\patchcmd{\BR@backref}{\newblock}{\newblock(cited on p.~}{}{}
\patchcmd{\BR@backref}{\par}{)\par}{}{}
\def
\addsymbol \binom{n}{k}: {Coefficient of $x^k$ in the expansion of $(1+x)^n$}{symbol:binom}
\addsymbol \binom{n}{k}: {Coefficient of $x^k$ in the expansion of $(1+x)^n$}{symbol:binom}
\def\addsymbol #1: #2#3{$#1$ \> \parbox{5in}{#2}\\} 
\def\newnot#1{\label{#1}} 
\newtheorem{theorem}{Theorem}[chapter]
\newtheorem{lemma}[theorem]{Lemma}
\newtheorem{proposition}[theorem]{Proposition}
\newtheorem{condition}[theorem]{Condition}
\theoremstyle{definition}
\newtheorem{definition}[theorem]{Definition}
\newtheorem{remark}[theorem]{Remark}
\newtheorem*{conjecturea}{Saff-Varga Width Conjecture}
\newtheorem*{conjectureb}{Modified Saff-Varga Width Conjecture}
\newtheorem{rhp}[theorem]{Riemann-Hilbert Problem}
\numberwithin{section}{chapter}
\numberwithin{equation}{section}
\def\N{\mathbb{N}}
\def\C{\mathbb{C}}
\def\R{\mathbb{R}}
\DeclareMathOperator{\erfc}{erfc}
\DeclareMathOperator{\re}{Re}
\DeclareMathOperator{\im}{Im}
\DeclareMathOperator{\len}{length}
\DeclareMathOperator{\M}{\mathbf M}
\DeclareMathOperator{\Ai}{Ai}
\DeclareMathOperator{\Bi}{Bi}
\newcommand{\const}{\text{const}}
\renewcommand{\th}{\text{th}}
\newcommand{\dsim}{\mathrel{\dot\sim}}
\def\blfootnote{\gdef\@thefnmark{}\@footnotetext}
\begin{document}

\title{Scaling Limits for Partial Sums of Power Series}
\author{Antonio R.\ Vargas}
\defenceday{15}
\defencemonth{July}
\defenceyear{2016}
\convocation{October}{2016}


\supervisor{Karl Dilcher}
\reader{Robert Milson}
\reader{Theodore Kolokolnikov}
\reader{Peter Miller}

\nolistoftables

\phd 
\frontmatter

\begin{abstract}
In this thesis it will be shown that the partial sums of the Maclaurin series for a certain class of entire functions possess scaling limits in various directions in the complex plane. In doing so we obtain information about the zeros of the partial sums. We will only assume that these entire functions have a certain asymptotic behavior at infinity.

With this information we will partially verify for this class of functions a conjecture on the location of the zeros of their partial sums known as the Saff-Varga Width Conjecture.
\end{abstract}

\chapter*{List of Symbols Used\hfill} \addcontentsline{toc}{chapter}{List of Symbols Used}
\markboth{}{List of Symbols Used} 
 \clearpage


\begin{acknowledgements}

\vspace{1cm}

\noindent Thank you Karl for your sharp guidance. You knew there was a good problem here.

\vspace{1cm}

\noindent Thank you Rob for your unending generosity. You went far, far out of your way to help me.

\vspace{1cm}

\noindent Thank you Amelia for your infinite patience. We did it.
\end{acknowledgements}

\mainmatter

\graphicspath{{images/chap_intro/}}

\chapter{Introduction}
\label{chap_intro}

Consider the complex function
\[
	f(z) = \frac{(1+3z)(z-\zeta_1)(z-\zeta_2)}{(1-z)^2},
\]
where
\[
	\zeta_1 = -\frac{1}{5} + \frac{i}{3} \qquad \text{and} \qquad \zeta_2 = \frac{35}{100} - \frac{i}{4}.
\]
The particular values of $\zeta_1$ and $\zeta_2$ aren't so important---just note that the function $f$ has three zeros $z=-1/3,\zeta_1,\zeta_2$, each inside the circle $|z| = 1$\newnot{symbol:modulus}, and a singularity located at $z=1$. Due to this singularity at $z=1$ the radius of convergence of the Maclaurin series for $f$ is $1$ and thus the series has the unit circle as its circle of convergence.

\begin{figure}[!htb]
	\centering
	\begin{minipage}[c]{0.45\textwidth}
		\includegraphics[width=\textwidth]{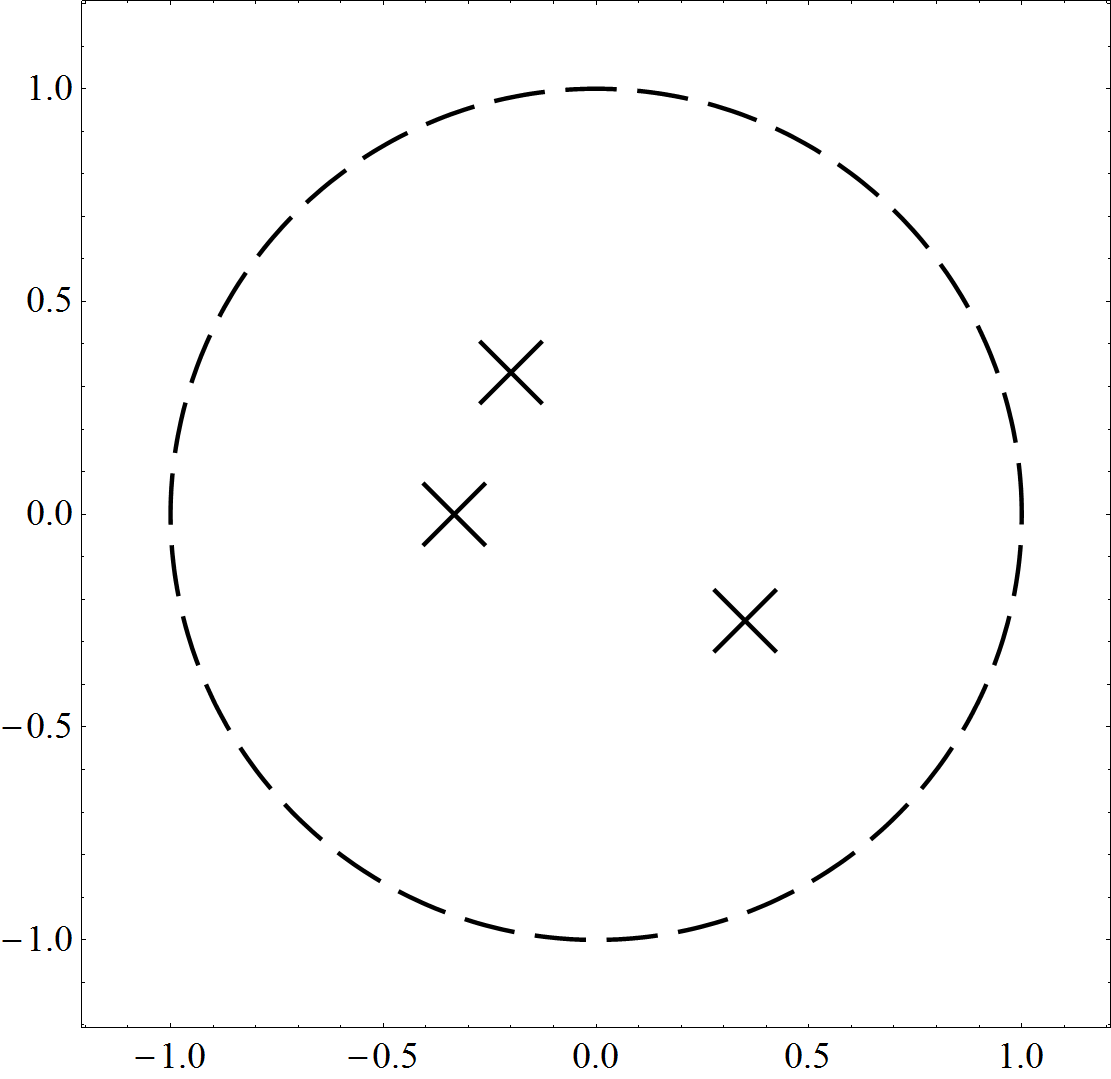}
	\end{minipage}
	\hspace{0.3cm}
	\begin{minipage}[c]{0.45\textwidth}
		\caption[The zeros of $f$.]{The zeros of {$f$}, marked with black crosses. The dashed curve is the unit circle.}
		\label{introintrofig_finitefzeros}
	\end{minipage}
\end{figure}

Let $p_n(z)$ be the $n^\th$ partial sum of the Maclaurin series for $f$. The first few of these are
\begin{align*}
	&p_0(z) = \zeta_1 \zeta_2, \\
	&p_1(z) = \zeta_1 \zeta_2 + (5\zeta_1\zeta_2 - \zeta_1 - \zeta_2)z, \\
	&p_2(z) = \zeta_1 \zeta_2 + (5\zeta_1\zeta_2 - \zeta_1 - \zeta_2)z + (9\zeta_1\zeta_2 - 5\zeta_1 - 5\zeta_2 + 1)z^2,
\end{align*}
and in general
\begin{align*}
	p_n(z) &= \zeta_1 \zeta_2 + (5\zeta_1\zeta_2 - \zeta_1 - \zeta_2)z \\
	&\qquad + \sum_{k=2}^{n} \bigl[(4k+1)\zeta_1\zeta_2 - (4k-3)(\zeta_1+\zeta_2) + 4k-7\bigr]z^k.
\end{align*}
Consider the zeros of the partial sum $p_5$.

\begin{figure}[!htb]
	\centering
	\begin{minipage}[c]{0.45\textwidth}
		\includegraphics[width=\textwidth]{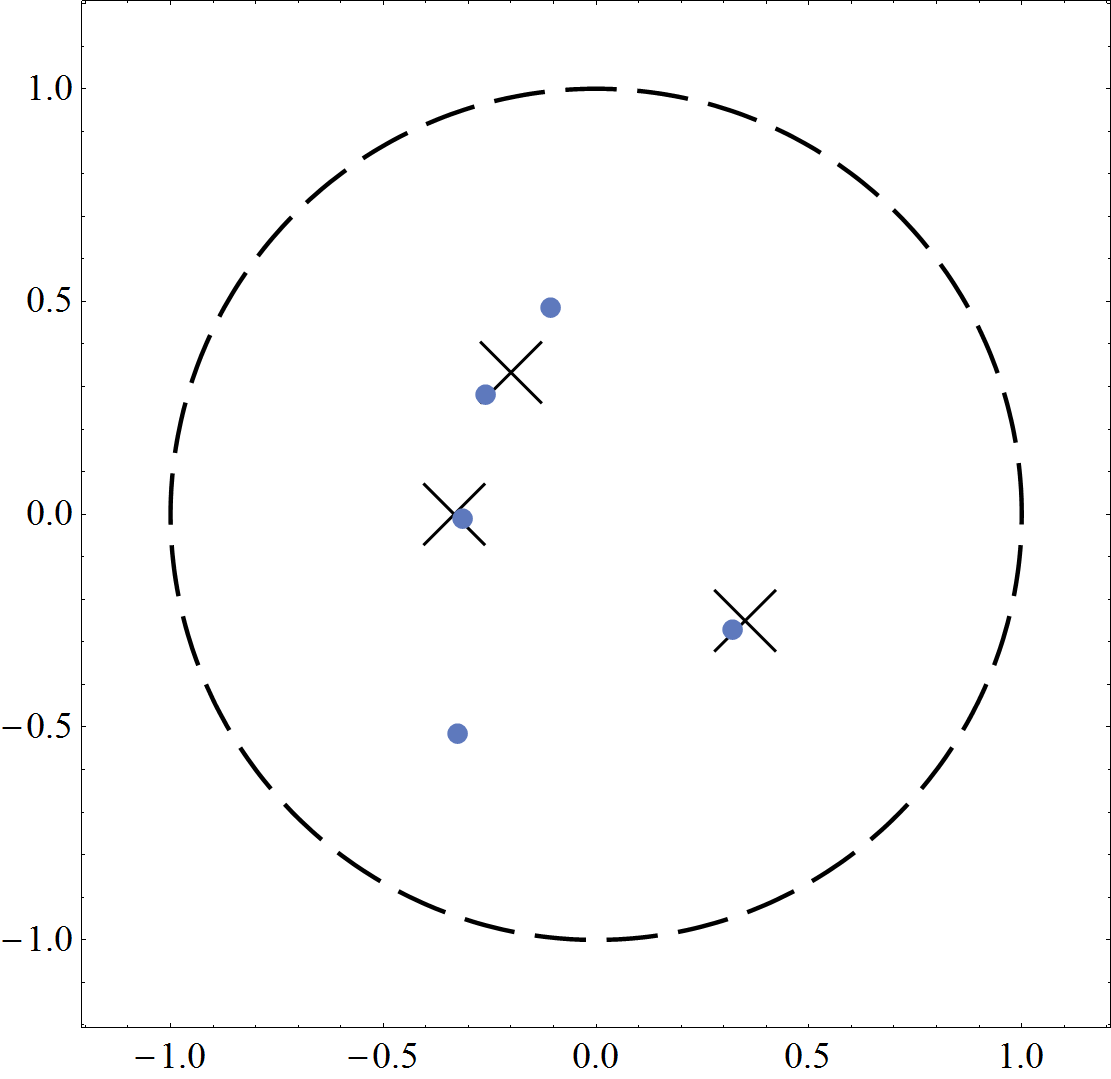}
	\end{minipage}
	\hspace{0.3cm}
	\begin{minipage}[c]{0.45\textwidth}
		\caption[The zeros of $p_5(z)$ and the zeros of $f$.]{The zeros of $p_5(z)$ as blue dots and the zeros of $f$ as black crosses. The dashed curve is the unit circle.}
		\label{introintrofig_finitefp5}
	\end{minipage}
\end{figure}

\noindent As $p_5$ is a fifth degree polynomial it has $5$ complex zeros. Three of them are close to the three zeros of $f$, but it also has two extra zeros. As $n\to\infty$ the polynomials $p_n$ converge to $f$ uniformly on compact subsets of $|z| < 1$, so we expect that each $p_n$ will have a zero near each zero of $f$ and, further, that $p_n$ \textit{won't} have a zero where $f$ doesn't. This is essentially a special case of Hurwitz's theorem \cite[p.\ 4]{marden:geom}:

\begin{theorem}[Hurwitz's Theorem]
\label{introthm_hurwitz}
Let $f_n(z)$ be a sequence of functions which are analytic in a region $R$ and which converge uniformly to a function $f(z) \not\equiv 0$ in every closed subregion of $R$. Let $\zeta$ be an interior point of $R$. If $\zeta$ is a limit point of the zeros of the $f_n(z)$, then $\zeta$ is a zero of $f(z)$. Conversely, if $\zeta$ is an $m$-fold zero of $f(z)$, every sufficiently small neighborhood $K$ of $\zeta$ contains exactly $m$ zeros (counting multiplicities) of each $f_n(z)$ if $n$ is large enough.
\end{theorem}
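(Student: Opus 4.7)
The plan is to prove the two halves of the statement separately, invoking uniform convergence and the argument principle (in the form of Rouché's theorem).

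For the first part, suppose $\zeta \in R$ is a limit point of the zeros of the $f_n(z)$. Then I can extract a subsequence $f_{n_k}(z)$ together with points $z_k$ with $f_{n_k}(z_k) = 0$ and $z_k \to \zeta$. Choosing a closed disk $K \subset R$ centered at $\zeta$, the convergence $f_n \to f$ is uniform on $K$, so for large $k$
\[
	|f(\zeta) - f_{n_k}(z_k)| \leq |f(\zeta) - f(z_k)| + |f(z_k) - f_{n_k}(z_k)|,
\]
where the first term tends to $0$ because $f$ is continuous at $\zeta$ (being the uniform limit of analytic functions on $K$, hence itself analytic on the interior of $K$), and the second tends to $0$ by uniform convergence. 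Since $f_{n_k}(z_k) = 0$, this gives $f(\zeta) = 0$.

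For the converse, suppose $\zeta$ is a zero of $f$ of multiplicity $m$. Because $f \not\equiv 0$ is analytic, its zeros are isolated, so I can pick a closed disk $K \subset R$ centered at $\zeta$, small enough that $\zeta$ is the only zero of $f$ in $K$ and $f$ is nowhere zero on the boundary $\partial K$. By compactness, $\delta := \min_{z \in \partial K} |f(z)| > 0$. By uniform convergence on the compact set $\partial K$, there exists $N$ such that for all $n \geq N$ and all $z \in \partial K$,
\[
	|f_n(z) - f(z)| < \delta \leq |f(z)|.
\]
This is precisely the hypothesis of Rouché's theorem, so $f_n$ and $f$ have the same number of zeros inside $K$, counted with multiplicity. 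Since $f$ has exactly $m$ zeros there (namely $\zeta$ with multiplicity $m$), the same is true of $f_n$ for every $n \geq N$.

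The step requiring the most care is the first one: one must justify that the uniform limit $f$ is continuous (indeed analytic) at $\zeta$, and handle the fact that $z_k$ and $n_k$ both vary simultaneously via the triangle-inequality split above. Everything else is standard once a neighborhood small enough to isolate $\zeta$ among the zeros of $f$ has been chosen, and once one notes that Rouché's theorem applies because the strict inequality $|f_n - f| < |f|$ holds on $\partial K$.
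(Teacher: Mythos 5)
The thesis does not prove this statement: it is quoted verbatim from Marden's \emph{Geometry of the Zeros} with only a citation, so there is no in-paper proof to compare against. Your argument is the standard one and is correct: the triangle-inequality split handles the simultaneous limits in $z_k$ and $n_k$ for the direct half, and Rouch\'e's theorem with $\delta = \min_{\partial K}|f| > 0$ gives the converse half; passing from closed disks to arbitrary sufficiently small neighborhoods $K$ is immediate by sandwiching $K$ between two such disks. The only point worth making explicit is that in the first half you need the extracted indices $n_k$ to tend to infinity (otherwise the uniform convergence step says nothing); this is forced by the intended reading of ``limit point of the zeros of the $f_n$,'' since if the accumulating zeros all came from a single $f_{n_0}$ the identity theorem would give $f_{n_0}\equiv 0$ and the claim about $f$ could fail, so the hypothesis must be understood as supplying zeros of $f_{n_k}$ with $n_k\to\infty$, exactly as your subsequence extraction assumes.
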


The partial sum $p_5$ has two more zeros than $f$, $p_6$ has three more, and so on. The limit function $f$ can have only finitely-many zeros in any compact subset of $|z| < 1$, so it follows from Hurwitz's theorem that almost all of the zeros of $p_n$ must leave any fixed compact subset of $|z| < 1$ as $n \to \infty$.

\begin{figure}[h!tb]
	\centering
	\begin{tabular}{ccc}
		\includegraphics[width=0.3\textwidth]{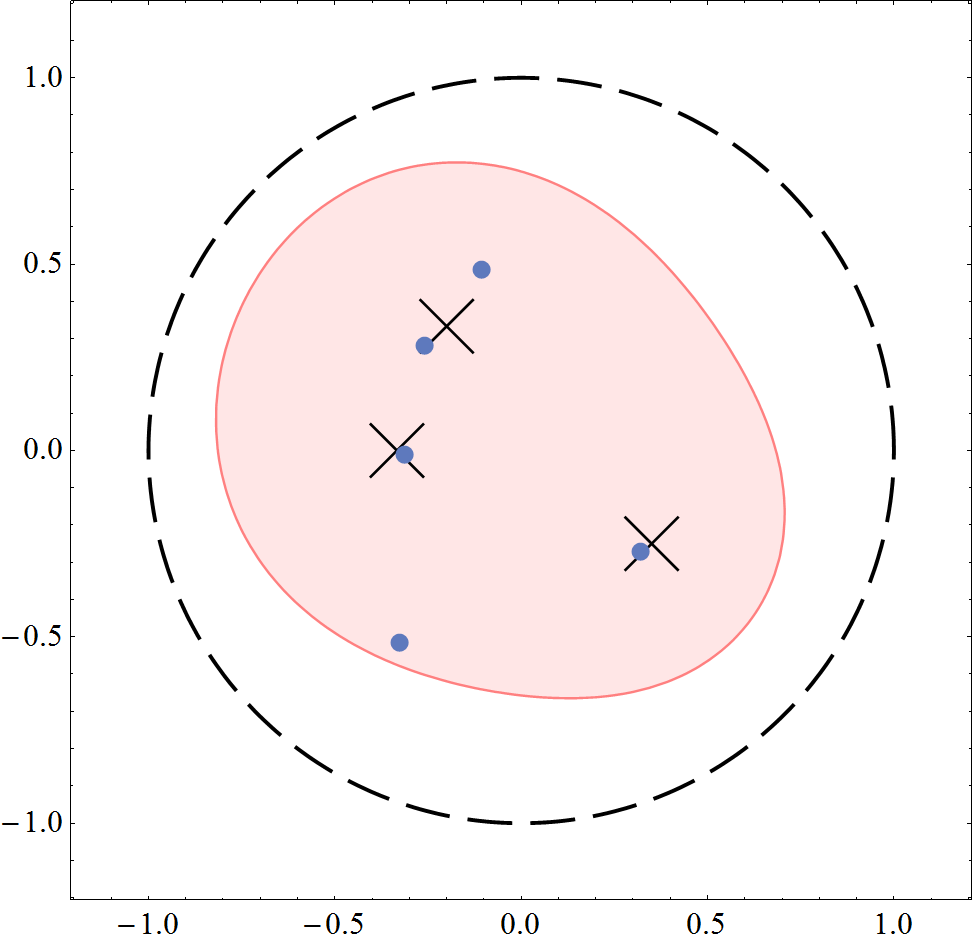}
			& \includegraphics[width=0.3\textwidth]{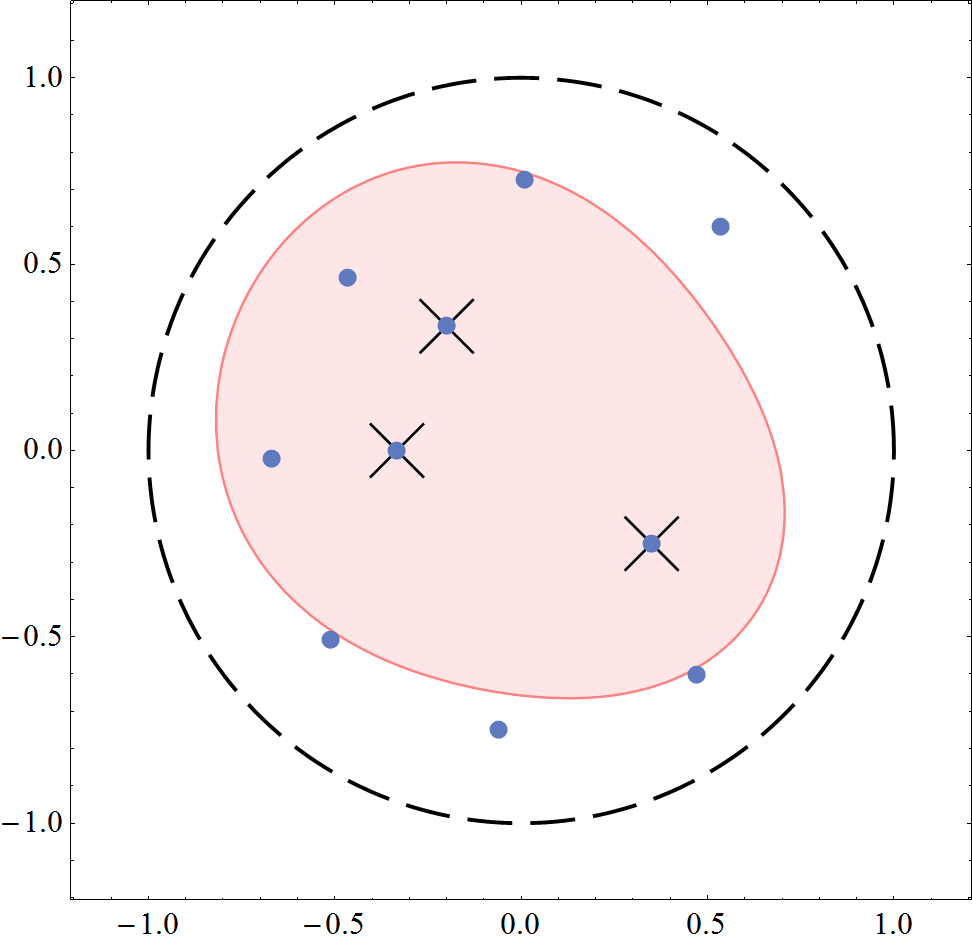}
			& \includegraphics[width=0.3\textwidth]{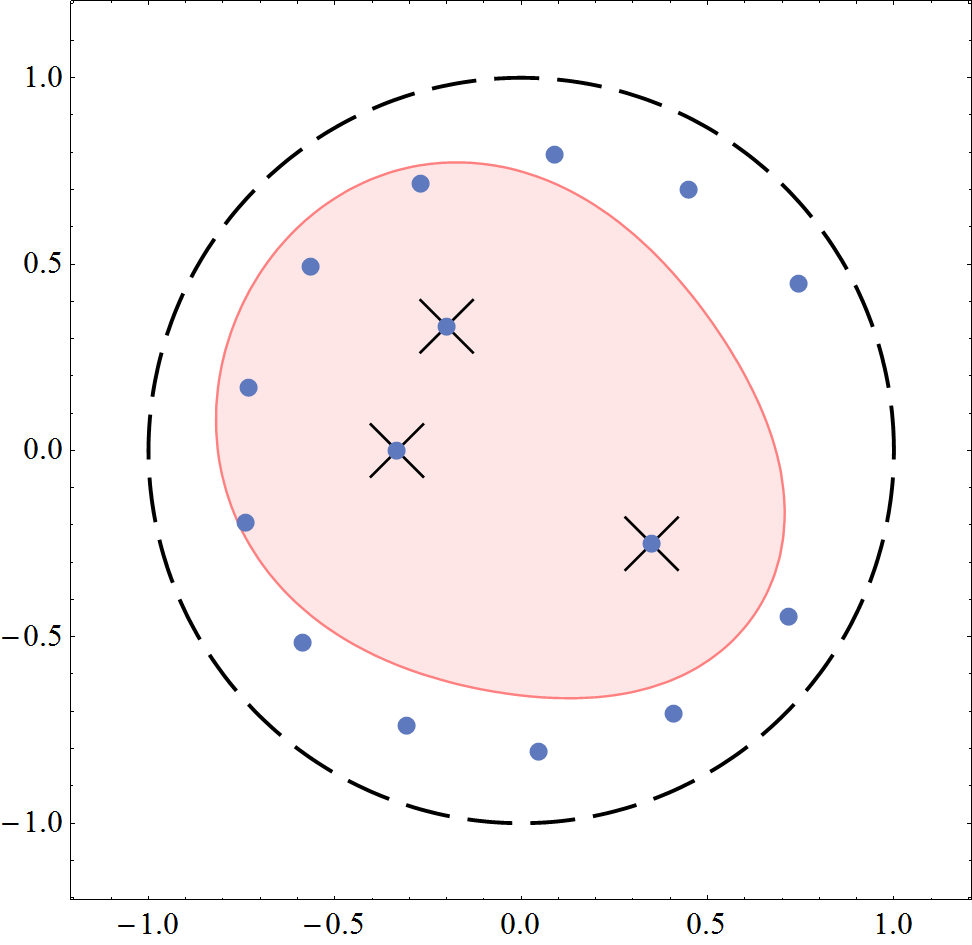} \\
		\includegraphics[width=0.3\textwidth]{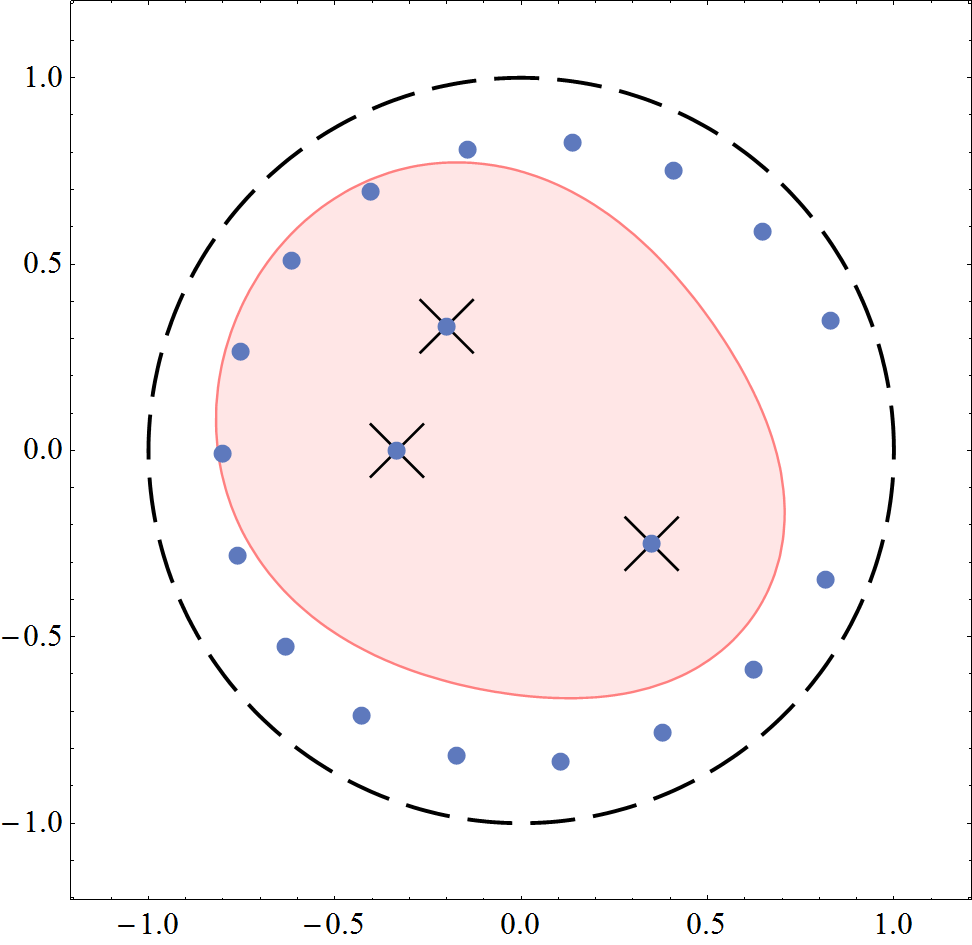}
			& \includegraphics[width=0.3\textwidth]{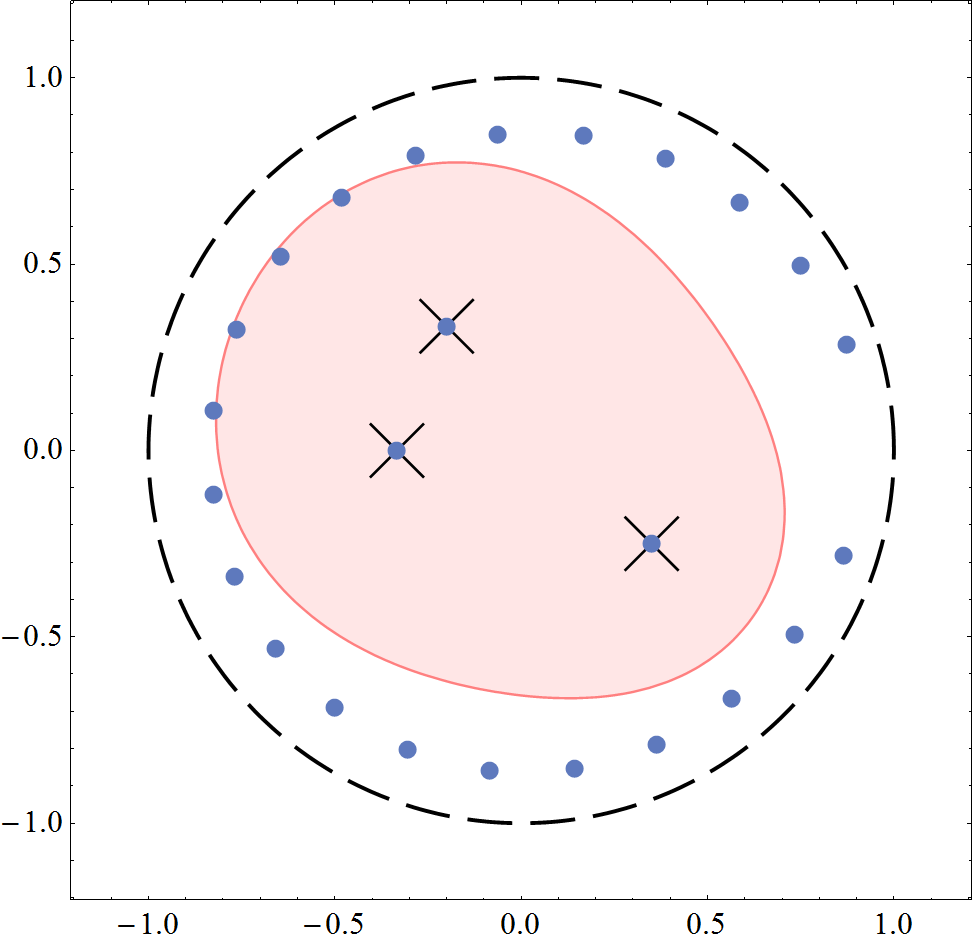}
			& \includegraphics[width=0.3\textwidth]{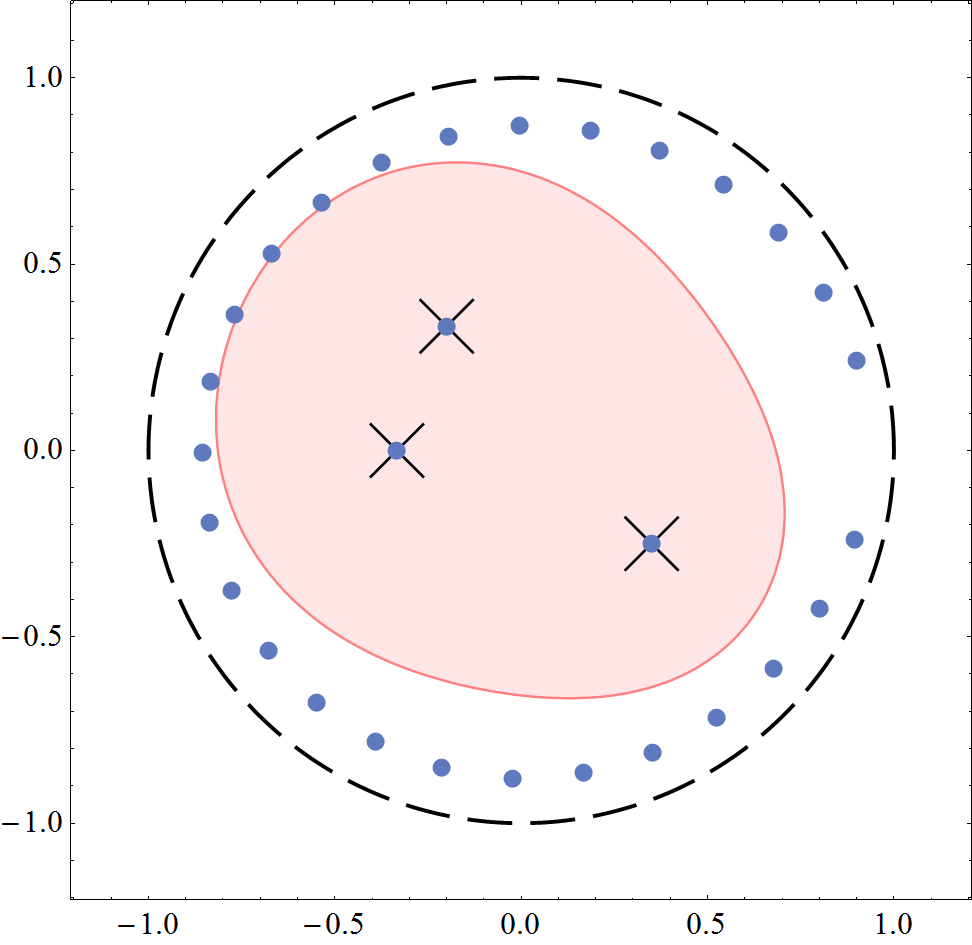} \\
		\includegraphics[width=0.3\textwidth]{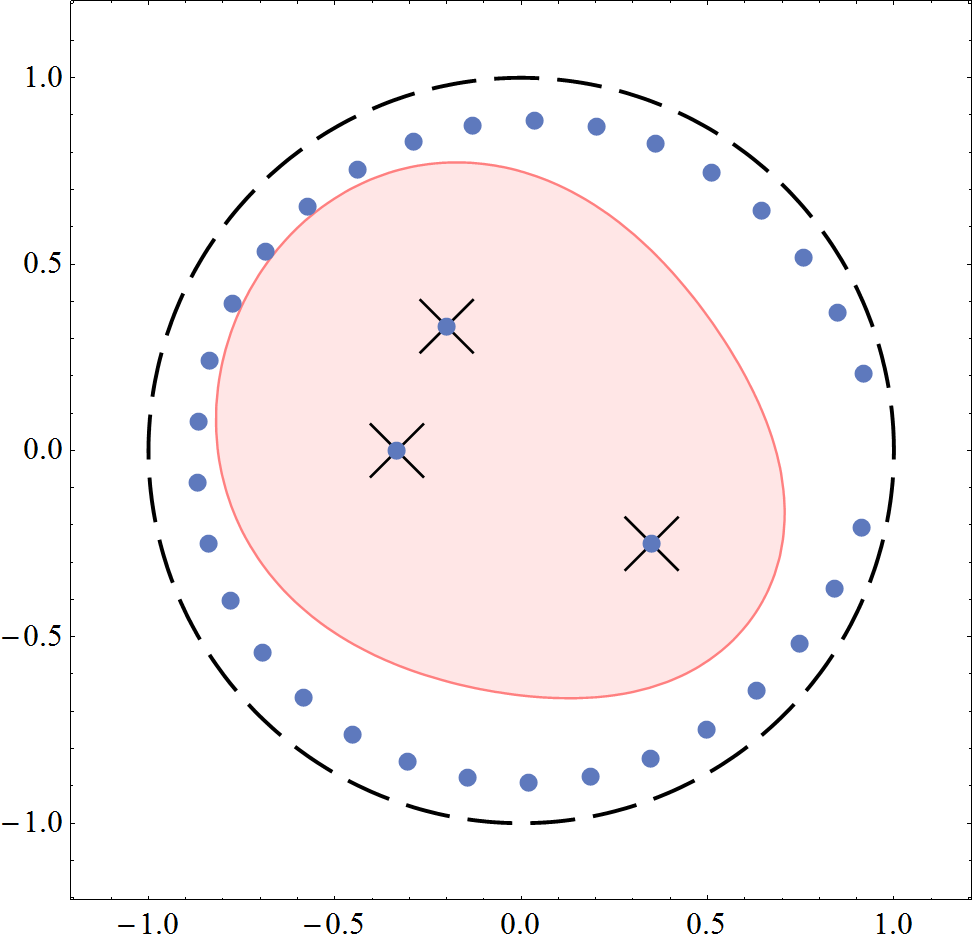}
			& \includegraphics[width=0.3\textwidth]{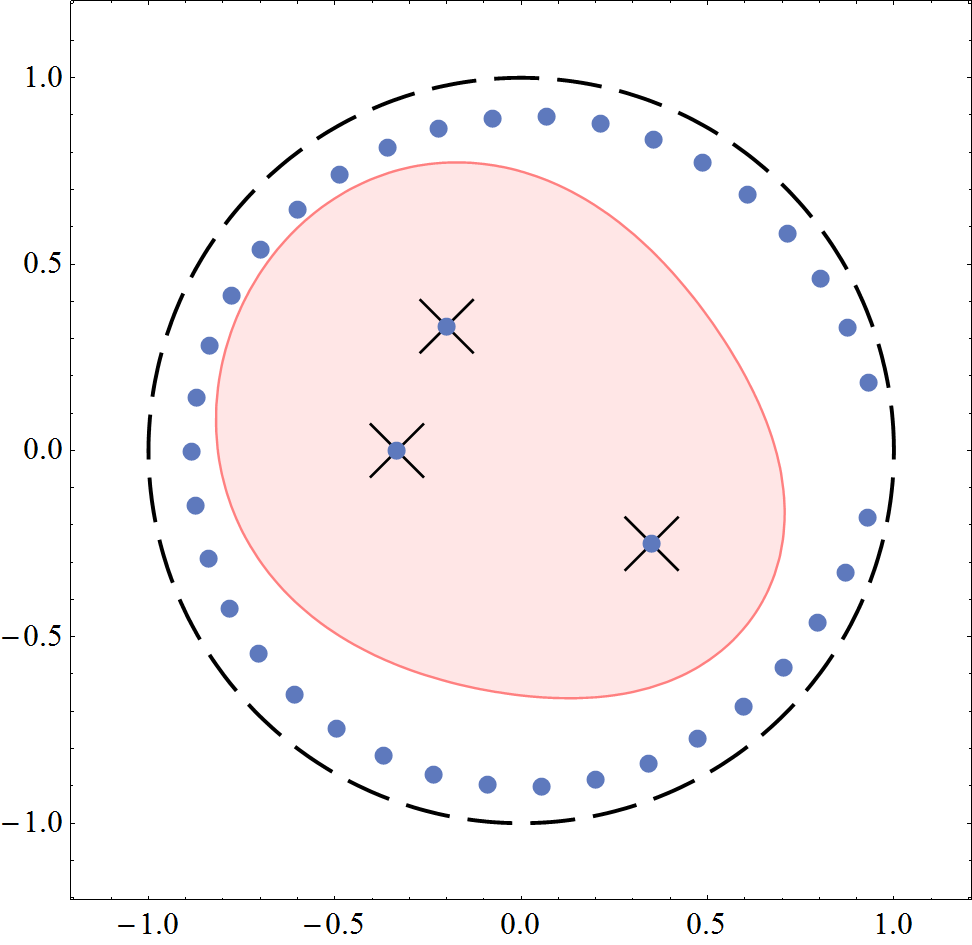}
			& \includegraphics[width=0.3\textwidth]{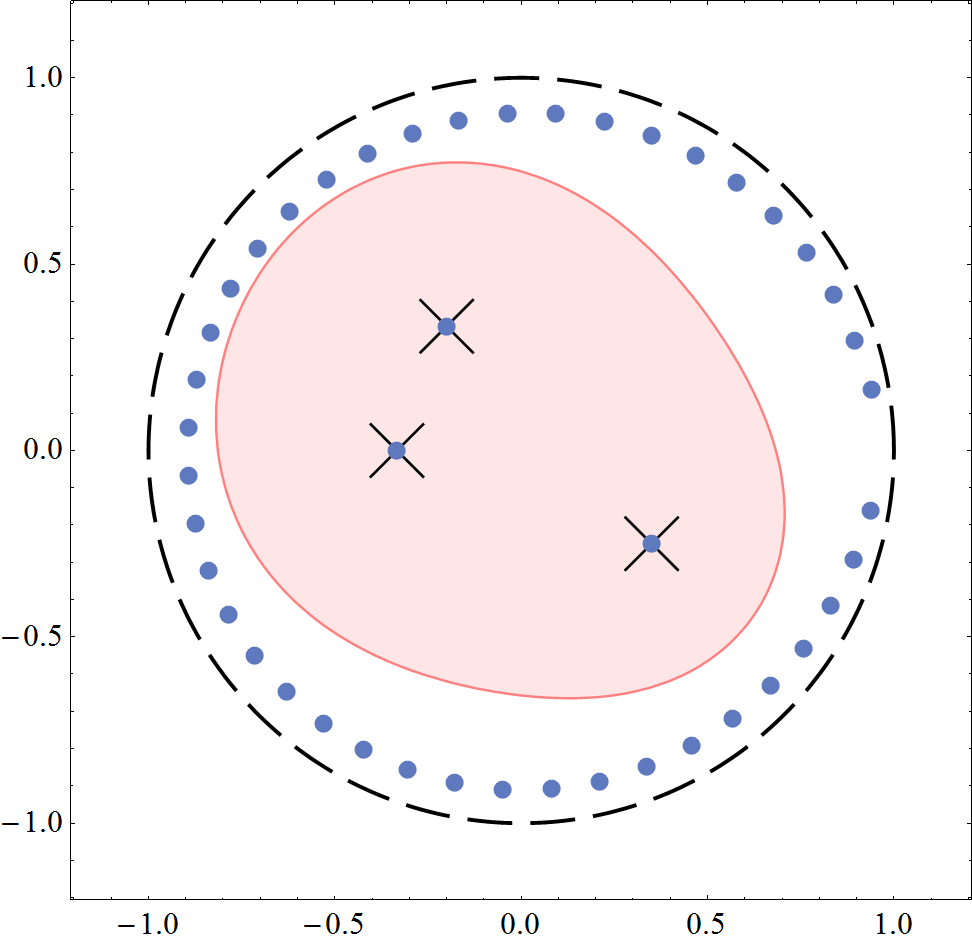}
	\end{tabular}
	\caption[From left-to-right and top-to-bottom, the zeros of the partial sums $p_n$ with $n=5,10,15,20,25,30,35,40,45$, respectively, shown with an arbitrary compact subset of $|z| < 1$ and the unit circle.]{From left-to-right and top-to-bottom, the zeros of the partial sums $p_n$ with $n=5,10,15,20,25,30,35,40,45$, respectively, shown with an arbitrary compact subset of $|z| < 1$ (pink) and the unit circle (dashed).}
	\label{introfig_finiteconvergence}
\end{figure}

In Figure \ref{introfig_finiteconvergence} we can see that all but three zeros of $p_n$ leave the pink set (an arbitrary compact subset of $|z| < 1$) as $n$ grows. In fact it appears that, as $n \to \infty$, these zeros of $p_n$ which don't converge to zeros of $f$ will converge to the unit circle, which is the circle of convergence of the power series. This is no coincidence: Jentzsch showed in 1916 \cite{jentzsch,jentzsch2} that, given any power series with positive, finite radius of convergence, every point on the circle of convergence of the power series will be a limit point of the zeros of the partial sums of the power series. This result was strengthened by Szeg\H{o} in 1922 \cite{szego:jentzsch} when he showed that there is a subsequence $(n_k)$ for which the zeros of $p_{n_k}$ are asymptotically uniformly distributed in angle.

Together these results have come to be known as the Jentzsch-Szeg\H{o} theorems and have sparked a large field of research, especially in finding analogues of the results for other types of approximating polynomials. A modern treatment of this topic can be found in \cite{andrievskiiblatt:discrepancybook}.

The situation is quite different if we instead suppose that the power series has infinite radius of convergence. In this case there is no circle of convergence---or, rather, now the point at $z=\infty$ plays the role of the circle of convergence. Indeed, as $n$ grows it is again true that almost all of the zeros of $p_n$ must leave any fixed compact subset of the region of convergence of the power series, only in this case the region of convergence is all of $\C$\newnot{symbol:CC}. Consequently almost all of the zeros of $p_n$ must tend to $z=\infty$ as $n \to \infty$.

This raises several questions, the simplest of which are probably
\begin{enumerate}[label={(\alph*)}]
\item How quickly do the zeros tend to $\infty$?
\item In the finite-radius case the extra zeros asymptotically formed a circle. What kind of geometry do the zeros have in this case?
\end{enumerate}

Let's take a look at the partial sums of the exponential function as an example. Define
\[
	p_n[\exp](z) = \sum_{k=0}^{n} \frac{z^k}{k!}.
\]
As $n$ grows so do the zeros of $p_n[\exp]$\newnot{symbol:section}, and from Figure \ref{introfig_expunscaled} it looks like they do so pretty quickly.

\begin{figure}[!htb]
	\centering
	\begin{tabular}{ccc}
		\includegraphics[width=0.3\textwidth]{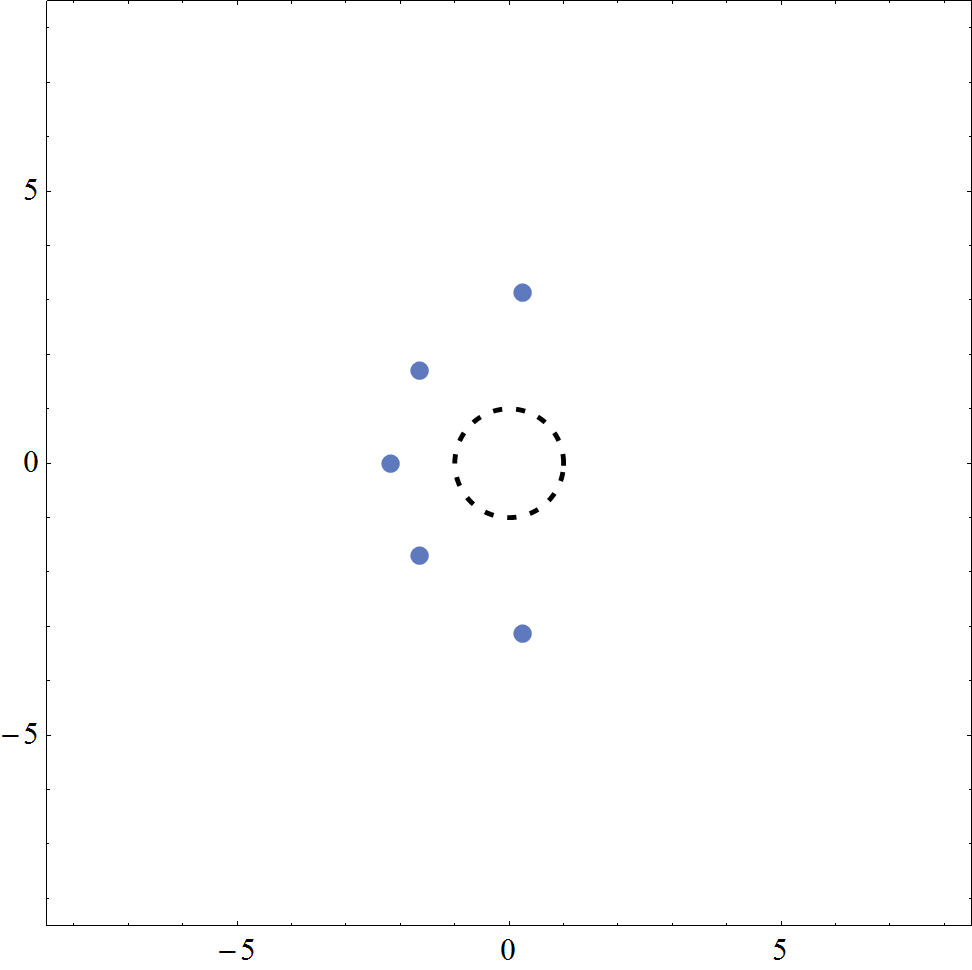}
			& \includegraphics[width=0.3\textwidth]{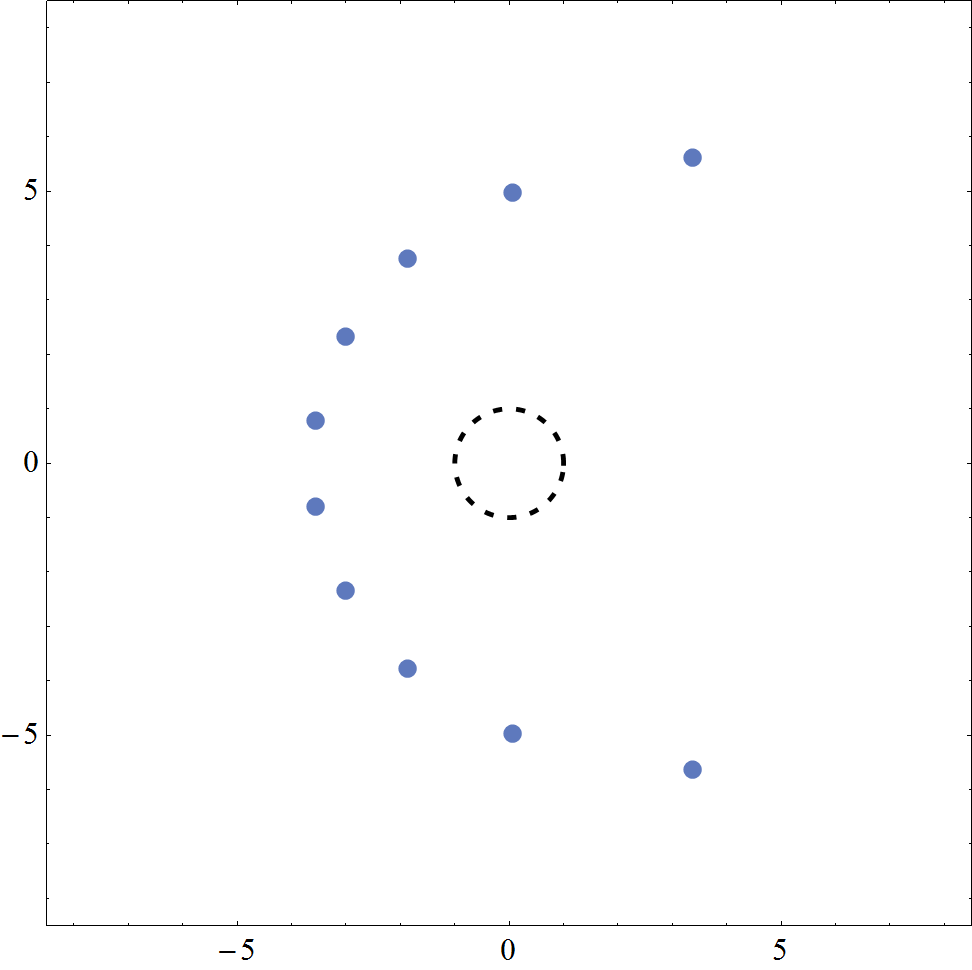}
			& \includegraphics[width=0.3\textwidth]{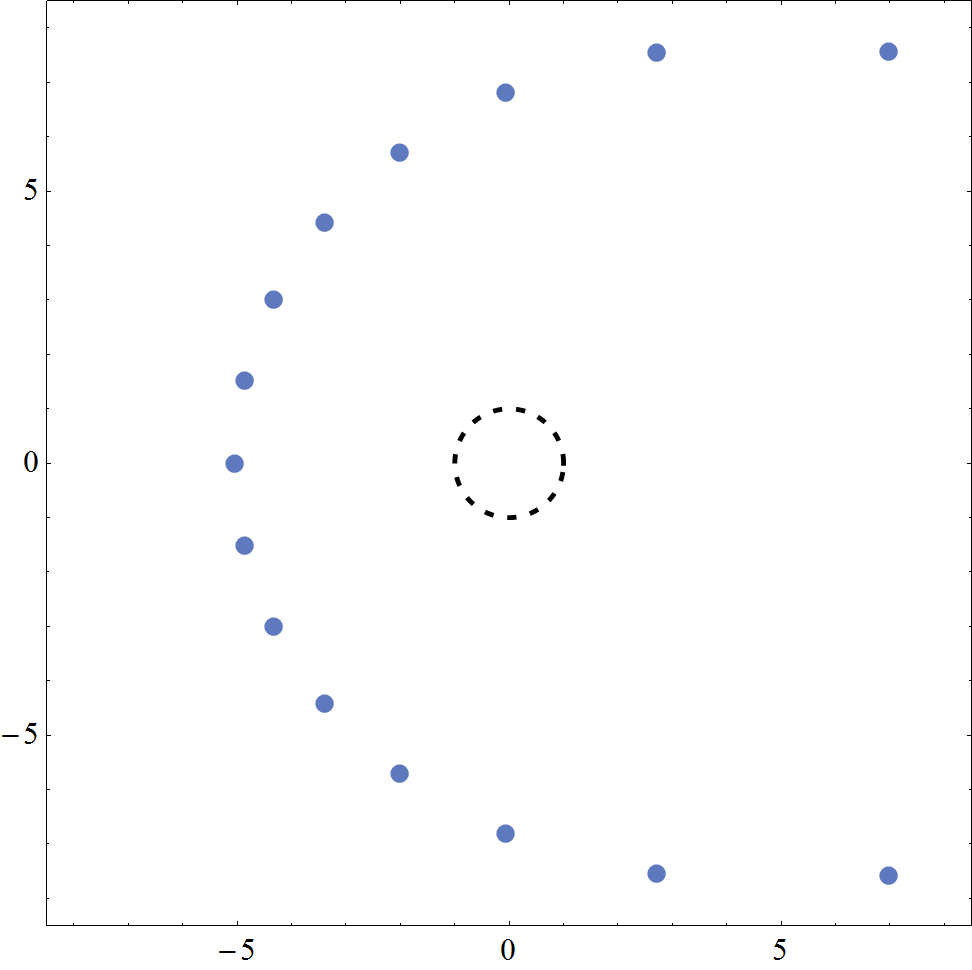}
	\end{tabular}
	\caption[From left-to-right, the zeros of the partial sums {$p_n[\exp]$} with {$n=5,10,15$}, respectively, shown with the unit circle.]{From left-to-right, the zeros of the partial sums {$p_n[\exp]$} with {$n=5,10,15$}, respectively, shown with the unit circle (dashed).}
	\label{introfig_expunscaled}
\end{figure}

These polynomials were studied by Szeg\H{o} in 1924 \cite{szego:exp}. Szeg\H{o} showed that the zeros of $p_n[\exp]$ which tend to $\infty$ do so at a rate comparable to $n$, and further that the zeros of the scaled partial sums $p_n[\exp](nz)$ converge to the piecewise-smooth curve given by
\begin{equation}
\label{introeq_szegocurve}
	S = \left\{ z \in \C : \left|ze^{1-z}\right| = 1, \text{ } |z| \leq 1 \right\}.
\end{equation}

\begin{figure}[!htb]
	\centering
	\begin{tabular}{ccc}
		\includegraphics[width=0.3\textwidth]{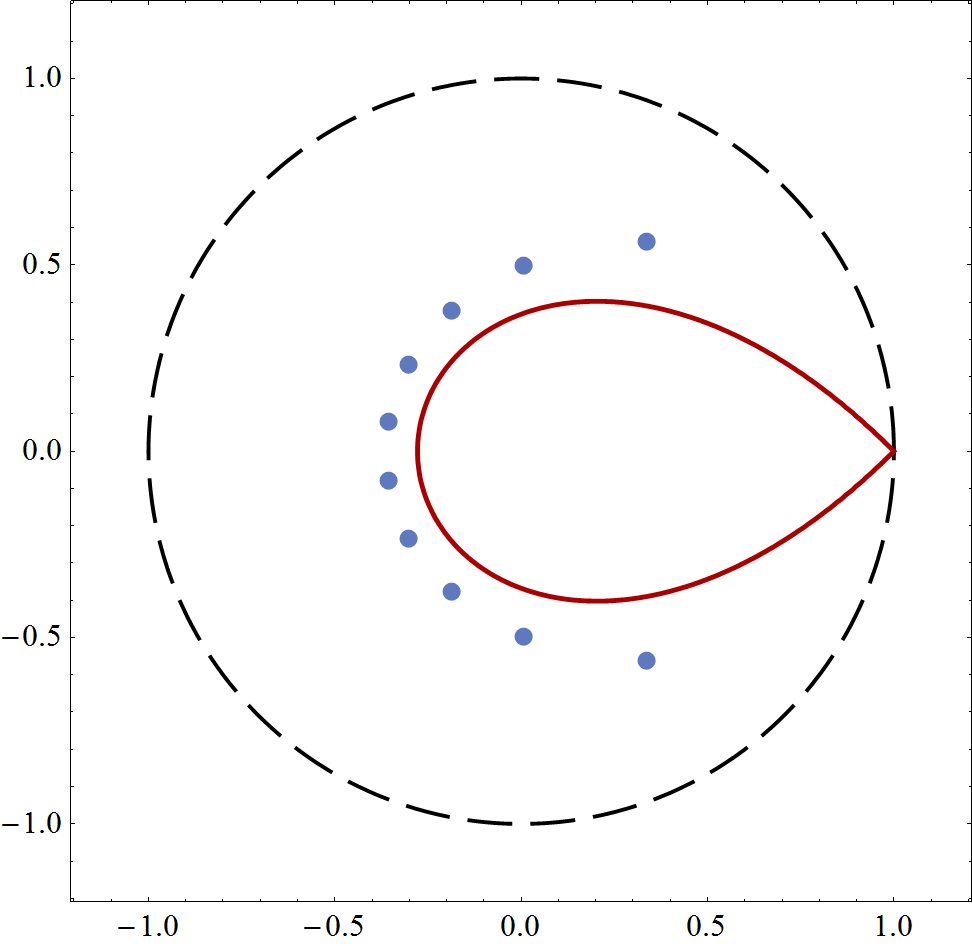}
			& \includegraphics[width=0.3\textwidth]{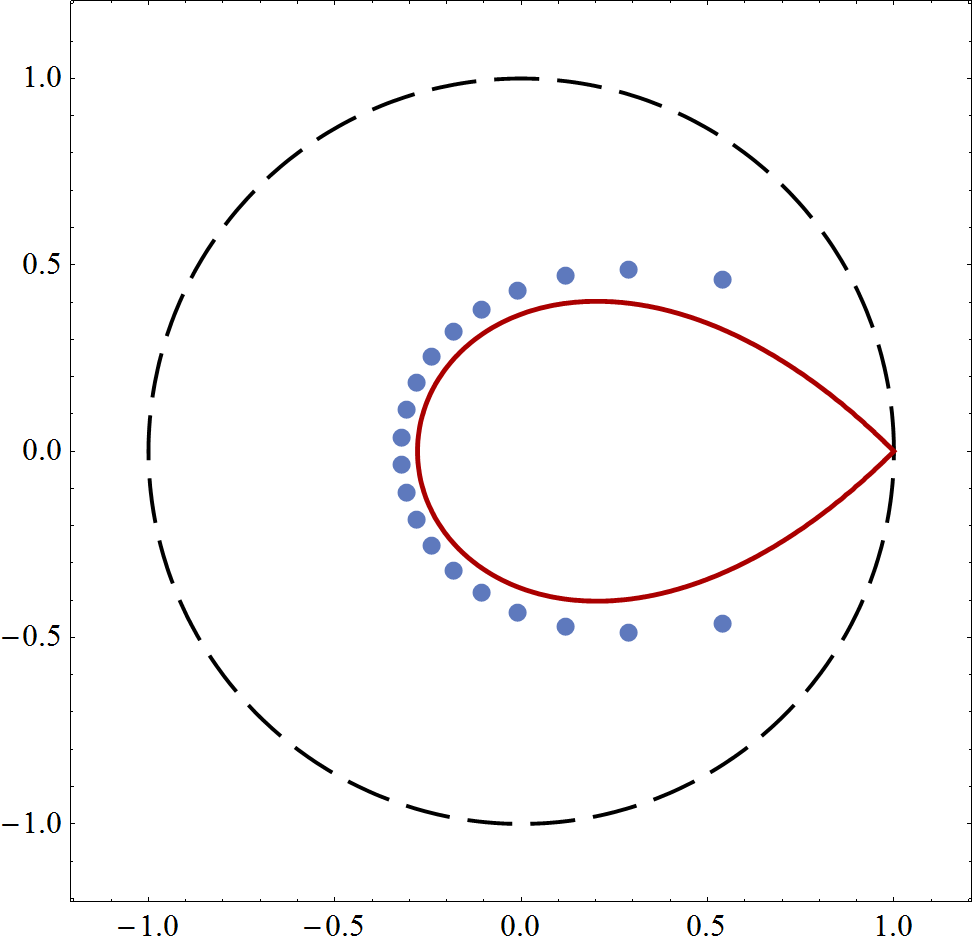}
			& \includegraphics[width=0.3\textwidth]{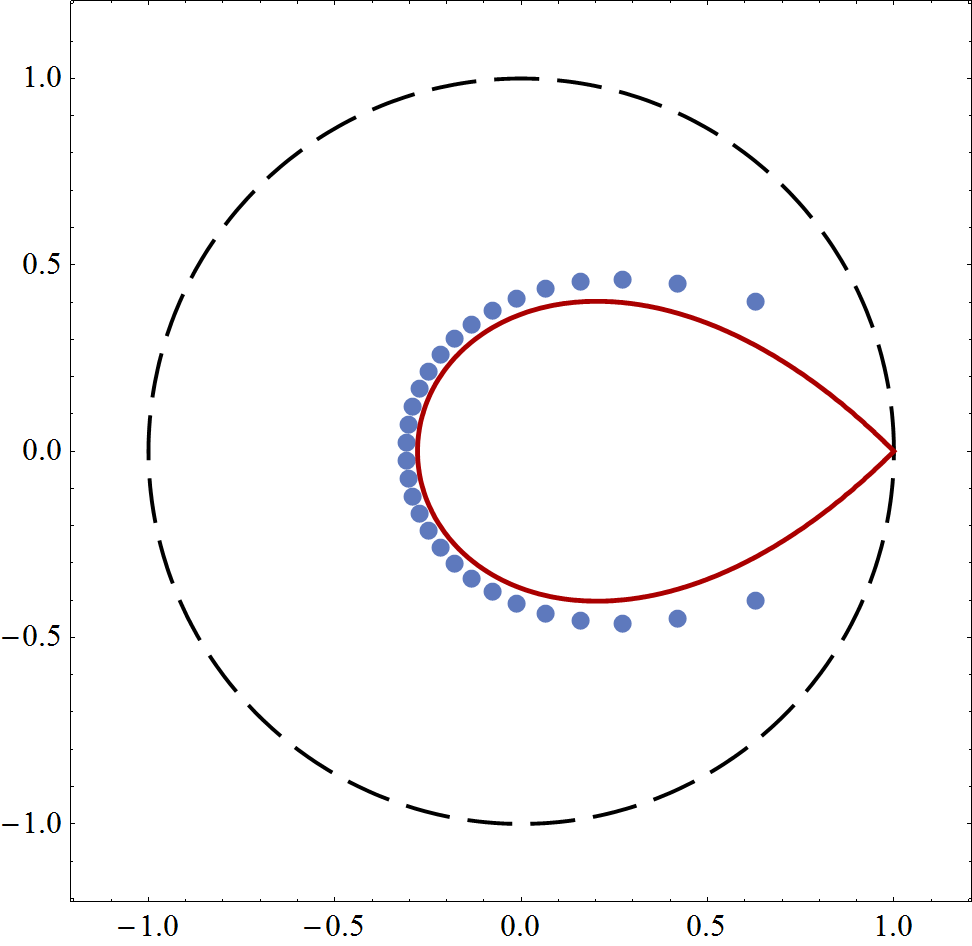}
	\end{tabular}
	\includegraphics[width=0.9\textwidth]{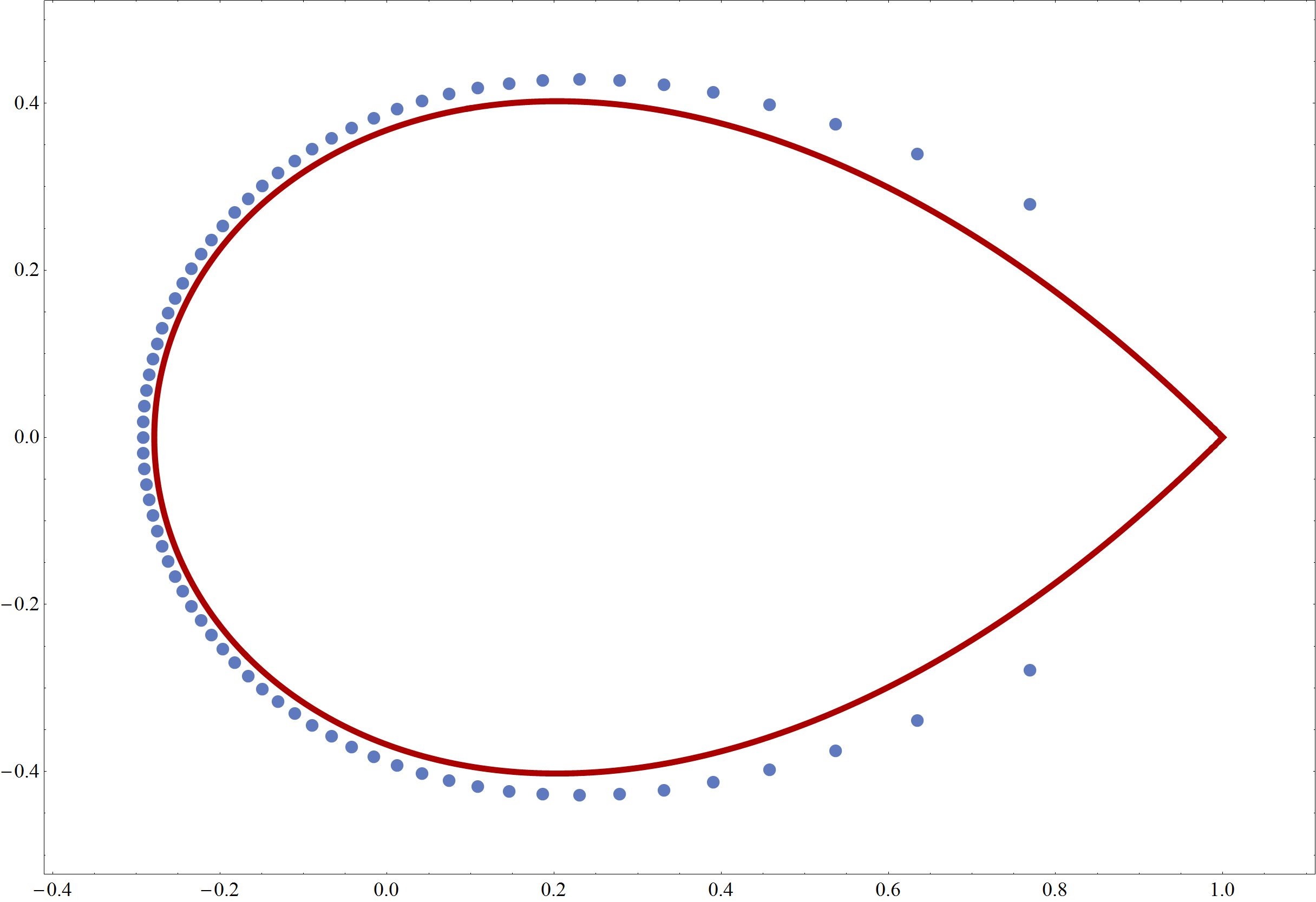}
	\caption[From left-to-right and top-to-bottom, the zeros of the \textbf{scaled} partial sums {$p_n[\exp](nz)$} with {$n=10,20,30,75$}, respectively, shown with the curve {$S$} and the unit circle.]{From left-to-right and top-to-bottom, the zeros of the \textbf{scaled} partial sums {$p_n[\exp](nz)$} with {$n=10,20,30,75$}, respectively, shown with the curve {$S$} (solid red) and the unit circle (dashed).}
	\label{introfig_expscaled}
\end{figure}

\noindent This curve has come to be known as the Szeg\H{o} curve.

Since Szeg\H{o}'s work many authors have studied various aspects of this phenomenon. Of note is Dieudonn\'e, who independently rediscovered many of Szeg\H{o}'s results on the partial sums of the exponential function \cite{dieu:expsections}. In the following sections we will explore what has been studied in detail for these partial sums, the partial sums of other entire functions, as well as what is known in terms of general theory.

\section{Limit Curves and Distribution of the Zeros}
\label{introsec_limitcurves}

In his 1924 paper Szeg\H{o} also studied the angular distribution of the zeros of the partial sums $p_n[\exp]$. He observed that the map $\phi(z) = ze^{1-z}$ maps points of the limit curve $S$ to points on the unit circle in a monotonic fashion, and that the arguments of the zeros of the partial sums are asymptotically uniformly distributed modulo weighting by $\phi$.

To be precise, let $\theta_1,\theta_2$ be such that $0 < \theta_1 < \theta_2 < 2\pi$ and let $z_j = \phi^{-1}(e^{i\theta_j})$, $j=1,2$, so that $z_1,z_2 \in S$. If
\[
	W = \{ z \in \C : \arg z_1 \leq \arg z \leq \arg z_2 \},
\]
\begin{equation}
\label{introeq_zndef}
	Z_n = \{ z \in \C : p_n[\exp](z) = 0 \},
\end{equation}
and $\#(W \cap Z_n)$ is the number of points in the set $W \cap Z_n$, then Szeg\H{o} showed that there is a constant $C_1$ depending only on $\theta_1$ and $\theta_2$ such that
\begin{equation}
\label{introeq_szegoangular}
	\left|\frac{\#(W \cap Z_n)}{n} - \frac{\theta_2 - \theta_1}{2\pi}\right| \leq \frac{C_1}{n}.
\end{equation}
Note, however, that this result doesn't apply to sectors which contain the positive real axis. Taking another look at Figures \ref{introfig_expunscaled} and \ref{introfig_expscaled} gives us a clue as to why this might be: the zeros of the scaled partial sums $p_n[\exp](nz)$ seem to approach the point of the limit curve $S$ which lies on the positive real axis (namely the point $z=1$) much more slowly than they approach the rest of the curve. We might suspect then that the $1/n$ bound on the right-hand side of \eqref{introeq_szegoangular} wouldn't hold near the positive real axis. Indeed, Szeg\H{o}'s result was strengthened by Andrievskii, Carpenter, and Varga in 2006 \cite{acv:angulardistribution} to include such sectors. They showed that there are absolute positive constants $C_2$ and $a$ such that, for any choice of $\theta_1$ and $\theta_2$ satisfying $0 < \theta_2 - \theta_1 < 2\pi$,
\[
	\left|\frac{\#(W \cap Z_n)}{n} - \frac{\theta_2 - \theta_1}{2\pi}\right| \leq \frac{C_2}{n^a}.
\]
We also note that Szeg\H{o}'s result has been reformulated in the modern language of weak*-convergence of measures \cite{pritskervarga:weighted}.

There has also been a great deal of interest in the radial distribution of zeros. This is most often framed as the study of the distance from the zeros of the scaled partial sums $p_n[\exp](nz)$ to their limit curve $S$.

In 1966, Buckholtz \cite{buckholtz:expcharacter} (building on his earlier work \cite{buckholtz:copapprox}) obtained two striking results in this vein. First, Buckholtz showed using a very simple argument that none of the zeros of $p_n[\exp](nz)$ lie on the interior of the curve $S$, which is characterized by the inequalities
\[
	\left| ze^{1-z} \right| \leq 1, \quad |z| \leq 1.
\]
In other words, the zeros of the scaled partial sums approach $S$ from the outside. This can be seen clearly in Figure \ref{introfig_expscaled}. Second, Buckholtz showed that all zeros of $p_n[\exp](nz)$ lie within a distance of $2e/\sqrt{n}$ of $S$: if $p_n[\exp](nz_0) = 0$ then
\[
	\max_{s \in S} |s-z_0| \leq \frac{2e}{\sqrt{n}}.
\]

\begin{figure}[!htb]
	\centering
	\begin{minipage}[c]{0.50\textwidth}
		\includegraphics[width=\textwidth]{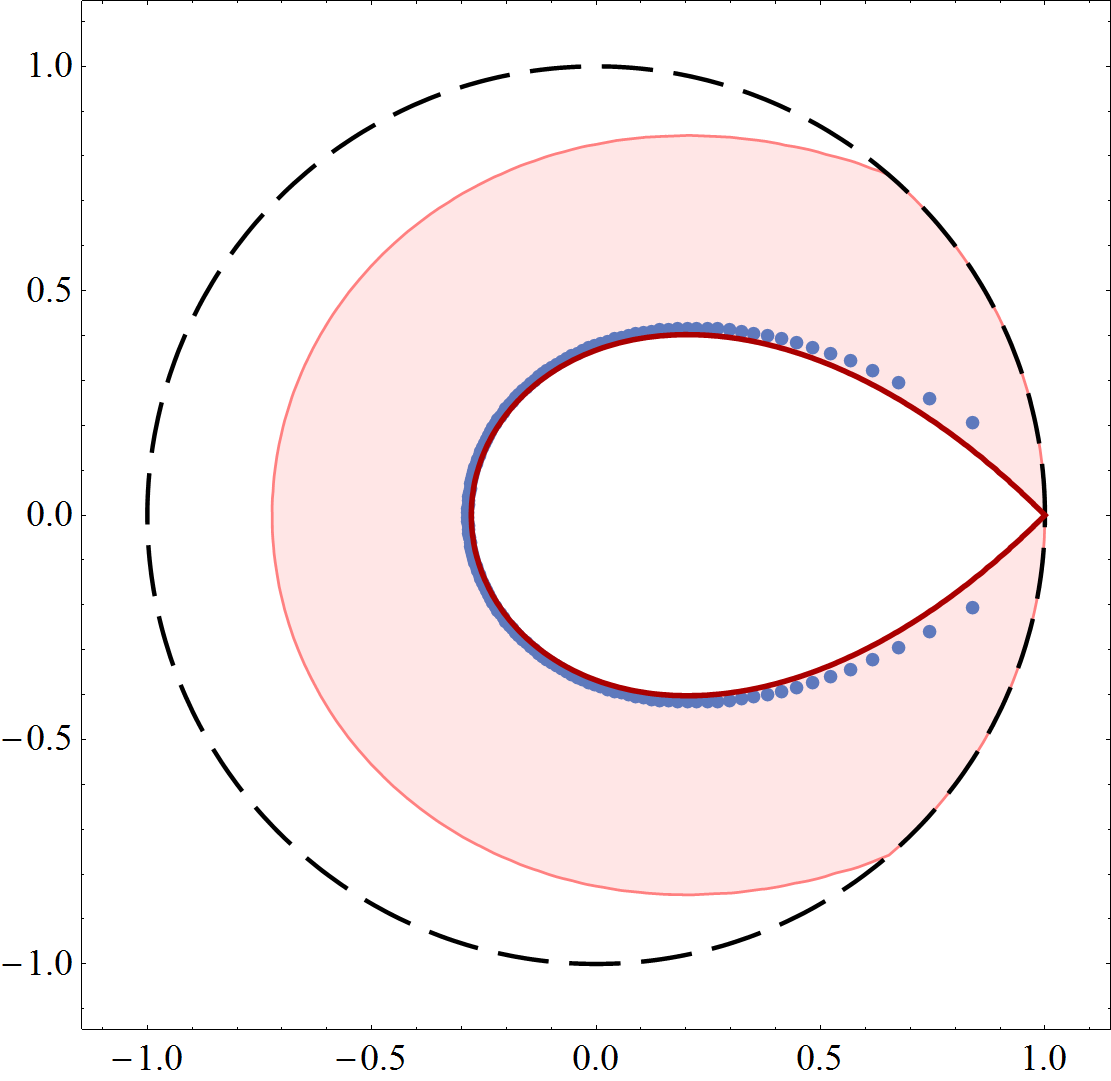}
	\end{minipage}
	\hspace{0.3cm}
	\begin{minipage}[c]{0.4\textwidth}
		\caption[The zeros of {$p_{150}[\exp](150z)$}, shown with the bounding region found by Buckholtz, the Szeg\H{o} curve {$S$}, and the unit circle.]{The zeros of $p_{150}[\exp](150z)$ as blue dots, shown with the bounding region found by Buckholtz (pink), the Szeg\H{o} curve $S$ (solid red), and the unit circle (dashed). All zeros of $p_n[\exp](nz)$ lie in the disk $|z| \leq 1$ by the Enestr\"om-Kakeya theorem \cite{asv:ke}, so Buckholtz's region is restricted to this disk in the image.}
		\label{introfig_buckholtz}
	\end{minipage}
\end{figure}

It turns out that, except for the constant $2e$, Buckholtz's estimate is asymptotically best-possible in the sense that
\[
	\max_{z \in Z_n} \min_{s \in S} |s-z| = \Theta\!\left(n^{-1/2}\right)
\]
as $n \to \infty$, where $Z_n$ is as defined in \eqref{introeq_zndef} and $\Theta(\cdots)$ is Big Theta notation (see Definition \ref{prelimsdef_bigtheta}). In fact it was shown by Carpenter, Varga, and Waldvogel in \cite{cvw:expasympi} that
\[
	\liminf_{n \to \infty} \sqrt{n} \cdot \max_{z \in Z_n} \min_{s \in S} |s-z| \geq \re w_1 + \im w_1 \approx 0.636657,
\]
where $w_1 \approx -1.35481 + 1.99147i$\newnot{symbol:real}\newnot{symbol:imag} is the zero of the complementary error function $\erfc$ with smallest modulus in the upper half-plane. (For information about $\erfc$ and its zeros we refer the reader to \cite{fettis:erfczeros}.)\newnot{symbol:erfc}

A major study of these types of detailed asymptotics was carried out by Edrei, Saff, and Varga, who published a monograph on the topic in 1983 \cite{esv:sections}. In the monograph the authors studied the partial sums of the Mittag-Leffler function $E_{1/\lambda}$, which can be considered a generalization of the exponential function $e^z$ and is defined for $z \in \C$ and $\lambda > 1$ by
\[
	E_{1/\lambda}(z) = \sum_{k=0}^{\infty} \frac{z^k}{\Gamma(k/\lambda + 1)}.
\]
When $\lambda = 1$ the usual exponential function is recovered. When $\lambda > 1$, however, the Mittag-Leffler function is an entire function of order $\lambda$. (See Section \ref{prelimssec_entirefunctions} for the definition of the order of an entire function.)

Let
\[
	p_n[E](z) = \sum_{k=0}^{n} \frac{z^k}{\Gamma(k/\lambda + 1)}.
\]
denote the $n^\th$ partial sum of $E_{1/\lambda}$. Edrei, Saff, and Varga found that the zeros of $p_n[E]$ which do not converge to zeros of $E_{1/\lambda}$ grow at a rate comparable to
\[
	r_n := e^{1/(2n)} \left(\frac{n}{\lambda}\right)^{1/\lambda},
\]
and, more precisely, that the corresponding zeros of the scaled partial sums $p_n[E](r_n z)$ converge to the curve
\begin{align*}
S_E &= \left\{ z \in \C : \left|z^\lambda \exp\!\left(1-z^\lambda\right)\right| = 1, \text{ } |z| \leq 1, \text{ and } |{\arg z}| \leq \frac{\pi}{2\lambda} \right\} \\
	&\qquad \cup \left\{ z \in \C : |z| = e^{-1/\lambda} \text{ and } |{\arg z}| \geq \frac{\pi}{2\lambda} \right\}.
\end{align*}
Further, the authors found that the zeros of the scaled partial sums which approach points $\xi \in S_E$ with $\xi \neq 1$ and $|{\arg \xi}| \neq \pi/(2\lambda)$ do so at a rate of $\Theta(\log n/n)$\newnot{symbol:log} and are separated from each other by a distance of $\Theta(n^{-1})$, and those which approach the point $\xi = 1$ do so at a rate of $\Theta(n^{-1/2})$ and are separated by a distance of $\Theta(n^{-1/2})$. So, not only do the zeros approach the smooth arcs of the limit curve more quickly than they approach the corner at $z=1$, but they are also clustered more closely together near the arcs than they are near the corner.

\begin{figure}[!htb]
	\centering
	\includegraphics[width=0.9\textwidth]{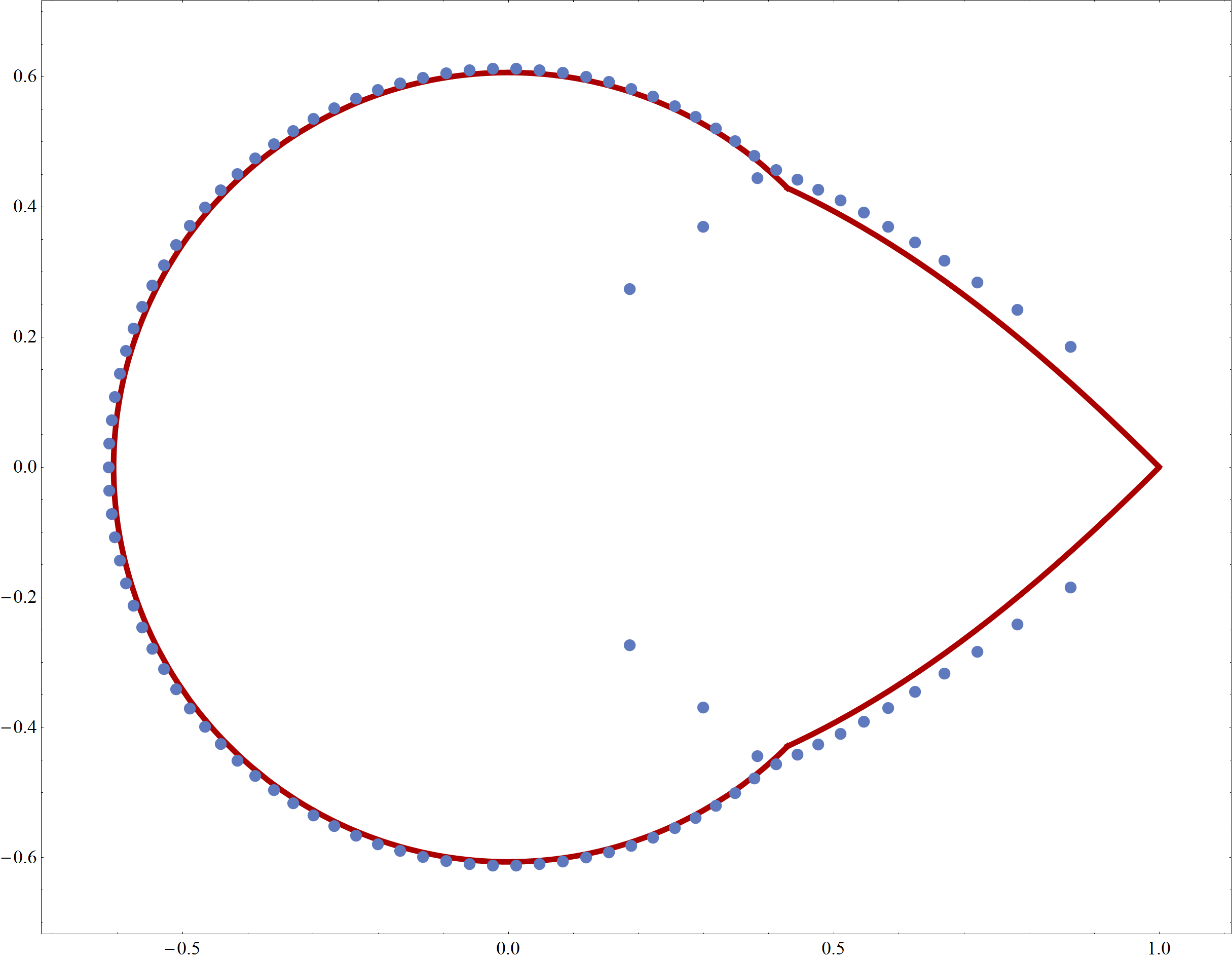}
	\caption[The zeros of the \textbf{scaled} partial sums {$p_{105}[E](r_{105} z)$} with {$\lambda = 2$}, shown with the curve {$S_E$}.]{The zeros of the \textbf{scaled} partial sums {$p_{105}[E](r_{105} z)$} with {$\lambda = 2$}, shown with the curve {$S_E$} (solid red).}
	\label{introfig_mittagscaled}
\end{figure}

Even though it was assumed that $\lambda > 1$, these facts are still true for the zeros of the partial sums of the exponential function ($\lambda = 1$): the zeros of $p_n[\exp](nz)$ which approach points $\xi \in S$ with $\xi \neq 1$ do so at a rate of $\Theta(\log n/n)$ and, as we indicated above, the zeros which approach $\xi = 1$ do so at a rate of $\Theta(n^{-1/2})$. See, for example, \cite{cvw:expasympi,vc:expasympii} for detailed discussions.

We will go into much more detail about the results of Edrei, Saff, and Varga in Section \ref{introsec_width}. The content of their monograph \cite{esv:sections} inspired much of the work that appears in this thesis, and the Mittag-Leffler function $E_{1/\lambda}$ is essentially the archetype of the functions we will consider.

Other functions whose partial sums have been studied include the sine and cosine functions in Szeg\H{o}'s original paper \cite{szego:exp} as well as more recently by Kappert \cite{kappert:sincos} and by Varga and Carpenter \cite{vc:sincosasympi,vc:sincosasympii,vc:sincosasympiii}, the confluent hypergeometric functions ${}_1F_1(1,b,z)$ by Norfolk \cite{norfolk:1f1}, finite sums of exponentials by Bleher and Mallison \cite{mallison:expsums}, and exponential integrals of the form
\[
	f(z) = \int_a^b g(t) e^{zt}\,dt
\]
by both Norfolk \cite{norfolk:transforms} and the current author \cite{vargas:limitcurves}.

A general study of these types of results was undertaken by Rosenbloom in his 1944 PhD thesis \cite{rosen:thesis}, later summarized in \cite{rosen:distrib}. One basic yet fundamental contribution of his is that, if
\[
	f(z) := \sum_{k=0}^{\infty} a_k z^k
\]
is an entire function of positive, finite order, then the zeros of
\[
	p_n(z) := \sum_{k=0}^{n} a_k z^k
\]
which do not converge to zeros of $f$ grow at a rate of approximately $|a_n|^{-1/n}$. Before giving the precise statement of this result let's take a moment to build some intuition for it.

First, if $a_0,a_n \neq 0$ then the product of the moduli of the zeros of $p_n$ is equal to $|a_0/a_n|$ in absolute value, and so the geometric mean of their moduli satisfies
\[
	\left| \frac{a_0}{a_n} \right|^{1/n} \sim |a_n|^{-1/n}
\]
as $n \to \infty$. If we were to scale the zeros by this factor then the new geometric mean of their moduli would be approximately equal to $1$, and we might expect then that the scaled zeros would be bounded. This interpretation is due to Dupuy \cite{dupuy:partialsumsanswer}.

We can also draw a comparison with the case when the power series has a finite radius of convergence $R$. We mentioned in the previous section that the the zeros in this case cluster on the circle of convergence for the power series. There is a subsequence $(n_k)$ such that
\[
	\lim_{k \to \infty} |a_{n_k}|^{-1/n_k} = R,
\]
so it follows that the zeros of the scaled partial sums $p_{n_k}(|a_{n_k}|^{-1/n_k} z)$ cluster on the unit circle. When, on the other hand, the power series has infinite radius of convergence, Rosenbloom's result below shows that this scaling indeed still essentially fixes the moduli of the zeros of the partial sums at $1$. From this perspective Rosenbloom's results are a direct generalization of the Jentzsch-Szeg\H{o} theorems to entire power series with positive, finite order.

Let $\#_n^\angle(\theta_1,\theta_2)$\newnot{symbol:numsec} denote the number of zeros of the partial sum $p_n$ in the sector $\theta_1 \leq \arg z \leq \theta_2$ for some appropriate determination of $\arg z$ and let $\#_n^\circ(r)$\newnot{symbol:numdisk} denote the number of zeros of $p_n$ in the disk $|z| \leq r$. We say that a sequence of partial sums $(p_{n_k})$ has a positive fraction of zeros in any sector with vertex at the origin if
\[
	\liminf_{k \to \infty} \frac{\#_{n_k}^\angle(\theta_1,\theta_2)}{n_k} > 0
\]
for any $\theta_1 < \theta_2$.

\begin{theorem}[Rosenbloom]
\label{introthm_rosen1}
Let $f$ be an entire function of positive, finite order $\lambda$, let $p_n$ denote the $n^\th$ partial sum of the Maclaurin series for $f$, and let $r_n = |a_n|^{-1/n}$. Then there is a subsequence $(n_k)$ such that the sequence of partial sums $p_{n_k}$ has a positive fraction of zeros in any sector with vertex at the origin and, for every $\epsilon > 0$, the number of zeros of $p_{n_k}$ satisfying
\[
	|z| \geq \left(e^{1/\lambda} + \epsilon\right) r_{n_k}
\]
is bounded and all zeros lie in the disk
\[
	|z| \leq \left(2e^{1/\lambda} + \epsilon\right) r_{n_k}
\]
for $k$ large enough. Further, for $0 \leq t < 1$ and $\epsilon > 0$ we have
\[
	\liminf_{k \to \infty} \frac{\#_{n_k}^{\circ}\left((e^{1/\lambda} + \epsilon)r_{n_k}\right) - \#_{n_k}^{\circ}(t r_{n_k})}{n_k} \geq 1 - t^{\lambda} > 0.
\]
\end{theorem}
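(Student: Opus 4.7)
The plan is to combine the Hadamard--Valiron formula relating the order to coefficient decay with Jensen's formula (for counting) and Rouch\'e's theorem (for outer localization). The formula
\[
  \lambda = \limsup_{n\to\infty}\frac{n\log n}{\log(1/|a_n|)}
\]
gives, on the one hand, a uniform upper estimate on $|a_n|$, and hence a lower estimate $r_n \geq n^{1/\lambda - o(1)}$ for every $n$, and on the other hand produces a subsequence $(n_k)$ along which the bound is essentially attained, so that $r_{n_k}$ matches the extremal scale. I would fix this subsequence at the start and use it for all four assertions in the statement.

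For the outer localization $|z| \leq (2e^{1/\lambda}+\epsilon)r_{n_k}$, I would factor
\[
  p_{n_k}(z) = a_{n_k} z^{n_k}\left(1 + \sum_{j=1}^{n_k}\frac{a_{n_k-j}}{a_{n_k}}z^{-j}\right)
\]
and use the coefficient estimate, amplified by the extremality of $|a_{n_k}|$, to show that the parenthesized expression has modulus greater than $1/2$ on the circle $|z| = (2e^{1/\lambda}+\epsilon)r_{n_k}$. Rouch\'e's theorem, comparing $p_{n_k}$ with the monomial $a_{n_k}z^{n_k}$, then traps all $n_k$ zeros strictly inside. The threshold $e^{1/\lambda}$ appears as the precise radius at which the size of the leading term $|a_{n_k}|R^{n_k}$ starts to exceed the maximum modulus of $f$ at $R$ along the subsequence, and the factor of $2$ provides the slack needed to dominate the contributions of the lower-order terms.

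The radial counting estimate uses Jensen's formula
\[
  \int_0^R\frac{\#_{n_k}^\circ(s)}{s}\,ds = \frac{1}{2\pi}\int_0^{2\pi}\log|p_{n_k}(Re^{i\theta})|\,d\theta - \log|a_0|
\]
together with the well-known lower bound
\[
  \frac{1}{2\pi}\int_0^{2\pi}\log|p_{n_k}(Re^{i\theta})|\,d\theta \geq \log|a_{n_k}|R^{n_k}.
\]
At $R_2 = (e^{1/\lambda}+\epsilon)r_{n_k}$ this is at least $n_k\log(e^{1/\lambda}+\epsilon)$, which, compared against the trivial upper bound by $\log M_f(tr_{n_k}) + o(n_k)$ at $R_1 = tr_{n_k}$ (controlled by the order of $f$ and the extremality of the subsequence), produces after dividing by $\log(R_2/R_1)$ the asserted lower bound $(1-t^\lambda)n_k(1+o(1))$. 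The boundedness of the number of zeros with $|z| \geq (e^{1/\lambda}+\epsilon)r_{n_k}$ would then follow by running the same argument on the thin annulus up to $(2e^{1/\lambda}+\epsilon)r_{n_k}$, where the leading-term estimate is already so tight that only $O(1)$ zeros can be accommodated.

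The hardest part, I expect, is the positive-fraction-in-every-sector assertion. My plan would be to apply Carleman's formula, the sectorial analogue of Jensen's, to each wedge $\theta_1 \leq \arg z \leq \theta_2$: this yields a formula relating the number of zeros in the wedge to boundary integrals of $\log|p_{n_k}|$ over the two bounding rays and the bounding arc. The lower bound on $\log|p_{n_k}|$ along the arc, inherited from the leading-term domination used above, would force the count of zeros in the wedge to be at least a positive fraction of $n_k$, \emph{provided} the ray integrals do not dominate the arc contribution. Controlling those ray integrals --- which can fluctuate wildly since $\log|p_{n_k}|$ can plunge near isolated zeros sitting close to the rays --- is the main technical difficulty. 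The standard remedy is to average the estimate over a small range of angles and, if necessary, pass to yet a further subsequence of $(n_k)$ so that the averaged quantities stabilize.
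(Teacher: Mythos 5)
First, a point of orientation: the thesis does not actually prove this theorem. It is imported from Rosenbloom \cite{rosen:thesis,rosen:distrib}, and the sentence immediately following the statement points to Norfolk \cite{norfolk:widthconj} for a constructive proof, precisely because the construction of the subsequence $(n_k)$ is the substantive content of the result. That is also where your sketch is thinnest. Taking $(n_k)$ to be a subsequence along which the $\limsup$ in the coefficient formula for the order is ``essentially attained'' is not enough: every quantitative constant in the theorem ($e^{1/\lambda}$, $2e^{1/\lambda}$, $1-t^\lambda$) comes from the stronger extremality property that $|a_m|\rho^m \leq |a_{n_k}|\rho^{n_k}$ for all $m \leq n_k$ at the radius $\rho \approx e^{1/\lambda}r_{n_k}$, i.e.\ that $n_k$ is essentially a central index of the maximal term of the series. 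Your Rouch\'e step and your bound $\log M_{p_{n_k}}(R) \leq \log M_f(R) + o(n_k)$ both use this silently; without it the series $\sum_j |a_{n_k-j}/a_{n_k}|R^{-j}$ need not be small on $|z|=(2e^{1/\lambda}+\epsilon)r_{n_k}$, and for a general function of order $\lambda$ (no type hypothesis) the estimate $\log M_f(cr_{n_k}) \leq (cr_{n_k})^{\lambda+o(1)}$ carries an $n_k^{o(1)}$ in the exponent that destroys all the constants.

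Second, two steps fail as written. (i) Applying Jensen between $R_1 = tr_{n_k}$ and $R_2=(e^{1/\lambda}+\epsilon)r_{n_k}$ and dividing by $\log(R_2/R_1)$ gives only $\#_{n_k}^{\circ}(tr_{n_k}) \lesssim n_k/(1-\lambda\log t)$, which exceeds $t^\lambda n_k$ for every $t<1$; the sharp constant requires running Jensen on an annulus that scales with $t$, namely from $tr_{n_k}$ to $\sigma\, tr_{n_k}$ and optimizing $\sigma^\lambda/\log\sigma$ at $\sigma=e^{1/\lambda}$, which is exactly what produces $t^\lambda$. One then gets the displayed $\liminf$ by combining $\#_{n_k}^{\circ}((e^{1/\lambda}+\epsilon)r_{n_k}) = n_k - O(1)$ with this separate upper bound on $\#_{n_k}^{\circ}(tr_{n_k})$, not from a single division. (ii) The positive-fraction-in-every-sector claim is the genuinely hard part, and your Carleman plan is left with the ray integrals uncontrolled by your own admission; averaging over angles does not by itself repair this. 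The standard way to close it is an Erd\H{o}s--Tur\'an type angular discrepancy bound applied to $p_{n_k}(r_{n_k}z)/(a_{n_k}r_{n_k}^{n_k})$, whose hypotheses are again supplied by the extremality of $(n_k)$. As it stands the proposal is a reasonable inventory of the right tools but not a proof.
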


The means by which such a sequence of indices $(n_k)$ can be constructed was given by Norfolk in \cite{norfolk:widthconj}.  In doing so, Norfolk furnished a constructive proof of the above result.

Not only did Rosenbloom obtain the rate at which the zeros of $p_n$ grow, but he also showed that the zeros of the scaled partial sums $p_n(|a_n|^{-1/n} z)$ will indeed converge to some kind of limit curve, and, further, that the shape of this curve determines the asymptotic density of the zeros along any segment of it.

\begin{theorem}[Rosenbloom]
\label{introthm_rosen2}
Let $f$, $\lambda$, $p_n$, $r_n$, and $(n_k)$ be as in Theorem \ref{introthm_rosen1} and suppose that the following conditions hold:
\begin{enumerate}[label=(\arabic*)]
\item the quantity $f(r_{n_k} z)^{1/n_k}$ converges uniformly to a single-valued analytic function $g$ in some subdomain $X$ of the disk $|z| \leq e^{1/\lambda}$;
\item $w = g(z)/z$ maps $X$ univalently onto a domain $X_1$;
\item no limit function of the sequence
\[
	T_{n_k}(z) = \frac{f(r_{n_k} z) - p_{n_k}(r_{n_k} z)}{z^{n_k}}
\]
is identically zero in $X$; and
\item $T_{n_k}(z) \neq 0$ in $X$ for $k$ large enough.
\end{enumerate}
Then the only limit points of the zeros of $p_{n_k}(r_{n_k} z)$ in $X$ are the points on the curve $|g(z)/z| = 1$, and their images in $X_1$ under the mapping $w = g(z)/z$ are equidistributed about the unit circle $|w| = 1$; that is, the number of zeros of $p_{n_k}(r_{n_k} z)$ which accumulate on any arc in $X$ whose image under $w=g(z)/z$ has length $\alpha$ is asymptotically $n_k \alpha/2\pi$.
\end{theorem}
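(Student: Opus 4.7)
The plan is to factor $p_{n_k}(r_{n_k}z)$ so that locating its zeros reduces to solving an equation of the form $H_{n_k}(z)^{n_k}=1$, and then deduce both conclusions from uniform convergence of the bracketed factor together with the univalence of $\phi(z):=g(z)/z$. Using the definition of $T_{n_k}$, I would write
\[
p_{n_k}(r_{n_k}z) \;=\; f(r_{n_k}z) - z^{n_k}T_{n_k}(z) \;=\; z^{n_k}T_{n_k}(z)\bigl[H_{n_k}(z)^{n_k} - 1\bigr],
\]
where $H_{n_k}(z)^{n_k} := f(r_{n_k}z)/(z^{n_k}T_{n_k}(z))$. This factorization is valid on $X$ because $T_{n_k}$ is non-vanishing there by condition (4), and one checks from the series expansion that $T_{n_k}$ has a zero at the origin, so necessarily $0\notin X$. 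The zeros of $p_{n_k}(r_{n_k}z)$ in $X$ therefore coincide with the solutions of $H_{n_k}^{n_k}=1$.

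Next I would prove that $|H_{n_k}(z)|\to |\phi(z)|$ locally uniformly on $X$. Condition (1) already yields $|f(r_{n_k}z)|^{1/n_k}\to |g(z)|$, so the work reduces to showing $\tfrac{1}{n_k}\log|T_{n_k}|\to 0$ uniformly on compact subsets of $X$. By (4), $\log|T_{n_k}|$ is harmonic on $X$; by (3) no subsequential limit of $T_{n_k}$ vanishes identically, and a Vitali--Harnack normal-families argument prevents $\tfrac{1}{n_k}\log|T_{n_k}|$ from diverging to $-\infty$ on any compact set. The matching upper bound is obtained from the defining expression $T_{n_k}(z)=(f(r_{n_k}z)-p_{n_k}(r_{n_k}z))/z^{n_k}$ together with the Cauchy coefficient estimates $|a_n|^{-1/n}\sim (n/\lambda)^{1/\lambda}$ available for entire functions of order $\lambda$. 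I expect this step to be the main obstacle, since conditions (3) and (4) are purely qualitative and must be converted, via potential theory and the growth hypotheses on $f$, into quantitative uniform convergence.

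With this convergence in hand, the first conclusion is immediate: on any compact subset of $X\cap\{|\phi|>1\}$ we have $|H_{n_k}|^{n_k}\to\infty$ so $H_{n_k}^{n_k}-1$ has no zeros for large $k$, and symmetrically on $X\cap\{|\phi|<1\}$ we have $|H_{n_k}|^{n_k}\to 0$; hence the only limit points of zeros lie on the curve $\{|\phi(z)|=1\}$. For the equidistribution, I would apply the argument principle to $H_{n_k}^{n_k}-1$ on a thin region straddling a subarc $\gamma\subset\{|\phi|=1\}$ whose image under $\phi$ has length $\alpha$. Using the univalent change of variables $w=\phi(z)$ guaranteed by condition (2), the count of zeros of $p_{n_k}(r_{n_k}z)$ in this region is transported to the $w$-plane, where it becomes the count of solutions of $w^{n_k}=c_{n_k}(w)$ for a function $c_{n_k}\to 1$. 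For large $k$ this equals the number of $n_k$-th roots of unity lying on the image arc on $|w|=1$, namely $n_k\alpha/(2\pi)+O(1)$, which gives the stated asymptotic density.
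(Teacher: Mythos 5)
First, a point of reference: the thesis states this result as background (it is quoted from Rosenbloom's 1944 thesis) and gives no proof of it, so there is no in-paper argument to compare against; what follows is an assessment of your proposal on its own terms. Your architecture is the classical one and is sound through the first conclusion: the factorization $p_{n_k}(r_{n_k}z)=z^{n_k}T_{n_k}(z)\bigl[H_{n_k}(z)^{n_k}-1\bigr]$ is valid (and your observation that $T_{n_k}(0)=0$ forces $0\notin X$ is correct), and once one knows $n_k^{-1}\log|T_{n_k}|\to 0$ locally uniformly, the regions $|g(z)/z|\ne 1$ are cleared out exactly as you say. You are also right that converting (3)--(4) into $n_k^{-1}\log|T_{n_k}|\to 0$ is the main obstacle, but the tool you name for the upper bound is the wrong one: $|a_n|^{-1/n}\sim(n/\lambda)^{1/\lambda}$ is \emph{not} a consequence of $f$ having order $\lambda$ (it holds only for functions of perfectly regular growth). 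The local uniform boundedness of $T_{n_k}(z)=\sum_{j>n_k}a_jr_{n_k}^jz^{j-n_k}$ on $|z|\le e^{1/\lambda}$ --- which you need both for the upper bound and to make the normal-families/Hurwitz argument legitimate --- comes from the specific construction of the subsequence $(n_k)$ in Theorem \ref{introthm_rosen1} (the maximal-term, or ``principal index,'' property of $n_k$), and must be imported from there.

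The more serious defect is in the counting step. After the change of variables you do \emph{not} obtain $w^{n_k}=c_{n_k}(w)$ with $c_{n_k}\to 1$. Writing $\phi_{n_k}(z):=f(r_{n_k}z)^{1/n_k}/z=\phi(z)\,(1+\epsilon_{n_k}(z))$ with $\epsilon_{n_k}\to 0$ uniformly, the zero equation becomes $w^{n_k}=T_{n_k}/(1+\epsilon_{n_k})^{n_k}=:c_{n_k}$, and $(1+\epsilon_{n_k})^{n_k}$ need not converge to $1$ or to anything else; all that survives is that $c_{n_k}$ is non-vanishing with $|c_{n_k}|^{1/n_k}\to 1$. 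Consequently the comparison with the $n_k$-th roots of unity, and in particular your $O(1)$ error term, are unjustified as stated. The repair is to run the argument principle directly on $\phi_{n_k}^{n_k}-T_{n_k}$ over the curvilinear rectangle bounded by the level curves $|\phi|=1\pm\delta$ and the preimages of two radii: on the outer side $|\phi_{n_k}|^{n_k}\ge(1+\delta/2)^{n_k}$ dominates the bounded $T_{n_k}$, so the argument variation is $n_k\,\Delta\arg\phi_{n_k}+O(1)=n_k(\alpha+O(\delta))+o(n_k)$; on the inner side $T_{n_k}$ dominates and contributes $\Delta\arg T_{n_k}=O(1)$ (here the convergence of $T_{n_k}$ to a non-vanishing limit is used a second time); the lateral sides contribute $O(n_k\delta)$. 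Letting $\delta\to 0$ gives $n_k\alpha/(2\pi)+o(n_k)$, which is the asserted asymptotic density. With these two repairs your outline becomes a correct proof.
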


These two remarkable theorems encompass what is essentially the (asymptotic) first-order theory of the zeros of the partial sums of power series for entire functions of positive, finite order: this is what we need to scale the zeros by to make them converge and this is what they converge to.

\section{A Parabolic Zero-Free Region for the Partial Sums of the Exponential Function}
\label{introsec_zerofree}

In contrast to Rosenbloom's results for entire functions of positive, finite order, Carlson showed in 1948 \cite{carlson:entiresector} that, supposing $f$ is a function analytic near the origin and $p_n$ is the $n^\th$ partial sum of its Maclaurin series, if there is a sector with vertex at the origin in which $p_n$ has $o(n)$ zeros as $n \to \infty$ then $f$ is an entire function of order $0$. (Here $o(\cdots)$ is Little O notation, see Definition \ref{prelimsdef_littleo}.) Because the exponential function is entire of order $1$, it follows that there cannot be any such zero-free sector for its partial sums.

An insight into the next part of the story appears in Szeg\H{o}'s collected papers \cite{szego:collected1} just after its reproduction of \cite{szego:exp}:

\begin{displayquote}
Varga was interested in the location of the zeros of the partial sums of the power series of $e^z$ because of applications to the analysis of stability for some numerical methods for solving systems of ordinary differential equations. Fast computing machines were then available and he asked Iverson to compute the zeros for $n$ up to 20.
\end{displayquote}

Possibly inspired by Iverson's findings, Varga showed in 1953 \cite{varga:strips} that the partial sums of $e^z$ have no zeros in the semi-infinite strip $|{\im z}| < \sqrt{6}$, $\re z > 0$.

\begin{figure}[!htb]
	\centering
	\includegraphics[width=0.7\textwidth]{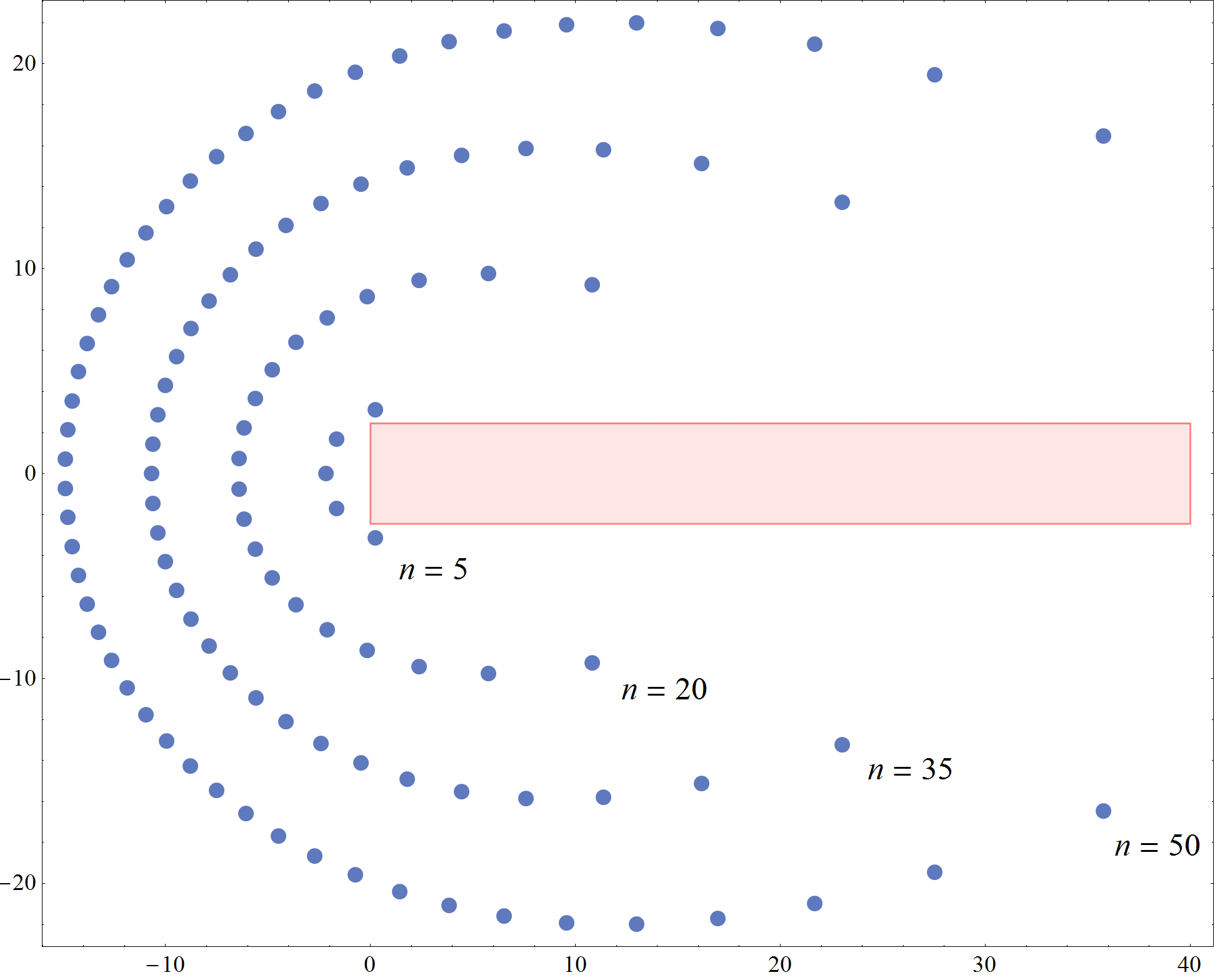}
	\caption{The zeros of the partial sums $p_n[\exp](z)$ for $n=5,20,35,50$, shown with the zero-free strip $|y| \leq \sqrt{6}$, $x > 0$ found by Varga.}
	\label{introfig_vargastrip}
\end{figure}

\noindent The next year Iverson published a paper \cite{iverson:zeros} containing his numerical findings in which he remarked that there seemed to be a large zero-free region surrounding the positive real axis---larger than the zero-free strip found by Varga---which was not yet described by the available literature.  Some twenty years later this zero-free region was investigated by Newman and Rivlin in \cite{newriv:expzeros,newriv:expzeroscorrect} and in a more general setting by Saff and Varga in \cite{sv:zerofree}, and in the latter paper it was shown that no partial sum has a zero in the parabolic region
\begin{equation}
\label{introeq_expsvparab}
	\{x+iy : y^2 \leq 4(x+1) \text{ and } x > -1\}.
\end{equation}

\begin{figure}[!htb]
	\centering
	\includegraphics[width=0.7\textwidth]{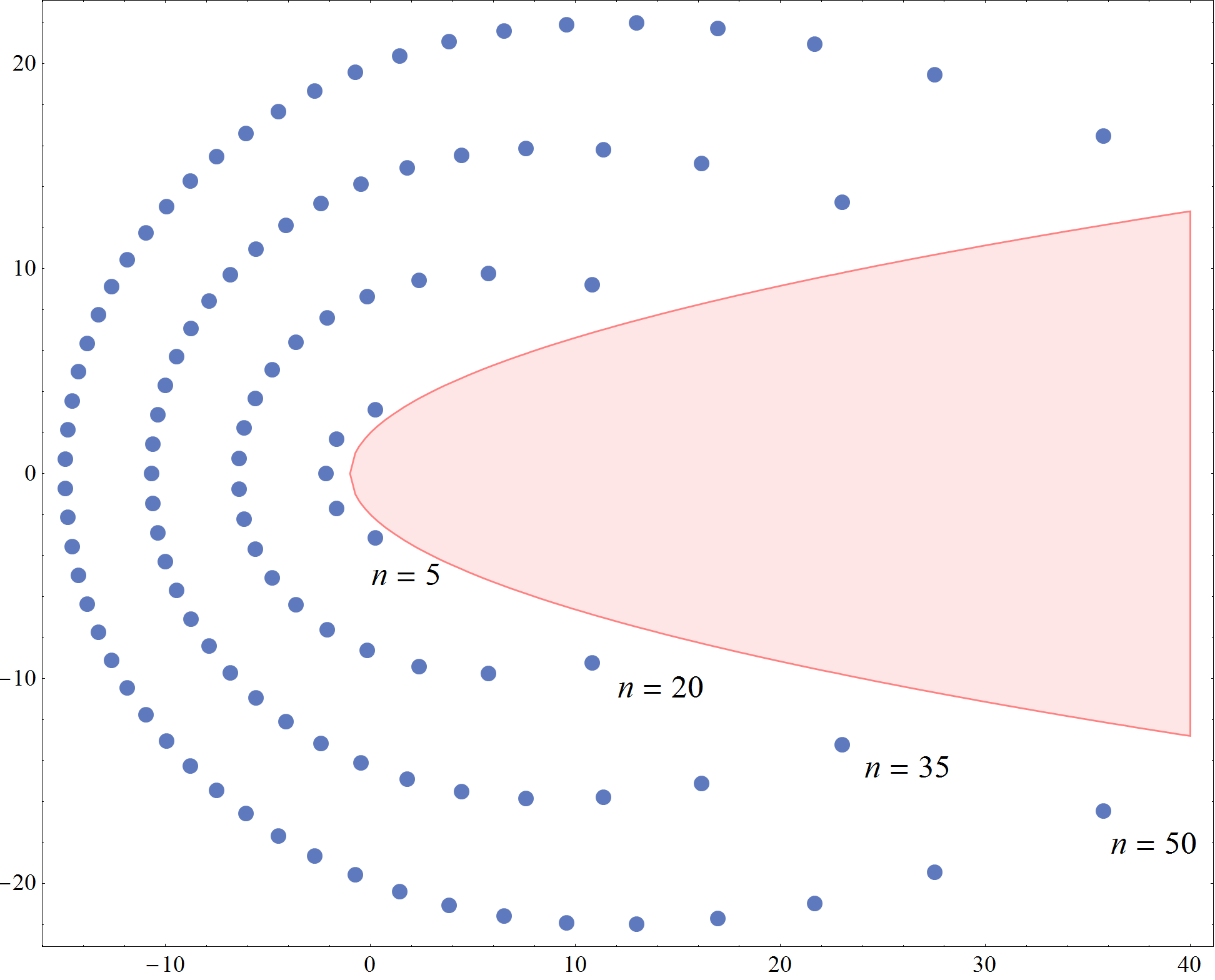}
	\caption{The zeros of the partial sums {$p_n[\exp](z)$} for $n=5,20,35,50$, shown with the zero-free parabola $y^2 \leq 4(x+1)$, $x > -1$ found by Saff and Varga.}
	\label{introfig_saffvargaparab}
\end{figure}

\noindent Conversely, in their first paper \cite{newriv:expzeros} Newman and Rivlin calculated a certain scaling limit for the partial sums $p_n[\exp]$ in which the argument traces out a parabolic arc in the plane.

\begin{theorem}[Newman and Rivlin]
\label{introthm_newriv}
\[
	\lim_{n\to\infty} \frac{p_n[\exp](n+w\sqrt{n})}{\exp(n+w\sqrt{n})} = \frac{1}{2} \erfc\!\left(\frac{w}{\sqrt{2}}\right)
\]
uniformly for $w$ restricted to any compact subset of $\C$.
\end{theorem}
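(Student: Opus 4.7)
The plan is to start from the integral form of the Taylor remainder, $e^z - p_n[\exp](z) = \frac{1}{n!}\int_0^z (z-s)^n e^s\, ds$, which by Cauchy's theorem holds along any path from $0$ to $z$. The substitution $u = z-s$ converts this into $e^z - p_n[\exp](z) = \frac{e^z}{n!}\int_0^z u^n e^{-u}\, du$, and combining it with $n! = \int_0^\infty u^n e^{-u}\, du$ yields the compact identity
\[
\frac{p_n[\exp](z)}{e^z} = \frac{1}{n!}\int_z^\infty u^n e^{-u}\, du,
\]
where the tail contour can be any path from $z$ to $\infty$ along which $\re u \to +\infty$ (the integrand is entire, so the exact path is immaterial).

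Next I would take $z = n + w\sqrt n$ and parametrize the contour by $u = n + v\sqrt n$ with $v = w + t$, $t \in [0,\infty)$---a horizontal ray in the $u$-plane at height $\sqrt n\cdot \im w$. This turns the identity into
\[
\frac{p_n[\exp](n+w\sqrt n)}{e^{n+w\sqrt n}} = \frac{\sqrt n\, n^n e^{-n}}{n!}\int_w^\infty \exp\!\bigl(\phi_n(v)\bigr)\, dv,
\]
where $\phi_n(v) := n\log(1+v/\sqrt n) - v\sqrt n$ and the $v$-integral is taken along the horizontal line $\im v = \im w$. Stirling's formula makes the prefactor tend to $1/\sqrt{2\pi}$, and the Taylor expansion of the logarithm gives $\phi_n(v) = -v^2/2 + O(v^3/\sqrt n)$, so $e^{\phi_n(v)} \to e^{-v^2/2}$ uniformly on compact sets. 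Once the limit can be passed inside the integral, the right-hand side becomes $\frac{1}{\sqrt{2\pi}}\int_w^\infty e^{-v^2/2}\, dv$, which is precisely $\frac{1}{2}\erfc(w/\sqrt 2)$.

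The main obstacle is justifying that interchange, which demands an $n$-uniform dominating bound for $|e^{\phi_n(v)}|$ on the entire ray, and the Taylor expansion of the logarithm is only useful while $|v| = o(\sqrt n)$. I would split the contour at some fixed radius $|v-w| = M$: on the bounded piece the pointwise convergence combined with any crude bound suffices, while for the tail one works directly from $\re\phi_n(v) = \tfrac{n}{2}\log|1+v/\sqrt n|^2 - \sqrt n\,\re v$, using $\log(1+t) \leq t - t^2/4$ for $t \in [0,1]$ in the regime $\re v \leq \sqrt n$ to obtain a Gaussian dominator, and the rougher estimate $\log(1+\alpha) - \alpha \leq \log 2 - 1 < 0$ for $\alpha \geq 1$ to handle $\re v \geq \sqrt n$, where the integrand already decays exponentially in $n$. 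Uniformity of the limit in $w$ over compact subsets of $\C$ then follows because every constant appearing (the cutoff $M$, the Gaussian decay rate, and the error terms in Stirling and in the expansion of $\phi_n$) can be chosen uniformly over any bounded parameter set.
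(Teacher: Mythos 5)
Your argument is correct. The identity $p_n[\exp](z)/e^z = \frac{1}{n!}\int_z^\infty u^n e^{-u}\,du$ is the right starting point, the change of variables and Stirling's formula are handled properly, and your plan for dominating the tail is sound: for $\re v \ge 0$ the concavity estimate $\log(1+\alpha) \le \alpha - \alpha^2/4$ on $[0,1]$ (together with $\log(A+B) \le \log A + B/A$ to absorb the bounded $\im v$ contribution) gives an $n$-independent Gaussian majorant, and for $\re v \ge \sqrt{n}$ concavity again gives $n\bigl(\log(1+\alpha)-\alpha\bigr) \le n(\log 2 - 1) - \tfrac{1}{2}\sqrt{n}\,(\re v - \sqrt{n})$, which is both exponentially small in $n$ and integrable along the ray. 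This is, however, a genuinely different route from the one taken in this thesis. The theorem is quoted here from Newman and Rivlin rather than proved, and the closest result proved in the text is its generalization, Theorem \ref{seconeexp_maintheo}, which is established by representing the normalized remainder as a Cauchy integral $F_n(z)$ over an admissible contour, comparing it to a local parametrix $P_n$ built from $h(\zeta) = \frac{1}{2\pi i}\int_{-\infty}^{\infty} e^{-u^2}\,du/(u-\zeta)$, and controlling the difference through a scalar Riemann--Hilbert problem. Your proof is essentially Newman and Rivlin's original one: it leans on the closed-form incomplete-gamma identity, which is special to $e^z$ and does not survive passage to the class of functions $f(z) \sim z^a(\log z)^b\exp(z^\lambda)$ studied in Chapters \ref{chap_limitcurves}--\ref{chap_posfinite}; in exchange it is elementary, self-contained, and yields uniformity on every compact subset of $\C$, whereas the Riemann--Hilbert argument as written only delivers the limit uniformly on compact subsets of $\re w < 0$.
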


The function $\erfc$ in the theorem statement above is known as the complementary error function and is defined by
\[
	\erfc(z) = \frac{2}{\sqrt{\pi}} \int_z^\infty e^{-s^2}\,ds,
\]
where the contour of integration is the horizontal line starting at $s=z$ and extending to the right to $s = z+\infty$. This function has infinitely many zeros, all of which lie in the sectors $\pi/2 < |{\arg z}| < 3\pi/4$ and approach asymptotically the rays $\arg z = \pm 3\pi/4$ (see, e.g., \cite{fettis:erfczeros}).

If $w_0 = s_0 + it_0$ is any zero of $\erfc(w)$ then, by Hurwitz's theorem (Theorem \ref{introthm_hurwitz}), Theorem \ref{introthm_newriv} says that $p_n[\exp](z)$ has a zero $z_0 = x_0 + iy_0$ of the form
\[
	z_0 = n + w_0 \sqrt{2n} + o\!\left(\sqrt{n}\right).
\]
In particular we have $x_0 = n + s_0 \sqrt{2n} + o(\sqrt{n})$ and $y_0 = t_0 \sqrt{2n} + o(\sqrt{n})$, so that
\[
	x_0 = \frac{1}{2}\left(\frac{y_0}{t_0}\right)^2 + O(y_0)
\]
as $n \to \infty$. In other words, for large $n$ the partial sum $p_n[\exp]$ has a zero that lies near the parabola $x = (y/t_0)^2/2$.

\begin{figure}[!htb]
	\centering
	\includegraphics[width=0.9\textwidth]{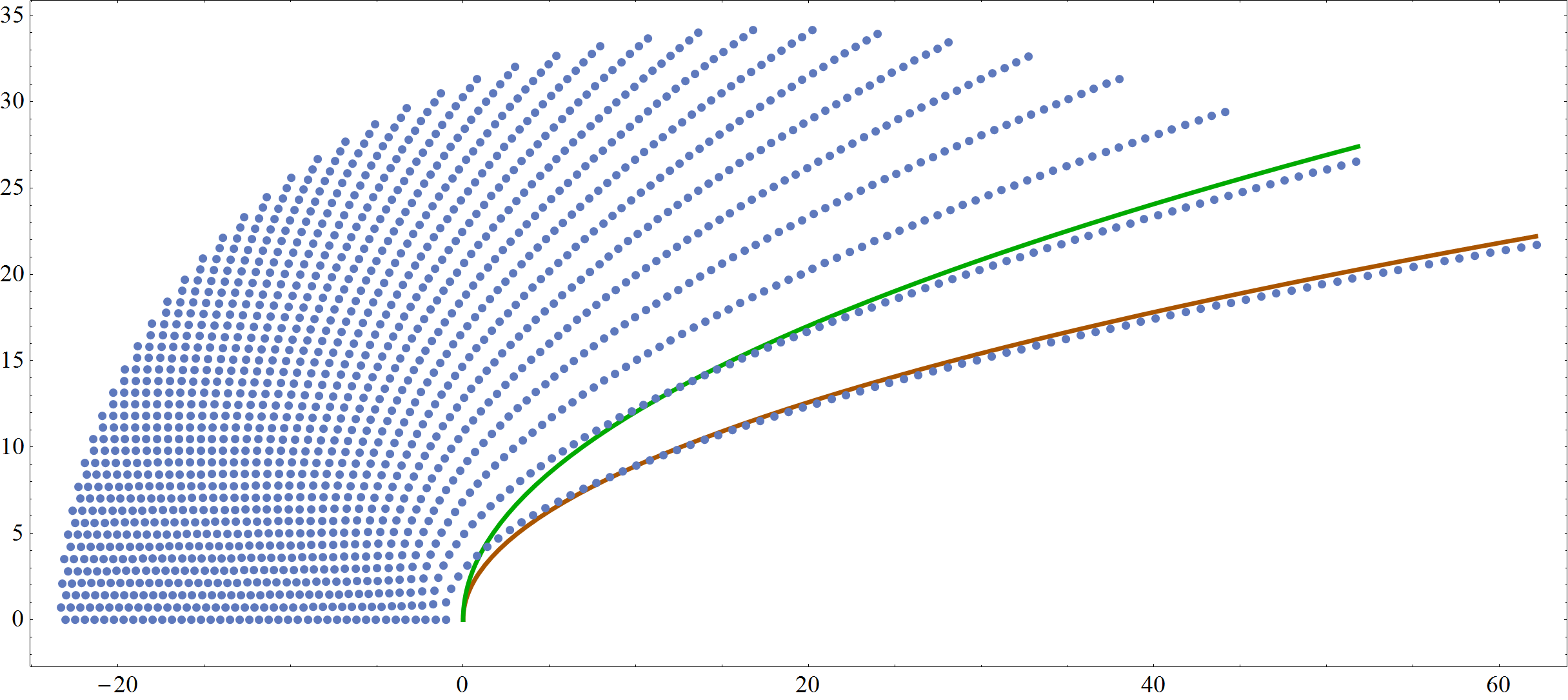}
	\caption[The zeros of the partial sums {$p_n[\exp](z)$} for {$n=1,\ldots,80$} in the upper half plane, shown with the parabolas {$x = (y/t_1)^2$} and {$x = (y/t_2)^2$}, where {$w_1 = s_1 + it_1$} is the smallest zero of {$\erfc$} in the upper half-plane and {$w_2 = s_2 + it_2$} is the next smallest.]{The zeros of the partial sums {$p_n[\exp](z)$} for {$n=1,\ldots,80$} in the upper half plane, shown with the parabolas {$x = (y/t_1)^2$} in orange and {$x = (y/t_2)^2$} in green, where {$w_1 = s_1 + it_1 \approx -1.35481 + 1.99147i$} is the smallest zero of {$\erfc$} in the upper half-plane and {$w_2 = s_2 + it_2 \approx -2.17704 + 2.69115i$} is the next smallest.}
	\label{introfig_newrivparab}
\end{figure}

\noindent The function $\erfc$ has infinitely many zeros, so this analysis allows us to conclude that the number of zeros of $p_n[\exp]$ in any region which is larger than a parabola, say of the form
\[
	|y| \leq Ax^{1/2+\epsilon}, \quad x \geq B,
\]
is unbounded as $n \to \infty$.

Theorem \ref{introthm_newriv} was later studied in considerably more detail in \cite{mallison:expsums}.

\section{The Saff-Varga Width Conjecture and Scaling Limits}
\label{introsec_width}

The exponential function is not the only entire function to have a parabolic zero-free region as in \eqref{introeq_expsvparab}. For a given entire function
\[
	f(z) := \sum_{k=0}^{\infty} a_kz^k,
\]
denote its $n^\th$ partial sum by $p_n(z)$. Let $W$ denote the set of all nondecreasing functions $h\colon [0,\infty) \to (0,\infty)$ such that neither $f$ nor any $p_n$ have zeros in
\[
	\{z = x+iy : x \geq 0 \text{ and } |y| \leq h(x) \}
\]
and define
\[
	H(x) = \sup \{ h(x) : h \in W \}.
\]

\begin{theorem}[Saff, Varga \cite{sv:overconvergence}]
If $f$ is entire of positive order and its power series coefficients $(a_k)$ are positive and satisfy
\[
	\inf_{k > 0} \frac{a_k}{k^2 a_{k+1}} > 0
\]
then
\[
	c\sqrt{x} \leq H(x) < \infty, \qquad x \geq 0
\]
for some positive constant $c$.
\end{theorem}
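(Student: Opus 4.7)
The plan is to treat the two inequalities independently: the lower bound $c\sqrt{x} \leq H(x)$, which is the substantive content, will be proved by constructing an explicit parabolic zero-free region for $f$ and all of its partial sums using the coefficient hypothesis, while the upper bound $H(x) < \infty$ will follow from a soft contradiction argument combining the positive order of $f$ with Rosenbloom's theorem (Theorem \ref{introthm_rosen1}).

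For the lower bound, I would fix $x > 0$, Taylor expand $p_n$ about $x$, and take real parts to obtain
\[
\re p_n(x+iy) \;=\; \sum_{m \geq 0} (-1)^m y^{2m} b_{2m}(x), \qquad b_{2m}(x) := \frac{p_n^{(2m)}(x)}{(2m)!}.
\]
Since the $a_k$ are positive, each $b_{2m}$ is a polynomial in $x$ with nonnegative coefficients, and $b_{2m}(x) > 0$ for $x \geq 0$. The aim is to apply the alternating series test, which requires the monotonicity $y^{2m+2} b_{2m+2}(x) \leq y^{2m} b_{2m}(x)$ for every $m$. Writing $\gamma := \inf_{k > 0} a_k/(k^2 a_{k+1}) > 0$ so that $a_k \geq \gamma k^2 a_{k+1}$ for $k \geq 1$, I would prove the stronger polynomial inequality $c^2 x \cdot b_{2m+2}(x) \leq b_{2m}(x)$ with $c^2 = \gamma$ by comparing the coefficient of $x^\ell$ on each side. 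After simplification this reduces to the requirement
\[
c^2 \;\leq\; \frac{\gamma\,(2m+1)(2m+2)\,(\ell+2m)^2}{\ell\,(\ell+2m+1)} \qquad (\ell \geq 1,\ m \geq 0),
\]
and a direct analysis shows that the infimum of the right-hand side is attained at $(\ell,m) = (1,0)$ and equals exactly $\gamma$; the $\ell = 0$ coefficient is automatic because $c^2 x\, b_{2m+2}(x)$ has no constant term while $b_{2m}(0) = a_{2m} > 0$. The extra factor of $x$ in this comparison is what promotes a coefficient estimate into a parabolic region: for $y^2 \leq c^2 x$ the required monotonicity holds, and the alternating sum is bounded strictly below by $b_0(x) - y^2 b_2(x) > 0$, so $p_n(x+iy) \neq 0$ in $\{x \geq 0,\ |y| \leq c\sqrt{x}\}$. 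The identical calculation applied to the Maclaurin series of $f$ yields zero-freeness of $f$ there as well. Finally, since $f(0) = p_n(0) = a_0 \neq 0$ there is a disk about $0$ in which none of these functions vanish; grafting it onto the parabola produces a valid nondecreasing $h \in W$ with $h(x) \geq c\sqrt{x}$.

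For the upper bound, suppose to the contrary that $H(x_0) = \infty$ at some $x_0 \geq 0$. Then by the monotonicity of every $h \in W$, for arbitrarily large $M$ the strip $\{\re z \geq x_0,\ |\im z| \leq M\}$ must be zero-free for $f$ and all $p_n$, hence the entire half-plane $\{\re z \geq x_0\}$ is. But by Rosenbloom's theorem there is a subsequence $(n_k)$ along which $p_{n_k}$ has a positive fraction $\delta > 0$ of its zeros in any given sector about the positive real axis, while at most a $(t^\lambda + o(1))$-fraction of zeros lie in the disk $|z| \leq t r_{n_k}$, where $r_{n_k} = |a_{n_k}|^{-1/n_k} \to \infty$. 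Choosing $t$ small enough that $t^\lambda < \delta$, the sector must contain a zero of modulus exceeding $t r_{n_k}$, whose real part tends to $+\infty$ and therefore eventually exceeds $x_0$, the desired contradiction. The main obstacle in the whole argument is the coefficient-by-coefficient comparison in the lower bound: the key insight is to compare $c^2 x\, b_{2m+2}$ against $b_{2m}$ (rather than $y^2 b_{2m+2}$ against $b_{2m}$ directly), so that the coefficient decay $a_k \geq \gamma k^2 a_{k+1}$ can be applied termwise without losing the parabolic $\sqrt{x}$ scaling, and it is a pleasant surprise that the resulting estimate is saturated at the smallest indices $(\ell,m) = (1,0)$ with the clean constant $c^2 = \gamma$.
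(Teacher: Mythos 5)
The thesis never proves this theorem---it is quoted verbatim from Saff and Varga \cite{sv:overconvergence}---so there is no internal proof to measure yours against; judged on its own, your argument is correct and essentially complete. The central computation checks out: with $b_{2m}(x)=\sum_{\ell\ge 0}a_{\ell+2m}\binom{\ell+2m}{2m}x^{\ell}$, the coefficient-by-coefficient comparison of $c^{2}x\,b_{2m+2}(x)$ with $b_{2m}(x)$, after invoking $a_{\ell+2m}\ge\gamma(\ell+2m)^{2}a_{\ell+2m+1}$ (legitimate, since $\ell\ge1$ keeps the index in the range $k>0$), reduces to exactly your displayed inequality, and the quantity $(2m+1)(2m+2)(\ell+2m)^{2}/[\ell(\ell+2m+1)]$ really is minimized at $(\ell,m)=(1,0)$ with value $1$: setting $u=\ell+2m\ge\ell$ gives $(\ell+2m)^{2}/[\ell(\ell+2m+1)]\ge u/(u+1)\ge 1/2$ while $(2m+1)(2m+2)\ge 2$. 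The alternating-series bound then yields $\re p_n(x+iy)\ge b_0(x)-y^{2}b_2(x)\ge a_0>0$ on $y^{2}\le\gamma x$, uniformly in $n$, and the same estimate applies to $f$ itself. Two steps deserve more than an assertion. First, the ``grafting'' near the origin needs a zero-free rectangle $[0,\rho]\times[-\delta,\delta]$ that is uniform in $n$; this follows either from Hurwitz's theorem (Theorem \ref{introthm_hurwitz}) on a small disk about $0$ where $f\ne 0$ together with the finitely many remaining indices, or more directly from the crude estimate $\sum_{m\ge1}y^{2m}b_{2m}(x)\le y^{2}f(2)$ valid for $0\le x\le1$, $|y|\le1$, which gives $\re p_n(x+iy)\ge a_0-y^{2}f(2)>0$ for $y^{2}<a_0/f(2)$. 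Second, Rosenbloom's theorem as quoted requires positive \emph{finite} order, whereas the hypothesis only states positive order; you should note that $a_{k+1}\le\gamma^{-1}k^{-2}a_k$ forces $a_k=O\bigl(\gamma^{-k}/((k-1)!)^{2}\bigr)$ and hence order at most $1/2$, so the theorem does apply. (Alternatively, Carlson's result quoted in Section \ref{introsec_zerofree} gives the upper bound with no finiteness issue: a zero-free half-plane $\re z\ge x_0$ would, by Hurwitz on the bounded part, produce a sector with $O(1)=o(n)$ zeros of $p_n$, forcing $f$ to have order $0$.) With those two sentences added, the proof stands.
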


Prompted by the this result and by additional computations, Saff and Varga made the following conjecture \cite{sv:overconvergence,sv:openprobs,esv:sections}.

\begin{conjecturea}
\label{introconj_widthconjecture1}
Consider the ``parabolic region''
\[
	S_0(\tau) = \left\{z = x+iy : |y| \leq K x^{1-\tau/2}, \, x \geq x_0\right\},
\]
where $K$ and $x_0$ are fixed positive constants, and consider also the regions $S_\theta(\tau)$ obtained by rotations of $S_0(\tau)$:
\[
	S_\theta(\tau) = e^{i\theta} S_0(\tau).
\]
Given any entire function $f$ of positive finite order $\lambda > \tau$, denote its $n^\text{th}$ partial sum by $p_n(z)$.  There exists an infinite sequence $M$ of positive integers such that there is no $S_\theta(\tau)$ which is devoid of all zeros of all partial sums $p_m(z)$, $m \in M$.
\end{conjecturea}

Essentially what the conjecture is saying is that \textit{if} the zeros of the partial sums avoid some region, then that region may not be too wide.  In the particular case of an entire function of order $\lambda$ with a zero-free region symmetric about the positive real axis, the conjecture posits that the width of the region (i.e. the range of $y$ values) must be $O(x^{1-\lambda/2+\epsilon})$ as $x \to \infty$ for all $\epsilon > 0$. So for the exponential function, for example, an entire function of order $1$, the zero-free region would not be wider than $O(x^{1/2+\epsilon})$. Compare this with the discussion following Theorem \ref{introthm_newriv} in Section \ref{introsec_zerofree}.

\begin{figure}[!htb]
	\centering
	\includegraphics[width=0.9\textwidth]{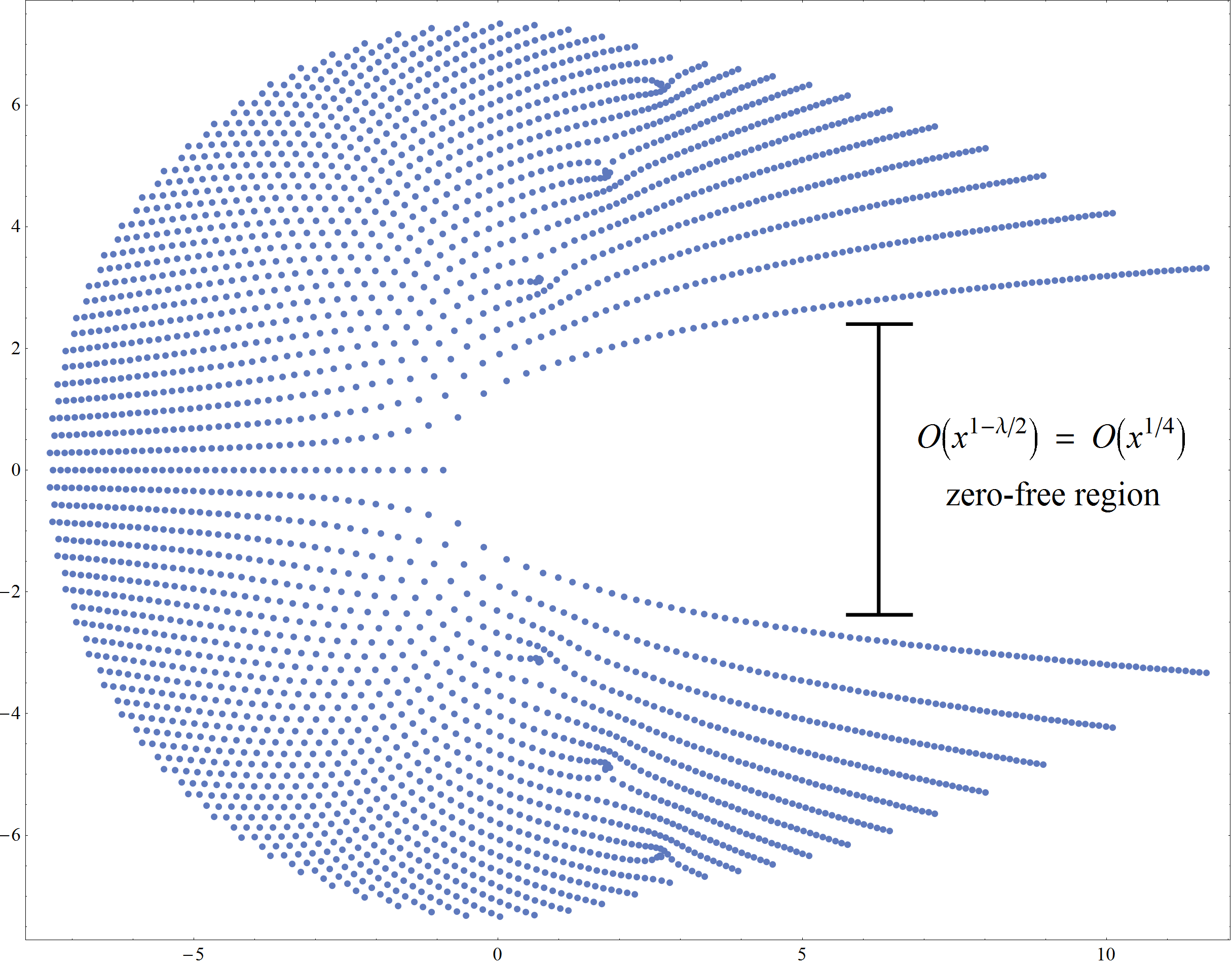}
	\caption[Zeros of the partial sums {$p_n[E](z)$} for {$n=1,\ldots,80$} with \newline {$\lambda = 3/2$}.]{Zeros of the partial sums $p_n[E](z)$ for $n=1,\ldots,80$ with $\lambda = 3/2$. A zero-free region can be seen opening to the right whose width is in agreement with the Saff-Varga Width Conjecture.}
	\label{introfig_mittagzerofree}
\end{figure}

According to the authors, the monograph \cite{esv:sections} arose from an attempt to settle this conjecture. In it they studied the Mittag-Leffler function $E_{1/\lambda}$, defined for $z \in \C$ and $\lambda > 1$ by
\[
	E_{1/\lambda}(z) = \sum_{k=0}^{\infty} \frac{z^k}{\Gamma(k/\lambda + 1)},
\]
and proved the following.

\begin{theorem}[Edrei, Saff, and Varga]
\label{introthm_esvcornerscaling}
Let
\[
	p_n[E](z) = \sum_{k=0}^n \frac{z^k}{\Gamma(k/\lambda + 1)}
\]
and let $r_n = (n/\lambda)^{1/\lambda} e^{1/(2n)}$.  Then
\[
	\lim_{n \to \infty} \frac{p_n\!\left(r_n \left(1+w\sqrt{2/(\lambda n)} \right)\right)}{\left(1+w\sqrt{2/(\lambda n)}\right)^n E_{1/\lambda}(r_n)} = \frac{1}{2} \exp\!\left(w^2\right) \erfc(w)
\]
uniformly for $w$ restricted to any compact subset of $\C$.
\end{theorem}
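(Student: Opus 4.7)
The plan is to decompose the ratio into two factors that admit separate asymptotic analyses. Setting $t_n := w\sqrt{2/(\lambda n)}$ so that $z_n := r_n(1 + t_n)$, and letting $R_n := E_{1/\lambda} - p_n[E]$ denote the tail of the Maclaurin series, we have
$$\frac{p_n[E](z_n)}{(1+t_n)^n E_{1/\lambda}(r_n)} = A_n(w)\bigl(1 - B_n(w)\bigr),$$
where
$$A_n(w) := \frac{E_{1/\lambda}(z_n)}{(1+t_n)^n E_{1/\lambda}(r_n)} \qquad \text{and} \qquad B_n(w) := \frac{R_n(z_n)}{E_{1/\lambda}(z_n)}.$$
I would show that $A_n(w) \to e^{w^2}$ and $B_n(w) \to \tfrac12\erfc(-w)$, both uniformly on compact subsets of $\C$, so that the product tends to $e^{w^2}\bigl(1 - \tfrac12\erfc(-w)\bigr) = \tfrac12 e^{w^2}\erfc(w)$ via the identity $\erfc(w) + \erfc(-w) = 2$.

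For the factor $A_n$, I would appeal to the classical Poincar\'e asymptotic $E_{1/\lambda}(z) = \lambda e^{z^\lambda}\bigl(1 + O(|z|^{-\lambda})\bigr)$, valid uniformly for $|\arg z| \leq \pi/(2\lambda) - \delta$ as $|z| \to \infty$. Since $z_n$ and $r_n$ both lie in such a sector, this reduces $A_n(w)$ to $\exp\bigl(z_n^\lambda - r_n^\lambda - n\log(1+t_n)\bigr) \cdot (1 + o(1))$. Expanding $z_n^\lambda - r_n^\lambda = r_n^\lambda\bigl((1+t_n)^\lambda - 1\bigr)$ and $n\log(1+t_n)$ as power series in $t_n$ up to cubic order, and using $r_n^\lambda = n/\lambda + \tfrac12 + O(1/n)$, the leading $O(\sqrt n)$ terms $w\sqrt{2n/\lambda}$ appearing in both expansions cancel. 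The remaining quadratic terms combine to $(\lambda-1)w^2/\lambda + w^2/\lambda = w^2$, while the cubic and higher remainders are $O(n^{-1/2})$; hence $A_n(w) \to e^{w^2}$.

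For the factor $B_n$, I would use the Hankel-type representation
$$R_n(z) = \frac{1}{2\pi i}\int_H \frac{e^s}{s}\cdot\frac{(z/s^{1/\lambda})^{n+1}}{1 - z/s^{1/\lambda}}\,ds,$$
obtained by substituting Hankel's formula $1/\Gamma(k/\lambda + 1) = (2\pi i)^{-1}\int_H e^s s^{-k/\lambda - 1}\,ds$ into the series defining $R_n$ and summing the resulting geometric series. The non-rational part of the integrand, $\exp\bigl(s - (n+1)\lambda^{-1}\log s\bigr)$, has a saddle at $s_0 := (n+1)/\lambda$ with curvature $\lambda/(n+1)$; deforming $H$ to the steepest-descent contour $s = s_0 + it\sqrt{2s_0}$, this factor becomes a Gaussian $e^{-t^2}$ times a constant prefactor. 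Under this scaling $s^{1/\lambda} \sim r_n\bigl(1 + it\sqrt{2/(\lambda n)}\bigr)$, so the singular factor simplifies to $1/(1 - z_n/s^{1/\lambda}) \sim -\sqrt{\lambda n/2}/(w - it)$. After dividing by $E_{1/\lambda}(z_n) \sim \lambda e^{z_n^\lambda}$ and cancelling the normalizing prefactors via Stirling's formula applied to $e^{s_0}s_0^{-(n+1)/\lambda}$, the limit of $B_n$ reduces to the standard Faddeeva-function evaluation
$$\frac{1}{\pi}\int_{-\infty}^{\infty}\frac{e^{-t^2}}{it - w}\,dt = \tfrac12\erfc(-w).$$

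The principal technical obstacle is the uniform control of the contour manipulations for $B_n$. The pole of the integrand at $s = z_n^\lambda$ sits at distance $O(\sqrt n)$ from the saddle $s_0$---that is, at distance of order $1$ in the rescaled coordinate---so the deformation of $H$ must be routed to pass on the correct side of this moving pole, possibly incurring a residue whose value depends on $\sign(\re w)$. One must also verify that the portion of the Hankel contour away from $s_0$ contributes only an exponentially small amount uniformly in $w$ over compact sets, which is standard Watson-lemma-type work but must be executed with care because the geometry of contour versus pole varies nontrivially with $w$.
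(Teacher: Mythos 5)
Your decomposition and both limits check out, but you should know that this thesis does not actually prove Theorem \ref{introthm_esvcornerscaling}: it is quoted from the Edrei--Saff--Varga monograph, and the thesis's own contribution is the family of generalizations in Chapter \ref{chap_posfinite} (Theorems \ref{seconeexp_maintheo}, \ref{secunbalcauchy_maintheo}), proved by a quite different mechanism. There, the partial sum is encoded as a Cauchy integral $F_n(z)=\frac{1}{2\pi i}\int_\gamma (e^{1/\lambda}s)^{-n}f(r_ns)\,\frac{ds}{s-z}$ over a contour in the \emph{$z$-plane} passing through the saddle $z=1$ of $\varphi(z)=(z^\lambda-1-\lambda\log z)/\lambda$, and the $\erfc$ emerges from matching $F_n$ against the local parametrix $P_n(z)=h(-i\sqrt{n}\,\psi^{-1}(z))$ via a Riemann--Hilbert argument. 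Your route — Hankel representation of $1/\Gamma$, geometric summation of the tail, and steepest descent in the Borel ($s$-) plane with a pole at distance $O(1)$ from the saddle in rescaled coordinates — is essentially the classical ESV strategy. It buys you something the thesis's method as executed does not: the residue of the integrand at $s=z_n^\lambda$ equals $\lambda e^{z_n^\lambda}\sim E_{1/\lambda}(z_n)$, i.e.\ contributes exactly $1$ to $B_n$, which is precisely what glues the two branches of the Faddeeva evaluation into the entire function $\tfrac12\erfc(-w)$ and yields uniformity on compact subsets of all of $\C$; the thesis's corner limits are stated only for $\re w<0$ because $F_n$ changes its relation to $p_{n-1}$ across $\gamma$. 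The trade-off is that the Riemann--Hilbert formulation requires only the asymptotic hypothesis on $f$ and so covers a whole class of functions, whereas your argument leans on the exact Hankel formula for $1/\Gamma(k/\lambda+1)$ and is specific to $E_{1/\lambda}$.

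Two small points to tighten. First, the displayed identity at the end of your $B_n$ analysis is not correct as written: for $\re w<0$ one has $\frac{1}{\pi}\int_{-\infty}^{\infty}\frac{e^{-t^2}}{it-w}\,dt=e^{w^2}\erfc(-w)$, not $\tfrac12\erfc(-w)$; the compensating factor $\tfrac12 e^{-w^2}$ must come out of your prefactor bookkeeping ($z_n^{n+1}s_0^{-(n+1)/\lambda}e^{s_0-z_n^\lambda}\to e^{-w^2}$ together with the $\frac{1}{2\pi}$ from the contour normalization), and it does — but you should display the identity you actually use. Second, when $\re w=0$ the pole sits on the steepest-descent contour itself, so uniformity on compact sets meeting the imaginary $w$-axis requires an indentation or a Plemelj-type argument in addition to the $\sign(\re w)$ residue bookkeeping you already flag; since the limit $\tfrac12\erfc(-w)$ is entire, this is routine, but it is part of the work.
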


Let
\begin{align*}
S_E &= \left\{ z \in \C : \left|z^\lambda \exp\!\left(1-z^\lambda\right)\right| = 1, \text{ } |z| \leq 1, \text{ and } |{\arg z}| \leq \frac{\pi}{2\lambda} \right\} \\
	&\qquad \cup \left\{ z \in \C : |z| = e^{-1/\lambda} \text{ and } |{\arg z}| \geq \frac{\pi}{2\lambda} \right\}.
\end{align*}

\begin{theorem}[Edrei, Saff, and Varga]
\label{introthm_esvcurvescaling}
Let $\xi \in S_E$ with $0 < |\arg \xi| < \pi/(2\lambda)$ be fixed and set
\[
	\tau = |\xi|^\lambda \sin(\lambda \arg \xi) - \lambda \arg \xi.
\]
Define the sequence $(\tau_n)$ by the condition
\[
	\frac{\tau}{\lambda} n \equiv \tau_n \pmod{2\pi}, \qquad -\pi < \tau_n \leq \pi.
\]
Let
\[
	\zeta_0 = -\frac{1}{2} \log(2\pi \lambda) + \frac{1}{2}\left(1-\xi^\lambda\right) + \log\!\left(\frac{\xi}{1-\xi}\right),
\]
where $\log(2\pi\lambda)$ is real and the determination of the last logarithm is such that
\[
	-\pi < \im \log\!\left(\frac{\xi}{1-\xi}\right) \leq \pi.
\]
Put
\[
	P_n = \left(1 + \frac{\log n + 2i \tau_n - 2\zeta_0}{2(1-\xi^\lambda)n}\right) r_n \xi
\]
and consider all zeros of the polynomial in $\zeta$ given by
\[
	\psi_n(\zeta) := p_n[E]\!\left(P_n - \frac{r_n \xi}{(1-\xi^\lambda)n} \zeta\right),
\]
where $r_n$ and $p_n[E]$ are as in Theorem \ref{introthm_esvcornerscaling}. Given $t > 0$ ($t$ not a multiple of $2\pi$), the polynomial $\psi_n$ has, in the disk $|\zeta| \leq t$, exactly
\[
	2 \left\lfloor \frac{t}{2\pi} \right\rfloor + 1 =: 2 \ell + 1
\]
zeros, all of them simple.  Denoting these zeros of $\psi_n$ by $\zeta_{n,j}$, $j = 0,\pm 1, \ldots, \pm \ell$, then
\[
	\zeta_{n,j} = 2j\pi i + \eta_{n,j}, \qquad j = 0,\pm 1, \ldots, \pm \ell,
\]
where for fixed $t$
\[
	\lim_{n \to \infty} \eta_{n,j} = 0.
\]

A similar result holds for $\xi \in S_E$ with $|{\arg \xi}| > \pi/(2\lambda)$.
\end{theorem}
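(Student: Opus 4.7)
My plan is to derive, via the saddle-point method, a uniform asymptotic formula for $p_n[E](z)$ in a $\Theta(r_n/n)$-neighborhood of $r_n\xi$, and then to reduce the zero-counting for $\psi_n$ in $|\zeta|\le t$ to the study of a small perturbation of $1 - e^{-\zeta}=0$, whose zeros in that disk are precisely $\zeta = 2\pi i j$ for $j = 0, \pm 1, \ldots, \pm \ell$. An application of Hurwitz's theorem (Theorem \ref{introthm_hurwitz}) then delivers the count $2\ell+1$, the simplicity, and the convergence $\eta_{n,j}\to 0$.

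First I would split $p_n[E](z) = E_{1/\lambda}(z) - R_n(z)$ with $R_n(z) = \sum_{k=n+1}^\infty z^k/\Gamma(k/\lambda+1)$. Since $0 < |{\arg \xi}| < \pi/(2\lambda)$, the point $r_n\xi$ lies strictly inside the principal sector of the Mittag-Leffler function, so the classical Wright asymptotic gives $E_{1/\lambda}(z) = \lambda e^{z^\lambda} + O(z^{-\lambda})$ uniformly on compact subsectors. For $R_n(z)$, the ratio of successive terms is $\sim z(\lambda/k)^{1/\lambda}$ by Stirling, so at $k = n+1$ with $z$ near $r_n\xi$ the ratio is $\sim \xi$; since $|\xi|<1$ on the smooth arc (the condition $|\xi^\lambda e^{1-\xi^\lambda}|=1$ forces $\re\xi^\lambda<1$, hence $|\xi|<1$ there), summing the nearly-geometric tail yields
\[
R_n(z) = \frac{z^{n+1}}{\Gamma((n+1)/\lambda+1)} \cdot \frac{1}{1 - \xi}\bigl(1 + O(n^{-1})\bigr),
\]
uniformly for $z$ in the prescribed neighborhood. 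The curve condition $|\xi^\lambda e^{1-\xi^\lambda}|=1$ is precisely what forces the moduli of the two leading terms $\lambda e^{z^\lambda}$ and $R_n(z)$ to balance at $z = r_n\xi$.

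Next I would substitute $z = P_n - \tfrac{r_n\xi}{(1-\xi^\lambda)n}\zeta$ and take logarithms of $E_{1/\lambda}(z) = R_n(z)$. Taylor expansion around $r_n\xi$ gives $z^\lambda = r_n^\lambda\xi^\lambda - \xi^\lambda\zeta/(1-\xi^\lambda) + O(1/n)$ on the left and $(n+1)\log z = (n+1)\log(r_n\xi) - \zeta/(1-\xi^\lambda) + O(1/n)$ on the right, so the $\zeta$-linear parts combine as $-\xi^\lambda/(1-\xi^\lambda) + 1/(1-\xi^\lambda) = 1$ and the equation becomes linear in $\zeta$ with unit coefficient. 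For the constants, expanding $\Gamma((n+1)/\lambda+1)$ by Stirling to half-integer precision produces a $\tfrac12\log n$ term (matched by the $\log n$ in $P_n$), the $-\tfrac12\log(2\pi\lambda)$ piece of $\zeta_0$, and the $\tfrac12(1-\xi^\lambda)$ piece arising from the correction $r_n^\lambda = (n/\lambda)e^{\lambda/(2n)}$; the remaining constant $\log(\xi/(1-\xi))$ in $\zeta_0$ is produced by combining a residual $\log\xi$ from $(n+1)\log(r_n\xi)$ with the $-\log(1-\xi)$ from the geometric-series denominator; and finally the integer phase $n\tau/\lambda\pmod{2\pi}$ carried by $\xi^n$ and $e^{(n/\lambda)\xi^\lambda}$ is exactly the $i\tau_n$ shift in $P_n$. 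After all cancellations, the reduced equation reads $\zeta = 2\pi i j + o(1)$ uniformly for $|\zeta|\le t$.

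Exponentiating, $C_n^{-1}\psi_n(\zeta) \to 1 - e^{-\zeta}$ uniformly on $|\zeta|\le t$ for an appropriate normalization $C_n$. The limit has exactly $2\ell+1$ simple zeros in the open disk $|\zeta|<t$, and none on $|\zeta|=t$ (since $t$ is not a multiple of $2\pi$), so Hurwitz's theorem supplies exactly $2\ell+1$ simple zeros $\zeta_{n,j}$ of $\psi_n$ for $n$ large, each converging to its corresponding $2\pi i j$, which is the desired conclusion. The analogous result for $|{\arg\xi}|>\pi/(2\lambda)$ follows by repeating the argument, but with the saddle-point analysis adapted to the regime where $r_n\xi$ lies outside the principal sector so that $R_n$ rather than $E_{1/\lambda}$ supplies the dominant piece of $p_n[E]$. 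The main obstacle is the precise bookkeeping in the saddle-point analysis: the asymptotic for $R_n(z)$ must be controlled to $o(1)$ precision \emph{in the logarithm} and uniformly in $z$ across the $\Theta(r_n/n)$-neighborhood, for any missed $\sqrt{n}$ or $O(1)$ factor would translate into a spurious unbounded or finite shift of the zeros and obstruct $\eta_{n,j}\to 0$.
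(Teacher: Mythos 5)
Your outline is sound, and it reaches the right conclusion by a route that differs from the one this thesis uses for its own generalizations of this result (the theorem itself is quoted from Edrei--Saff--Varga; the thesis does not reprove it, but Theorem \ref{curscaonetheo_maintheo} and its proof are the relevant analogue). Where you estimate the tail $R_n(z)=\sum_{k>n}z^k/\Gamma(k/\lambda+1)$ directly, term by term, as a nearly geometric series with ratio $\to\xi$ and Stirling for the Gamma ratios, the thesis instead represents the normalized tail as a Cauchy integral $F_n(z)=\frac{1}{2\pi i}\int_\gamma(e^{1/\lambda}s)^{-n}f(r_ns)\,\frac{ds}{s-z}$ over an admissible contour and extracts the same leading behavior by steepest descent at the saddle $s=1$ of $\varphi(s)=(s^\lambda-1-\lambda\log s)/\lambda$; both routes land on the same key asymptotic (your $R_n(z)\sim\frac{z^{n+1}}{(1-\xi)\Gamma((n+1)/\lambda+1)}$ is exactly the statement $F_n(z)\sim\frac{1}{(1-z)\sqrt{2\pi\lambda n}}$ after normalization), after which the exponent bookkeeping, the identification of $\zeta_0$, $\tau_n$, and the $\tfrac12\log n$ shift, and the Hurwitz argument against the limit $1-e^{-\zeta}$ proceed as you describe. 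Your method is more elementary but is tied to the Mittag-Leffler function specifically, since it needs the Taylor coefficients in closed form; the contour-integral method is what lets the thesis treat functions known only through their asymptotic behavior at infinity. That trade-off is the real difference between the two proofs.

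Two points deserve more care than your sketch gives them. First, your claimed relative error $1+O(n^{-1})$ for the geometric-tail approximation is optimistic: with $z=r_n\xi\left(1+O(\log n/n)\right)$ the term ratios are $\xi\left(1+O((\log n+j)/n)\right)$ at offset $j$, so you should truncate the tail at some $J=J(n)$ with $J(\log n+J)/n\to 0$ and bound the remainder geometrically, which yields $1+o(1)$ --- sufficient, but the stronger claim would need justification. Second, the closing remark about $|{\arg\xi}|>\pi/(2\lambda)$ is too brisk: there the limit curve is the circular arc $|z|=e^{-1/\lambda}$, $E_{1/\lambda}(r_n z)$ is merely $O(r_n^{-1})$ rather than exponentially large, and the balance is between the algebraic part of $E_{1/\lambda}$ and a tail of size $\Theta(n^{-1/2})$, so which term dominates depends on $\lambda$ and the whole exponent computation must be redone; it is not obtained by the same argument with roles swapped.
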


Together these theorems say that the zeros of the scaled partial sums $p_n[E](r_n z)$ of the Mittag-Leffler function which approach points $\xi \in S_E$ with $\xi \neq 1$ and $|{\arg \xi}| \neq \pi/(2\lambda)$ do so at a rate of $\Theta(\log n/n)$ and are separated from each other by a distance of $\Theta(n^{-1})$, and those which approach the point $\xi = 1$ do so at a rate of $\Theta(n^{-1/2})$ and are separated by a distance of $\Theta(n^{-1/2})$. This is typical of the functions which have been studied to date: most the zeros of the partial sums cluster densely together and ``fill'' up most of the plane, and there are only a finite number of exceptional arguments where the zeros are widely spaced and zero-free regions like those discussed in the previous section exist. 

To capture these observations, Edrei, Saff, and Varga proposed a modified Width Conjecture \cite[p.\ 6]{esv:sections}.

\begin{conjectureb}
\label{introconj_widthconjecture2}
Let $f$ be an entire function of positive, finite order $\lambda$ and let $p_n(z)$ denote its $n^\th$ partial sum.  We can find an infinite sequence $M$ of positive integers and a finite number of exceptional arguments $\theta_1,\theta_2,\ldots,\theta_q$ such that
\begin{enumerate}[label=\textnormal{(\alph*)}]
\item For any argument $\theta \neq \theta_j$, $j = 1,2,\ldots,q$, it's possible to find a positive sequence $(\rho_m)_{m\in M}$ with $\rho_m \to \infty$ and $\rho_m = O(m^{2/\lambda})$ such that, for every fixed $\epsilon > 0$, the number of zeros of the partial sum $p_m(z)$ in the disk
\[
	\left|z - \rho_m e^{i\theta}\right| \leq \rho_m m^{-1+\epsilon}
\]
tends to infinity as $m \to \infty$, $m \in M$.

\item For any exceptional argument $\theta_j$ it's possible to find an integer $k \geq 2$ and a positive sequence $(\rho_m)_{m\in M}$ with $\rho_m \to \infty$ and $\rho_m = O(m^{2/(\lambda k)})$ such that, for every fixed $\epsilon > 0$, the number of zeros of the partial sum $p_m(z)$ in the disk
\[
	\left|z - \rho_m e^{i\theta_j}\right| \leq \rho_m m^{-1/k + \epsilon}
\]
tends to infinity as $m \to \infty$, $m \in M$.
\end{enumerate}
\end{conjectureb}

One can check that a verification of the Modified Width Conjecture would imply the truth of the standard Width Conjecture.  The benefit of this second conjecture is that it makes an attempt to distinguish between the two distinct behaviors observed of the scaled zeros of the partial sums: heavy clustering along smooth arcs of their limit curve (the non-exceptional arguments) and an aversion to approaching any of the curve's corners (the exceptional arguments).

For the particular case of the Mittag-Leffler function, Theorem \ref{introthm_esvcornerscaling} verifies part (b) of the Modified Width Conjecture at the exceptional argument $\theta = 0$ with $k=2$ and Theorem \ref{introthm_esvcurvescaling} verifies part (a) along any argument $\theta \neq 0,\pm \pi/(2\lambda)$. Together with the fact that the zeros of $E_{1/\lambda}$ lie asymptotically on the rays $\arg z = \pm \pi/(2\lambda)$ (and an application of Hurwitz's theorem), this verifies the original Saff-Varga Width Conjecture for this function. The authors also proved versions of Theorems \ref{introthm_esvcornerscaling} and \ref{introthm_esvcurvescaling} for $\mathcal{L}$-functions \cite[p.\ 21]{esv:sections}.

Similarly, Theorem \ref{introthm_newriv} verifies part (b) of the Modified Width Conjecture at the exceptional argument $\theta = 0$ with $\lambda = 1$, $\rho_n = n$, and $k=2$ in the particular case of the exponential function $f(z) = \exp(z)$. Norfolk obtained the following analogue for the case of the confluent hypergeometric function \cite{norfolk:1f1} which also verifies the Modified Width Conjecture at the exceptional argument $\theta = 0$ with $\lambda = 1$, $\rho_n = n$, and $k=2$.

\begin{theorem}[Norfolk]
\label{introthm_norfolkcorner}
Let
\[
	{}_1F_1(1;b;z) = \Gamma(b) \sum_{k=0}^{\infty} z^k/\Gamma(k+b)
\]
and let $p_n[{}_1F_1](z)$ denote its $n^\th$ partial sum. If $b \in \R$\newnot{symbol:RR} with $b \neq 1,0,-1,-2,\ldots$ then
\[
	\lim_{n\to\infty} \frac{p_n[{}_1F_1](n+w\sqrt{n})}{e^{w\sqrt{n}}\, {}_1F_1(1;b;n)} = \frac{1}{2} \erfc\!\left(\frac{w}{\sqrt{2}}\right)
\]
uniformly for $w$ restricted to any compact subset of $\C$.
\end{theorem}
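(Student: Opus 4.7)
The plan is to piggyback on the Newman--Rivlin theorem (Theorem~\ref{introthm_newriv}) for the exponential function. The coefficients $\Gamma(b)/\Gamma(b+k)$ of ${}_1F_1(1;b;z)$ differ from the exponential coefficients $1/k!$ by the factor $\Gamma(b)\,k!/\Gamma(b+k)$, which is slowly varying in $k$ (by Stirling it behaves like $\Gamma(b)\,k^{1-b}$). Over the window of $k$ which carries essentially all the mass of the partial sum's tail---a window of width $O(\sqrt{n})$ around $k=n$, as revealed by Newman--Rivlin---this factor is nearly constant, so pulling it out of the tail sum should reduce everything to the exponential case.

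Concretely, first I would decompose $p_n[{}_1F_1](z) = {}_1F_1(1;b;z) - T_n(z)$ where
\[
    T_n(z) = \Gamma(b) \sum_{k=n+1}^{\infty} \frac{z^k}{\Gamma(b+k)},
\]
and use the classical asymptotic ${}_1F_1(1;b;z) \sim \Gamma(b)\,z^{1-b} e^z$ as $z \to \infty$ in any right half-plane to verify that
\[
    \frac{{}_1F_1(1;b;n+w\sqrt{n})}{e^{w\sqrt{n}}\,{}_1F_1(1;b;n)} \longrightarrow 1
\]
uniformly for $w$ on compact sets. The theorem therefore reduces to showing that $T_n(n+w\sqrt{n})/(e^{w\sqrt{n}}\,{}_1F_1(1;b;n))$ converges to $1 - \frac{1}{2}\erfc(w/\sqrt{2}) = \frac{1}{2}\erfc(-w/\sqrt{2})$. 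For the tail I would factor out the constant $\Gamma(b)\,n!/\Gamma(b+n)$ and write
\[
    T_n(z) = \frac{\Gamma(b)\,n!}{\Gamma(b+n)} \sum_{k=n+1}^{\infty} \frac{z^k}{k!}\, R_{n,k}, \qquad R_{n,k} := \prod_{i=1}^{k-n}\frac{n+i}{b+n+i-1}.
\]
A short computation gives $R_{n,k} = 1 + O((k-n)/n)$, so $R_{n,k} = 1+o(1)$ uniformly for $k-n = o(n)$. Splitting the sum at $k = n + \sqrt{n}\log n$ and killing the remote tail with a ratio-test estimate on $|z|^k/\Gamma(b+k)$ then yields
\[
    T_n(n+w\sqrt{n}) = \frac{\Gamma(b)\,n!}{\Gamma(b+n)}\bigl(e^{n+w\sqrt{n}} - p_n[\exp](n+w\sqrt{n})\bigr)(1+o(1))
\]
uniformly for $w$ on compact sets. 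Applying Newman--Rivlin to the bracketed factor, together with the Stirling estimate $\Gamma(b)\,n!/\Gamma(b+n) \sim \Gamma(b)\,n^{1-b}$ and the asymptotic ${}_1F_1(1;b;n) \sim \Gamma(b)\,n^{1-b} e^n$, makes all the prefactors cancel and leaves the limit $\frac{1}{2}\erfc(-w/\sqrt{2})$, as required.

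The main obstacle is the two-sided squeeze on the summation window: one needs it wide enough to capture essentially all the mass of the exponential tail (which lives in a window of width $O(\sqrt{n})$ around $k=n$) yet narrow enough that $R_{n,k}$ is uniformly close to $1$. Because $R_{n,k} = 1 + O((k-n)/n)$, the tension is mild---any window of width $o(n)$ works for $R_{n,k}$ and a window of width $\sqrt{n}\log n$ is more than enough for the exponential-tail contribution---but one still needs an independent bound on the remote-tail sum $\sum_{k > n + \sqrt{n}\log n} |z|^k/\Gamma(b+k)$, which should follow from comparing consecutive terms to a convergent geometric series once $k$ exceeds $|z|+c\sqrt{n}$.
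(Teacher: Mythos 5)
Your proposal is correct in outline, but it takes a genuinely different route from the one this thesis uses. The thesis never touches the Maclaurin coefficients of ${}_1F_1$: it treats the theorem as an instance of the general corner scaling limit (Theorem \ref{seconeexp_maintheo}, applied to $\M(\alpha,\beta,z)$ in Section \ref{applicationssec_confluent}), which is proved by representing $f - p_{n-1}$ as a Cauchy integral $F_n$ over an admissible contour, matching it near the saddle $z=1$ to an explicit $\erfc$ parametrix via a Riemann--Hilbert argument, and using only the sectorial asymptotic $f(z) \sim \Gamma(b) z^{1-b} e^z$. Your argument instead exploits the fact that the coefficients $\Gamma(b)/\Gamma(b+k)$ are an explicitly slowly-varying perturbation of $1/k!$, pulls the ratio $R_{n,k}$ out of the tail over a window of width $\sqrt{n}\log n$, and reduces everything to Newman--Rivlin. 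What your approach buys is elementarity and, incidentally, the full uniformity on compact subsets of $\C$ claimed in Norfolk's statement (the thesis's general theorem only delivers $\re w < 0$); what it gives up is generality, since it cannot be run for a function known only through its asymptotic behavior rather than its coefficients.

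Two details deserve more care than your sketch gives them. First, replacing $R_{n,k}$ by $1+o(1)$ inside a sum of \emph{complex} terms produces an additive error bounded by $o(1)\cdot\sum_{k>n}|z|^k/k! = o(e^{|z|})$, and your multiplicative form $(\cdots)(1+o(1))$ is not legitimate at the $w$ for which $\erfc(-w/\sqrt{2})$ vanishes; you should keep the error additive and observe that $e^{|z|} = O(|e^{z}|)$ on the parabolic scaling $z = n + w\sqrt{n}$ (since $|z| = \re z + O(1)$ there), so the additive error is $o(|e^z|)$ and still washes out after normalization. Second, in the remote tail the term ratio is $1 - \Theta(\log n/\sqrt{n})$, so the geometric-series bound costs a factor $O(\sqrt{n}/\log n)$; this is harmless only because the first omitted term is smaller than $e^{|z|}$ by a factor of roughly $\exp(-(\log n)^2/2)$, which should be said explicitly. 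Neither point is a gap, but both are where a referee would push.
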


\begin{figure}[!htb]
	\centering
	\includegraphics[width=0.9\textwidth]{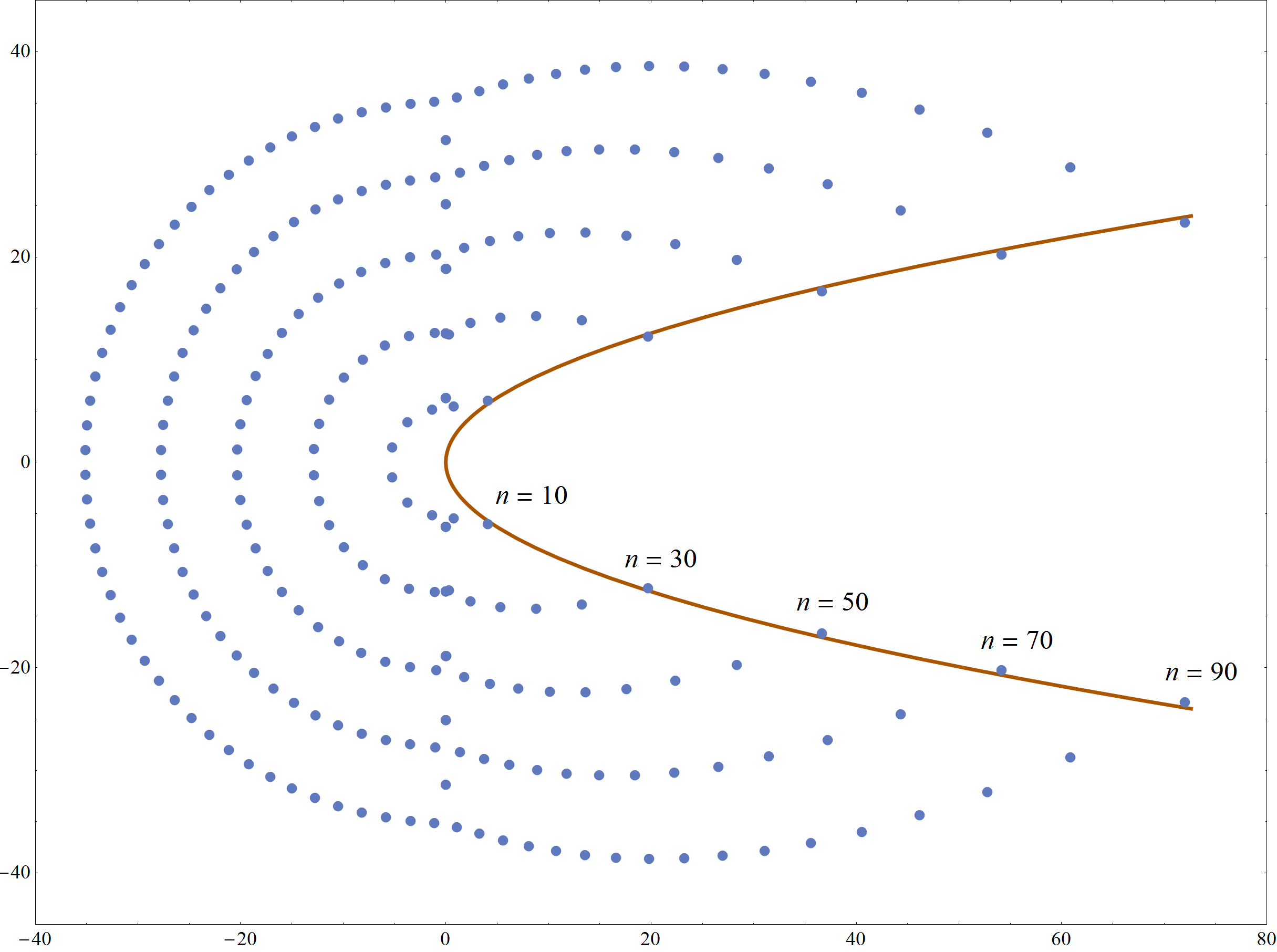}
	\caption[Zeros of the partial sums {$p_n[{}_1F_1](z)$} for {$n=10,30,50,70,90$}, shown with the parabola {$x = (y/t_1)^2$}, where {$w_1 = s_1 + it_1$} is the smallest zero of {$\erfc$} in the upper half-plane.]{Zeros of the partial sums $p_n[{}_1F_1](z)$ for $n=10,30,50,70,90$, shown with the parabola $x = (y/t_1)^2$ in orange, where $w_1 = s_1 + it_1 \approx -1.35481 + 1.99147i$ is the smallest zero of $\erfc$ in the upper half-plane.}
	\label{introfig_1f1parab}
\end{figure}

\noindent Other results in this same vein have been proved for binomial expansions \cite{norfolk:binom} and for linear combinations of sections and tails of Mittag-Leffler functions \cite{zhel:mlsectails} and of classical Lindel\"of functions \cite{zhel:lindelof}.

Though we are primarily concerned with the above statements of the standard and modified Width Conjectures we note that an alternate formulation of the standard conjecture was given by Norfolk in \cite{norfolk:widthconj}.

\section{A Riemann-Hilbert Approach}

In 2008 Kriecherbauer, Kuijlaars, McLaughlin, and Miller published a paper \cite{mclaughlin:exprh} in which they undertook an analysis of the partial sums of the exponential function using a version of the Riemann-Hilbert method of asymptotic analysis. Among other things the authors obtained a complete asymptotic expansion of $p_n[\exp](nz)$ in the regime $n \to \infty$, $z \to 1$.

They began by defining the function
\[
	F_n(z) = \frac{1}{2\pi i} \int_\gamma \frac{(se^{1-s})^{-n}}{s-z}\,ds,
\]
where $\gamma$ is a simple closed loop around the origin passing through the point $s=1$. This function is related to the partial sums of the exponential function by the formula
\[
	F_n(z) = \begin{cases}
		-(ez)^{-n} p_{n-1}[\exp](nz) & \text{for } z \text{ outside } \gamma, \\
		(ze^{1-z})^{-n} - (ez)^{-n} p_{n-1}[\exp](nz) & \text{for } z \neq 0 \text{ inside } \gamma.
		\end{cases}
\]
Taking into account its decay as $|z| \to \infty$ and the above jump as $z$ crosses the curve $\gamma$ the authors formulated a Riemann-Hilbert problem (see Section \ref{prelimssec_rhps}) solved by $F_n(z)$. By applying the Riemann-Hilbert method to this problem they obtained the following asymptotic expansion for $p_{n-1}[\exp]$.

\begin{theorem}[Kriecherbauer, Kuijlaars, McLaughlin, and Miller]
\label{introthm_kkmm}
There exists an $\epsilon > 0$ such that, for any $J \in \N$,
\[
	(ez)^{-n} p_{n-1}[\exp](nz) = \frac{1}{2} e^{n\varphi(z)} \erfc\!\left(\sqrt{n\varphi(z)}\right) - \frac{1}{\sqrt{2\pi n}} \left[\sum_{j=0}^{J-1} \frac{g_j(z)}{n^j} + O\!\left(n^{-J}\right)\right],
\]
where
\[
	\varphi(z) = z - 1 - \log z,
\]
\[
	g_j(z) = \frac{(-1)^j \Gamma\!\left(j+\frac{1}{2}\right)}{(2\pi)^{3/2} i} \int_{|s-1| = 2\epsilon} \frac{\varphi(s)^{-j-1/2}}{s-z}\,ds,
\]
and the error term is uniform for $|z-1| < \epsilon$ with $z$ inside $\gamma$.
\end{theorem}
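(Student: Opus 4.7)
The plan is to analyze the integral representation
\[
F_n(z) = \frac{1}{2\pi i} \int_\gamma \frac{e^{-n\varphi(s)}}{s-z}\,ds, \qquad \varphi(s) = s - 1 - \log s,
\]
by a steepest-descent argument carried out uniformly in a neighborhood of $z=1$. The crux of the matter is that $\varphi$ has a quadratic saddle at $s=1$ (with $\varphi(1) = \varphi'(1) = 0$ and $\varphi''(1) = 1$), so as $z \to 1$ the Cauchy pole at $s=z$ approaches the saddle. The coalescence of a simple pole with a quadratic saddle is the classical mechanism producing a complementary error function in the leading asymptotics; once the pole is isolated, the remaining integrand is analytic near the saddle and yields a Watson-type asymptotic series in powers of $1/n$ whose coefficients turn out to be exactly the contour integrals of $\varphi(s)^{-j-1/2}/(s-z)$ appearing in the theorem.

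First I would deform $\gamma$ onto the steepest-descent contour of $-\re\varphi$ through $s=1$, and introduce the conformal change of variable $w = w(s)$ near $s=1$ defined by $w(s)^2 = 2\varphi(s)$ with $w(s)/(s-1) \to 1$. On the deformed contour, $w$ runs along a real segment through the origin. I would then split the Cauchy kernel as
\[
\frac{1}{s-z} = \frac{w'(z)}{w(s)-w(z)} + R(s,z),
\]
where $R(s,z)$ is chosen so that it is holomorphic in $s$ at $s=z$ and bounded in $w$ at $w=0$, uniformly for $z$ in a small disk about $1$.

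The $R$-piece, after passage to the $w$-variable, has an integrand smooth at $w=0$, and Taylor-expanding it against the Gaussian $e^{-nw^2/2}$ gives an asymptotic series whose $k$th coefficient involves $n^{-k-1/2}\Gamma(k+\tfrac{1}{2})$; rewriting the Taylor coefficients of the regular piece at $w=0$ back in the $s$-variable as contour integrals over a small circle around $s=1$ produces exactly the $g_j(z)$ of the theorem, with the factor $(2\pi)^{3/2}$ emerging from the combination of the Gaussian normalization $\sqrt{2\pi/n}$ and the extraction formula $\frac{1}{2\pi i}\oint$. The pole-carrying term, integrated against $e^{-nw^2/2}$, collapses to
\[
\frac{1}{2\pi i} \int \frac{e^{-nw^2/2}}{w - w(z)}\,dw = \pm \tfrac{1}{2}\, e^{nw(z)^2/2} \erfc\!\bigl(\sqrt{n/2}\,w(z)\bigr),
\]
and since $w(z)^2/2 = \varphi(z)$ this supplies the leading $\tfrac{1}{2} e^{n\varphi(z)} \erfc(\sqrt{n\varphi(z)})$ term, the sign being fixed by the side of the deformed contour on which $z$ lies. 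The theorem then follows from the stated identity between $F_n(z)$ and $(ez)^{-n} p_{n-1}[\exp](nz)$ for $z$ inside $\gamma$.

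The main obstacle will be establishing the uniformity of the error $O(n^{-J})$ on the full disk $|z-1|<\epsilon$. When $z$ is within $O(n^{-1/2})$ of the saddle, $w(z)$ is itself comparable to the width of the Gaussian, so the splitting of $1/(s-z)$ must be organized so that $R(s,z)$ and all of its Taylor coefficients at $w=0$ remain controlled uniformly as $w(z) \to 0$; naive pointwise bounds blow up and must be replaced by estimates that exploit the decay of $e^{-nw^2/2}$ to compensate for $w(s) - w(z)$ becoming small. The Riemann--Hilbert reformulation of the problem is precisely what delivers this uniformity cleanly: the steepest-descent analysis of the RHP produces a local parametrix near $s=1$ built out of a model $\erfc$-problem, and the small-norm estimate for the resulting jump matrices furnishes the uniform remainder bound directly, for every $J$ at once.
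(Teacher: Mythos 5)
Your proposal is the classical Bleistein/van der Waerden route for a pole coalescing with a quadratic saddle, and the mechanism you identify (pole-near-saddle yields $\erfc$; the regular remainder yields a Watson-type series; the identity between $F_n$ and $(ez)^{-n}p_{n-1}[\exp](nz)$ inside $\gamma$ finishes the job) is the right one. But it is a genuinely different organization from the one the cited authors use, and the one reproduced at leading order in Section \ref{seconeexpsub_proof} of this thesis: there the Cauchy kernel is never split. Instead one compares $F_n$ to the local parametrix $P_n(z)=h(-i\sqrt{n}\,\psi^{-1}(z))$ built from $h(\zeta)=\frac{1}{2\pi i}\int_{\R}e^{-u^2}(u-\zeta)^{-1}\,du$, shows via the small-norm Riemann--Hilbert argument that the difference equals $-\frac{1}{2\pi i}\int_{|s-1|=2\epsilon}P_n(s)(s-z)^{-1}\,ds$ up to exponentially small terms, and then inserts the large-argument expansion $h(\zeta)\dsim-\frac{1}{2\pi i}\sum_{j}\Gamma(j+\tfrac{1}{2})\zeta^{-2j-1}$ with $\zeta=-i\sqrt{n\varphi(s)}$. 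That is precisely where the stated form of $g_j$ comes from: it is an integral of the parametrix over a circle a \emph{fixed} distance from the saddle, with the constant $(2\pi)^{3/2}$ arising from the $2\pi i$ of Cauchy's formula, the $2\pi$ in $h$, and the extracted $\sqrt{2\pi n}$. It is not a Taylor coefficient at $w=0$.

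This points to the two real gaps. First, your claim that the Taylor coefficients of the regular part reproduce ``exactly the $g_j$'' is asserted rather than derived, and as stated it is not correct: with $R(s,z)=(s-z)^{-1}-w'(z)(w(s)-w(z))^{-1}$, rewriting the $w$-Taylor coefficients of $R$ as contour integrals produces $\oint\varphi(s)^{-j-1/2}(s-z)^{-1}\,ds$ \emph{plus} an extra $z$-dependent piece $-w'(z)\oint\varphi(s)^{-j-1/2}(w(s)-w(z))^{-1}\,ds$, which you would have to show is cancelled by the higher corrections generated by the pole part; the theorem's clean structure (one exact $\erfc$ term plus a single power series) is exactly what the parametrix route delivers without this bookkeeping. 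Second, the uniformity of $O(n^{-J})$ on the full disk $|z-1|<\epsilon$ --- the entire content of the theorem --- is deferred in your last paragraph to ``the RHP delivers this cleanly'' without being carried out; in your direct approach you would need to verify that $R(s,z)$ and all its $w$-derivatives at $w=0$ stay bounded uniformly as $w(z)\to 0$ (they do, because the poles of the two pieces at $s=z$ cancel and nothing else degenerates), and that the discarded Gaussian tails are uniform in $z$. There is also a normalization slip worth fixing: with $w(s)^2=2\varphi(s)$ and $w(s)/(s-1)\to 1$, the steepest-descent contour through $s=1$ for the actual kernel $(se^{1-s})^{-n}=e^{n\varphi(s)}$ is where $\varphi$ is negative real, so $w$ runs along the \emph{imaginary} axis, not a real segment; this is repairable by the rotation $\zeta=-i\sqrt{n\varphi}$ but it determines the sign of the $\erfc$ term, which you currently leave ambiguous.
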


This is a considerable improvement over Newman and Rivlin's scaling limit in Theorem \ref{introthm_newriv}. From this result the authors were able to obtain complete asymptotic expansions of the zeros of the scaled partial sums $p_{n-1}[\exp](nz)$ which approach the point $z=1$.

\begin{theorem}[Kriecherbauer, Kuijlaars, Mclaughlin, and Miller]
Let $(w_k)_{k\in\N}$ be the sequence of zeros of $\erfc$ in the upper half-plane ordered so that $|w_k| < |w_{k+1}|$. There exist polynomials $q_j$ of degree $j$ such that, for $0 < \beta < 1$, $1 < k < n^\beta$, and $r \in \N$, the scaled partial sum $p_{n-1}[\exp](nz)$ has a zero $z_{k,n}$ satisfying
\[
	z_{k,n} = 1 + \sum_{j=1}^{r-1} \frac{q_j(w_k)}{n^{j/2}} + O\!\left(\left(\frac{k}{n}\right)^{r/2}\right),
\]
where the constant in the error term depends only on the choice of $r$ and $\beta$. The polynomials $q_j$ can be computed explicitly.
\end{theorem}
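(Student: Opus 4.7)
The plan is to extract the zero locations directly from the asymptotic expansion in Theorem~\ref{introthm_kkmm}. That formula expresses $(ez)^{-n}p_{n-1}[\exp](nz)$ as a ``leading'' piece $\tfrac{1}{2}e^{n\varphi(z)}\erfc\!\bigl(\sqrt{n\varphi(z)}\bigr)$ plus a formal series in $1/n$, so the zeros of $p_{n-1}[\exp](nz)$ near $z=1$ should be small perturbations of the zeros of $\erfc\!\bigl(\sqrt{n\varphi(z)}\bigr)$. Since $\varphi(z)=z-1-\log z$ has a double zero at $z=1$ with $\varphi''(1)=1$, the function $\eta(z)=\sqrt{2\varphi(z)}$, normalized by $\eta(1)=0$, $\eta'(1)=1$, is a conformal change of coordinates on a fixed neighborhood $U$ of $z=1$. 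Setting $u=\sqrt{n/2}\,\eta(z)$, the equation $p_{n-1}[\exp](nz)=0$ becomes, after clearing the nonvanishing factor $(ez)^{-n}$,
\[
F(u)\;:=\;\tfrac{1}{2}e^{u^2}\erfc(u)\;=\;\frac{1}{\sqrt{2\pi n}}\sum_{j=0}^{J-1}\frac{g_j(z(u))}{n^j}+O\!\bigl(n^{-J-1/2}\bigr),
\]
with both sides analytic in $u$ on the preimage of $U$.

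The next step is a perturbation argument around each $w_k$. From the identity $F'(u)+2uF(u)=-1/\sqrt{\pi}$ one reads off $F'(w_k)=-1/\sqrt{\pi}\neq 0$ and, by induction, that every $F^{(m)}(w_k)$ is a polynomial in $w_k$ of degree $m-1$. A Rouch\'e argument on a disc of radius of order $n^{-1/2}$ about $w_k$ then produces a unique solution $u_{k,n}=w_k+O(n^{-1/2})$ of the displayed equation. Substituting the ansatz $u_{k,n}=w_k+\sum_{j\geq 1}\alpha_j(w_k)\,n^{-j/2}$, expanding the left-hand side by its Taylor series at $w_k$ and the right-hand side in powers of $n^{-1/2}$ (using in turn the Taylor expansion of $g_j(z(w_k+h))$ at $h=0$), and matching coefficients yields each $\alpha_j$ recursively as a polynomial in $w_k$ with coefficients explicitly determined by the values of $F^{(m)}(w_k)$ and $g_j^{(m)}(1)$.

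To finish I would invert the coordinate change. Writing $\eta^{-1}(\zeta)=\zeta+c_2\zeta^2+c_3\zeta^3+\cdots$ with explicit $c_m$ read off from the Taylor series of $\varphi$, substituting $\zeta=\sqrt{2/n}\,u_{k,n}$, and collecting powers of $n^{-1/2}$ produces
\[
z_{k,n}-1\;=\;\eta^{-1}\!\Bigl(\sqrt{2/n}\,u_{k,n}\Bigr)\;=\;\sum_{j=1}^{r-1}\frac{q_j(w_k)}{n^{j/2}}+O\!\left(\left(\frac{k}{n}\right)^{r/2}\right),
\]
with each $q_j$ a polynomial of degree exactly $j$ in $w_k$: the top-degree contribution $c_j(\sqrt{2}\,w_k)^j n^{-j/2}$ comes from the $\zeta^j$ term of $\eta^{-1}$, and the lower-degree contributions are inherited from the $\alpha_m$ with $m<j$. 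The size of the remainder follows from the bound $\alpha_r(w_k)n^{-r/2}=O(|w_k|^r n^{-r/2})$ together with the classical estimate $|w_k|=O(\sqrt{k})$ (see \cite{fettis:erfczeros}), which converts $|w_k|^r n^{-r/2}$ into $O((k/n)^{r/2})$.

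The main obstacle will be uniformity in $k$ as $k$ ranges up to $n^\beta$. The Rouch\'e step requires that the image $z(w_k)=1+\eta^{-1}(\sqrt{2/n}\,w_k)$ remain inside the fixed neighborhood on which both $\eta$ is invertible and the estimate in Theorem~\ref{introthm_kkmm} is valid, which demands $|w_k|/\sqrt{n}$ to be small; the bound $|w_k|=O(\sqrt{k})$ then identifies $k<n^\beta$ with $\beta<1$ as the admissible range. Carrying the explicit $|u|$-dependence through the error terms of Theorem~\ref{introthm_kkmm}, rather than treating them as blanket $O(n^{-j})$ bounds, is the technically delicate point, since without it the polynomial growth in $w_k$ is washed out and the sharp $(k/n)^{r/2}$ estimate degrades to a mere $n^{-r/2}$.
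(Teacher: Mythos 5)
This theorem is not proved in the thesis at all --- it is quoted from Kriecherbauer--Kuijlaars--McLaughlin--Miller \cite{mclaughlin:exprh} --- so there is no internal proof to compare against; your proposal reconstructs essentially the argument of that source, and it is sound: pass to the conformal coordinate $u=\sqrt{n\varphi(z)}$, perturb off the simple zeros of $F(u)=\tfrac12 e^{u^2}\erfc(u)$ using the full expansion of Theorem~\ref{introthm_kkmm} as the forcing term, and push the resulting series for $u_{k,n}$ back through $\eta^{-1}$. Three small points. First, the differential identity should read $F'(u)-2uF(u)=-1/\sqrt{\pi}$ (not $+2uF$); the conclusions you draw from it, namely $F'(w_k)=-1/\sqrt{\pi}$ and $\deg F^{(m)}(w_k)=m-1$ via $F^{(m)}=2uF^{(m-1)}+2(m-1)F^{(m-2)}$, are nevertheless correct. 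Second, you must fix the branch of $\sqrt{n\varphi(z)}$ consistently with the region ``inside $\gamma$'' where Theorem~\ref{introthm_kkmm} holds; the wrong branch would place the putative zeros at $1-\sqrt{2}w_k n^{-1/2}$, inside the Szeg\H{o} curve, where Buckholtz's result forbids them. Third, your closing worry is slightly misplaced: the blanket $O(n^{-J})$ error of Theorem~\ref{introthm_kkmm} needs no refinement, since $(k/n)^{r/2}\ge n^{-r/2}$ for $k\ge 1$ and one may simply take $J\ge r/2$; the genuine source of the $k$-dependence in the remainder is the Taylor remainders of $\eta^{-1}$, of $F$ at $w_k$, and of the $g_j$ at $z=1$, each of which contributes $O(|w_k|^r n^{-r/2})=O((k/n)^{r/2})$ by $|w_k|=O(\sqrt{k})$ --- exactly the bookkeeping you describe. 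A complete write-up would also need the inductive degree bound $\deg\alpha_j\le j$ to justify that $q_j$ has degree exactly $j$ (the top term $c_j(\sqrt{2}\,w)^j$ coming from $\eta^{-1}$, as you say, and as the explicit coefficients in \eqref{introeq_rhzapprox} confirm), but that is routine given your setup.
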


\noindent Writing $z = z_{k,n}$ and $w = w_k$ for simplicity, the first few terms of this series are
\begin{equation}
\label{introeq_rhzapprox}
	z = 1 + \sqrt{2} w n^{-1/2} + \frac{2w^2-1}{3} n^{-1} + \frac{2w^3-7w}{18\sqrt{2}} n^{-3/2} - \frac{6w^4 + 7w^2 - 8}{405} n^{-2} + \cdots.
\end{equation}

\begin{figure}[!htb]
	\centering
	\includegraphics[width=0.95\textwidth]{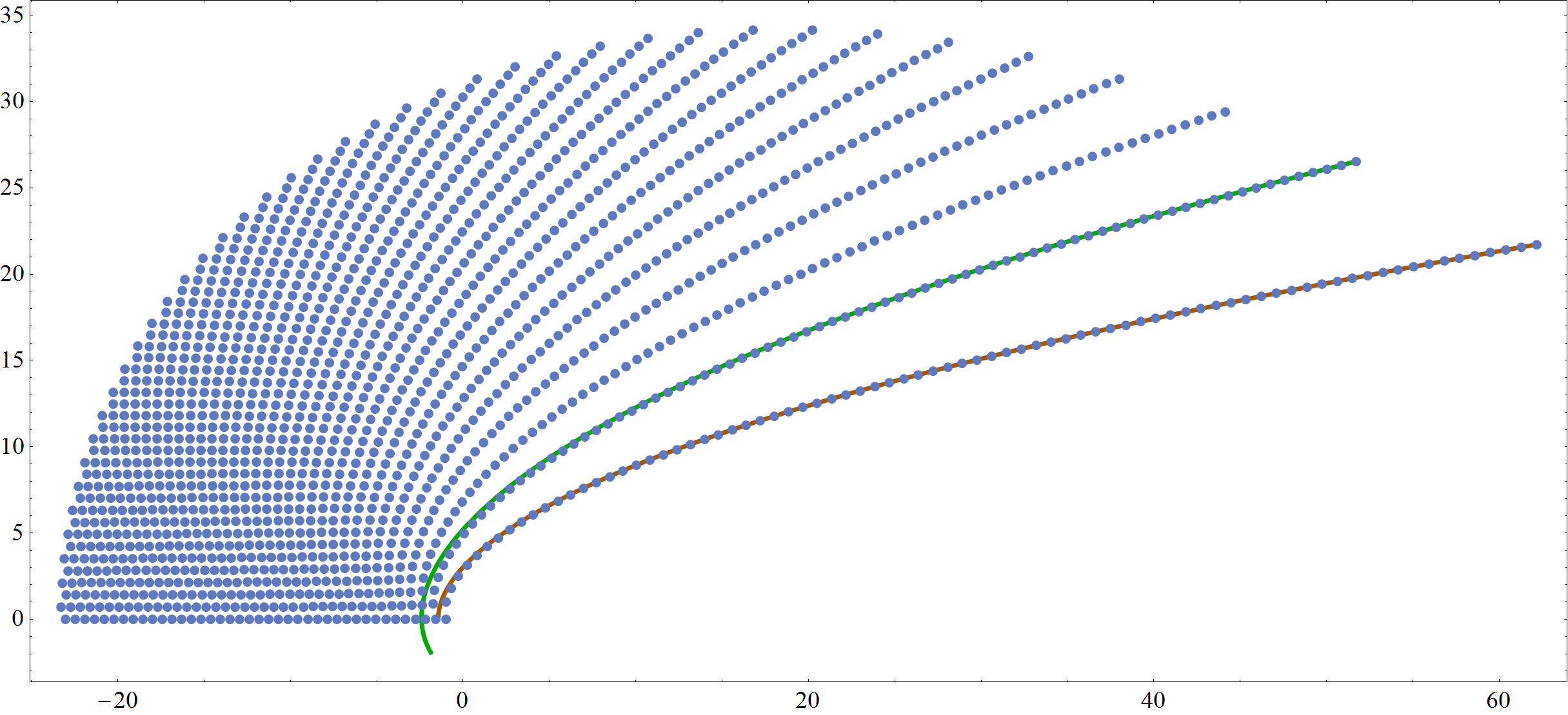}
	\caption[The zeros of the partial sums {$p_n[\exp](z)$} for {$n=1,\ldots,80$} in the upper half plane, shown with the approximations given by the first four terms of equation \eqref{introeq_rhzapprox} using {$w=w_1$} and {$w=w_2$}, where {$w_1$} is the smallest zero of {$\erfc$} in the upper half-plane and {$w_2$} is the next smallest.]{The zeros of the partial sums $p_n[\exp](z)$ for $n=1,\ldots,80$ in the upper half plane, shown with the approximations given by the first four terms of equation \eqref{introeq_rhzapprox} using $w=w_1$ in orange and $w=w_2$ in green, where $w_1 \approx -1.35481 + 1.99147i$ is the smallest zero of $\erfc$ in the upper half-plane and $w_2 \approx -2.17704 + 2.69115i$ is the next smallest. Compare with Figure \ref{introfig_newrivparab}.}
	\label{introfig_rhexpparab}
\end{figure}

\begin{remark}
In \cite{newriv:expzeros} the authors claim that if $w = u+iv$ is any zero of $\erfc(w/\sqrt{2})$ then $p_n[\exp]$ has a zero arbitrarily close to the parabola
\[
	x = \frac{1}{v^2}y^2 + \frac{u}{v}y.
\]
The expansion in \eqref{introeq_rhzapprox} indicates that the constant on the $y$ term is not correct. In fact the correct parabola is evidently
\[
	x = \frac{1}{v^2} y^2 - \frac{u}{3v} y + \frac{1-u^2-5v^2}{18}.
\]
\end{remark}

The methods in this thesis, and in particular in Chapter \ref{chap_posfinite}, are based on the ones Kriecherbauer, Kuijlaars, Mclaughlin, and Miller used to obtain the above results. By generalizing their function $F_n(z)$ we are able to apply their methods to the study of partial sums of power series of a wide class of entire functions.


\section{This Thesis}

In this thesis we will verify the Modified Saff-Varga Width Conjecture for a class of entire functions of positive, finite order with a certain asymptotic character in their sectors of maximal growth.

We will focus on functions $f$ where $f(z) \approx \exp(z^\lambda)$ in one or two sectors in the plane, and which are bounded by a smaller exponential $\exp(\mu |z|^\lambda)$, $\mu < 1$ otherwise. For the sake of generality we will allow $f$ to have some subexponential factors in its asymptotic---in the case of a single sector of maximal growth we will assume that $f(z) \sim z^a (\log z)^b \exp(z^\lambda)$ for some $a,b \in C$, and in the case of two sectors of maximal growth we will restrict this a little and only assume that $f(z) \sim z^a \exp(z^\lambda)$. Specifically we assume the following.

Let $a,b,A \in \C$, $0 < \lambda < \infty$, $\mu < 1$, and $\zeta \in \C$ with $|\zeta| = 1$, $\zeta \neq 1$. Let $\theta \in (0,\pi)$ be small enough so that the sectors $|{\arg z}| \leq \theta$ and $|{\arg(z/\zeta)}| \leq \theta$ are disjoint. We say that an entire function $f$ has a single direction of maximal exponential growth if
\[
f(z) = \begin{cases}
	z^a (\log z)^b \exp(z^\lambda)\,\bigl[1+o(1)\bigr] & \text{for } |{\arg z}| \leq \theta, \\
	O\!\left(\exp(\mu |z|^\lambda)\right) & \text{for } |{\arg z}| > \theta
	\end{cases}			
\]
as $|z| \to \infty$, and that the function $f$ has two directions of maximal exponential growth if
\[
f(z) = \begin{cases}
	z^a \exp\!\left(z^\lambda\right) \bigl[1+o(1)\bigr] & \text{for } |{\arg z}| \leq \theta, \\
	A(z/\zeta)^b \exp\!\left((z/\zeta)^\lambda\right) \bigl[1+o(1)\bigr] & \text{for } |{\arg(z/\zeta)}| \leq \theta, \\
	O\!\left(\exp\!\left(\mu |z|^\lambda\right)\right) & \text{otherwise}
	\end{cases}			
\]
as $|z| \to \infty$, with all estimates holding uniformly in their sector. Under either of these assumptions $f$ is of order $\lambda$.

\begin{remark}
Both the assumption that $f(z) \sim z^a (\log z)^b \exp(z^\lambda)$ in the case of one direction of maximal growth and that $f(z) \sim z^a \exp(z^\lambda)$ in the case of two directions of maximal growth can be generalized to include other reasonably simple subexponential factors such as $(\log\log z)^c$. The conditions as they are were chosen to try to strike a balance between generality and simplicity.
\end{remark}

We will begin in Chapter \ref{chap_limitcurves} by deriving the limit curves for the appropriately-scaled zeros of the partial sums of these functions $f$ in the sector $|{\arg z}| < \theta$. Then in Chapter \ref{chap_curvescaling} we will prove analogues of Theorem \ref{introthm_esvcurvescaling}, which we refer to as ``scaling limits at the arcs of the limit curve''. We will then show that these scaling limits verify part (a) of the Modified Saff-Varga Width Conjecture for these functions $f$ in the sector $0 < |{\arg z}| < \theta$. Finally in Chapter \ref{chap_posfinite} we will prove analogues of Theorems \ref{introthm_newriv}, \ref{introthm_esvcornerscaling}, and \ref{introthm_norfolkcorner}, which we refer to as ``scaling limits at the corner of the limit curve''. We will show that these scaling limits verify part (b) of the Modified Saff-Varga Width Conjecture at the exceptional argument $\arg z = 0$ in the case of a single direction of maximal exponential growth, and also when $f$ has two directions of maximal exponential growth as long as $\re b - \re a < \lambda/2$.

In total we verify the Modified Saff-Varga Width Conjecture for these functions $f$ in the full sector $|{\arg z}| < \theta$.

In Chapter \ref{chap_applications} we will apply the results in the preceding chapters to several common special functions. Among these will be the sine and cosine functions \cite{szego:exp,kappert:sincos,vc:sincosasympi,vc:sincosasympii,vc:sincosasympiii}, the confluent hypergeometric functions \cite{norfolk:1f1}, the Bessel functions of the first kind \cite{vargas:limitcurves}, and certain exponential integrals \cite{norfolk:transforms,vargas:limitcurves}, all of which have been studied before in some way in the listed citations (though some in less generality, e.g.\ with tighter restrictions on the ranges of their parameters). Among these functions, scaling limits of the form in this thesis have only been obtained for the confluent hypergeometric functions in \cite{norfolk:1f1}. We will also study the Airy functions and the parabolic cylinder functions, neither of which have, to my knowledge, been examined in this way.

We note that \cite{vc:sincosasympii} refers to the behavior of the zeros of the partial sums of sine and cosine near the convex corners of their limit curve as an open problem. This is addressed in the relevant section of Chapter \ref{chap_applications} by applying Theorem \ref{secunbalcauchy_maintheo}.

Chapter \ref{chap_prelims} contains several preliminary facts and definitions which we will need in our analysis, including a discussion of Riemann-Hilbert problems which appear in Chapter \ref{chap_posfinite}. Finally, Appendix \ref{appendix_laplace} contains an overview of the Laplace method, a strategy for estimating integrals which we use throughout the thesis.

\chapter{Technical Preliminaries}
\label{chap_prelims}

\section{Entire Functions}
\label{prelimssec_entirefunctions}

A function $f \colon \C \to \C$ is said to be entire if it is analytic on all of $\C$ or, equivalently, if its Taylor series converges on all of $\C$.

\begin{definition}
\label{orderdef}
The \textit{order} (sometimes \textit{exponential order}) of an entire function $f$ is defined as the infimum over all positive numbers $\lambda$ such that
\[
	|f(z)| < \exp\left(|z|^\lambda\right)
\]
for all $z$ large enough.  If this isn't satisfied for any positive number $\lambda$ then we say $f$ has order $\infty$.  If $f$ is defined by the power series
\[
	f(z) = \sum_{k=0}^{\infty} a_k z^k
\]
then its order can be calculated by
\[
	\lambda = \limsup_{k \to \infty} \frac{\log k}{\log\left(|a_k|^{-1/k}\right)}.
\]
(See, for example, \cite{levin:entirelec}.)
\end{definition}

\section{The Notation of Asymptotic Analysis}

Asymptotic analysis uses a variety of notation whose purpose is to hide certain information about the quantities we handle. It allows us to ignore details we're not interested in and to greatly simplify otherwise complicated formulas.

It's important to note that the various notations we will introduce below sometimes have different meanings depending on the conventions of the field of mathematics in which they appear. One symbol may mean different things in different situations, and two different authors may use two different symbols for the same purpose. Suffice it to say that the notation is only somewhat standardized. Our usage of the notation follows the conventions of the subfield of classical analysis which is concerned with the study of special functions. We will try to give a relatively complete description of these conventions here.

\subsection{Definitions}

The principal player in the notation of asymptotic analysis is the ``Big O'' (pronounced ``big oh'').\newnot{symbol:bigo}

\begin{definition}[Big O Notation]
\label{bigodef}
Given a set $U$ and two functions $f$ and $g$ whose domains include $U$, the statement
\begin{equation}
\label{bigodefeq}
	f(x) = O(g(x)) \quad \text{for all } x \in U
\end{equation}
is defined to mean that there exists a constant $C > 0$ such that
\[
	|f(x)| \leq C|g(x)| \quad \text{for all } x \in U.
\]
\end{definition}

There is some flexibility in the quantifier in this definition, ``for all $x \in U$''. For example, if we instead write
\[
	f(x) = O(g(x)) \quad \text{as } x \to \infty,
\]
we mean that there exists an $M \in \R$ such that
\[
	f(x) = O(g(x)) \quad \text{for all } x > M.
\]
Here we have taken $U = \{x \in \R : x > M\}$. Similarly we may write something like
\[
	f(x) = O(g(x)) \quad \text{as } x \to a,
\]
where $a$ is some limit point of the domains of $f$ and $g$, to mean that there exists some open neighborhood $U$ of $a$ in the domains of $f$ and $g$ such that $f(x) = O(g(x))$ for all $x \in U$.

Notice that by writing ``$f(x) = O(g(x))$ as $x \to \infty$'' we've suddenly hidden away the values of \textit{two} constants, namely the $C$ in the definition of ``$=O(\cdots)$'' and the $M$ in the discussion above. This is a strength of the notation---for our purposes we won't need to know what their values are, only that they exist.

\begin{definition}[Big Omega Notation]
Given a set $U$ and two functions $f$ and $g$ whose domains include $U$, the statement
\[
	f(x) = \Omega(g(x)) \quad \text{for all } x \in U
\]
is defined to mean that there exists a constant $C > 0$ such that
\[
	|f(x)| \geq C|g(x)| \quad \text{for all } x \in U.
\]
\end{definition}

\begin{definition}[Big Theta Notation]
\label{prelimsdef_bigtheta}
Given a set $U$ and two functions $f$ and $g$ whose domains include $U$, the statement
\[
	f(x) = \Theta(g(x)) \quad \text{for all } x \in U
\]
is defined to mean that there exists a constant $C > 0$ such that
\[
	f(x) = O(g(x)) \quad \text{and} \quad f(x) = \Omega(g(x)) \quad \text{for all } x \in U.
\]
\end{definition}

The quantifiers in Big Omega and Big Theta notation have the same flexibility we described above for Big O notation.

\begin{definition}[Little O Notation]
\label{prelimsdef_littleo}
Given a point $a$ (with the possibility that $a=\infty$) and two functions $f$ and $g$ defined in a neighborhood of $a$, the statement
\[
	f(x) = o(g(x)) \quad \text{as } x \to a
\]
is defined to mean that for any $\epsilon > 0$ we can find a neighborhood $U$ of $a$ such that
\[
	|f(x)| \leq \epsilon |g(x)| \quad \text{for all } x \in U.
\]
\end{definition}

If $g(x) \neq 0$ for all $x$ near $a$, $x\neq a$, then this definition is equivalent to the statement that
\[
	\lim_{x \to a} \frac{f(x)}{g(x)} = 0.
\]

\begin{definition}[Little Omega Notation]
Given a point $a$ (with the possibility that $a=\infty$) and two functions $f$ and $g$ defined in a neighborhood of $a$, the statement
\[
	f(x) = \omega(g(x)) \quad \text{as } x \to a
\]
is defined to mean that for any $M > 0$ we can find a neighborhood $U$ of $a$ such that
\[
	|f(x)| \geq M |g(x)| \quad \text{for all } x \in U.
\]
\end{definition}

If $g(x) \neq 0$ for all $x$ near $a$, then this definition is equivalent to the statement that
\[
	\lim_{x \to a} \left|\frac{f(x)}{g(x)}\right| = \infty.
\]
It should also be noted that this Little Omega notation is not in very common use and should be explained whenever it appears.

So, we have collected all of our basic notations: $O$, $\Omega$, $\Theta$, $o$, and $\omega$. Now we will expand the scope of their definitions to allow them to appear in formulas. This is the most useful aspect of the notations though it does require an understanding of the underlying conventions. The following discussion applies to all of these notations, so for simplicity we'll just use Big O notation as our example.

There is an inherent asymmetry in the meaning of the equals sign $=$ in formulas involving $O$. The primary convention we use is that we are always simplifying left-to-right. A complicated expression may appear on the left of $=$, and its simplified  or refined form should appear on the right. Because of this the equals sign should be viewed as part of the new notation rather than as usual equality.

Specifically, when we write a statement involving $O$ there are hidden quantifiers. On the left side of the equals sign $=$ there is a ``for all'', and on the right side there is a ``there exists''. For example, in the statement
\begin{equation}
\label{bigoexample}
	\log(1+x) = x - \frac{x^2}{2} + O(x^3) \quad \text{as } x \to 0
\end{equation}
the $O$ notation appears on the right, and is therefore associated with a ``there exists'': there exists a function $g$ defined in a neighborhood of $x=0$ such that
\[
	\log(1+x) = x - \frac{x^2}{2} + g(x)
\]
and which satisfies
\[
	g(x) = O(x^3) \quad \text{as } x \to 0,
\]
where this last line is to be interpreted precisely as in Definition \ref{bigodef}.

Similarly, the expression
\[
	\log(1+x) + O(x^4) = x - \frac{x^2}{2} + O(x^3) \quad \text{as } x \to 0
\]
should be interpreted as follows: there is an $O(\cdots)$ on the left of the $=$ and another on the right, so we understand the expression to mean that \textbf{for all} functions $f(x) = O(x^4)$ as $x \to 0$ \textbf{there exists} a function $g(x)$ which satisfies the equation
\[
	\log(1+x) + f(x) = x - \frac{x^2}{2} + g(x)
\]
and the asymptotic $g(x) = O(x^3)$ as $x \to 0$. Indeed, taking such a $g$ defined by this equation we can deduce from \eqref{bigoexample} and the assumption $f(x) = O(x^4)$ as $x \to 0$ that
\begin{align*}
	|g(x)| &= \left|\log(1+x) - x + \frac{x^2}{2} + f(x)\right| \\
	&\leq \left|\log(1+x) - x + \frac{x^2}{2}\right| + |f(x)| \\
	&\leq C_1 x^3 + C_2 x^4 \\
	&\leq C_3 x^3
\end{align*}
for all $x$ small enough. This proves that $g(x) = O(x^3)$ as $x \to 0$, as desired.

\begin{remark}
If it's not clear, it would be a valuable exercise to check that by reversing the quantifiers the above statement is not true. It is not true that for any function $g(x) = O(x^3)$ as $x \to 0$ there exists a function $f(x) = O(x^4)$ as $x \to 0$ such that
\[
	\log(1+x) + f(x) = x - \frac{x^2}{2} + g(x).
\]
Consider, for example, $g(x) = x^3/6$.
\end{remark}

\begin{definition}[Asymptotic Equivalence]
\label{defasympequiv}
Given a point $a$ (with the possibility that $a = \infty$) and two functions $f$ and $g$, we say that $f$ is asymptotic to $g$, and write
\[
	f(x) \sim g(x) \quad \text{as } x \to a,
\]
if
\[
	\lim_{x \to a} \frac{f(x)}{g(x)} = 1.
\]
Alternatively we may say that $f$ is asymptotically equivalent to $g$.
\end{definition}

\begin{definition}[Asymptotic Scale]
Given a point $a$ (with the possibility that $a = \infty$) and a sequence of functions $\{\varphi_n(x)\}_{n \in \N}$, we say that this sequence is an asymptotic scale as $x \to a$ if
\[
	\varphi_{n+1}(x) = o(\varphi_n(x)) \quad \text{as } x \to a
\]
for all $n \in \N$.
\end{definition}

\begin{definition}[Asymptotic Expansion]
Given a function $f$, a point $a$, and an asymptotic scale $\{\varphi_n(x)\}_{n \in \N}$ as $x \to a$, we say that the formal series
\[
	\sum_{n=1}^{\infty} \varphi_n(x)
\]
is an asymptotic expansion for $f(x)$ as $x \to a$, and write
\[
	f(x) \dsim \sum_{n=1}^{\infty} \varphi_n(x) \quad \text{as } x \to a,
\]
if
\[
	f(x) = \sum_{n=1}^{N} \varphi_n(x) + o(\varphi_N(x)) \quad \text{as } x \to a
\]
for all $N \in \N$.
\end{definition}

\begin{remark}
The dotted tilde notation $\dsim$ above is non-standard. In some texts the plain tilde $\sim$ is instead used in the notation for an asymptotic expansion, and in others a wavy equals $\approx$ is used. In my experience using the plain $\sim$ can cause confusion because it already has another meaning in asymptotic analysis; see Definition \ref{defasympequiv}. The reason I don't use $\approx$\newnot{symbol:approx} is slightly different: I prefer that it be left undefined so that it may be used informally. It can be a very helpful tool for communicating exploratory, non-rigorous calculations. These are useful for building intuition. The precise meaning of $\approx$ can then be determined afterward with a more detailed, rigorous analysis. (We use $\approx$ in this way in Section \ref{curscasec_exploratory}.)
\end{remark}

\begin{definition}[Uniformity]
Given sets $U$ and $V$ and two functions $f(x,y)$ and $g(x)$ whose $x$-domains include $U$, the statement
\[
	f(x,y) = O(g(x)) \quad \text{uniformly for all } x \in U \text{ and } y \in V
\]
is defined to mean that there exists a constant $C > 0$ depending only on $U$ and $V$ such that
\[
	|f(x,y)| \leq C|g(x)| \quad \text{for all } x \in U \text{ and } y \in V.
\]
\end{definition}

Of course similar definitions of uniformity can be made for the other notations $\Omega$, $\Theta$, $o$ $\omega$, $\sim$, and $\dsim$. Essentially a claim of uniformity in an asymptotic expression is a statement that none of the hidden constants depend on the specified additional parameters, though it is possible that they depend on the \textit{ranges} of the parameters.

The main way that the concept of uniform estimates will be used in this thesis is in the description of the behavior of functions defined on the complex plane when $|z| \to \infty$. For example, if $\theta \in (0,\pi/2)$ then
\[
	\left| \frac{-2i \sin(iz)}{e^z} - 1 \right| \leq e^{-2|z|\cos\theta}, \qquad |{\arg z}| \leq \theta.
\]
There is a constant $C(\theta) > 0$ which depends on $\theta$ such that
\[
	e^{-2x\cos\theta} \leq \frac{1}{x}
\]
for all $x > C(\theta)$, so the above formula implies that if $\theta \in (0,\pi/2)$ then
\[
	\left| \frac{-2i \sin(iz)}{e^z} - 1 \right| \leq \frac{D}{|z|}, \qquad |{\arg z}| \leq \theta \text{ and } |z| > C(\theta)
\]
with $D=1$. We can restate this in asymptotic terms:
\[
	-2i \sin(iz) = e^z \left[1 + O\!\left(z^{-1}\right)\right]
\]
as $|z| \to \infty$ uniformly for $|{\arg z}| \leq \theta$, and we interpret this to mean that the hidden constants associated with the $O(\cdots)$ term (in this case $C(\theta)$ and $D$) do not depend on $\arg z$ as long as $|{\arg z}| \leq \theta$.

\section{Cauchy Integrals}

In this section we will state several important facts about Cauchy integrals. Proofs of these as well as further details can be found in \cite{miller:rhnotes,musk:singularintegrals,gakhov:bvp}.

\begin{definition}[Cauchy Integrals]
Let $L$ be a finite, smooth, oriented curve (which may be a closed contour) and suppose that $g\colon \C \to \C$ is integrable with respect to arc length on $L$, i.e.
\[
	\int_L |g(t)|\,|dt| < \infty.
\]
For $z \in C\setminus L$ the Cauchy integral of $g$ is defined as
\[
	\mathcal C_L[g](z) = \frac{1}{2\pi i} \int_L \frac{g(t)}{t-z}\,dt.
\]
\end{definition}

\begin{proposition}
\label{prelimsprop_cauchyproperties}
The function $\mathcal C_L[g]$ is analytic on $\C \setminus L$ and $\mathcal C_L[g](z) = O(z^{-1})$ as $|z| \to \infty$.
\end{proposition}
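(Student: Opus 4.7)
The plan is to verify the two claims separately, both of which follow from standard arguments about integrals depending holomorphically on a parameter, together with the hypothesis that $g$ is integrable with respect to arc length and that $L$ is finite (hence compact).

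For the analyticity claim, I would fix a point $z_0 \in \C \setminus L$. Since $L$ is a finite smooth curve it is compact, so $d := \dist(z_0, L) > 0$. On the open disk $D = \{z : |z - z_0| < d/2\}$ we then have $|t - z| \geq d/2$ for every $t \in L$, which gives the uniform bound $|g(t)/(t-z)| \leq 2|g(t)|/d$ on $L \times D$. Since $\int_L |g(t)|\,|dt| < \infty$ by hypothesis, this dominating function is integrable on $L$. For each fixed $t \in L$, the map $z \mapsto g(t)/(t-z)$ is holomorphic on $D$, so by the standard theorem on differentiation under the integral sign (justified by dominated convergence on the difference quotients), $\mathcal{C}_L[g]$ is holomorphic on $D$, with
\[
\mathcal{C}_L[g]'(z) = \frac{1}{2\pi i} \int_L \frac{g(t)}{(t-z)^2}\,dt.
\]
Since $z_0 \in \C \setminus L$ was arbitrary, $\mathcal{C}_L[g]$ is analytic on all of $\C \setminus L$. (Alternatively, one could invoke Morera's theorem after using Fubini to swap a contour integral around a small triangle in $D$ with the integral over $L$.)

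For the decay at infinity, I would exploit that $L$ is compact, so $R_0 := \max_{t \in L} |t| < \infty$. For $|z| > 2R_0$ and any $t \in L$, the reverse triangle inequality gives $|t - z| \geq |z| - |t| \geq |z|/2$. Hence
\[
|\mathcal{C}_L[g](z)| \leq \frac{1}{2\pi} \int_L \frac{|g(t)|}{|t-z|}\,|dt| \leq \frac{1}{\pi |z|} \int_L |g(t)|\,|dt|.
\]
Since the last integral is a finite constant depending only on $g$ and $L$, this is exactly the statement $\mathcal{C}_L[g](z) = O(z^{-1})$ as $|z| \to \infty$.

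Neither step is really an obstacle; the only mild care needed is in justifying differentiation under the integral, but since $g$ is merely assumed integrable (not continuous), the dominated-convergence route via the uniform bound on the neighborhood $D$ is the cleanest path and avoids any appeal to uniform convergence of Riemann sums.
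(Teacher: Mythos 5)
Your proof is correct and complete; both the dominated-convergence justification of differentiation under the integral (which correctly handles the fact that $g$ is only assumed integrable, not continuous) and the reverse-triangle-inequality bound for $|z| > 2\max_{t \in L}|t|$ are exactly the standard arguments. The paper does not supply its own proof of this proposition but defers to the cited references on singular integrals, where essentially this same argument appears.
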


\begin{definition}[H\"older Continuity]
Let $0 < \alpha \leq 1$. A function $g$ defined on some connected set $U \subset \C$ is said to be H\"older continuous on $U$ if there exists a constant $K$ such that
\[
	|g(z_1) - g(z_2)| \leq K|z_1-z_2|^\alpha \quad \text{for all } z_1,z_2 \in U.
\]
\end{definition}

\begin{proposition}[Plemelj Formulas]
\label{prelimsprop_plemelj}
If $g$ is H\"older continuous on $L$ then $\mathcal C_L[g]$ has extensions from the left of $L$ onto $L$ and from the right of $L$ onto $L$ denoted by $\mathcal C_L^+[g]$ and $\mathcal C_L^-[g]$\newnot{symbol:extensions}, respectively, which are continuous except possibly in arbitrarily small neighborhoods of the endpoints of $L$. If $\zeta \in L$ is not an endpoint of $L$, or if it is an endpoint and $g(\zeta) = 0$, then
\[
	\mathcal C_L^\pm[g](\zeta) = \frac{1}{2\pi i} \operatorname{P.V.} \int_L \frac{g(t)}{t-\zeta}\,dt \pm \frac{g(\zeta)}{2},
\]
where $\operatorname{P.V.} \int$ is a principal value integral, and hence
\[
	\mathcal C_L^+[g](\zeta) - \mathcal C_L^-[g](\zeta) = g(\zeta).
\]
If $L$ is an arc which connects $a$ to $b$, $a,b\in \C$, then there exists a function $H_a$ defined in a neighborhood of $a$ which is analytic on $L^c$ (the complement of $L$), continuous at $a$, and has continuous extensions onto $L\setminus\{a\}$ from the left and the right such that
\[
	\mathcal C_L[g](z) = \frac{g(a)}{2\pi i} \log \frac{1}{z-a} + H_a(z)
\]
for $z$ near $a$, where the branch cut of the logarithm coincides with $L$. For $z$ near $b$ there is an analogous function $H_b$ such that
\[
	\mathcal C_L[g](z) = -\frac{g(b)}{2\pi i} \log \frac{1}{z-b} + H_b(z),
\]
where again the branch cut of the logarithm coincides with $L$.
\end{proposition}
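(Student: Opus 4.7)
The plan is to reduce everything to the elementary case $g \equiv 1$ via the classical ``subtract-and-add'' decomposition. For any $\zeta \in L$, write
\[
\mathcal{C}_L[g](z) = \frac{1}{2\pi i} \int_L \frac{g(t) - g(\zeta)}{t-z}\,dt + \frac{g(\zeta)}{2\pi i} \int_L \frac{dt}{t-z}.
\]
Call the first term $\Phi_\zeta(z)$. The second term is explicit: if $a,b$ are the endpoints of $L$, it equals $\frac{g(\zeta)}{2\pi i}\bigl[\log(b-z) - \log(a-z)\bigr]$ for any consistent branch whose cut lies along $L$, and as $z$ approaches an interior $\zeta$ from the left versus the right, these two branches differ by $2\pi i$. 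Its boundary values from either side also differ from the corresponding principal value integral by exactly $\pm\pi i/1$. Thus the claimed $\pm g(\zeta)/2$ jump falls out immediately, provided I can show that $\Phi_\zeta$ extends continuously to $\zeta$ from both sides with a single value, namely $\frac{1}{2\pi i}\operatorname{P.V.}\int_L \frac{g(t)-g(\zeta)}{t-\zeta}\,dt$.

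To establish this continuous extension, I would split $L = L_\delta \cup (L \setminus L_\delta)$, where $L_\delta$ is the subarc of $L$ within distance $\delta$ of $\zeta$. On $L \setminus L_\delta$ the integrand is jointly continuous in $(t,z)$ for $z$ in a small neighborhood of $\zeta$, so that contribution extends continuously without issue. On $L_\delta$, H\"older continuity gives the majorant $|g(t)-g(\zeta)|/|t-z| \leq K|t-\zeta|^{\alpha}/|t-z|$, and a standard geometric estimate, using smoothness of $L$ to compare $|t-z|$ with $|t-\zeta|$, yields a bound of size $O(\delta^\alpha)$ uniformly in $z$ close to $\zeta$. Sending $\delta \to 0$ produces the claimed continuous extension to $\zeta$ from either side with common limit equal to the principal-value integral.

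For the endpoint behavior near $a$, the same decomposition with $\zeta = a$ applies. Now $|g(t)-g(a)|/|t-z| \leq K|t-a|^{\alpha-1}/|1|$ near $a$, which is integrable, so the first piece defines a function $H_a(z)$ that is analytic off $L$, continuous at $a$, and, by a repeat of the interior argument applied to the modified density $g(t)-g(a)$ (which vanishes at $a$), has continuous boundary values on $L \setminus \{a\}$ from either side. The remaining piece $\frac{g(a)}{2\pi i}\log\frac{b-z}{a-z}$ splits as $\frac{g(a)}{2\pi i}\log\frac{1}{z-a}$ plus an analytic contribution that can be absorbed into $H_a$, once the branch cut of the logarithm is fixed along $L$. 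The analysis at $b$ is identical up to orientation, which flips the sign. The main obstacle in the entire proposition is the uniformity estimate governing $z \to \zeta$ nontangentially in the first step; everything else is bookkeeping with elementary logarithms and a reprise of that same estimate with the modified density. References \cite{miller:rhnotes,musk:singularintegrals,gakhov:bvp} carry out the geometric estimate in full detail, so I would cite them rather than repeat it.
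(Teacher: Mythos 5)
Your sketch is the classical subtract-and-add regularization argument of Muskhelishvili and Gakhov, which is precisely the proof the paper defers to: the thesis states this proposition without proof and cites \cite{miller:rhnotes,musk:singularintegrals,gakhov:bvp} for exactly this argument. The decomposition into $\Phi_\zeta$ plus the explicit logarithmic term, the H\"older-based $O(\delta^\alpha)$ estimate for the continuity of $\Phi_\zeta$ across $L$, and the absorption of the analytic part of $\log\frac{b-z}{a-z}$ into $H_a$ and $H_b$ are all correct and match the standard treatment, so nothing further is needed.
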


\section{Scalar Riemann-Hilbert Problems}
\label{prelimssec_rhps}

Let $L$ be an oriented curve in the plane and let $g$ be a function defined on $L$. If $L$ is a closed contour, set $L^* = L$ and if $L$ is an arc which begins at $z=a$ and ends at $z=b$, set $L^* = L \setminus \{a,b\}$. A scalar Riemann-Hilbert problem is a problem of the following form.

\begin{rhp}
\label{prelimsrhp_example}
Seek an analytic function $\Phi\colon \C \setminus L \to \C$ such that
\begin{enumerate}
	\item $\Phi$ has continuous extensions $\Phi^+$ and $\Phi^-$ from the left and right of $L^*$, respectively, onto $L^*$,
	\item $\Phi^+(\zeta) - \Phi^-(\zeta) = g(\zeta)$ for $\zeta \in L^*$,
	\item $\Phi(z) \to 0$ as $|z| \to \infty$,
	\item if $c$ is an endpoint of $L$ then $\Phi(z) = O(|z-c|^{-1+\epsilon})$ as $z \to c$ with $z \in \C \setminus L$ for some $\epsilon > 0$.
\end{enumerate}
\end{rhp}

\begin{proposition}
If $L$ has finite length and if $g$ is H\"older continuous on $L$ then $\Phi = \mathcal C_L[g]$ is the unique solution to Riemann Hilbert Problem \ref{prelimsrhp_example}.
\end{proposition}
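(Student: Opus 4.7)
The plan is to establish existence by verifying that $\Phi := \mathcal{C}_L[g]$ satisfies each of the four conditions in turn, and then to prove uniqueness via a standard analytic-continuation/Liouville argument applied to the difference of two solutions.

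For existence, conditions (1) and (3) are immediate from Proposition \ref{prelimsprop_cauchyproperties}: the Cauchy integral of an integrable function is analytic off $L$ and decays like $O(z^{-1})$ at infinity, which in particular goes to $0$. Condition (2) is exactly the jump statement of the Plemelj formulas (Proposition \ref{prelimsprop_plemelj}), provided we restrict to $L^*$ so as to avoid endpoints. Condition (4) is where I would use the endpoint expansion from Proposition \ref{prelimsprop_plemelj}: near an endpoint $c$ of $L$, $\mathcal{C}_L[g](z) = \pm \frac{g(c)}{2\pi i}\log\frac{1}{z-c} + H_c(z)$ with $H_c$ continuous at $c$. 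Since $|\log|z-c|| = o(|z-c|^{-\epsilon})$ as $z \to c$ for every $\epsilon>0$, this yields $\Phi(z) = O(|z-c|^{-1+\epsilon})$, in fact even a much stronger bound.

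For uniqueness, suppose $\Phi_1, \Phi_2$ are two solutions, and set $\Psi := \Phi_1 - \Phi_2$. Then $\Psi$ is analytic on $\C\setminus L$, has continuous extensions $\Psi^\pm$ on $L^*$, and $\Psi^+(\zeta) - \Psi^-(\zeta) = 0$ for $\zeta \in L^*$. Hence $\Psi$ extends continuously across $L^*$, and by Morera's theorem the extension is analytic on $\C \setminus \{\text{endpoints of } L\}$. At each endpoint $c$, the estimate $\Psi(z) = O(|z-c|^{-1+\epsilon})$ gives $(z-c)\Psi(z) \to 0$, so Riemann's removable singularity theorem (in its sharp form) shows the endpoint is removable. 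Thus $\Psi$ is entire, and since $\Psi(z) \to 0$ as $|z|\to\infty$, Liouville's theorem forces $\Psi \equiv 0$.

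The main obstacle is really just bookkeeping around the endpoints of $L$: verifying (4) for $\mathcal{C}_L[g]$ relies on the logarithmic endpoint asymptotics supplied by Proposition \ref{prelimsprop_plemelj}, and the removability step in the uniqueness argument is what crucially requires the exponent $-1+\epsilon$ rather than $-1$. Everything else is a direct application of the two propositions cited. If $L$ is a closed contour there are no endpoints, so $L^* = L$ and the uniqueness argument simplifies to Morera plus Liouville without the removable-singularity step.
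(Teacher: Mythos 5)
Your proposal is correct and follows essentially the same route as the paper: existence by reading off conditions (1)--(4) from Propositions \ref{prelimsprop_cauchyproperties} and \ref{prelimsprop_plemelj}, and uniqueness by showing the difference of two solutions has no jump across $L^*$, has removable singularities at the endpoints because of the growth condition (4), and hence is entire and vanishes by Liouville. Your explicit appeal to the logarithmic endpoint expansion for condition (4) and to the sharp form of the removable singularity theorem is if anything slightly more careful than the paper's own wording.
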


\begin{proof}
It follows immediately from Propositions \ref{prelimsprop_cauchyproperties} and \ref{prelimsprop_plemelj} that $\mathcal C_L[g]$ solves the Riemann-Hilbert problem.

To show that $\mathcal C_L[g]$ it is the only solution, suppose that $\Psi$ also solves the Riemann-Hilbert problem and define
\[
	 F = C_L[g] - \Psi.
\]
Then for $\zeta \in L^*$ we have
\[
	F^+(\zeta) = F^-(\zeta),
\]
Consequently $F$ is continuous across $L^*$ and hence analytic on $\C$ except possibly at the endpoints of $L$. These endpoints are at most isolated singularities of $F$ of order strictly less than $1$, so in fact $F$ is bounded near these points and is thus analytic there as well. In total $F$ is entire, and since $F(z) \to 0$ as $|z| \to \infty$ it must be true that $F \equiv 0$ by Liouville's theorem.
\end{proof}

\noindent This is essentially the same proof given in \cite[sec. 78]{musk:singularintegrals}.

We will make use of this proposition at key points of Chapter \ref{chap_posfinite}.

\chapter{Limit Curves for the Zeros of the Partial Sums}
\label{chap_limitcurves}

In this chapter we will calculate the set of limit points in a certain sector of the (appropriately scaled) zeros of the partial sums of the Maclaurin series for the functions $f$ we are interested in. Just as in the examples discussed in Chapter \ref{chap_intro}, as the degree of the partial sum tends to infinity its scaled zeros converge to a piecewise-smooth curve in the plane.

\section{One Direction of Maximal Exponential Growth}
\label{seccurve1_onedirsec}

Let $a,b \in \C$, $0 < \lambda < \infty$, $0 < \theta < \pi$, and $\mu < 1$. We suppose that $f$ is an entire function with the asymptotic behavior
\begin{equation}
\label{seccurve1_fgrowth}
f(z) = \begin{cases}
	z^a (\log z)^b \exp\!\left(z^\lambda\right) \bigl[1+o(1)\bigr] & \text{for } |{\arg z}| \leq \theta, \\
	O\!\left(\exp(\mu |z|^\lambda)\right) & \text{for } |{\arg z}| > \theta
	\end{cases}			
\end{equation}
as $|z| \to \infty$, with each estimate holding uniformly in its sector.

Note that without loss of generality we assume that the sector of maximal growth is bisected by the positive real line---if a function grows maximally in some other direction we can replace $z$ by $\zeta z$ for some appropriate $\zeta \in \C$ with $|\zeta| = 1$ so that the maximal growth sector of the rotated function is oriented as desired. Similarly note that we assume $f$ has been normalized so that the leading coefficient in its asymptotic, as well as the coefficient of $z^\lambda$ in the exponential, are both equal to $1$.

For this $f$, let
\[
	p_n(z) = \sum_{k=0}^{n} \frac{f^{(k)}(0)}{k!} z^k
\]
and define
\begin{equation}
\label{seccurve1_rndef}
	r_n = \left(\frac{n}{\lambda}\right)^{1/\lambda}.
\end{equation}

\begin{theorem}
\label{seccurve1_maintheo}
All limit points of the zeros of the scaled partial sums $p_{n-1}(r_n z)$ in the sector $|{\arg z}| < \theta$, $z \neq 0$ lie on the curve
\[
	\left| z^\lambda \exp\!\left(1-z^\lambda\right) \right| = 1, \qquad |z| \leq 1.
\]
Further, these zeros approach this curve from the region $|z^\lambda \exp(1-z^\lambda)| > 1$.
\end{theorem}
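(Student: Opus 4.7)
The plan is to obtain an asymptotic formula for $p_{n-1}(r_n z)/f(r_n z)$ by writing the tail of the Maclaurin series as a Cauchy contour integral and applying the Laplace (steepest-descent) method at a saddle point at $u=1$. By Cauchy's integral formula applied to the Taylor coefficients of $f$,
\[
f(z) - p_{n-1}(z) = \frac{1}{2\pi i}\oint_{\Gamma}\frac{f(w)}{w-z}\left(\frac{z}{w}\right)^{n}dw
\]
for any simple closed contour $\Gamma$ enclosing both $0$ and $z$. Rescaling $w = r_{n}u$ and inserting the asymptotic $f(r_n u) \sim (r_n u)^{a}(\log r_n u)^{b}\exp(n u^{\lambda}/\lambda)$ valid inside the sector $|\arg u|\leq\theta$ shows that the integrand is, up to slowly varying factors,
\[
\exp\!\left(n\left[\log z - \log u + \tfrac{u^{\lambda}}{\lambda}\right]\right).
\]
The phase $\varphi(u) := u^{\lambda}/\lambda - \log u$ has a critical point at $u=1$ with $\varphi''(1) = \lambda > 0$, which will be the relevant saddle.

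The contour $\Gamma$ is then deformed so as to (a) pass through $u=1$ along its steepest-descent direction (vertical at $u=1$), (b) remain in the sector $|\arg u|\leq\theta$ wherever the refined asymptotic for $f$ is needed, and (c) be routed so that on any portion outside this sector the cruder bound $f(r_n u) = O(\exp(\mu n|u|^{\lambda}/\lambda))$, with $\mu<1$, makes those contributions exponentially smaller than the saddle contribution. This is the precise role of the hypothesis $\mu<1$. Laplace's method (see Appendix~\ref{appendix_laplace}) applied to the deformed contour then yields, uniformly on compact subsets of $\{|\arg z|<\theta\}$ bounded away from $z=0$ and $z=1$,
\[
f(r_n z) - p_{n-1}(r_n z) = \frac{G(z)+o(1)}{\sqrt{n}}\, f(r_n z)\, \left[z\exp\!\left(\tfrac{1-z^{\lambda}}{\lambda}\right)\right]^{n},
\]
where $G(z)$ is an explicit, nonvanishing, slowly-varying prefactor assembled from the saddle calculation and the subexponential factors in $f$.

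Rearranging gives
\[
\frac{p_{n-1}(r_n z)}{f(r_n z)} = 1 - \frac{G(z)+o(1)}{\sqrt{n}}\left[z\exp\!\left(\tfrac{1-z^{\lambda}}{\lambda}\right)\right]^{n},
\]
and since $\bigl|z\exp((1-z^{\lambda})/\lambda)\bigr|^{n} = \bigl|z^{\lambda}e^{1-z^{\lambda}}\bigr|^{n/\lambda}$, the correction term decays super-polynomially where $|z^{\lambda}e^{1-z^{\lambda}}|<1$ and grows super-polynomially where $|z^{\lambda}e^{1-z^{\lambda}}|>1$. In either region the ratio is bounded away from zero for $n$ large, so no limit point of the zeros can lie off the curve $|z^{\lambda}e^{1-z^{\lambda}}|=1$. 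Further, for $z_n$ to actually be a zero one needs
\[
\bigl|z_{n}^{\lambda}e^{1-z_{n}^{\lambda}}\bigr|^{n/\lambda} = \frac{\sqrt{n}}{|G(z_n)|}\bigl(1+o(1)\bigr) > 1,
\]
which forces $|z_{n}^{\lambda}e^{1-z_{n}^{\lambda}}| > 1$ for large $n$, so the zeros approach the limit curve from the exterior region, as claimed.

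The main obstacles are technical: making the Laplace estimate sufficiently uniform in $z$ (pointwise asymptotics would not suffice to conclude about limit points); arranging the deformed contour so that its non-sector portions are genuinely negligible while still interacting correctly with the moving pole at $u=z$; and handling the confluence of the saddle $u=1$ with the pole $u=z$ as $z\to 1$. The last point is finessed by excluding a fixed neighborhood of $z=1$ from the analysis—since $z=1$ itself lies on the limit curve, this loses nothing for the present theorem, and a dedicated scaling limit at that corner is carried out in a later chapter. Finally, a direct examination of the equation $|z^{\lambda}e^{1-z^{\lambda}}|=1$ within the sector $|\arg z|<\theta$ verifies that its relevant portion is precisely the loop $|z|\leq 1$ claimed in the theorem.
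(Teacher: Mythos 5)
Your overall strategy is the same one the paper uses: represent the tail $f-p_{n-1}$ as a Cauchy integral of $f(w)(z/w)^{n}/(w-z)$, deform onto an admissible contour through the saddle at $u=1$, use the hypothesis $\mu<1$ to make the out-of-sector contribution exponentially negligible, and read off the location of the zeros by comparing the super-polynomial growth or decay of $\bigl[z\exp\bigl((1-z^{\lambda})/\lambda\bigr)\bigr]^{n}=e^{-n\varphi(z)}$ against the $n^{-1/2}$ prefactor. Your displayed asymptotic is exactly the paper's estimate \eqref{seccurve1_fnasymp} rewritten as a statement about $p_{n-1}/f$ (with $G(z)=\bigl[z^{a}(1-z)\sqrt{2\pi\lambda}\bigr]^{-1}$), and your derivation of the approach from the region $|z^{\lambda}e^{1-z^{\lambda}}|>1$ is the same computation as \eqref{seccurve1_insideasymp}.

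There is, however, one genuine gap: the clause $|z|\leq 1$ in the statement. The level set $\bigl|z^{\lambda}\exp(1-z^{\lambda})\bigr|=1$ inside the sector is \emph{not} just the closed loop; since $\varphi(z)\approx\tfrac{\lambda}{2}(z-1)^{2}$ near $z=1$, four branches meet at $z=1$, and two of them run outward into $|z|>1$ while remaining (near $z=1$) inside the sector $|{\arg z}|<\theta$. Your argument that no limit point lies \emph{off} the curve does not exclude limit points on these outer branches, and your closing remark that a ``direct examination'' shows the relevant portion is the loop is not something one can verify by inspection --- it is exactly the content that must be proved. Moreover, at and beyond those outer branches the point $z$ sits \emph{outside} the deformed contour, so the deformation crosses the pole at $w=z$ and the Cauchy integral represents $-p_{n-1}$ rather than $f-p_{n-1}$; your formula for $p_{n-1}/f$ loses its leading ``$1$'' there. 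On the outer branches $\bigl|e^{-n\varphi(z)}\bigr|=1$, so the correct statement is $p_{n-1}(r_{n}z)/f(r_{n}z)=-(G(z)+o(1))\,n^{-1/2}e^{-n\varphi(z)}$, which tends to $0$ rather than to $1$: the absence of zeros follows because this quantity is nonvanishing of exact order $n^{-1/2}$, not because the ratio is ``bounded away from zero.'' This is precisely the separate case the paper treats for zeros outside $\gamma$ in $\{\re\varphi(z)\leq 0,\ |z|>1\}$, where $F_{n}(z)=0$ would contradict \eqref{seccurve1_fnasymp}. Once you add this inside-versus-outside-the-contour case analysis (and strengthen ``bounded away from $0$ and $1$'' to ``bounded away from the contour,'' rechoosing the admissible contour near any given limit point, which is harmless since the steepest-descent path meets the limit curve only at $z=1$), your proof closes.
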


In fact we will show that if $z = z(n)$ is a zero of $p_{n-1}(r_n z)$ which converges to a point $z_0$ with $|{\arg z_0}| < \theta$, $z_0 \neq 0,1$ then
\[
	\left| z^\lambda \exp\!\left(1-z^\lambda\right) \right| = 1 + \frac{\lambda\log n}{2n} + O\!\left(n^{-1}\right)
\]
as $n \to \infty$.

\subsection{Definitions and Preliminaries}
\label{seccurve1_defsandprelims}

For $|{\arg z}| \leq \theta$ it follows from \eqref{seccurve1_fgrowth} that
\begin{align}
\frac{f(r_n z)}{r_n^a (\log r_n)^b (e^{1/\lambda}z)^n} &\sim z^a \left(z^\lambda e^{1-z^\lambda}\right)^{-n/\lambda} \nonumber \\
&= z^a e^{n \varphi(z)} \label{seccurve1_integrandasymp}
\end{align}
as $n \to \infty$, where
\begin{equation}
\label{seccurve1_phidef}
	\varphi(z) := \left.\left(z^\lambda - 1 - \lambda \log z\right)\right/\lambda.
\end{equation}

\begin{definition}
\label{seccurve1_admisscontour}
A contour $\gamma$ is said to be \textit{admissible} for a function $\varphi$ if
\begin{enumerate}
\item $\gamma$ is a smooth Jordan curve winding counterclockwise around the origin.
\item In the sector $|{\arg z}| \leq \theta$, $\gamma$ is a positive distance from the curve $\re \varphi(z) = 0$ except for a part that lies in some neighborhood $U_\gamma$ of $z = 1$.  In this set $U_\gamma$ the contour $\gamma$ coincides with the path of steepest decent of the function $\re \varphi(z)$ passing through the point $z = 1$.
\item In the sector $|{\arg z}| \geq \theta$, $\gamma$ coincides with the unit circle.
\end{enumerate}
\end{definition}

Let $\gamma$ be an admissible contour for $\varphi$ and suppose for now that $z \neq 0$ is inside the scaled contour $r_n \gamma$.  The function
\[
	\frac{f(z) - p_{n-1}(z)}{z^n} =: \Phi(z)
\]
is entire, so by Cauchy's integral formula
\begin{equation}
\label{seccurve1_fnbuild}
	\Phi(z) = \frac{1}{2\pi i} \int_{r_n \gamma} \zeta^{-n} f(\zeta) \frac{d\zeta}{\zeta - z} - \frac{1}{2\pi i}\int_{r_n \gamma} \zeta^{-n} p_{n-1}(\zeta) \frac{d\zeta}{\zeta - z}.
\end{equation}
Since
\[
	\int_{r_n \gamma} \zeta^{-m} \frac{d\zeta}{\zeta - z} = 0
\]
for all integers $m \geq 1$, the second integral in \eqref{seccurve1_fnbuild} is zero.  Making the substitution $\zeta = r_n s$ and replacing $z$ by $r_n z$ yields the identity
\[
	\frac{f(r_n z) - p_{n-1}(r_n z)}{(r_n z)^n} = \frac{1}{2\pi i} \int_\gamma (r_n s)^{-n} f(r_n s) \frac{ds}{s-z},
\]
which holds for $z \neq 0$ inside $\gamma$.  (This construction is a special case of the one in \cite[p.\ 436]{edrei:paderem} for an integral representation of the error of a Pad\'e approximation.)

Define the function
\begin{equation}
\label{seccurve1_fdef}
	F_n(z) = \frac{r_n^{-a} (\log r_n)^{-b}}{2\pi i} \int_{\gamma} \left(e^{1/\lambda}s\right)^{-n}\! f(r_n s) \frac{ds}{s-z}
\end{equation}
for $z \notin \gamma$, $z \neq 0$. For $z$ inside $\gamma$ with $z \neq 0$ it follows from the derivation above that
\[
	F_n(z) = \frac{f(r_n z) - p_{n-1}(r_n z)}{r_n^a (\log r_n)^b (e^{1/\lambda} z)^n}.
\]
The value of $F_n(z)$ for $z$ outside $\gamma$ can be calculated using a similar derivation or by using the residue theorem, and in total
\begin{equation}
\label{seccurve1_fnexplicit}
	F_n(z) = \frac{1}{r_n^a (\log r_n)^b (e^{1/\lambda} z)^n} \times \begin{cases}
		-p_{n-1}(r_n z) & \text{for } z \text{ outside } \gamma, \\
		f(r_n z) - p_{n-1}(r_n z) & \text{for } z \neq 0 \text{ inside } \gamma.
		\end{cases}
\end{equation}

A straightforward calculation shows that $\varphi(1) = \varphi'(1) = 0$ and $\varphi''(1) = \lambda$, so
\[
	\varphi(s) = \frac{\lambda}{2}(s-1)^2 + O\!\left((s-1)^3\right)
\]
in a neighborhood of $s=1$.  The inverse function theorem ensures the existence of a neighborhood $V$ of the origin, a neighborhood $U \subset U_\gamma$ of $s=1$, and a biholomorphic map $\psi \colon V \to U$ which satisfies
\[
	(\varphi \circ \psi)(x) = x^2
\]
for $x \in V$. This function $\psi$ maps a segment of the imaginary axis onto the path of steepest descent of the function $\re \varphi(z)$ going through $z=1$ with $\psi(0) = 1$, and we make the choice that $\psi'(0) = \sqrt{2/\lambda}$.

\subsection{Proof of Theorem \ref{seccurve1_maintheo}}
\label{seccurve1_maintheoproof}

Split the integral for $F_n$ into the two pieces
\begin{equation}
\label{seccurve1_fnsplit}
	F_n(z) = \frac{r_n^{-a} (\log r_n)^{-b}}{2\pi i} \left( \int_{\gamma_\theta} \left(e^{1/\lambda}s\right)^{-n}\! f(r_ns) \frac{ds}{s-z} + \int_{\gamma \setminus \gamma_\theta} \left(e^{1/\lambda}s\right)^{-n}\! f(r_ns) \frac{ds}{s-z} \right),
\end{equation}
where $\gamma_\theta$ is the portion of $\gamma$ in the sector $|{\arg z}| \leq \theta$.

\begin{lemma}
\label{seccurve1_gminusgtlem}
If $z$ is restricted to any sector $|{\arg z}| \leq \theta - \epsilon$ with $\epsilon > 0$ then
\[
	\int_{\gamma \setminus \gamma_\theta} \left(e^{1/\lambda}s\right)^{-n}\! f(r_ns) \frac{ds}{s-z} = O\!\left(e^{(\mu-1)n/\lambda}\right)
\]
as $n \to \infty$ uniformly in $z$.
\end{lemma}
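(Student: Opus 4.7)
The plan is to bound each factor of the integrand pointwise on $\gamma \setminus \gamma_\theta$ and multiply by the finite length of the contour.

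First I would use condition 3 of Definition \ref{seccurve1_admisscontour} to reduce to $|s| = 1$ on $\gamma \setminus \gamma_\theta$, which gives $|(e^{1/\lambda}s)^{-n}| = e^{-n/\lambda}$ immediately. Since every such $s$ satisfies $|\arg s| \geq \theta$, the second branch of the hypothesis \eqref{seccurve1_fgrowth} supplies a constant $C > 0$ (independent of $s$) with
\[
|f(r_n s)| \leq C \exp\!\left(\mu r_n^\lambda\right) = C\, e^{\mu n / \lambda}
\]
for all sufficiently large $n$. Multiplying these two bounds already produces the target factor $e^{(\mu - 1)n/\lambda}$; what remains is to control the Cauchy denominator $(s-z)^{-1}$ uniformly in $z$.

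The key step, and really the only thing needing verification, is a uniform positive lower bound on $|s - z|$ that does not depend on $|z|$. I would write $z = r e^{i\alpha}$ with $|\alpha| \leq \theta - \epsilon$ and $s = e^{i\beta}$ with $|\beta| \geq \theta$, and minimize
\[
|s - z|^2 = r^2 - 2r \cos(\beta - \alpha) + 1
\]
over $r \geq 0$. If $\cos(\beta - \alpha) \leq 0$ the minimum is $1$; if $\cos(\beta - \alpha) > 0$ the minimum is $\sin^2(\beta - \alpha) \geq \sin^2 \epsilon$, since in that case the angular separation of $s$ and $z$ lies in $[\epsilon, \pi/2)$. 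In either case $|s - z| \geq \sin \epsilon$, uniformly in $z$.

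Combining the three pointwise estimates with $\operatorname{length}(\gamma \setminus \gamma_\theta) \leq 2\pi$ yields
\[
\left| \int_{\gamma \setminus \gamma_\theta} (e^{1/\lambda} s)^{-n} f(r_n s)\, \frac{ds}{s - z} \right| \leq \frac{2\pi C}{\sin \epsilon}\, e^{(\mu - 1) n/\lambda},
\]
with the constant independent of $z$, which is the claim. I expect this to be the easiest of the lemmas needed to isolate the saddle contribution near $s = 1$; the main analytical work of the chapter will come in handling the $\gamma_\theta$ integral by Laplace's method.
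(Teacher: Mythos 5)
Your proof is correct and follows essentially the same route as the paper: bound $|(e^{1/\lambda}s)^{-n}| = e^{-n/\lambda}$ on the unit-circle portion of $\gamma$, invoke the $O(\exp(\mu|z|^\lambda))$ branch of \eqref{seccurve1_fgrowth} to get $|f(r_n s)| \leq C e^{\mu n/\lambda}$, and multiply by a uniform lower bound on $|s-z|$ and the length of the arc. The only difference is that the paper simply asserts the existence of the constant $K$ with $|s-z|\geq K$ (separation of a compact arc from a disjoint closed sector), whereas you compute the explicit value $\sin\epsilon$ --- a harmless refinement.
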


\begin{proof}
For any $\epsilon > 0$ there exists a constant $K > 0$ such that $|s-z| \geq K$ for all $s \in \gamma\setminus\gamma_\theta$ and all $z$ with $|{\arg z}| \leq \theta - \epsilon$. Also, it follows from the asymptotic assumption on $f$ in \eqref{seccurve1_fgrowth} that for all $z$ large enough with $|{\arg z}| \geq \theta$ there is a constant $M$ such that
\[
	|f(z)| \leq M \exp\!\left(\mu |z|^\lambda\right).
\]
Since the contour $\gamma \setminus \gamma_\theta$ lies on the unit circle in the sector $|{\arg z}| \geq \theta$, it therefore follows from the above assumptions that
\[
	\left| \int_{\gamma \setminus \gamma_\theta} \left(e^{1/\lambda}s\right)^{-n}\! f(r_ns) \frac{ds}{s-z} \right| \leq K^{-1} M \len(\gamma\setminus\gamma_\theta) e^{(\mu-1)n/\lambda}
\]
for all $n$ large enough.
\end{proof}

Define the function $\delta(z)$ for $|{\arg z}| \leq \theta$ by
\begin{equation}
\label{seccurve1_deltadef}
	f(z) = z^a (\log z)^b \exp(z^\lambda) \bigl[ 1+\delta(z) \bigr].
\end{equation}
This implies
\[
	\frac{f(r_n s)}{r_n^a (\log r_n)^b (e^{1/\lambda}s)^n} = s^a e^{n \varphi(s)} \left(1 + \frac{\log s}{\log r_n}\right)^b \bigl[ 1 + \delta(r_n s) \bigr]
\]
for $s \in \gamma_\theta$. By then defining
\begin{equation}
\label{seccurve1_deltatdef}
	\tilde{\delta}(r_n, s) = \left(1 + \frac{\log s}{\log r_n}\right)^b \bigl[ 1 + \delta(r_n s) \bigr] - 1
\end{equation}
the integrand of the first integral in \eqref{seccurve1_fnsplit} can be rewritten as
\[
	s^a e^{n\varphi(s)} + s^a e^{n\varphi(s)} \tilde{\delta}(r_n, s).
\]
It then follows from \eqref{seccurve1_fnsplit} and Lemma \ref{seccurve1_gminusgtlem} that, for some constant $c > 0$,
\begin{equation}
\label{seccurve1_fnsplit2}
	F_n(z) = \frac{1}{2\pi i} \int_{\gamma_\theta} s^a e^{n\varphi(s)} \frac{ds}{s-z} + \frac{1}{2\pi i} \int_{\gamma_\theta} s^a e^{n\varphi(s)}\tilde{\delta}(r_n, s) \frac{ds}{s-z} + O(e^{-cn})
\end{equation}
as $n \to \infty$ uniformly for $z$ restricted to any sector $|{\arg z}| \leq \theta - \epsilon$ with $\epsilon > 0$.

For $\epsilon > 0$ define $N_\epsilon$ to be the set of all points within a distance of $\epsilon$ of the curve~$\gamma_\theta$.

\begin{lemma}
\label{seccurve1_fintapprox1}
\[
	\int_{\gamma_\theta} s^a e^{n \varphi(s)} \frac{ds}{s-z} = \frac{i}{1-z} \sqrt\frac{2\pi}{\lambda n} + O\!\left(n^{-1}\right)
\]
as $n \to \infty$ uniformly for $z \in \C \setminus N_\epsilon$ with $\epsilon > 0$.
\end{lemma}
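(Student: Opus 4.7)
The plan is to apply Laplace's method (steepest descent) to the integral. The exponent $\varphi$ has a saddle point at $s = 1$ with $\varphi(1) = \varphi'(1) = 0$ and $\varphi''(1) = \lambda$, and admissibility of $\gamma$ guarantees that $\re \varphi(s) \leq -c < 0$ for some constant $c > 0$ on the portion of $\gamma_\theta$ outside a fixed neighborhood of $s = 1$. Hence the integral concentrates near $s = 1$, and the leading-order contribution comes from evaluating the non-exponential part of the integrand there.

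First, I would split $\gamma_\theta = (\gamma_\theta \cap U) \cup (\gamma_\theta \setminus U)$, where $U \subset U_\gamma$ is the biholomorphic neighborhood from Section \ref{seccurve1_defsandprelims}. On the outer piece, admissibility gives $|e^{n\varphi(s)}| \leq e^{-cn}$, and combined with the uniform bound $|1/(s-z)| \leq 1/\epsilon$ (available because $z \in \C \setminus N_\epsilon$), the contribution from $\gamma_\theta \setminus U$ is $O(e^{-cn})$, which is absorbed by the $O(n^{-1})$ error.

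On the inner piece, I would apply the change of variables $s = \psi(it)$ with $t$ real, so that $\varphi(\psi(it)) = -t^2$ and $ds = i\psi'(it)\,dt$. The integral transforms into
\[
	i \int_{-T}^{T} e^{-nt^2} G(t,z)\, dt, \qquad G(t,z) = \frac{\psi(it)^a\, \psi'(it)}{\psi(it) - z},
\]
where $T > 0$ is chosen so that $\psi$ maps $i[-T, T]$ into $U$. Taylor-expanding $G(t,z) = G(0,z) + t\,\partial_t G(0,z) + O(t^2)$ with $G(0,z) = \sqrt{2/\lambda}/(1-z)$, and invoking the standard Gaussian estimates $\int_{-\infty}^{\infty} e^{-nt^2}\,dt = \sqrt{\pi/n}$, $\int_{-\infty}^{\infty} t\, e^{-nt^2}\,dt = 0$, and $\int_{-\infty}^{\infty} t^2 e^{-nt^2}\,dt = O(n^{-3/2})$ together with the exponentially small tail $\int_{|t| > T} e^{-nt^2}\,dt = O(e^{-nT^2})$, yields the claimed leading term $i(1-z)^{-1}\sqrt{2\pi/(\lambda n)}$ with remainder $O(n^{-3/2}) \subset O(n^{-1})$. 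Appendix \ref{appendix_laplace} packages exactly this kind of computation.

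The main obstacle is verifying that all error constants can be taken uniformly in $z \in \C \setminus N_\epsilon$. This amounts to bounding $G(t,z)$ and its $t$-derivatives uniformly in $z$ and $t \in [-T, T]$, which follows from the fact that $|\psi(it) - z| \geq \epsilon$ on this region (because $\psi(it) \in \gamma_\theta$), and that the remaining ingredients $\psi(it)^a$ and $\psi'(it)$ are $z$-independent and smooth on a compact interval. Everything else in the argument, including the exponential bound on $\gamma_\theta \setminus U$ and the tail estimate for $|t| > T$, is already uniform in $z$, so the $O(n^{-1})$ estimate transfers uniformly to $z \in \C \setminus N_\epsilon$ as required.
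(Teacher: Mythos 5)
Your proof is correct and follows essentially the same route as the paper's: discard the exponentially small contribution of $\gamma_\theta$ away from $s=1$, change variables $s=\psi(it)$ to reduce to a Gaussian integral, and extract the value of the non-exponential factor at the saddle, with uniformity in $z$ coming from $|\psi(it)-z|\geq\epsilon$. The only difference is cosmetic: you expand the amplitude to second order and use the vanishing odd moment to get an $O(n^{-3/2})$ remainder, whereas the paper simply bounds the first-order correction by $M|t|$ and integrates to obtain the stated $O(n^{-1})$.
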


\begin{proof}
Fix $\epsilon > 0$. Using the same method as in the proof of Lemma \ref{seccurve1_gminusgtlem} it can be shown that the integral over the part of the contour outside of $U$ is exponentially small, so that
\[
	\int_{\gamma_\theta} s^a e^{n\varphi(s)} \frac{ds}{s-z} = \int_{\gamma_\theta \cap U} s^a e^{n\varphi(s)} \frac{ds}{s-z} + O(e^{-cn})
\]
as $n \to \infty$ uniformly for $z \in \C \setminus N_\epsilon$, where $c$ is some positive constant not depending on $n$ or $z$. Making the substitution $s = \psi(it)$ yields
\begin{equation}
\label{seccurve1_g2sub}
	\int_{\gamma_\theta} s^a e^{n\varphi(s)} \frac{ds}{s-z} = \int_{-\alpha_1}^{\alpha_2} e^{-nt^2} \psi(it)^a \frac{i\psi'(it)}{\psi(it)-z}\,dt + O(e^{-cn})
\end{equation}
for some $\alpha_1,\alpha_2 > 0$.

Now write
\[
	\psi(it)^a \frac{i\psi'(it)}{\psi(it)-z} = \frac{i}{1-z} \sqrt\frac{2}{\lambda} + \tilde\psi(t,z),
\]
where
\[
	\tilde\psi(t,z) := \psi(it)^a \frac{i\psi'(it)}{\psi(it)-z} - \psi(0)^a \frac{i\psi'(0)}{\psi(0)-z}.
\]
By taking $U$ smaller if necessary it is tedious though straightforward to show by Taylor's theorem that for all $z \in \C \setminus N_\epsilon$ and for all $t \in [-\alpha_1,\alpha_2]$ that
\[
	|\tilde\psi(t,z)| \leq M|t|
\]
for some constant $M$ not depending on $z$, and therefore that
\begin{align*}
\left|\int_{-\alpha_1}^{\alpha_2} e^{-nt^2} \tilde\psi(t,z)\,dt \right| &\leq M \int_{-\alpha_1}^{\alpha_2} e^{-nt^2} |t|\,dt \\
&< 2M \int_0^\infty e^{-nt^2} t\,dt \\
&= \frac{M}{n}.
\end{align*}
Substituting this into \eqref{seccurve1_g2sub} and using the fact that
\[
	\int_{-\alpha_1}^{\alpha_2} e^{-nt^2} \,dt = \sqrt\frac{\pi}{n} + O(e^{-c'n})
\]
for some constant $c' > 0$ yields the estimate
\begin{align}
\int_{\gamma_\theta} s^a e^{n\varphi(s)} \frac{ds}{s-z} &= \frac{i}{1-z} \sqrt\frac{2}{\lambda} \int_{-\alpha_1}^{\alpha_2} e^{-nt^2} \,dt + O\!\left(n^{-1}\right) \nonumber \\
	&= \frac{i}{1-z} \sqrt\frac{2\pi}{\lambda n} + O\!\left(n^{-1}\right)
	\label{seccurve1_g2error}
\end{align}
as $n \to \infty$ uniformly for $z \in \C \setminus N_\epsilon$.
\end{proof}

\begin{lemma}
\label{seccurve1_fintapprox2}
\[
	\int_{\gamma_\theta} s^a e^{n\varphi(s)} \tilde\delta(r_n,s) \frac{ds}{s-z} = o\!\left(n^{-1/2}\right)
\]
as $n \to \infty$ uniformly for $z \in \C \setminus N_\epsilon$ with $\epsilon > 0$.
\end{lemma}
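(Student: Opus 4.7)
The plan is to mirror the Laplace-method analysis carried out in the proof of Lemma \ref{seccurve1_fintapprox1}, exploiting the fact that $\tilde\delta(r_n,s) \to 0$ uniformly on $\gamma_\theta$ as $n\to\infty$. This uniform decay should let us extract an arbitrarily small multiplicative factor from the $O(n^{-1/2})$ Laplace estimate, yielding the claimed $o(n^{-1/2})$.

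First, I would argue that the contribution from $\gamma_\theta \setminus U$ is exponentially small. On this part of the contour $\re\varphi(s) \leq -c$ for some $c > 0$ by admissibility, while $|\tilde\delta(r_n,s)|$ is bounded (the factor $(1+\log s/\log r_n)^b$ is bounded for $s$ bounded away from $0$ and $n$ large, and $\delta(r_n s)$ is bounded since $\delta(z) = o(1)$ as $|z|\to\infty$ in the sector). Combined with $|s-z| \geq \epsilon$ for $z \in \C \setminus N_\epsilon$, we get a bound of order $e^{-cn}$ uniformly in such $z$, just as in the proof of Lemma \ref{seccurve1_gminusgtlem}.

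Next, on $\gamma_\theta \cap U$ I would use the substitution $s = \psi(it)$ introduced in Section \ref{seccurve1_defsandprelims}, so that $\varphi(\psi(it)) = -t^2$ and $t$ runs over $[-\alpha_1,\alpha_2]$. This reduces the problem to estimating
\[
\int_{-\alpha_1}^{\alpha_2} e^{-nt^2}\, h(t,z)\, \tilde\delta(r_n,\psi(it))\,dt,
\qquad
h(t,z) := \psi(it)^a\,\frac{i\psi'(it)}{\psi(it)-z}.
\]
For $z \in \C \setminus N_\epsilon$ and $t \in [-\alpha_1,\alpha_2]$, the amplitude $h(t,z)$ is bounded by some constant $M$ depending only on $\epsilon$ and the contour, since $|\psi(it)-z| \geq \epsilon$. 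The key point is that $\tilde\delta(r_n,\psi(it)) \to 0$ uniformly in $t \in [-\alpha_1,\alpha_2]$: the factor $(1+\log\psi(it)/\log r_n)^b$ tends to $1$ uniformly since $\psi(it)$ is bounded away from $0$ and $\infty$, and $\delta(r_n\psi(it))\to 0$ uniformly by the asymptotic assumption \eqref{seccurve1_fgrowth} applied to points in the sector $|\arg z| \leq \theta$ with modulus of order $r_n$.

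Given any $\eta > 0$, choose $N$ so that $|\tilde\delta(r_n,\psi(it))| < \eta$ for all $n > N$ and all $t \in [-\alpha_1,\alpha_2]$. Then the integral above is bounded in modulus by
\[
\eta M \int_{-\alpha_1}^{\alpha_2} e^{-nt^2}\,dt \leq \eta M \sqrt{\pi/n},
\]
uniformly in $z \in \C\setminus N_\epsilon$. Combining with the exponentially small tail contribution, we conclude that $\limsup_{n\to\infty} n^{1/2}\bigl|\text{integral}\bigr| \leq \eta M \sqrt\pi$ for every $\eta > 0$, which gives the desired $o(n^{-1/2})$ bound. The only real subtlety is verifying the uniform decay of $\tilde\delta(r_n,\psi(it))$ in $t$, which I would handle by checking separately the convergence of the two factors in its definition; once that is in hand, the rest is a standard application of the Laplace method.
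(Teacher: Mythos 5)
Your proof is correct and follows essentially the same route as the paper's: split off the exponentially small contribution away from $s=1$, apply the substitution $s=\psi(it)$, bound the amplitude using $|\psi(it)-z|\geq\epsilon$ for $z\in\C\setminus N_\epsilon$, and extract the $o(1)$ factor from the uniform decay of $\tilde\delta(r_n,\psi(it))$. Your explicit verification that both factors in the definition of $\tilde\delta$ converge uniformly is a point the paper passes over with "by extension," but the argument is the same.
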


\begin{proof}
Fix $\epsilon > 0$ and let $U' \subset U$ be a neighborhood of $z=1$ such that
\[
	\sup_{z \in U'} |z-1| < \epsilon.
\]
As in Lemma \ref{seccurve1_fintapprox2},
\[
	\int_{\gamma_\theta} s^a e^{n\varphi(s)} \tilde\delta(r_n,s) \frac{ds}{s-z} = \int_{\gamma_\theta \cap U'} s^a e^{n\varphi(s)} \tilde\delta(r_n,s) \frac{ds}{s-z} + O(e^{-cn})
\]
as $n \to \infty$ uniformly for $z \in \C \setminus N_\epsilon$, where $c$ is some positive constant not depending on $n$ or $z$, and the substitution $s=\psi(it)$ yields
\[
	\int_{\gamma_\theta} s^a e^{n\varphi(s)} \tilde\delta(r_n,s) \frac{ds}{s-z} = \int_{-\alpha_1'}^{\alpha_2'} e^{-nt^2} \psi(it)^a \frac{i\psi'(it)}{\psi(it)-z} \tilde\delta(r_n,\psi(it))\,dt + O(e^{-cn})
\]
for some $\alpha_1',\alpha_2' > 0$. If $z \in \C \setminus N_\epsilon$ and $t \in [-\alpha_1',\alpha_2']$ then $|\psi(it) - z| \geq K$ for some positive constant $K$, so that
\begin{align*}
&\left| \int_{-\alpha_1'}^{\alpha_2'} e^{-nt^2} \psi(it)^a \frac{i\psi'(it)}{\psi(it)-z} \delta(n\psi(it))\,dt \right| \\
&\qquad \leq K^{-1} \int_{-\alpha_1'}^{\alpha_2'} e^{-nt^2}\,dt \sup_{-\alpha_1' < t < \alpha_2'} \left| \psi(it)^a \psi'(it) \tilde\delta(r_n,\psi(it)) \right| \\
&\qquad < \frac{M}{\sqrt{n}} \sup_{-\alpha_1' < t < \alpha_2'} \left|\tilde\delta(r_n,\psi(it))\right|
\end{align*}
for some positive constant $M$. Note that $U$ may need to be made smaller to ensure that $\psi'(it)$ is bounded---doing this doesn't cause any issues.

By definition $\delta(z) \to 0$ as $|z| \to \infty$ uniformly for $|{\arg z}| \leq \theta$, and so $\delta(r_n s) \to 0$ as $n \to \infty$ uniformly for $s \in \gamma_\theta$. By extension this holds for $\tilde\delta(r_n,s)$ as well, and thus
\[
	\lim_{n \to \infty} \sup_{-\alpha_1' < t < \alpha_2'} \left|\tilde\delta(r_n,\psi(it))\right| = 0.
\]
\end{proof}

Combining Lemmas \ref{seccurve1_fintapprox1} and \ref{seccurve1_fintapprox2} and equation \eqref{seccurve1_fnsplit2} yields the asymptotic
\begin{equation}
\label{seccurve1_fnasymp}
F_n(z) = \frac{1}{(1-z)\sqrt{2\pi\lambda n}} + o\!\left(n^{-1/2}\right)
\end{equation}
as $n \to \infty$ uniformly for $z \in \C \setminus N_\epsilon$ with $|{\arg z}| \leq \theta - \epsilon$ for any fixed $\epsilon > 0$.

Suppose that $z = z(n)$ is a zero of $p_{n-1}(r_n z)$, i.e.\ that $p_{n-1}(r_n z) = 0$. Suppose further that, as $n \to \infty$, $z$ tends to a limit point inside $\gamma$ and in the set
\[
	\{z \in \C : |{\arg z}| < \theta \text{ and } z \neq 0,1\}.
\]
Then for $n$ large enough there is an $\epsilon > 0$ such that $|{\arg z}| \leq \theta - \epsilon$ and $z \in \C \setminus N_\epsilon$, and so from the definition of $F_n(z)$ it follows from \eqref{seccurve1_fnasymp} that
\[
	\frac{f(r_n z)}{r_n^a (\log r_n)^b (e^{1/\lambda} z)^n} \sim \frac{1}{(1-z)\sqrt{2\pi\lambda n}} = \frac{e^{-(\log n)/2}}{(1-z)\sqrt{2\pi\lambda}}
\]
as $n \to \infty$, and from the asymptotic assumption on $f$ in \eqref{seccurve1_fgrowth} that
\begin{equation}
\label{seccurve1_zeroest}
	z^a \left[ z^\lambda \exp\!\left( 1 - z^\lambda \right) \right]^{-n/\lambda} \sim \frac{e^{-(\log n)/2}}{(1-z)\sqrt{2\pi\lambda}}
\end{equation}
as $n \to \infty$. If $g(n) = \Theta(1)$ then
\[
	|g(z)|^{1/n} = 1 + O\!\left(n^{-1}\right)
\]
as $n \to \infty$, so taking absolute values and raising both sides of \eqref{seccurve1_zeroest} to the power $-\lambda/n$ yields
\begin{align}
\left| z^\lambda \exp\!\left( 1 - z^\lambda \right) \right| &= \exp\!\left(\frac{\lambda \log n}{2n}\right) \left[ 1 + O\!\left(n^{-1}\right) \right] \nonumber \\
	&= 1 + \frac{\lambda \log n}{2n} + O\!\left(n^{-1}\right)
\label{seccurve1_insideasymp}
\end{align}
as $n \to \infty$.

It follows from the above asymptotic that the limit points $z_0$ of the zeros of $p_{n-1}(r_n z)$ inside $\gamma$ with $|{\arg z_0}| < \theta$, $z_0 \neq 0$ lie on the curve
\[
	\left|z_0^\lambda \exp\!\left(1-z_0^\lambda\right)\right| = 1,
\]
or, equivalently,
\[
	\re \varphi(z_0) = 0.
\]
Further, the exterior of this curve in this region is characterized by the inequality
\[
	 \left|z^\lambda \exp\!\left(1-z^\lambda\right)\right| > 1,
\]
so the zeros approach this limit curve from the exterior. Finally, as an admissible contour may be taken to lie as close to the lines $\im \varphi(z) = 0$ as desired, the only such limit points $z_0$ in the whole set
\[
	\{z \in \C : \re \varphi(z) > 0 \} \cup \{ z \in \C : \re \varphi(z) \leq 0 \text{ and } |z| \leq 1 \}
\]
must lie on the curve $\re \varphi(z_0) = 0$.

Suppose now that $z$ is a zero of $p_{n-1}(r_n z)$ which lies in a sector $|{\arg z}| \leq \theta-\epsilon$ with $\epsilon > 0$ and in the set
\[
	\{ z \in \C : \re \varphi(z) \leq 0 \text{ and } |z| > 1 \} \setminus N_\epsilon
\]
for $n$ large enough. Then $z$ eventually lies outside of $\gamma$, and so $F_n(z) = 0$ by \eqref{seccurve1_fnexplicit}. From equation \eqref{seccurve1_fnasymp} it follows that
\[
	0 = \frac{1}{(1-z)\sqrt{2\pi\lambda n}} + o\!\left(n^{-1/2}\right)
\]
as $n \to \infty$, and, on multiplying this by $\sqrt{n}$, that
\[
	0 = \frac{1}{(1-z)\sqrt{2\pi\lambda}} + o(1)
\]
as $n \to \infty$. The only way this is possible is if $|z| \to \infty$.

Since $\epsilon > 0$ was arbitrary, it follows that the zeros of $p_{n-1}(r_n z)$ have no limit point in the set
\[
	\{ z \in \C : \re \varphi(z) \leq 0 \text{ and } |z| > 1 \}.
\]
This completes the proof.

\section{Two Directions of Maximal Exponential Growth}
\label{seccurvetwo_twodirsec}

Let $a,b,A \in \C$, $0 < \lambda < \infty$, $\mu < 1$, and $\zeta \in \C$ with $|\zeta| = 1$, $\zeta \neq 1$. Let $\theta \in (0,\pi)$ be small enough so that the sectors $|{\arg z}| \leq \theta$ and $|{\arg(z/\zeta)}| \leq \theta$ are disjoint.  We suppose that $f$ is an entire function with the asymptotic behavior
\begin{equation}
\label{seccurvetwo_fgrowth}
f(z) = \begin{cases}
	z^a \exp\!\left(z^\lambda\right) \bigl[1+o(1)\bigr] & \text{for } |{\arg z}| \leq \theta, \\
	A(z/\zeta)^b \exp\!\left((z/\zeta)^\lambda\right) \bigl[1+o(1)\bigr] & \text{for } |{\arg(z/\zeta)}| \leq \theta, \\
	O\!\left(\exp\!\left(\mu |z|^\lambda\right)\right) & \text{otherwise}
	\end{cases}			
\end{equation}
as $|z| \to \infty$, with each estimate holding uniformly in its sector.

Without loss of generality we assume that one sector of maximal growth is bisected by the positive real line---if neither of the function's directions of maximal growth are bisected by the positive real line then we can replace $z$ by $\omega z$ for some $\omega \in \C$ with $|\omega| = 1$ so that one of those directions is as desired. Similarly we assume that $f$ has been normalized so that the leading coefficient in its asymptotic in this direction bisected by the positive real line, as well as the coefficient of $z^\lambda$ in the first exponential and the coefficient of $(z/\zeta)^\lambda$ in the second exponential, are all equal to $1$.

For this $f$, let
\[
	p_n(z) = \sum_{k=0}^{n} \frac{f^{(k)}(0)}{k!} z^k
\]
and define
\begin{equation}
\label{seccurvetwo_rndef}
	r_n = \left(\frac{n}{\lambda}\right)^{1/\lambda}.
\end{equation}

\begin{theorem}
\label{seccurvetwo_maintheo}
All limit points of the zeros of the scaled partial sums $p_{n-1}(r_n z)$ in the sector $|{\arg z}| < \theta$, $z \neq 0$ lie on the curve
\[
	\left| z^\lambda \exp\!\left(1-z^\lambda\right) \right| = 1, \qquad |z| \leq 1.
\]
If $\re a \neq \re b$, define $\alpha = \min\{\re a,\re b\}$. In this case the zeros approach this limit curve from the region $|z^\lambda \exp(1-z^\lambda)| > 0$ if $\alpha - \re b + \lambda/2 > 0$ and from the region $|z^\lambda \exp(1-z^\lambda)| < 0$ if $\alpha - \re b + \lambda/2 < 0$.
\end{theorem}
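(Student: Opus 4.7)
The plan is to adapt the argument of Theorem~\ref{seccurve1_maintheo} to accommodate two saddle points. First I would generalize Definition~\ref{seccurve1_admisscontour}: an admissible contour $\gamma$ is now a smooth Jordan curve winding once counterclockwise around the origin, coinciding with the path of steepest descent of $\re\varphi$ through $s=1$ in the sector $|\arg s|\leq\theta$, with the steepest descent path through $s=\zeta$ (obtained via the substitution $s=\zeta u$ and working with the same $\varphi(u)=(u^\lambda-1-\lambda\log u)/\lambda$) in the sector $|\arg(s/\zeta)|\leq\theta$, and with the unit circle elsewhere. I would then define
\[
F_n(z) = \frac{1}{2\pi i} \int_\gamma \left(e^{1/\lambda} s\right)^{-n} f(r_n s) \frac{ds}{s-z},
\]
omitting the $r_n^a(\log r_n)^b$ normalization of~\eqref{seccurve1_fdef} since the leading size of the integrand now depends on which of $\re a$ and $\re b$ is larger. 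The same Cauchy derivation as in~\eqref{seccurve1_fnexplicit} gives $F_n(z) = (e^{1/\lambda} z)^{-n}[f(r_n z) - p_{n-1}(r_n z)]$ for $z$ inside $\gamma$ and $F_n(z) = -(e^{1/\lambda} z)^{-n} p_{n-1}(r_n z)$ for $z$ outside.

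Next I would split $F_n$ into three pieces $I_1$, $I_2$, $I_3$ corresponding to the portions of $\gamma$ near $s=1$, near $s=\zeta$, and along the remaining unit-circle arc respectively. The analog of Lemma~\ref{seccurve1_gminusgtlem} gives $I_3 = O(e^{(\mu-1)n/\lambda})$ uniformly on compact subsets of $|\arg z|<\theta$ at positive distance from $\gamma$. Applying the Laplace-type argument of Lemma~\ref{seccurve1_fintapprox1} to $I_1$, and to $I_2$ after the substitution $s=\zeta u$ (which converts the phase to $n\varphi(u)$ with saddle at $u=1$ and turns the Cauchy kernel into $1/(\zeta u - z)$, evaluating to $1/(\zeta-z)$ at the saddle), yields
\[
F_n(z) = \frac{1}{\sqrt{2\pi\lambda n}}\left[\frac{r_n^a}{1-z} + \frac{A\zeta^{-n+1} r_n^b}{\zeta-z}\right] + o\!\left(r_n^{\max(\re a,\re b)} n^{-1/2}\right)
\]
uniformly on compact subsets of $\{|\arg z|\leq\theta-\epsilon\}$ at positive distance from $\gamma$.

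If $z=z(n)$ is a zero of $p_{n-1}(r_n z)$ with limit point $z_0$ satisfying $|\arg z_0|<\theta$ and $z_0\neq 0,1$, and with $z$ lying inside $\gamma$ for large $n$, then $F_n(z) = (e^{1/\lambda} z)^{-n} f(r_n z) \sim r_n^a z^a [z^\lambda e^{1-z^\lambda}]^{-n/\lambda}$. Equating this with the asymptotic above, dividing by $r_n^a$, taking moduli, raising to the power $-\lambda/n$, and using $\log r_n = (\log n)/\lambda + O(1/n)$, a short calculation produces
\[
\left|z^\lambda e^{1-z^\lambda}\right| = 1 + \frac{(\alpha-\re b + \lambda/2)\log n}{n} + O\!\left(n^{-1}\right),
\]
where $\alpha = \min(\re a,\re b)$; this identifies the limit curve and, via the sign of $\alpha-\re b+\lambda/2$, the side from which the zeros approach. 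The exclusion of limit points with $|z|>1$ proceeds exactly as in Theorem~\ref{seccurve1_maintheo}: for $z$ outside $\gamma$ the identity $F_n(z)=0$, combined with the fact that the bracketed quantity has modulus $\Theta(r_n^{\max(\re a,\re b)})$ whenever $\re a\neq\re b$ (so that exactly one term dominates), forces $|z|\to\infty$.

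I expect the principal obstacle to be twofold. First, it must be verified that $\re\varphi < 0$ on the portions of $\gamma$ between the two saddles, so that the off-saddle contributions to $I_1$ and $I_2$ really are exponentially small; this rests on the hypothesis $\mu<1$ together with the global geometry of the two steepest-descent arcs, which now must mesh cleanly with each other and with the unit circle. Second, care is needed in handling the oscillatory factor $\zeta^{-n+1}$ and the complex power $r_n^{b-a}$ when extracting moduli. In the borderline case $\re a=\re b$ the two saddle contributions have the same order and can cancel for special $n$, which is precisely why the theorem declines to specify an approach direction in that case.
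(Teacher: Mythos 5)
Your outline follows the paper's proof essentially step for step: the same three-way split of $\gamma$, the same Laplace analysis at the two saddles $s=1$ and $s=\zeta$ (your two-term expansion for $F_n$ agrees with \eqref{seccurvetwo_fnasymp} up to the $r_n^{-a}$ normalization), and the same matching of $F_n$ against $f(r_nz)/(r_n^a(e^{1/\lambda}z)^n)$ for interior zeros and against $0$ for exterior ones; your displayed formula for $\left|z^\lambda e^{1-z^\lambda}\right|$ is exactly the refinement stated after the theorem. One small remark: redefining the admissible contour to follow the steepest descent path through $s=\zeta$ is not needed. Definition \ref{seccurve1_admisscontour} already places $\gamma$ on the unit circle in the sector $|{\arg(s/\zeta)}|\leq\theta$, and the unit circle is itself a descent contour for $\re\tilde\varphi$ through $s=\zeta$ (on $|s|=1$ one has $\re\tilde\varphi(s)=(\cos(\lambda\arg(s/\zeta))-1)/\lambda\leq 0$), which is what Lemma \ref{seccurvetwo_g2lemma} uses. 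Your variant works too; it is just extra machinery.

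The genuine gap is the case $\re a=\re b$. The theorem's first assertion---that \emph{all} limit points lie on the curve---is claimed for all $a,b$, but your ``short calculation'' divides by the bracketed quantity $\frac{1}{1-z}+\frac{A\zeta^{1-n}r_n^{b-a}}{\zeta-z}$, and when $\re a=\re b$ the second summand has constant modulus $|A|/|\zeta-z|$ with oscillating argument, so the bracket can come arbitrarily close to $0$ along subsequences whenever the candidate limit point lies on the circle $\{w:|\zeta-w|=|A(1-w)|\}$. You correctly observe that this cancellation is why no approach direction is asserted in that case, but it also invalidates your derivation of $\left|z^\lambda e^{1-z^\lambda}\right|=1+O(\log n/n)$ for such limit points, so a separate argument is required to show they still land on the curve. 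The paper supplies it in \eqref{seccurvetwo_aeqb}--\eqref{seccurvetwo_incurveasymp5}: the $\limsup$ of the bracket is positive, hence the $\liminf$ of the $(-\lambda/n)$-th power of its modulus equals $1$ by \eqref{seccurvetwo_oscliminf}; since every other factor in the identity has a convergent $(-\lambda/n)$-th power of its modulus, the corresponding limit for the bracket must exist and therefore equal $1$, yielding $\left|z^\lambda\exp(1-z^\lambda)\right|=1+o(1)$. The same borderline case must also be treated in your exterior argument, where you invoke the $\Theta\!\left(r_n^{\max(\re a,\re b)}\right)$ lower bound only ``whenever $\re a\neq\re b$.'' Apart from this, the proposal is sound.
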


In fact we will show that if $\re a \neq \re b$ and if $z = z(n)$ is a zero of $p_{n-1}(r_n z)$ which converges to a point $z_0$ with $|{\arg z_0}| < \theta$, $z_0 \neq 0$ then
\[
	\left| z^\lambda \exp\!\left(1-z^\lambda\right) \right| = 1 + \left(\alpha - \re b + \frac{\lambda}{2}\right)\frac{\log n}{n} + O\!\left(n^{-1}\right)
\]
as $n \to \infty$. This formula also holds when $\re a = \re b$ as long as
\[
	z_0 \notin \{ w \in \C : |\zeta - w| = |A(1-w)| \}.
\]

\subsection{Definitions and Preliminaries}
\label{seccurvetwo_defsandprelims}

Let
\[
	\varphi(z) = \left.\left(z^\lambda - 1 - \lambda\log z\right)\right/z
\]
and let $\gamma$ be an admissible contour for $\varphi$ (see Definition \ref{seccurve1_admisscontour}).

Define
\begin{equation}
\label{seccurvetwo_fndef}
	F_n(z) = \frac{r_n^{-a}}{2\pi i} \int_\gamma \left(e^{1/\lambda} s\right)^{-n}\! f(r_n s) \frac{ds}{s-z}
\end{equation}
for $z \notin \gamma$, $z \neq 0$. As in Section \ref{seccurve1_defsandprelims},
\begin{equation}
\label{seccurvetwo_fnexplicit}
	F_n(z) = \frac{1}{r_n^a (e^{1/\lambda} z)^n} \times \begin{cases}
		-p_{n-1}(r_n z) & \text{for } z \text{ outside } \gamma, \\
		f(r_n z) - p_{n-1}(r_n z) & \text{for } z \neq 0 \text{ inside } \gamma.
		\end{cases}
\end{equation}

\subsection{Proof of Theorem \ref{seccurvetwo_maintheo}}
\label{seccurvetwo_maintheoproof}

Call $\gamma_1$ the part of $\gamma$ in the sector $|{\arg z}| \leq \theta$, call $\gamma_2$ the part of $\gamma$ in the sector $|{\arg(z/\zeta)}| \leq \theta$, and call $\gamma_3$ the part of $\gamma$ outside of either of those sectors. This allows us to divide the integral in the definition of $F_n(z)$ in \eqref{seccurvetwo_fndef} into three parts
\[
	\int_\gamma = \int_{\gamma_1} + \int_{\gamma_2} + \int_{\gamma_3}
\]
which will be analyzed separately.

Using a method identical to the proof of Lemma \ref{seccurve1_gminusgtlem} it can be shown that
\begin{equation}
\label{seccurvetwo_g3asymp}
	\int_{\gamma_3} \left(e^{1/\lambda} s\right)^{-n}\! f(r_n s) \frac{ds}{s-z} = O\!\left(e^{(\mu-1)n/\lambda}\right)
\end{equation}
as $n \to \infty$ uniformly for $z$ restricted to any sector $|{\arg z}| \leq \theta - \epsilon$ with $\epsilon > 0$, and using a method identical to the proofs of Lemmas \ref{seccurve1_fintapprox1} and \ref{seccurve1_fintapprox2} that
\begin{equation}
\label{seccurvetwo_g1asymp}
	\int_{\gamma_1} \left(e^{1/\lambda} s\right)^{-n}\! f(r_n s) \frac{ds}{s-z} = \frac{ir_n^a}{1-z} \sqrt\frac{2\pi}{\lambda n} + o\!\left(r_n^a n^{-1/2}\right)
\end{equation}
as $n \to \infty$ uniformly for $z$ in any set $\C \setminus N_\epsilon$ with $\epsilon > 0$. Here $N_\epsilon$ is defined to be the set of all points within a distance of $\epsilon$ from $\gamma_1$.

For $\epsilon > 0$ define $\tilde N_\epsilon$ to be the set of all points within a distance of $\epsilon$ of the curve~$\gamma_2$.

\begin{lemma}
\label{seccurvetwo_g2lemma}
\[
	\int_{\gamma_2} \left(e^{1/\lambda} s\right)^{-n}\! f(r_n s) \frac{ds}{s-z} = \frac{iA\zeta^{1-n} r_n^b}{\zeta-z} \sqrt\frac{2\pi}{\lambda n} + o(r_n^b n^{-1/2})
\]
as $n \to \infty$ uniformly for $z \in \C \setminus \tilde N_\epsilon$ with $\epsilon > 0$.
\end{lemma}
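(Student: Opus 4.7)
The plan is to reduce the integral over $\gamma_2$ to the one-direction arguments of Lemmas \ref{seccurve1_fintapprox1} and \ref{seccurve1_fintapprox2} via the substitution $s = \zeta w$, which pulls the relevant saddle from $s=\zeta$ back to $w=1$ and restores exactly the structure of those proofs. First I would record the local form of $f$ in the second sector by writing
\[
	f(z) = A(z/\zeta)^b \exp\!\left((z/\zeta)^\lambda\right)\!\bigl[1 + \delta_2(z)\bigr], \qquad |{\arg(z/\zeta)}| \leq \theta,
\]
with $\delta_2(z) \to 0$ uniformly as $|z| \to \infty$. Using $r_n^\lambda = n/\lambda$, a short computation gives
\[
	\left(e^{1/\lambda}s\right)^{-n}\! f(r_n s) = A\zeta^{-n} r_n^b\, w^b\, e^{n\varphi(w)}\!\bigl[1 + \delta_2(r_n\zeta w)\bigr], \qquad w := s/\zeta,
\]
with the same $\varphi$ from \eqref{seccurve1_phidef}. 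Substituting $s = \zeta w$ then converts the integral into
\[
	A\zeta^{1-n} r_n^b \int_{\gamma_2/\zeta} \frac{w^b\,e^{n\varphi(w)}}{\zeta w - z}\bigl[1 + \delta_2(r_n\zeta w)\bigr]\,dw.
\]

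Next I would extend the admissibility of $\gamma$ (Definition \ref{seccurve1_admisscontour}) to the two-direction setting by also requiring that $\gamma$ coincide near $s=\zeta$ with the $\zeta$-image of the steepest-descent path of $\re\varphi$ through $w=1$. Under this extension $\gamma_2/\zeta$ is a short arc through $w=1$ lying locally on that steepest-descent path, and the Laplace-method setup of Section \ref{seccurve1_maintheoproof} transfers intact. I would then split the transformed integral into a main piece (dropping $\delta_2$) and a $\delta_2$-remainder. The main piece is handled by the argument of Lemma \ref{seccurve1_fintapprox1} verbatim using the biholomorphism $\psi$ satisfying $(\varphi\circ\psi)(x) = x^2$; the only change is that the smooth factor at $t=0$ becomes $i\psi'(0)/(\zeta\psi(0) - z) = i\sqrt{2/\lambda}/(\zeta - z)$, which yields
\[
	\int_{\gamma_2/\zeta} \frac{w^b e^{n\varphi(w)}}{\zeta w - z}\,dw = \frac{i}{\zeta-z}\sqrt{\frac{2\pi}{\lambda n}} + O\!\left(n^{-1}\right)
\]
uniformly for $z \in \C\setminus\tilde N_\epsilon$. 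The $\delta_2$-remainder is handled as in Lemma \ref{seccurve1_fintapprox2}: $1/(\zeta w - z)$ stays bounded away from zero on the Laplace window, $\delta_2(r_n\zeta\psi(it)) \to 0$ uniformly in $t$, and the standard Gaussian estimate bounds its contribution by $o(n^{-1/2})$. Multiplying through by the prefactor $A\zeta^{1-n}r_n^b$ produces the claimed asymptotic.

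The analytic content is essentially identical to the one-direction lemmas; the main obstacle is organizational. One must confirm that an admissible contour passing through $s=\zeta$ in the appropriate rotated steepest-descent direction can actually be constructed (this is straightforward because the saddle of $\varphi$ at $w=1$ is nondegenerate and $|\zeta|=1$), and one must carefully track the prefactors $\zeta^{1-n}$ and $r_n^b$ so that the error term ultimately appears at the correct scale $r_n^b n^{-1/2}$ rather than simply $n^{-1/2}$.
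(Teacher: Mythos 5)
Your proposal is correct and is essentially the paper's own argument: the paper writes $f$ in the second sector with a remainder $\delta$, extracts the prefactor $A\zeta^{-n}r_n^b$, and introduces the rotated phase $\tilde\varphi(s)=\varphi(s/\zeta)$ with a biholomorphism $\tilde\psi$ satisfying $(\tilde\varphi\circ\tilde\psi)(x)=x^2$, $\tilde\psi'(0)=\zeta\sqrt{2/\lambda}$, then runs the proofs of Lemmas \ref{seccurve1_fintapprox1} and \ref{seccurve1_fintapprox2} verbatim --- which is exactly what your substitution $s=\zeta w$ accomplishes, since $\tilde\psi=\zeta\psi$ and the Jacobian $ds=\zeta\,dw$ supplies the $\zeta^{1-n}$. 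Your explicit remark that the admissible contour must follow the (rotated) steepest-descent path near $s=\zeta$ is a point the paper leaves implicit in Definition \ref{seccurve1_admisscontour}, and flagging it is a mild improvement rather than a divergence.
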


\begin{proof}
By the asymptotic assumption on $f$ in \eqref{seccurvetwo_fgrowth}, for $|{\arg(z/\zeta)}| \leq \theta$ we can write
\begin{equation}
\label{seccurvetwo_deltadef}
	f(z) = A (z/\zeta)^b \exp\!\left((z/\zeta)^\lambda\right) \bigl[ 1+\delta(z) \bigr],
\end{equation}
where $\delta(z) \to 0$ uniformly as $|z| \to \infty$ in this sector.  This implies
\[
	\frac{f(r_n s)}{(e^{1/\lambda}s)^n} = A \zeta^{-n} r_n^b (s/\zeta)^b e^{n \tilde\varphi(s)} \bigl[ 1 + \delta(r_n s) \bigr]
\]
for $s \in \gamma_2$, where
\[
	\tilde\varphi(s) = \left.\left[(s/\zeta)^\lambda-1-\lambda\log(s/\zeta)\right]\right/\lambda,
\]
allowing us to split the integral in question like
\begin{align}
&\int_{\gamma_2} \left(e^{1/\lambda} s\right)^{-n}\! f(r_n s) \frac{ds}{s-z} \nonumber \\
	&\qquad = A \zeta^{-n} r_n^b \left( \int_{\gamma_2} (s/\zeta)^b e^{n\tilde\varphi(s)} \frac{ds}{s-z} + \int_{\gamma_2} (s/\zeta)^b e^{n\tilde\varphi(s)} \delta(r_ns) \frac{ds}{s-z} \right). \label{seccurvetwo_g2split}
\end{align}
By the inverse function theorem there exists a neighborhood $\tilde V$ of the origin, a neighborhood $\tilde U$ of $s=\zeta$, and a biholomorphic map $\tilde\psi \colon \tilde V \to \tilde U$ which satisfies
\[
	(\tilde\varphi \circ \tilde\psi)(x) = x^2
\]
for $x \in \tilde V$. It follows that $\tilde\psi(0) = \zeta$, and we make the choice that $\psi'(0) = \zeta\sqrt{2/\lambda}$. This function $\tilde\psi$ maps a segment of the imaginary axis onto the path of steepest descent of the function $\re \tilde\varphi(z)$ going through $z=\zeta$.

The rest of the proof proceeds just as the proofs of Lemmas \ref{seccurve1_fintapprox1} and \ref{seccurve1_fintapprox2} by using $\tilde\varphi$ and $\tilde\psi$ in place of $\varphi$ and $\psi$, respectively.
\end{proof}

Combining equations \eqref{seccurvetwo_g3asymp}, \eqref{seccurvetwo_g1asymp}, and Lemma \ref{seccurvetwo_g2lemma} yields the asymptotic
\begin{equation}
\label{seccurvetwo_fnasymp}
	F_n(z) = \frac{1}{(1-z)\sqrt{2\pi\lambda n}} + \frac{A\zeta^{1-n} r_n^{b-a}}{(\zeta-z)\sqrt{2\pi\lambda n}} + o\!\left(n^{-1/2}\right) + o\!\left(r_n^{b-a} n^{-1/2}\right)
\end{equation}
as $n \to \infty$ uniformly for $z \in \C \setminus N_\epsilon$ restricted to the sector $|{\arg z}| \leq \theta - \epsilon$, where $\epsilon > 0$ is arbitrary but fixed.

Suppose that $z = z(n)$ is a zero of $p_{n-1}(r_n z)$ which, as $n \to \infty$, tends to a limit point inside $\gamma$ and in the set
\[
	\{ z \in \C : |{\arg z}| < \theta \text{ and } z \neq 0,1 \}.
\]
Then for $n$ large enough there is an $\epsilon > 0$ such that $|{\arg z}| \leq \theta - \epsilon$ and $z \in \C \setminus N_\epsilon$, and so from the definition of $F_n(z)$ it follows from \eqref{seccurvetwo_fnasymp} that
\[
	\frac{f(r_n z)}{r_n^a (e^{1/\lambda} z)^n} = \frac{1}{(1-z)\sqrt{2\pi\lambda n}} + \frac{A\zeta^{1-n} r_n^{b-a}}{(\zeta-z)\sqrt{2\pi\lambda n}} + o\!\left(n^{-1/2}\right) + o\!\left(r_n^{b-a} n^{-1/2}\right)
\]
as $n \to \infty$, and from the asymptotic assumption on $f$ in \eqref{seccurvetwo_fgrowth} that
\begin{align}
&z^a \left[ z^\lambda \exp\!\left(1-z^\lambda\right) \right]^{-n/\lambda} \bigl[1 + o(1) \bigr] \nonumber \\
&\qquad = \frac{1}{(1-z)\sqrt{2\pi\lambda n}} + \frac{A\zeta^{1-n} r_n^{b-a}}{(\zeta-z)\sqrt{2\pi\lambda n}} + o\!\left(n^{-1/2}\right) + o\!\left(r_n^{b-a} n^{-1/2}\right)
\label{seccurvetwo_incurveasymp1}
\end{align}
as $n \to \infty$.

If $\re a > \re b$ then
\[
	z^a \left[ z^\lambda \exp\!\left(1-z^\lambda\right) \right]^{-n/\lambda} \sim \frac{1}{(1-z)\sqrt{2\pi\lambda n}}
\]
and hence, just as in \eqref{seccurve1_insideasymp},
\begin{equation}
\label{seccurvetwo_incurveasymp2}
	\left| z^\lambda \exp\!\left(1-z^\lambda\right) \right| = 1 + \frac{\lambda \log n}{2n} + O\!\left(n^{-1}\right)
\end{equation}
as $n \to \infty$. If instead $\re a < \re b$ then
\[
	z^a \left[ z^\lambda \exp\!\left(1-z^\lambda\right) \right]^{-n/\lambda} \sim \frac{A\zeta^{1-n} r_n^{b-a}}{(\zeta-z)\sqrt{2\pi\lambda n}}
\]
and hence
\begin{equation}
\label{seccurvetwo_incurveasymp3}
	\left| z^\lambda \exp\!\left(1-z^\lambda\right) \right| = 1 + \left(\re a - \re b + \frac{\lambda}{2}\right)\frac{\log n}{n} + O\!\left(n^{-1}\right)
\end{equation}
as $n \to \infty$. Finally suppose $\re b = \re a$, so that
\begin{align}
&z^a \left[ z^\lambda \exp\!\left(1-z^\lambda\right) \right]^{-n/\lambda} \bigl[1+o(1)\bigr] \nonumber \\
&\qquad = \left( A^{-1}\frac{\zeta - z}{1-z} + \zeta^{1-n} r_n^{b-a} + o(1) \right) \frac{1}{(\zeta-z)\sqrt{2\pi\lambda n}}
\label{seccurvetwo_aeqb}
\end{align}
as $n \to \infty$. Note that $|\zeta^{1-n} r_n^{b-a}| = 1$ in this case, so for $n$ large enough the quantity in parentheses is bounded below in absolute value by a positive constant unless
\[
	\left| \frac{\zeta - z}{1-z} \right| \to |A|.
\]
If $z$ does not tend to a point on the circle
\[
	\{w \in \C : |\zeta - w| = |A(1-w)| \}
\]
then taking absolute values and raising both sides of the equation to the power $-\lambda/n$ yields
\begin{equation}
\label{seccurvetwo_incurveasymp4}
	\left| z^\lambda \exp\!\left(1-z^\lambda\right) \right| = 1 + \frac{\lambda \log n}{2n} + O\!\left(n^{-1}\right)
\end{equation}
as $n \to \infty$. Suppose now that $z$ does tend to a point on the circle
\[
	\{w \in \C : |\zeta - w| = |A(1-w)| \},
\]
define
\[
	M = \limsup_{n \to \infty} \left| A^{-1}\frac{\zeta - z}{1-z} + \zeta^{1-n} r_n^{b-a} \right|,
\]
and note that $M > 0$. It follows that
\begin{equation}
\label{seccurvetwo_oscliminf}
	\liminf_{n \to \infty} \left| A^{-1}\frac{\zeta - z}{1-z} + \zeta^{1-n} r_n^{b-a} \right|^{-\lambda/n} = 1.
\end{equation}
If
\[
	\limsup_{n \to \infty} \left| A^{-1}\frac{\zeta - z}{1-z} + \zeta^{1-n} r_n^{b-a} \right|^{-\lambda/n} \neq 1
\]
then
\[
	\lim_{n \to \infty} \left| A^{-1}\frac{\zeta - z}{1-z} + \zeta^{1-n} r_n^{b-a} \right|^{-\lambda/n}
\]
does not exist. However, by the above assumption on the convergence of $z$ as $n \to \infty$ it's true that
\[
	\lim_{n \to \infty} \left|z^{-a\lambda/n}\right|,\ \lim_{n \to \infty} \left|z^\lambda \exp\!\left(1-z^\lambda\right)\right|,\ \text{and } \lim_{n \to \infty} \left|\frac{1}{(\zeta-z)\sqrt{2\pi\lambda n}}\right|^{-\lambda/n}
\]
all exist. Consequently it follows from \eqref{seccurvetwo_aeqb} that
\[
	\lim_{n \to \infty} \left| A^{-1}\frac{\zeta - z}{1-z} + \zeta^{1-n} r_n^{b-a} \right|^{-\lambda/n}
\]
exists, and then from \eqref{seccurvetwo_oscliminf} that this limit is equal to $1$. Altogether, this implies that
\begin{equation}
\label{seccurvetwo_incurveasymp5}
	\left|z^\lambda \exp\!\left(1-z^\lambda\right)\right| = 1 + o(1)
\end{equation}
as $n \to \infty$.

It follows from the asymptotics in \eqref{seccurvetwo_incurveasymp2}, \eqref{seccurvetwo_incurveasymp3}, \eqref{seccurvetwo_incurveasymp4}, and \eqref{seccurvetwo_incurveasymp5} that the limit points $z_0$ of the zeros of $p_{n-1}(r_n z)$ inside $\gamma$ with $|{\arg z_0}| < \theta$, $z_0 \neq 0$ lie on the curve
\[
	\left|z_0^\lambda \exp\!\left(1-z_0^\lambda\right)\right| = 1,
\]
or, equivalently,
\[
	\re \varphi(z_0) = 0.
\]
Further, the exterior of this curve in this region is characterized by the inequality
\[
	 \left|z^\lambda \exp\!\left(1-z^\lambda\right)\right| > 1,
\]
so if $\re a \neq \re b$ and we set $\alpha = \min\{\re a,\re b\}$ then the zeros approach this limit curve from the exterior if $\alpha - \re b + \lambda/2 > 0$ and from the interior if $\alpha - \re b + \lambda/2 < 0$. If $\re a = \re b$ then the zeros which approach points of the curve which are not on the circle
\[
	\{w \in \C : |\zeta - w| = |A(1-w)| \}
\]
do so from the exterior.

As an admissible contour may be taken to lie as close to the lines $\im \varphi(z) = 0$ as desired, the only such limit points $z_0$ in the whole set
\[
	\{z \in \C : \re \varphi(z) > 0 \} \cup \{ z \in \C : \re \varphi(z) \leq 0 \text{ and } |z| \leq 1 \}
\]
must lie on the curve $\re \varphi(z_0) = 0$.

Suppose now that $z$ is a zero of $p_{n-1}(r_n z)$ which lies in a sector $|{\arg z}| \leq \theta-\epsilon$ with $\epsilon > 0$ and in the set
\[
	\{ z \in \C : \re \varphi(z) \leq 0 \text{ and } |z| > 1 \} \setminus N_\epsilon
\]
for $n$ large enough. Then $z$ eventually lies outside of $\gamma$, and so $F_n(z) = 0$ by \eqref{seccurvetwo_fnexplicit}. From equation \eqref{seccurvetwo_fnasymp} it follows that
\[
	0 = \frac{1}{(1-z)\sqrt{2\pi\lambda n}} + \frac{A\zeta^{1-n} r_n^{b-a}}{(\zeta-z)\sqrt{2\pi\lambda n}} + o\!\left(n^{-1/2}\right) + o\!\left(r_n^{b-a} n^{-1/2}\right)
\]
as $n \to \infty$.

If $\re a > \re b$ then multiplying through by $\sqrt{n}$ yields
\[
	0 = \frac{1}{(1-z)\sqrt{2\pi\lambda}} + o(1)
\]
as $n \to \infty$, and the only way this is possible is if $|z| \to \infty$. If $\re a < \re b$ then multiplying through by $r_n^{a-b} \sqrt{n}$ instead yields
\[
	0 = \frac{A\zeta^{1-n}}{(\zeta-z)\sqrt{2\pi\lambda}} + o(1)
\]
as $n \to \infty$, and we again conclude that we must have $|z| \to \infty$. Finally if $\re a = \re b$ then multiplying through by $\sqrt{n}$ yields
\[
	0 = \left( A^{-1}\frac{\zeta - z}{1-z} + \zeta^{1-n} r_n^{b-a}\right) \frac{1}{(\zeta-z)\sqrt{2\pi\lambda}} + o(1)
\]
as $n \to \infty$. Again, the only way this holds is if $|z| \to \infty$.

Since $\epsilon > 0$ was arbitrary, it follows that the zeros of $p_{n-1}(r_n z)$ have no limit point in the set
\[
	\{ z \in \C : \re \varphi(z) \leq 0 \text{ and } |z| > 1 \}.
\]
This completes the proof.

\section{Generalization to More Directions of Maximal Growth}
\label{seccurvemany_sec}

It's not difficult to extend the results in this chapter to functions with more than two of directions of maximal exponential growth.

Let $a,b_1,\ldots,b_m,A_1,\ldots,A_m \in \C$, $0 < \lambda < \infty$, $\mu < 1$, and $\zeta_1,\ldots,\zeta_m \in \C$ with $|\zeta_k| = 1$, $\zeta_k \neq 1$ for all $k = 1,\ldots,m$ and $\zeta_j \neq \zeta_k$ for $j \neq k$. Let $\theta \in (0,\pi)$ be small enough so that all of the sectors $|{\arg z}| \leq \theta$, $|{\arg(z/\zeta_k)}| \leq \theta$, $k=1,\ldots,m$, are disjoint.  We suppose that $f$ is an entire function with the asymptotic behavior
\begin{equation}
\label{seccurvemany_fgrowth}
f(z) = \begin{cases}
	z^a \exp\!\left(z^\lambda\right) \bigl[1+o(1)\bigr] & \text{for } |{\arg z}| \leq \theta, \\
	A_1(z/\zeta_1)^{b_1} \exp\!\left((z/\zeta_1)^\lambda\right) \bigl[1+o(1)\bigr] & \text{for } |{\arg(z/\zeta_1)}| \leq \theta, \\
	\qquad \vdots \\
	A_m(z/\zeta_m)^{b_m} \exp\!\left((z/\zeta_m)^\lambda\right) \bigl[1+o(1)\bigr] & \text{for } |{\arg(z/\zeta_m)}| \leq \theta, \\
	O\!\left(\exp\!\left(\mu |z|^\lambda\right)\right) & \text{otherwise}
	\end{cases}			
\end{equation}
as $|z| \to \infty$, with each estimate holding uniformly in its sector. For this $f$, let, $p_n(z)$, $r_n$, and $F_n(z)$ be defined as in Section \ref{seccurvetwo_twodirsec}. For convenience of notation, define $b_0 = a$, $A_0 = 1$, and $\zeta_0 = 1$.

We can derive an analogue to equations \eqref{seccurve1_fnasymp} and \eqref{seccurvetwo_fnasymp}, specifically
\begin{equation}
\label{seccurvemany_fnapprox}
	r_n^a F_n(z) = \frac{1}{\sqrt{2\pi\lambda n}} \sum_{k=0}^{m} \left[\frac{A_k \zeta_{k}^{1-n} r_n^{b_k}}{\zeta_k-z} + o\!\left(r_n^{b_k}n^{-1/2}\right)\right]
\end{equation}
as $n \to \infty$ uniformly with respect to $z$ as long as $z$ remains in any sector $|{\arg(z/\zeta_k)}| \leq \theta - \epsilon$ with $\epsilon > 0$, $k=0,1,\ldots,m$, and remains bounded away from $\gamma$. Following this we would proceed just as before to conclude that the limit points of the zeros of the scaled partial sum $p_{n-1}(r_n z)$ in the sector $|{\arg z}| < \theta$, $z \neq 0$ all still lie on the curve
\[
	\left| z^\lambda \exp\!\left(1-z^\lambda\right)\right| = 1.
\]

If $\re a > \re b_k$ for all $k=1,\ldots,n$ then the zeros will again approach these limit points from the exterior of the curve. The main complication that may arise is if $\re a$ is equal to a number of the quantities $\re b_k$, and in that case it may be more difficult to determine where the analogue of formula \eqref{seccurvetwo_incurveasymp3},
\[
	\left| z^\lambda \exp\!\left(1-z^\lambda\right)\right| = 1 + \left( \re a - \re b_k + \frac{\lambda}{2} \right) \frac{\log n}{n} + O\!\left(n^{-1}\right)
\]
for some $k \in \{1,\ldots,m\}$, will hold.

\chapter{Scaling Limits at the Arcs of the Limit Curve}
\label{chap_curvescaling}

In this chapter we aim to study the zeros of the scaled partial sums $p_n(r_n z)$ which approach the smooth arcs of the limit curve
\[
	S = \left\{ z \in \C : \left|z^\lambda \exp\!\left(1-z^\lambda\right)\right| = 1 \text{ and } |z| \leq 1 \right\}
\]
in the sector $|{\arg z}| < \theta$. We will determine how quickly these zeros approach the curve, track their movement and the spacing between them, and ultimately calculate a certain limit of the partial sums depending on an argument which follows the zeros in their approach.

The results in Sections \ref{curscasec_onedir}, \ref{curscatwosec_sec}, and \ref{curscamanysec_sec} can be seen as generalizations of Theorem \ref{introthm_esvcurvescaling} which was originally obtained by Edrei, Saff, and Varga in their monograph \cite{esv:sections}.

\section{Exploratory Analysis to Estimate the Rate of Approach to the Limit Curve}
\label{curscasec_exploratory}

We will first assume that $f$ has a single direction of maximal growth, just as in Section \ref{seccurve1_onedirsec}. In the derivation of the limit curve for the zeros of the scaled partial sums $p_{n-1}(r_n z)$ we obtained equation \eqref{seccurve1_zeroest}, which essentially says that any such zero $z = z(n)$ which tends to a point $\xi$ on the limit curve (other than the point $\xi=1$) satisfies
\begin{equation}
\label{curscaexploreeq_oneasymp}
	z^a \exp\!\left\{ \frac{n}{\lambda}\left(z^\lambda-1 - \lambda \log z\right) \right\} \approx \frac{e^{-(\log n)/2}}{(1-z)\sqrt{2\pi\lambda}}
\end{equation}
as $n \to \infty$. If we set $z = \xi + \delta$ and assume that $\delta \to 0$ as $n \to \infty$ then
\[
	z^a \approx \xi^a, \qquad 1-z \approx 1-\xi,
\]
and
\begin{align*}
&z^\lambda - 1 - \lambda \log z \\
	&\qquad = (\xi+\delta)^\lambda - 1 - \lambda\log(\xi+\delta) \\
	&\qquad = \xi^\lambda \left(1 + \frac{\delta}{\xi}\right)^\lambda - 1 - \lambda\log\xi - \lambda\log\!\left(1 + \frac{\delta}{\xi}\right) \\
	&\qquad \approx \xi^\lambda \left( 1 + \frac{\lambda}{\xi}\delta \right) - 1 - \lambda\log\xi - \lambda \frac{\delta}{\xi} \\
	&\qquad = \xi^\lambda - 1 - \lambda\log\xi + \frac{\lambda(\xi^\lambda-1)}{\xi}\delta,
\end{align*}
so that the estimate in \eqref{curscaexploreeq_oneasymp} becomes
\[
	\xi^a \exp\!\left\{\frac{n}{\lambda} \left(\xi^\lambda - 1 - \lambda\log\xi\right) + \frac{\xi^\lambda-1}{\xi}n\delta \right\} \approx \frac{e^{-(\log n)/2}}{(1-\xi)\sqrt{2\pi\lambda}}
\]
as $n \to \infty$. Since $\xi$ lies on the limit curve we have
\[
	\re\!\left(\xi^\lambda - 1 - \lambda\log\xi\right) = 0,
\]
so the factor $\xi^a \exp\{\frac{n}{\lambda} (\xi^\lambda - 1 - \lambda\log\xi)\}$ on the left-hand side is $\Theta(1)$. In order to balance the decay of the factor $e^{-(\log n)/2}$ on the right-hand side we thus need
\[
	\frac{\xi^\lambda-1}{\xi}n\delta \approx -\frac{1}{2} \log n,
\]
and hence
\[
	\delta \approx \frac{\xi\log n}{2(1-\xi^\lambda)n}
\]
as $n \to \infty$.

Now, this estimate for $\delta$ doesn't include anything that differentiates two separate zeros of $p_{n-1}(r_n z)$, so it makes sense for us to try to look for this information in higher-order corrections to $\delta$. In \eqref{seccurve1_insideasymp} we found that $z$ satisfies
\[
	\left| z^\lambda \exp\!\left( 1 - z^\lambda \right) \right| = 1 + \frac{\lambda \log n}{2n} + O\!\left(n^{-1}\right)
\]
as $n \to \infty$, so we might guess that $\delta$ has the same $1/n$ correction term, i.e.
\[
	\delta = \frac{\xi\log n}{2(1-\xi^\lambda)n} + \frac{v}{n},
\]
where $v = O(1)$ as $n \to \infty$. Attempting to use such a $\delta$ to prove something like Theorem \ref{curscaonetheo_maintheo} below leads immediately to the refinement $v = \xi(w-i\tau_n)/(1-\xi^\lambda)$ that appears in the statement of the theorem.

Suppose now that $f$ has two directions of maximal exponential growth, just as in Section \ref{seccurvetwo_twodirsec}. If $\re a > \re b$ then we again have
\[
	\xi^a \exp\!\left\{ \frac{n}{\lambda}\left(z^\lambda-1 - \lambda \log z\right) \right\} \approx \frac{e^{-(\log n)/2}}{(1-\xi)\sqrt{2\pi\lambda}}
\]
as $n \to \infty$ for any zero $z = z(n)$ of $p_{n-1}(r_n z)$ which tends to a point $\xi \neq 1$ on the limit curve. By following the steps above we again get an estimate like
\[
	z \approx \xi + \frac{\xi\log n}{2(1-\xi^\lambda)n}
\]
as $n \to \infty$. On the other hand, if $\re a < \re b$ then we instead get from \eqref{seccurvetwo_incurveasymp1} that
\[
	z^a \exp\!\left\{ \frac{n}{\lambda}\left(z^\lambda-1 - \lambda \log z\right) \right\} \approx \frac{A\zeta^{1-n} \lambda^{(a-b)/\lambda}}{(\zeta-z)\sqrt{2\pi\lambda }} \exp\!\left\{\left(\frac{b-a}{\lambda} - \frac{1}{2}\right)\log n \right\}
\]
as $n \to \infty$. If we set $z = \xi + \delta$ and assume that $\delta \to 0$ as $n \to \infty$ then the estimate for the left-hand side is the same as before, transforming the above into
\begin{align*}
&\xi^a \exp\!\left\{\frac{n}{\lambda} \left(\xi^\lambda - 1 - \lambda\log\xi\right) + \frac{\xi^\lambda-1}{\xi}n\delta \right\} \\
	&\qquad \approx \frac{A\zeta^{1-n} \lambda^{(a-b)/\lambda}}{(\zeta-\xi)\sqrt{2\pi\lambda }} \exp\!\left\{-\left(\frac{a-b}{\lambda} + \frac{1}{2}\right)\log n \right\}.
\end{align*}
To balance the possible growth or decay of the exponential factor on the right-hand side we need to take
\[
	\frac{\xi^\lambda-1}{\xi}n\delta \approx -\left(\frac{a-b}{\lambda} + \frac{1}{2}\right)\log n
\]
and hence
\[
	\delta \approx \left(a-b + \frac{\lambda}{2}\right) \frac{\xi\log n}{\lambda(1-\xi^\lambda) n}.
\]
There are still extra oscillations coming from the $\zeta^{-n}$ factor on the right-hand side which this scaling doesn't account for. We handle these in the statement of Theorem \ref{curscatwotheo_maintheo} by introducing extra periodicity in the higher-order corrections to this $\delta$ that we didn't need when we had $\re a > \re b$.

\section{One Direction of Maximal Exponential Growth}
\label{curscasec_onedir}

Let $a,b \in \C$, $0 < \lambda < \infty$, $0 < \theta < \pi$, and $\mu < 1$. We suppose that $f$ is an entire function with the asymptotic behavior
\begin{equation}
\label{curscaoneeq_fgrowth}
f(z) = \begin{cases}
	z^a (\log z)^b \exp\!\left(z^\lambda\right) \bigl[1+o(1)\bigr] & \text{for } |{\arg z}| \leq \theta, \\
	O\!\left(\exp(\mu |z|^\lambda)\right) & \text{for } |{\arg z}| > \theta
	\end{cases}			
\end{equation}
as $|z| \to \infty$, with each estimate holding uniformly in its sector. For this $f$, let
\[
	p_n(z) = \sum_{k=0}^{n} \frac{f^{(k)}(0)}{k!} z^k
\]
and define
\begin{equation}
\label{curscaoneeq_rndef}
	r_n = \left(\frac{n}{\lambda}\right)^{1/\lambda}.
\end{equation}

We showed in Chapter \ref{chap_limitcurves} that the limit points of the zeros of the scaled partial sums $p_{n-1}(r_n z)$ in the sector $|{\arg z}| < \theta$, $z \neq 0$, lie on the curve
\[
	S = \left\{ z \in \C : \left|z^\lambda \exp\!\left(1-z^\lambda\right)\right| = 1 \text{ and } |z| \leq 1 \right\}.
\]
If $\xi$ is a point of $S$ then
\[
	\re\!\left(\xi^\lambda - 1 - \lambda \log \xi\right) = 0.
\]

\begin{theorem}
\label{curscaonetheo_maintheo}
Let $\xi$ be a point of $S$ with $|{\arg \xi}| < \theta$, $\xi \neq 1$ and define
\[
	\tau = \im\!\left(\xi^\lambda - 1 - \lambda \log \xi\right).
\]
Define the sequence $\tau_n$ by the conditions
\[
	\frac{\tau n}{\lambda} \equiv \tau_n \pmod{2\pi}, \qquad -\pi < \tau_n \leq \pi
\]
and let
\[
	z_n(w) = \xi \left(1 + \frac{\log n}{2(1-\xi^\lambda)n} - \frac{w-i\tau_n}{(1-\xi^\lambda) n}\right).
\]
Then
\[
	\lim_{n \to \infty} \frac{p_{n-1}(r_n z_n(w))}{f(r_n z_n(w))} = 1 - \frac{e^{-w}}{\xi^a (1-\xi) \sqrt{2\pi\lambda}}
\]
uniformly for $w$ restricted to any compact subset of $\C$.
\end{theorem}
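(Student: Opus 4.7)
The plan is to exploit the integral representation $F_n$ constructed in Section \ref{seccurve1_defsandprelims}. By equation \eqref{seccurve1_fnexplicit}, for any $z$ inside the admissible contour $\gamma$ one has the identity
\[
1 - \frac{p_{n-1}(r_n z)}{f(r_n z)} = \frac{r_n^a (\log r_n)^b (e^{1/\lambda} z)^n F_n(z)}{f(r_n z)}.
\]
The contour $\gamma$ can be chosen so that $\xi$ lies in its interior and at positive distance from $\gamma$ (the admissibility conditions only constrain $\gamma$ near $z=1$ and on $|{\arg z}|\geq\theta$). Since $z_n(w)\to\xi$ uniformly on compact subsets of $w\in\C$, the point $z_n(w)$ then lies inside $\gamma$ and in $\C\setminus N_\epsilon$ for some fixed $\epsilon>0$ and all $n$ large, uniformly in $w$.

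I would next combine the uniform asymptotic \eqref{seccurve1_fnasymp} for $F_n$ with the assumption \eqref{curscaoneeq_fgrowth} on $f$ to rewrite the right-hand side as
\[
\frac{F_n(z_n(w))}{z_n(w)^a \, e^{n\varphi(z_n(w))}} \bigl[1+o(1)\bigr],
\]
where $\varphi(z)=(z^\lambda-1-\lambda\log z)/\lambda$. The crux is evaluating $e^{n\varphi(z_n(w))}$ by Taylor-expanding $\varphi$ about $\xi$. Using $\varphi(\xi)=i\tau/\lambda$, $\varphi'(\xi)=(\xi^\lambda-1)/\xi$, and the explicit form of $z_n(w)$, a direct computation gives
\[
n\varphi(z_n(w)) = i\frac{n\tau}{\lambda} - \frac{\log n}{2} + (w - i\tau_n) + O\!\left(\frac{(\log n)^2}{n}\right)
\]
uniformly on compact subsets of $w\in\C$. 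Since $n\tau/\lambda \equiv \tau_n \pmod{2\pi}$, the oscillating terms cancel and $e^{n\varphi(z_n(w))}\sim e^w/\sqrt{n}$.

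Assembling the pieces, $F_n(z_n(w)) \sim [(1-\xi)\sqrt{2\pi\lambda n}]^{-1}$ and $z_n(w)^a\to\xi^a$, so the right-hand side of the reduction identity tends to
\[
\frac{1}{(1-\xi)\sqrt{2\pi\lambda n}} \cdot \frac{\sqrt{n}}{\xi^a e^w} = \frac{e^{-w}}{\xi^a (1-\xi)\sqrt{2\pi\lambda}},
\]
which is exactly the claim. The main technical obstacle I expect is maintaining uniformity in $w$ throughout: the $o(n^{-1/2})$ remainder in $F_n$ and the $o(1)$ remainder in the $f$-asymptotic must remain small uniformly as $z_n(w)$ sweeps over a shrinking disk of radius $O(\log n/n)$ around $\xi$, and these remainders are amplified by the $\sqrt{n}$ coming from $1/e^{n\varphi(z_n(w))}$. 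This should follow because the disk is eventually contained in any fixed compact subset of $\C\setminus N_\epsilon$ on which both asymptotics are uniform, so the amplified errors still vanish in the limit.
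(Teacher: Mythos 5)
Your proposal is correct and follows essentially the same route as the paper: both rest on the identity from \eqref{curscaoneeq_fnexplicit}, the uniform asymptotic \eqref{curscaoneeq_fnasymp} for $F_n$ evaluated at $z_n(w)\to\xi$, and the computation $e^{n\varphi(z_n(w))}\sim e^{w}n^{-1/2}$ after the $\tau_n$ cancellation. The only cosmetic difference is that you Taylor-expand $\varphi$ about $\xi$ in one step, whereas the paper expands the factors $e^{r_n^\lambda z_n(w)^\lambda}$ and $z_n(w)^{-n}$ separately before recombining them; your handling of the uniformity issue (the fixed set $\C\setminus N_\epsilon$ absorbing the shrinking disks around $\xi$) matches the paper's argument as well.
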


\begin{remark}
Theorem \ref{curscaonetheo_maintheo} gives us precise asymptotics for individual zeros of the scaled partial sums $p_{n-1}(r_n z)$ near a given point $\xi$ on the arcs of the curve $S$. Details of this are given in Section \ref{curscasec_width}, where we use that information to verify part (a) of the Modified Saff-Varga Width Conjecture (see Section \ref{introsec_width}) for this class of functions.
\end{remark}

\subsection{Definitions and Preliminaries}

We will repeat here several relevant definitions from Section \ref{seccurve1_defsandprelims}.

Let $\gamma$ be an admissible contour for the function
\begin{equation}
\label{curscaoneeq_phidef}
	\varphi(z) := \left.\left(z^\lambda - 1 - \lambda \log z\right)\right/\lambda
\end{equation}
and define
\begin{equation}
\label{curscaoneeq_fndef}
	F_n(z) = \frac{r_n^{-a} (\log r_n)^{-b}}{2\pi i} \int_{\gamma} \left(e^{1/\lambda}s\right)^{-n}\! f(r_n s) \frac{ds}{s-z}
\end{equation}
for $z \notin \gamma$, $z \neq 0$. Just as in Section \ref{seccurve1_defsandprelims},
\begin{equation}
\label{curscaoneeq_fnexplicit}
	F_n(z) = \frac{1}{r_n^a (\log r_n)^b (e^{1/\lambda} z)^n} \times \begin{cases}
		-p_{n-1}(r_n z) & \text{for } z \text{ outside } \gamma, \\
		f(r_n z) - p_{n-1}(r_n z) & \text{for } z \neq 0 \text{ inside } \gamma.
		\end{cases}
\end{equation}

\subsection{Proof of Theorem \ref{curscaonetheo_maintheo}}

Let $\gamma_\theta$ be the portion of $\gamma$ in the sector $|{\arg z}| \leq \theta$ and for $\epsilon > 0$ define $N_\epsilon$ to be the set of all points within a distance of $\epsilon$ of $\gamma_\theta$. It was shown in the proof of Theorem \ref{seccurve1_maintheo} (see equation \eqref{seccurve1_fnasymp}) that
\begin{equation}
\label{curscaoneeq_fnasymp}
	F_n(z) = \frac{1}{(1-z)\sqrt{2\pi\lambda n}} + o\!\left(n^{-1/2}\right)
\end{equation}
as $n \to \infty$ uniformly for $z \in \C \setminus N_\epsilon$ with $|{\arg z}| \leq \theta - \epsilon$ for any fixed $\epsilon > 0$.

Because $\re \varphi(\xi) = 0$ and because $\gamma$ is an admissible contour for the function $\varphi$, $\epsilon > 0$ can be taken small enough so that
\[
	\inf_{s \in \gamma} |\xi - s| > \epsilon.
\]
Consequently if $w$ is restricted to a compact subset of $\C$ then $z_n(w) \notin N_\epsilon$ and $|{\arg z_n(w)}| \leq \theta - \epsilon$ for all such $w$ if $n$ is large enough. It follows from \eqref{curscaoneeq_fnasymp} that
\begin{align*}
F_n(z_n(w)) &= \frac{1}{(1-z_n(w))\sqrt{2\pi\lambda n}} + o\!\left(n^{-1/2}\right) \nonumber \\
	&\sim \frac{1}{(1-\xi)\sqrt{2\pi\lambda n}}
\end{align*}
as $n \to \infty$ uniformly for $w$ restricted to any compact subset of $\C$. Then, since $z_n(w)$ is inside $\gamma$ for $n$ large enough, \eqref{curscaoneeq_fnexplicit} implies that
\begin{equation}
\label{curscaoneeq_fnasymp2}
	\frac{f(r_n z_n(w))}{r_n^a (\log r_n)^b (e^{1/\lambda} z_n(w))^n} \left(\frac{p_{n-1}(r_n z_n(w))}{f(r_n z_n(w))} - 1 \right) \sim - \frac{1}{(1-\xi)\sqrt{2\pi\lambda n}}
\end{equation}
as $n \to \infty$ uniformly for $w$ restricted to any compact subset of $\C$.

It follows from the asymptotic assumption on $f$ in \eqref{curscaoneeq_fgrowth} that
\begin{align*}
\frac{f(r_n z_n(w))}{r_n^a (\log r_n)^b} &\sim \xi^a e^{r_n^\lambda z_n(w)^\lambda} \\
	&= \xi^a \exp\!\left\{\frac{n\xi^\lambda}{\lambda} \left(1 + \frac{\log n}{2(1-\xi^\lambda)n} - \frac{w-i\tau_n}{(1-\xi^\lambda) n}\right)^\lambda \right\} \\
	&\sim \xi^a \exp\!\left\{\frac{n\xi^\lambda}{\lambda} \left(1 + \frac{\lambda \log n}{2(1-\xi^\lambda)n} - \frac{\lambda(w-i\tau_n)}{(1-\xi^\lambda) n}\right) \right\} \\
	&= \xi^a n^{\xi^\lambda/[2(1-\xi^\lambda)]} \exp\!\left\{\frac{n\xi^\lambda}{\lambda} - \frac{\xi^\lambda(w-i\tau_n)}{1-\xi^\lambda}\right\}
\end{align*}
as $n \to \infty$ uniformly for $w$ restricted to any compact subset of $\C$. Since
\begin{align*}
z_n(w)^n &= \xi^n \left(1 + \frac{\log n}{2(1-\xi^\lambda)n} - \frac{w-i\tau_n}{(1-\xi^\lambda) n}\right)^n \\
	&= \xi^n \exp\!\left\{n\log\!\left(1 + \frac{\log n}{2(1-\xi^\lambda)n} - \frac{w-i\tau_n}{(1-\xi^\lambda) n}\right)\right\} \\
	&\sim \xi^n \exp\!\left\{\frac{\log n}{2(1-\xi^\lambda)} - \frac{w-i\tau_n}{1-\xi^\lambda}\right\} \\
	&= \xi^n n^{1/[2(1-\xi^\lambda)]} \exp\!\left\{ - \frac{w-i\tau_n}{1-\xi^\lambda}\right\}
\end{align*}
it then follows that
\begin{align}
\frac{f(r_n z_n(w))}{r_n^a (\log r_n)^b (e^{1/\lambda} z_n(w))^n} &\sim \xi^a n^{-1/2} \exp\!\left\{ \frac{n(\xi^\lambda-1-\lambda\log \xi)}{\lambda} + w - i\tau_n\right\} \nonumber \\
	&= \xi^a n^{-1/2} \exp\!\left\{ \frac{i\tau n}{\lambda} + w - i\tau_n\right\} \nonumber \\
	&= \xi^a n^{-1/2} e^w
\label{curscaoneeq_fnfracasymp}
\end{align}
as $n \to \infty$ uniformly for $w$ restricted to any compact subset of $\C$.

Substituting \eqref{curscaoneeq_fnfracasymp} into \eqref{curscaoneeq_fnasymp2} yields the limit
\[
	\frac{p_{n-1}(r_n z_n(w))}{f(r_n z_n(w))} \longrightarrow 1 - \frac{e^{-w}}{\xi^a (1-\xi)\sqrt{2\pi\lambda}}
\]
as $n \to \infty$ uniformly for $w$ restricted to any compact subset of $\C$, which is exactly the limit we desire.

\section{Maximal Growth in Two Directions and Its Effect on the Scaling Limit}
\label{curscatwosec_sec}

Let $a,b,A \in \C$, $0 < \lambda < \infty$, $\mu < 1$, and $\zeta \in \C$ with $|\zeta| = 1$, $\zeta \neq 1$. Let $\theta \in (0,\pi)$ be small enough so that the sectors $|{\arg z}| \leq \theta$ and $|{\arg(z/\zeta)}| \leq \theta$ are disjoint.  We suppose that $f$ is an entire function with the asymptotic behavior
\begin{equation}
\label{curscatwoeq_fgrowth}
f(z) = \begin{cases}
	z^a \exp\!\left(z^\lambda\right) \bigl[1+o(1)\bigr] & \text{for } |{\arg z}| \leq \theta, \\
	A(z/\zeta)^b \exp\!\left((z/\zeta)^\lambda\right) \bigl[1+o(1)\bigr] & \text{for } |{\arg(z/\zeta)}| \leq \theta, \\
	O\!\left(\exp\!\left(\mu |z|^\lambda\right)\right) & \text{otherwise}
	\end{cases}			
\end{equation}
as $|z| \to \infty$, with each estimate holding uniformly in its sector. For this $f$, let
\[
	p_n(z) = \sum_{k=0}^{n} \frac{f^{(k)}(0)}{k!} z^k
\]
and define
\begin{equation}
\label{curscatwoeq_rndef}
	r_n = \left(\frac{n}{\lambda}\right)^{1/\lambda}.
\end{equation}

We showed in Chapter \ref{chap_limitcurves} that the limit points of the zeros of the scaled partial sums $p_{n-1}(r_n z)$ in the sector $|{\arg z}| < \theta$, $z \neq 0$, lie on the curve
\[
	S = \left\{ z \in \C : \left|z^\lambda \exp\!\left(1-z^\lambda\right)\right| = 1 \text{ and } |z| \leq 1 \right\}.
\]
If $\xi$ is a point of $S$ then
\[
	\re\!\left(\xi^\lambda - 1 - \lambda \log \xi\right) = 0.
\]

\begin{theorem}
\label{curscatwotheo_maintheo}
Let $\xi$ be a point of $S$ with $|{\arg \xi}| < \theta$, $\xi \neq 1$ and define
\[
	\tau = \im\!\left(\xi^\lambda - 1 - \lambda \log \xi\right).
\]
Define the sequence $\tau_n$ by the conditions
\[
	\frac{\tau n}{\lambda} \equiv \tau_n \pmod{2\pi}, \qquad -\pi < \tau_n \leq \pi
\]
and let
\[
	z_n^1(w) = \xi \left(1 + \frac{\log n}{2(1-\xi^\lambda)n} - \frac{w-i\tau_n}{(1-\xi^\lambda) n}\right).
\]
Define the sequence $\sigma_n$ by the conditions
\[
	n\arg\zeta \equiv \sigma_n \pmod{2\pi}, \qquad -\pi < \sigma_n \leq \pi
\]
and let
\[
	z_n^2(w) = \xi \left[ 1 + \left(a-b+\frac{\lambda}{2}\right)\frac{\log n}{\lambda (1-\xi^\lambda) n} - \frac{w - i\sigma_n - i\tau_n}{(1-\xi^\lambda) n} \right].
\]
If $\re a > \re b$ then
\[
	\lim_{n \to \infty} \frac{p_{n-1}(r_n z_n^1(w))}{f(r_n z_n^1(w))} = 1 - \frac{e^{-w}}{\xi^a (1-\xi) \sqrt{2\pi\lambda}},
\]
if $\re a < \re b$ then
\[
	\lim_{n \to \infty} \frac{p_{n-1}(r_n z_n^2(w))}{f(r_n z_n^2(w))} = 1 - \frac{A\zeta \lambda^{(a-b)/\lambda} e^{-w}}{\xi^a (\zeta-\xi)\sqrt{2\pi\lambda}},
\]
and if $\re a = \re b$ then
\[
	\frac{p_{n-1}(r_n z_n^1(w))}{f(r_n z_n^1(w))} = 1 - \left(\frac{1}{1-\xi} + \frac{A\zeta^{1-n} r_n^{b-a}}{\zeta-\xi} \right) \frac{e^{-w}}{\xi^a \sqrt{2\pi\lambda}} + o(1)
\]
as $n \to \infty$. All three limits are uniform with respect to $w$ as long as $w$ is restricted to a compact subset of $\C$.
\end{theorem}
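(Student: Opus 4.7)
The plan is to mirror the strategy used for Theorem \ref{curscaonetheo_maintheo} but now applied to the refined asymptotic \eqref{seccurvetwo_fnasymp} for $F_n(z)$, carefully splitting into the three cases $\re a > \re b$, $\re a < \re b$, and $\re a = \re b$ and choosing the scaling sequence ($z_n^1$ or $z_n^2$) so that the perturbation $z - \xi$ is exactly calibrated to the dominant term of $F_n$. In every case, the ultimate identity being exploited is
\[
\frac{f(r_n z)}{r_n^a (e^{1/\lambda} z)^n}\left(\frac{p_{n-1}(r_n z)}{f(r_n z)} - 1\right) = F_n(z),
\]
which comes from \eqref{seccurvetwo_fnexplicit} once one checks that $z_n^j(w)$ is inside $\gamma$ for $n$ large, uniformly in $w$ on compacta, and lies outside an $\epsilon$-neighborhood of $\gamma_1$ so that \eqref{seccurvetwo_fnasymp} applies. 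This check is identical to the corresponding step in the one-direction proof and relies only on $\xi \in S$ lying away from $\gamma$.

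The computational core is evaluating the left-hand side asymptotically. For this I would expand $z_n^j(w)^n$ and $\exp(r_n^\lambda z_n^j(w)^\lambda)$ as in \eqref{curscaoneeq_fnfracasymp}: writing $z_n^j(w) = \xi(1 + \delta_n(w))$, one obtains
\[
z_n^j(w)^n \sim \xi^n e^{n\delta_n(w)}, \qquad \exp\!\left(r_n^\lambda z_n^j(w)^\lambda\right) \sim \exp\!\left(\tfrac{n}{\lambda}\xi^\lambda + n\xi^\lambda \delta_n(w)\right),
\]
and the combination with $e^{-n/\lambda}$, $r_n^{-a}$, and the asymptotic $f(r_n z) \sim (r_n z)^a e^{(r_n z)^\lambda}$ from \eqref{curscatwoeq_fgrowth} yields a closed-form product of a power of $n$, the factor $e^{i\tau_n}$ coming from $e^{n(\xi^\lambda - 1 - \lambda\log\xi)/\lambda}$, and the residual factor from $\delta_n$. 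The powers of $n$ collapse by the algebraic identity $(\xi^\lambda - 1)/(1-\xi^\lambda) = -1$, which is what forced the precise choice of the logarithmic correction in $z_n^j(w)$, and in Case 2 the phase $\zeta^{1-n}$ in $F_n(z_n^2(w))$ is cancelled by $e^{i\sigma_n}$ coming from $\delta_n$, leaving a clean limit.

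Having this asymptotic for the left-hand factor, I would substitute \eqref{seccurvetwo_fnasymp} on the right. In Case 1, the term $A\zeta^{1-n} r_n^{b-a}/[(\zeta - z)\sqrt{2\pi\lambda n}]$ is $o(n^{-1/2})$, so the derivation proceeds verbatim as in Theorem \ref{curscaonetheo_maintheo} (with the trivial simplification $b = 0$ there). In Case 2 the roles of the two terms are reversed: the first is $o(r_n^{b-a} n^{-1/2})$, and the second dominates; the extra factors $A$, $\zeta$, and $\lambda^{(a-b)/\lambda}$ from $r_n^{b-a} = \lambda^{(a-b)/\lambda} n^{(b-a)/\lambda}$ appear in the limiting constant. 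In Case 3 both terms are of order $n^{-1/2}$ and must be kept: since $|\zeta^{1-n} r_n^{b-a}| = 1$ but does not converge, only a statement with $o(1)$ remainder (rather than a limit) is possible, which is exactly what the theorem asserts.

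The main obstacle is bookkeeping: correctly aligning the logarithmic shift in $z_n^j(w)$ with the oscillatory phase in $F_n$ and with the growth factor $r_n^{b-a}$. The $\tau_n$-periodicity disposes of the phase $e^{n(\xi^\lambda - 1 - \lambda\log\xi)/\lambda}$ exactly as in the one-direction case, and I expect the new $\sigma_n$-periodicity in $z_n^2(w)$ to be what absorbs $\zeta^{-n}$; verifying that this cancellation is exact (i.e.\ produces $\zeta$ and not a residual oscillation) is the only place where the argument differs structurally from Section \ref{curscasec_onedir}, and it is where I would be most careful.
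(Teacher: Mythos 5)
Your proposal matches the paper's proof essentially step for step: the same identity derived from \eqref{curscatwoeq_fnexplicit} once $z_n^j(w)$ is checked to lie inside $\gamma$ and away from $N_\epsilon$, the same two-term asymptotic \eqref{seccurvetwo_fnasymp} for $F_n$, the same case split by the sign of $\re a - \re b$ with the indicated dominant term, and the same exact cancellation of $\zeta^{-n}$ by the $\sigma_n$ phase built into $z_n^2(w)$ (together with $r_n^{b-a}/n^{(b-a)/\lambda} = \lambda^{(a-b)/\lambda}$ producing the constant in the limit). The one slip is the sign in your displayed identity: since $F_n(z) = \bigl[f(r_n z) - p_{n-1}(r_n z)\bigr]/\bigl[r_n^a (e^{1/\lambda}z)^n\bigr]$ for $z$ inside $\gamma$, your left-hand side equals $-F_n(z)$ rather than $F_n(z)$, but this does not affect the correctness of the rest of the outline, whose final constants carry the right sign.
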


\begin{remark}
Depending on the balance between $\re a$ and $\re b$ there are three possible forms of the scaling limit in this case, compared to only one when $f$ has a single direction of maximal growth as in Section \ref{curscasec_onedir}. Examples of these extra scaling limits are given in Chapter \ref{chap_applications} (in all sections except \ref{applicationssec_confluent}).
\end{remark}

\begin{remark}
Just as with Theorem \ref{curscaonetheo_maintheo} in the case of one direction of maximal growth, Theorem \ref{curscatwotheo_maintheo} gives us precise asymptotics for individual zeros of the scaled partial sums $p_{n-1}(r_n z)$ near a given point $\xi$ on the arcs of the curve $S$ in the case of two directions of maximal growth. Details of this are given in Section \ref{curscasec_width}, where we use that information to verify part (a) of the Modified Saff-Varga Width Conjecture (see Section \ref{introsec_width}) for this class of functions.
\end{remark}

\subsection{Definitions and Preliminaries}

We will repeat here several relevant definitions from Section \ref{seccurvetwo_defsandprelims}.

Let $\gamma$ be an admissible contour for the function
\begin{equation}
\label{curscatwoeq_phidef}
	\varphi(z) := \left.\left(z^\lambda - 1 - \lambda \log z\right)\right/\lambda
\end{equation}
and define
\begin{equation}
\label{curscatwoeq_fndef}
	F_n(z) = \frac{r_n^{-a}}{2\pi i} \int_{\gamma} \left(e^{1/\lambda}s\right)^{-n}\! f(r_n s) \frac{ds}{s-z}
\end{equation}
for $z \notin \gamma$, $z \neq 0$. Just as in Section \ref{seccurvetwo_defsandprelims},
\begin{equation}
\label{curscatwoeq_fnexplicit}
	F_n(z) = \frac{1}{r_n^a (e^{1/\lambda} z)^n} \times \begin{cases}
		-p_{n-1}(r_n z) & \text{for } z \text{ outside } \gamma, \\
		f(r_n z) - p_{n-1}(r_n z) & \text{for } z \neq 0 \text{ inside } \gamma.
		\end{cases}
\end{equation}

\subsection{Proof of Theorem \ref{curscatwotheo_maintheo}}

Let $\gamma_\theta$ be the portion of $\gamma$ in the sector $|{\arg z}| \leq \theta$ and for $\epsilon > 0$ define $N_\epsilon$ to be the set of all points within a distance of $\epsilon$ of $\gamma_\theta$. It was shown in the proof of Theorem \ref{seccurvetwo_maintheo} (see equation \eqref{seccurvetwo_fnasymp}) that
\begin{equation}
\label{curscatwoeq_fnasymp}
	F_n(z) = \frac{1}{(1-z)\sqrt{2\pi\lambda n}} + \frac{A\zeta^{1-n} r_n^{b-a}}{(\zeta-z)\sqrt{2\pi\lambda n}} + o\!\left(n^{-1/2}\right) + o\!\left(r_n^{b-a} n^{-1/2}\right)
\end{equation}
as $n \to \infty$ uniformly for $z \in \C \setminus N_\epsilon$ with $|{\arg z}| \leq \theta - \epsilon$ for any fixed $\epsilon > 0$.

Fix $j \in \{1,2\}$. Because $\re \varphi(\xi) = 0$ and because $\gamma$ is an admissible contour for the function $\varphi$, $\epsilon > 0$ can be taken small enough so that
\[
	\inf_{s \in \gamma} |\xi - s| > \epsilon.
\]
Consequently if $w$ is restricted to a compact subset of $\C$ then $z_n^j(w) \notin N_\epsilon$ and $|{\arg z_n^j(w)}| \leq \theta - \epsilon$ for all such $w$ if $n$ is large enough. It follows from \eqref{curscatwoeq_fnasymp} that
\begin{align*}
F_n(z_n^j(w)) &= \frac{1}{(1-z_n^j(w))\sqrt{2\pi\lambda n}} + \frac{A\zeta^{1-n} r_n^{b-a}}{(\zeta-z_n^j(w))\sqrt{2\pi\lambda n}} + o\!\left(n^{-1/2}\right) + o\!\left(r_n^{b-a} n^{-1/2}\right) \nonumber \\
	&= \frac{1}{(1-\xi)\sqrt{2\pi\lambda n}} + \frac{A\zeta^{1-n} r_n^{b-a}}{(\zeta-\xi)\sqrt{2\pi\lambda n}} + o\!\left(n^{-1/2}\right) + o\!\left(r_n^{b-a} n^{-1/2}\right)
\end{align*}
as $n \to \infty$ uniformly for $w$ restricted to any compact subset of $\C$. Then, since $z_n^j(w)$ is inside $\gamma$ for $n$ large enough, \eqref{curscatwoeq_fnexplicit} implies that
\begin{align}
&\frac{f(r_n z_n^j(w))}{r_n^a (e^{1/\lambda} z_n^j(w))^n} \left(\frac{p_{n-1}(r_n z_n^j(w))}{f(r_n z_n^j(w))} - 1 \right) \nonumber \\
	&\qquad = -\frac{1}{(1-\xi)\sqrt{2\pi\lambda n}} - \frac{A\zeta^{1-n} r_n^{b-a}}{(\zeta-\xi)\sqrt{2\pi\lambda n}} + o\!\left(n^{-1/2}\right) + o\!\left(r_n^{b-a} n^{-1/2}\right)
\label{curscatwoeq_fnasymp2}
\end{align}
as $n \to \infty$ uniformly for $w$ restricted to any compact subset of $\C$.

Suppose that $\re a > \re b$. Then from \eqref{curscatwoeq_fnasymp2} it follows that
\[
	\frac{f(r_n z_n^1(w))}{r_n^a (e^{1/\lambda} z_n^1(w))^n} \left(\frac{p_{n-1}(r_n z_n^1(w))}{f(r_n z_n^1(w))} - 1 \right) \sim -\frac{1}{(1-\xi)\sqrt{2\pi\lambda n}}
\]
as $n \to \infty$ uniformly for $w$ restricted to any compact subset of $\C$. This is analogous to equation \eqref{curscaoneeq_fnasymp2} from the proof of Theorem \ref{curscaonetheo_maintheo}, and by proceeding as in that proof it can be shown that
\begin{equation}
\label{curscatwoeq_bleqalim}
	\frac{p_{n-1}(r_n z_n^1(w))}{f(r_n z_n^1(w))} \longrightarrow 1 - \frac{e^{-w}}{\xi^a (1-\xi)\sqrt{2\pi\lambda}}
\end{equation}
as $n \to \infty$ uniformly for $w$ restricted to any compact subset of $\C$. This is the first desired limit in Theorem \ref{curscatwotheo_maintheo}.

Now suppose $\re a < \re b$. From \eqref{curscatwoeq_fnasymp2} it follows that
\begin{equation}
\label{curscaoneeq_fnasymp3}
	\frac{f(r_n z_n^2(w))}{r_n^a (e^{1/\lambda} z_n^2(w))^n} \left(\frac{p_{n-1}(r_n z_n^2(w))}{f(r_n z_n^2(w))} - 1 \right) \sim - \frac{A\zeta^{1-n} r_n^{b-a}}{(\zeta-\xi)\sqrt{2\pi\lambda n}}
\end{equation}
as $n \to \infty$ uniformly for $w$ restricted to a compact subset of $\C$.

The asymptotic assumption on $f$ in \eqref{curscatwoeq_fgrowth} implies that
\begin{align*}
\frac{f(r_n z_n^2(w))}{r_n^a} &\sim \xi^a e^{r_n^\lambda z_n^2(w)^\lambda} \\
	&= \xi^a \exp\!\left\{ \frac{n\xi^\lambda}{\lambda} \left[1 + \left(a-b+\frac{\lambda}{2}\right)\frac{\log n}{\lambda (1-\xi^\lambda) n} - \frac{w - i\sigma_n - i\tau_n}{(1-\xi^\lambda) n} \right]^\lambda\right\} \\
	&\sim \xi^a \exp\!\left\{ \frac{n\xi^\lambda}{\lambda} \left[1 + \left(a-b+\frac{\lambda}{2}\right)\frac{\log n}{(1-\xi^\lambda) n} - \frac{\lambda(w - i\sigma_n - i\tau_n)}{(1-\xi^\lambda) n} \right]\right\} \\
	&= \xi^a \exp\!\left\{ \frac{n\xi^\lambda}{\lambda} + \left(a-b+\frac{\lambda}{2}\right)\frac{\xi^\lambda \log n}{\lambda(1-\xi^\lambda)} - \frac{\xi^\lambda(w - i\sigma_n - i\tau_n)}{1-\xi^\lambda} \right\}
\end{align*}
as $n \to \infty$ uniformly for $w$ restricted to any compact subset of $\C$. Since
\begin{align*}
z_n^2(w)^n &= \xi^n \left[1 + \left(a-b+\frac{\lambda}{2}\right)\frac{\log n}{\lambda (1-\xi^\lambda) n} - \frac{w - i\sigma_n - i\tau_n}{(1-\xi^\lambda) n} \right]^n \\
	&= \xi^n \exp\!\left\{ n \log\!\left[1 + \left(a-b+\frac{\lambda}{2}\right)\frac{\log n}{\lambda (1-\xi^\lambda) n} - \frac{w - i\sigma_n - i\tau_n}{(1-\xi^\lambda) n} \right]\right\} \\
	&\sim \xi^n \exp\!\left\{ n \left[\left(a-b+\frac{\lambda}{2}\right)\frac{\log n}{\lambda (1-\xi^\lambda) n} - \frac{w - i\sigma_n - i\tau_n}{(1-\xi^\lambda) n} \right]\right\} \\
	&= \xi^n \exp\!\left\{ \left(a-b+\frac{\lambda}{2}\right)\frac{\log n}{\lambda (1-\xi^\lambda)} - \frac{w - i\sigma_n - i\tau_n}{1-\xi^\lambda} \right\}
\end{align*}
it then follows that
\begin{align}
\frac{f(r_n z_n^2(w))}{r_n^a (e^{1/\lambda} z_n^2(w))^n} &\sim \xi^a n^{(b-a)/\lambda-1/2} \exp\!\left\{ \frac{n}{\lambda}\left(\xi^\lambda - 1 - \lambda\log\xi\right) + w - i\sigma_n - i\tau_n\right\} \nonumber \\
	&= \xi^a n^{(b-a)/\lambda-1/2} \exp\!\left\{ \frac{i\tau n}{\lambda} + w - i\sigma_n - i\tau_n\right\} \nonumber \\
	&= \xi^a n^{(b-a)/\lambda-1/2} e^{w - i\sigma_n}
\label{curscatwoeq_fnfracasymp}
\end{align}
as $n \to \infty$ uniformly for $w$ restricted to a compact subset of $\C$.

Substituting \eqref{curscatwoeq_fnfracasymp} into \eqref{curscaoneeq_fnasymp3} yields the limit
\[
	\frac{p_{n-1}(r_n z_n^2(w))}{f(r_n z_n^2(w))} \longrightarrow 1 - \frac{A\zeta^{1-n}\lambda^{(a-b)/\lambda} e^{i\sigma_n-w}}{\xi^a (\zeta-\xi)\sqrt{2\pi\lambda}} = 1 - \frac{A\zeta \lambda^{(a-b)/\lambda} e^{-w}}{\xi^a (\zeta-\xi)\sqrt{2\pi\lambda}}
\]
as $n \to \infty$ uniformly for $w$ restricted to any compact subset of $\C$. This is the second desired limit in Theorem \ref{curscatwotheo_maintheo}.

Finally suppose $\re a = \re b$. In this case, after setting $j=1$ equation \eqref{curscatwoeq_fnasymp2} becomes
\begin{align*}
&\frac{f(r_n z_n^1(w))}{r_n^a (e^{1/\lambda} z_n^1(w))^n} \left(\frac{p_{n-1}(r_n z_n^1(w))}{f(r_n z_n^1(w))} - 1 \right) \\
	&\qquad = -\left(\frac{1}{1-\xi} + \frac{A\zeta^{1-n} r_n^{b-a}}{\zeta-\xi} \right) \frac{1}{\xi^a \sqrt{2\pi\lambda n}} + o\!\left(n^{-1/2}\right)
\end{align*}
as $n \to \infty$ uniformly for $w$ restricted to any compact subset of $\C$. By following the same method as in the proof of Theorem \ref{curscaonetheo_maintheo} to obtain equation \eqref{curscaoneeq_fnfracasymp} it can be shown that
\[
	\frac{f(r_n z_n^1(w))}{r_n^a (e^{1/\lambda} z_n^1(w))^n} \sim \xi^a n^{-1/2} e^w
\]
as $n \to \infty$ uniformly for $w$ restricted to any compact subset of $\C$. Substituting this into the above yields the asymptotic
\[
	\frac{p_{n-1}(r_n z_n^1(w))}{f(r_n z_n^1(w))} = 1 - \left(\frac{1}{1-\xi} + \frac{A\zeta^{1-n} r_n^{b-a}}{\zeta-\xi} \right) \frac{e^{-w}}{\xi^a \sqrt{2\pi\lambda}} + o(1)
\]
as $n \to \infty$ uniformly for $w$ restricted to any compact subset of $\C$, which is the last desired item in Theorem \ref{curscatwotheo_maintheo}.

\section{Generalization to More Directions of Maximal Growth}
\label{curscamanysec_sec}

Let $a,b_1,\ldots,b_m,A_1,\ldots,A_m \in \C$, $0 < \lambda < \infty$, $\mu < 1$, and $\zeta_1,\ldots,\zeta_m \in \C$ with $|\zeta_k| = 1$, $\zeta_k \neq 1$ for all $k = 1,\ldots,m$ and $\zeta_j \neq \zeta_k$ for $j \neq k$. Let $\theta \in (0,\pi)$ be small enough so that all of the sectors $|{\arg z}| \leq \theta$, $|{\arg(z/\zeta_k)}| \leq \theta$, $k=1,\ldots,m$, are disjoint.  We suppose that $f$ is an entire function with the asymptotic behavior
\begin{equation}
\label{curscamanyeq_fgrowth}
f(z) = \begin{cases}
	z^a \exp\!\left(z^\lambda\right) \bigl[1+o(1)\bigr] & \text{for } |{\arg z}| \leq \theta, \\
	A_1(z/\zeta_1)^{b_1} \exp\!\left((z/\zeta_1)^\lambda\right) \bigl[1+o(1)\bigr] & \text{for } |{\arg(z/\zeta_1)}| \leq \theta, \\
	\qquad \vdots \\
	A_m(z/\zeta_m)^{b_m} \exp\!\left((z/\zeta_m)^\lambda\right) \bigl[1+o(1)\bigr] & \text{for } |{\arg(z/\zeta_m)}| \leq \theta, \\
	O\!\left(\exp\!\left(\mu |z|^\lambda\right)\right) & \text{otherwise}
	\end{cases}			
\end{equation}
as $|z| \to \infty$, with each estimate holding uniformly in its sector. For this $f$, let, $p_n(z)$, $r_n$, and $F_n(z)$ be defined as in Section \ref{curscatwosec_sec}. For convenience of notation, define $b_0 = a$, $A_0 = 1$, and $\zeta_0 = 1$.

As in Section \ref{seccurvemany_sec} we can derive an analogue to equations \eqref{seccurve1_fnasymp} and \eqref{seccurvetwo_fnasymp}, specifically
\begin{equation}
\label{curscamanyeq_fnapprox}
	F_n(z) = \frac{1}{\sqrt{2\pi\lambda n}} \sum_{k=0}^{m} \left[\frac{A_k \zeta_{k}^{1-n} r_n^{b_k-a}}{\zeta_k-z} + o\!\left(r_n^{b_k-a}n^{-1/2}\right)\right]
\end{equation}
as $n \to \infty$ uniformly with respect to $z$ as long as $z$ remains in any sector $|{\arg(z/\zeta_k)}| \leq \theta - \epsilon$ with $\epsilon > 0$, $k=0,1,\ldots,m$, and remains bounded away from $\gamma$.

The main difficulty in extending Theorem \ref{curscatwotheo_maintheo} to arbitrary $m \geq 3$ lies in the necessity of modifying the quantity $z_n(w)$ that appears in the scaling limit when the real parts of the $b_k$ balance in different ways. In the simplest case there is a $j$ such that $\re b_j > \re b_k$ for all $k \neq j$, which allows us to simplify \eqref{seccurvemany_fnapprox} into
\[
	F_n(z) \sim \frac{A_j \zeta_{j}^{1-n} r_n^{b_j-a}}{(\zeta_j-z)\sqrt{2\pi\lambda n}},
\]
after which the analysis proceeds just as in the relevant parts of the proof of Theorem \ref{curscatwotheo_maintheo}. This yields the following result.

\begin{theorem}
\label{curscamanytheo_agebb}
Let $m \geq 1$ and suppose that there is a $j \in \{0,1,\ldots,m\}$ such that $\re b_j > \re b_k$ for all $k \neq j$. Let $\xi$ be a point of
\[
	S = \left\{ z \in \C : \left|z^\lambda \exp\!\left(1-z^\lambda\right)\right| = 1 \text{ and } |z| \leq 1 \right\}
\]
with $|{\arg \xi}| < \theta$, $\xi \neq 1$ and define
\[
	\tau = \im\!\left(\xi^\lambda - 1 - \lambda \log \xi\right).
\]
Define the sequences $\tau_n$ and $\sigma_n$ by the conditions
\[
	\frac{\tau n}{\lambda} \equiv \tau_n \pmod{2\pi}, \qquad -\pi < \tau_n \leq \pi,
\]
\[
	n\arg\zeta_j \equiv \sigma_n \pmod{2\pi}, \qquad -\pi < \sigma_n \leq \pi
\]
and let
\[
	z_n(w) = \xi \left[ 1 + \left(a-b_j+\frac{\lambda}{2}\right)\frac{\log n}{\lambda (1-\xi^\lambda) n} - \frac{w - i\sigma_n - i\tau_n}{(1-\xi^\lambda) n} \right].
\]
Then
\[
	\lim_{n \to \infty} \frac{p_{n-1}(r_n z_n(w))}{f(r_n z_n(w))} = 1 - \frac{A_j\zeta_j \lambda^{(a-b_j)/\lambda} e^{-w}}{\xi^a (\zeta_j-\xi)\sqrt{2\pi\lambda}}
\]
uniformly for $w$ restricted to any compact subset of $\C$.
\end{theorem}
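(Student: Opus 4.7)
The plan is to follow the template established in the proofs of Theorems \ref{curscaonetheo_maintheo} and \ref{curscatwotheo_maintheo} almost verbatim, exploiting the fact that the hypothesis $\re b_j > \re b_k$ for all $k \neq j$ forces a single term in the sum \eqref{curscamanyeq_fnapprox} to dominate the asymptotics of $F_n(z)$. First I would fix an admissible contour $\gamma$ for $\varphi(z) = (z^\lambda - 1 - \lambda\log z)/\lambda$ and choose $\epsilon > 0$ small enough that $\xi$ is at distance greater than $\epsilon$ from $\gamma$, which is possible because $\xi$ lies on the curve $\re \varphi(z) = 0$ and $\gamma$ is constructed to stay away from this curve except near $z=1$. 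For $w$ in any compact subset of $\C$, the point $z_n(w)$ then lies in $\C \setminus N_\epsilon$ and inside $\gamma$ for all sufficiently large $n$, so the explicit formula from \eqref{curscatwoeq_fnexplicit} applies and $F_n(z_n(w)) = [f(r_n z_n(w)) - p_{n-1}(r_n z_n(w))]/[r_n^a (e^{1/\lambda} z_n(w))^n]$.

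Next, I would invoke \eqref{curscamanyeq_fnapprox} and factor out $r_n^{b_j-a}$ to write
\[
F_n(z_n(w)) = \frac{r_n^{b_j-a}}{\sqrt{2\pi\lambda n}} \left[\frac{A_j \zeta_j^{1-n}}{\zeta_j - z_n(w)} + \sum_{k \neq j} \frac{A_k \zeta_k^{1-n} r_n^{b_k-b_j}}{\zeta_k - z_n(w)} + o(1)\right].
\]
Since $\re(b_k - b_j) < 0$ for every $k \neq j$, the off-diagonal terms in the bracket vanish as $n \to \infty$ uniformly for $w$ in a compact set, and $z_n(w) \to \xi$, so
\[
F_n(z_n(w)) \sim \frac{A_j \zeta_j^{1-n} r_n^{b_j - a}}{(\zeta_j - \xi)\sqrt{2\pi\lambda n}}.
\]

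The bulk of the remaining work is the routine but careful Taylor-expansion computation of $f(r_n z_n(w))/[r_n^a (e^{1/\lambda} z_n(w))^n]$, which runs exactly in parallel to the derivation of \eqref{curscatwoeq_fnfracasymp} in the two-direction case. Using the leading asymptotic $f(z) \sim z^a \exp(z^\lambda)$ in the sector $|{\arg z}| \leq \theta$, expanding $z_n(w)^\lambda$ and $\log z_n(w)$ to first order in $1/n$, and combining the resulting exponentials, the special choice of $z_n(w)$ is designed precisely so that the subdominant terms give the factor $n^{(b_j-a)/\lambda - 1/2}$ together with the exponential $\exp\{(i\tau n/\lambda) + w - i\sigma_n - i\tau_n\}$; by the definitions of $\tau_n$ and $\sigma_n$ the $n$-dependent phases collapse to a single factor $e^{w - i\sigma_n}$, yielding
\[
\frac{f(r_n z_n(w))}{r_n^a (e^{1/\lambda} z_n(w))^n} \sim \xi^a n^{(b_j-a)/\lambda - 1/2} e^{w - i\sigma_n}.
\]

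Dividing the two asymptotics and rearranging $p_{n-1}/f = 1 - [\text{(ratio of }F_n\text{ to the normalized }f\text{)}]$, the factor $r_n^{b_j - a} = (n/\lambda)^{(b_j-a)/\lambda}$ exactly cancels the $n^{(b_j-a)/\lambda}$ above, and $\zeta_j^{1-n} e^{i\sigma_n} = \zeta_j$ by the definition of $\sigma_n$, giving the claimed limit. The main obstacle is really just bookkeeping: verifying that all of the oscillating powers of $n$ and unit-modulus phases cancel in the prescribed way and that the error terms $o(r_n^{b_k-a} n^{-1/2})$ for $k \neq j$ are indeed negligible after the normalization — but since each $\re(b_k - b_j) < 0$ is a strict inequality, this is immediate rather than delicate, so I expect no genuine difficulty beyond what already appears in Theorem \ref{curscatwotheo_maintheo}.
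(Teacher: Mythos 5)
Your proposal is correct and follows essentially the same route as the paper: the paper likewise observes that under $\re b_j > \re b_k$ for $k \neq j$ the sum in \eqref{curscamanyeq_fnapprox} collapses to the single dominant term $F_n(z) \sim A_j \zeta_j^{1-n} r_n^{b_j-a}/[(\zeta_j - z)\sqrt{2\pi\lambda n}]$, after which the argument is carried out exactly as in the $\re a < \re b$ branch of Theorem \ref{curscatwotheo_maintheo}. Your bookkeeping of the cancellations ($r_n^{b_j-a}$ against $n^{(b_j-a)/\lambda}$ leaving the $\lambda^{(a-b_j)/\lambda}$ factor, and $\zeta_j^{1-n}e^{i\sigma_n} = \zeta_j$) matches the paper's computation.
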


Now we will consider what happens when $\re a$ is not strictly larger than all of the other $\re b_k$. Because the analysis for general $m$ would be very complicated, we will restrict ourselves to the case $m=2$. Up to relabeling the $b_k$ there are three cases not included in the theorem above: (i) $\re a = \re b_1 > \re b_2$, (ii) $\re a = \re b_1 = \re b_2$, and (iii) $\re b_1 = \re b_2 > \re a$.

\textbf{Case (i): $\re a = \re b_1 > \re b_2$.} In this case \eqref{seccurvemany_fnapprox} becomes
\[
	F_n(z) = \frac{1}{(1-z)\sqrt{2\pi\lambda n}} + \frac{A_1\zeta_1^{1-n} r_n^{b_1-a}}{(\zeta_1-z)\sqrt{2\pi\lambda n}} + o\!\left(n^{-1/2}\right).
\]
If we define
\[
	z_n(w) = \xi \left(1 + \frac{\log n}{2(1-\xi^\lambda)n} - \frac{w-i\tau_n}{(1-\xi^\lambda) n}\right)
\]
then
\[
	F_n(z_n(w)) = \left(\frac{1}{1-\xi} + \frac{A_1\zeta_1^{1-n} r_n^{b_1-a}}{\zeta_1-\xi} \right) \frac{1}{\sqrt{2\pi\lambda n}} + o\!\left(n^{-1/2}\right)
\]
as $n \to \infty$ uniformly for $w$ restricted to any compact subset of $\C$. The remainder of the analysis proceeds just as in the analogous part of the proof of Theorem \ref{curscatwotheo_maintheo} and produces the following result.

\begin{theorem}
\label{curscamanytheo_aeqbgeb}
Let $m = 2$ and $\re a = \re b_1 > \re b_2$. Let $\xi$ be a point of $S$ with $|{\arg \xi}| < \theta$, $\xi \neq 1$ and define $\tau$ and $\tau_n$ as in Theorem \ref{curscamanytheo_agebb}. Let
\[
	z_n(w) = \xi \left(1 + \frac{\log n}{2(1-\xi^\lambda)n} - \frac{w-i\tau_n}{(1-\xi^\lambda) n}\right).
\]
Then
\[
	\frac{p_{n-1}(r_n z_n(w))}{f(r_n z_n(w))} = 1 - \left(\frac{1}{1-\xi} + \frac{A_1\zeta_1^{1-n} r_n^{b_1-a}}{\zeta_1-\xi} \right) \frac{e^{-w}}{\xi^a \sqrt{2\pi\lambda}} + o(1)
\]
as $n \to \infty$ uniformly for $w$ restricted to any compact subset of $\C$.
\end{theorem}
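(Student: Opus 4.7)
The plan is to follow the same route used at the end of the proof of Theorem \ref{curscatwotheo_maintheo} for the case $\re a = \re b$, treating the third growth direction (with exponent $b_2$) as strictly lower-order and folding its contribution into the error term. The hypothesis $\re a = \re b_1 > \re b_2$ means the $k=0$ and $k=1$ summands in \eqref{curscamanyeq_fnapprox} are both of exact order $n^{-1/2}$ (because $|\zeta_1^{1-n} r_n^{b_1-a}| = r_n^{\re b_1 - \re a} = 1$), while the $k=2$ summand carries a factor $r_n^{b_2-a}$ of modulus $r_n^{\re b_2 - \re a} \to 0$, so it is absorbed into the $o(n^{-1/2})$ remainder. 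This gives the reduction
\[
F_n(z) = \frac{1}{(1-z)\sqrt{2\pi\lambda n}} + \frac{A_1\zeta_1^{1-n} r_n^{b_1-a}}{(\zeta_1-z)\sqrt{2\pi\lambda n}} + o\!\left(n^{-1/2}\right),
\]
valid uniformly for $z$ staying in any sector $|{\arg z}| \leq \theta - \epsilon$ and bounded away from the contour $\gamma$.

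Next I would specialize this to $z = z_n(w)$. Since $\xi \in S$ with $|{\arg \xi}| < \theta$ and $\xi \neq 1$, I can choose an admissible $\gamma$ and an $\epsilon > 0$ so that $\xi$ sits at positive distance from $\gamma$ and strictly inside the sector; then for $n$ large and $w$ in any prescribed compact set $K \subset \C$, the point $z_n(w)$ (which tends to $\xi$) lies in the domain of validity of the display above. The factors $1/(1-z_n(w))$ and $1/(\zeta_1-z_n(w))$ converge uniformly on $K$ to $1/(1-\xi)$ and $1/(\zeta_1-\xi)$ respectively, so
\[
F_n(z_n(w)) = \left(\frac{1}{1-\xi} + \frac{A_1\zeta_1^{1-n} r_n^{b_1-a}}{\zeta_1-\xi}\right)\frac{1}{\sqrt{2\pi\lambda n}} + o\!\left(n^{-1/2}\right).
\]

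Because $z_n(w)$ is inside $\gamma$ for $n$ large, the explicit formula for $F_n$ (the analogue of \eqref{curscatwoeq_fnexplicit}) yields
\[
\frac{p_{n-1}(r_n z_n(w))}{f(r_n z_n(w))} = 1 - \frac{r_n^a \left(e^{1/\lambda} z_n(w)\right)^n}{f(r_n z_n(w))} F_n(z_n(w)).
\]
It remains to evaluate the prefactor. The scaling $z_n(w)$ is exactly the one used in Theorem \ref{curscaonetheo_maintheo}, and the central-sector asymptotic $f(z) \sim z^a \exp(z^\lambda)$ from \eqref{curscamanyeq_fgrowth} is the same hypothesis driving that earlier derivation; therefore the identical computation leading to \eqref{curscaoneeq_fnfracasymp} transfers verbatim and gives
\[
\frac{f(r_n z_n(w))}{r_n^a \left(e^{1/\lambda} z_n(w)\right)^n} \sim \xi^a\, n^{-1/2}\, e^w
\]
uniformly on $K$. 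Substituting the two displays into the identity for $p_{n-1}/f$ yields the claimed formula.

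The only real obstacle is bookkeeping: one must carry the factor $\zeta_1^{1-n} r_n^{b_1-a}$ through as an unresolved symbolic expression rather than trying to pass to a subsequence or replace it by a limit, because both $\zeta_1^{1-n}$ and $r_n^{b_1-a}$ are bounded but generally oscillate without converging. Keeping them explicit in the coefficient is exactly why the conclusion is stated as an $o(1)$ asymptotic rather than a genuine limit, and why no extra periodicity sequences (analogous to $\sigma_n$ from Theorem \ref{curscatwotheo_maintheo}) need be introduced into the definition of $z_n(w)$.
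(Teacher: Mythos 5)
Your proposal is correct and follows essentially the same route as the paper: drop the $k=2$ term of \eqref{curscamanyeq_fnapprox} since $|r_n^{b_2-a}| = r_n^{\re b_2 - \re a} \to 0$, evaluate the remaining two terms at $z_n(w)$, and then run the $\re a = \re b$ argument from Theorem \ref{curscatwotheo_maintheo} with the prefactor asymptotic \eqref{curscaoneeq_fnfracasymp}. Your closing remark about carrying $\zeta_1^{1-n} r_n^{b_1-a}$ as a bounded but oscillating symbolic coefficient, which is why the result is an $o(1)$ asymptotic rather than a limit, is exactly the right observation.
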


\textbf{Case (ii): $\re a = \re b_1 = \re b_2$.} This case is very similar to the previous one, the only difference being that we keep all three terms in the sum in \eqref{seccurvemany_fnapprox} instead of only two. Doing so yields the following result.

\begin{theorem}
\label{curscamanytheo_aeqbeqb}
Let $m = 2$ and $\re a = \re b_1 = \re b_2$. Let $\xi$ be a point of $S$ with $|{\arg \xi}| < \theta$, $\xi \neq 1$ and define $\tau$ and $\tau_n$ as in Theorem \ref{curscamanytheo_agebb} and $z_n(w)$ as in Theorem \ref{curscamanytheo_aeqbgeb}. Then
\[
	\frac{p_{n-1}(r_n z_n(w))}{f(r_n z_n(w))} = 1 - \left(\frac{1}{1-\xi} + \frac{A_1\zeta_1^{1-n} r_n^{b_1-a}}{\zeta_1-\xi} + \frac{A_2\zeta_2^{1-n} r_n^{b_2-a}}{\zeta_2-\xi} \right) \frac{e^{-w}}{\xi^a \sqrt{2\pi\lambda}} + o(1)
\]
as $n \to \infty$ uniformly for $w$ restricted to any compact subset of $\C$.
\end{theorem}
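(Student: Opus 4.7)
The plan is to mirror the proof of the third part of Theorem \ref{curscatwotheo_maintheo} (the case $\re a = \re b$), but retain all three terms rather than two in the sum \eqref{curscamanyeq_fnapprox} for $F_n$. Under the hypothesis $\re a = \re b_1 = \re b_2$ we have $|r_n^{b_k-a}| = r_n^{\re b_k - \re a} = 1$ for each $k \in \{0,1,2\}$, so each of the three summands in \eqref{curscamanyeq_fnapprox} contributes at the same order $n^{-1/2}$ and none can be absorbed into the error term. Specializing that formula gives
\[
    F_n(z) = \frac{1}{\sqrt{2\pi\lambda n}}\left(\frac{1}{1-z} + \frac{A_1\zeta_1^{1-n} r_n^{b_1-a}}{\zeta_1-z} + \frac{A_2\zeta_2^{1-n} r_n^{b_2-a}}{\zeta_2-z}\right) + o\!\left(n^{-1/2}\right)
\]
uniformly for $z$ in any sector $|{\arg(z/\zeta_k)}| \leq \theta - \epsilon$ and bounded away from $\gamma$.

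Next I would substitute $z = z_n(w)$ into this formula. Since $\xi$ lies on $S$ with $|{\arg\xi}|<\theta$ and $\xi \neq 1$, $\xi$ is automatically bounded away from $\zeta_0 = 1$, $\zeta_1$, and $\zeta_2$, and (by choice of an admissible contour $\gamma$ for $\varphi$) bounded away from $\gamma$. Because $z_n(w) \to \xi$ uniformly for $w$ in any compact subset of $\C$, the three rational factors $(\zeta_k-z_n(w))^{-1}$ may be replaced by $(\zeta_k-\xi)^{-1}$ with an error which is absorbed into the $o(n^{-1/2})$ term. Since $z_n(w)$ lies inside $\gamma$ for $n$ large, \eqref{curscamanyeq_fnexplicit} (inherited verbatim from Section \ref{curscatwosec_sec}) gives
\[
    \frac{f(r_n z_n(w))}{r_n^a (e^{1/\lambda} z_n(w))^n}\!\left(\frac{p_{n-1}(r_n z_n(w))}{f(r_n z_n(w))}-1\right)
    = -\left(\frac{1}{1-\xi}+\frac{A_1\zeta_1^{1-n} r_n^{b_1-a}}{\zeta_1-\xi}+\frac{A_2\zeta_2^{1-n} r_n^{b_2-a}}{\zeta_2-\xi}\right)\frac{1}{\sqrt{2\pi\lambda n}}+o\!\left(n^{-1/2}\right).
\]

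To convert this into the stated asymptotic, I would compute the prefactor $f(r_n z_n(w))/[r_n^a (e^{1/\lambda} z_n(w))^n]$. The definition of $z_n(w)$ coincides exactly with the $z_n^1(w)$ used in Theorem \ref{curscatwotheo_maintheo}, so the calculation is identical to \eqref{curscaoneeq_fnfracasymp}: using the leading asymptotic $f(r_n z_n(w)) \sim \xi^a \exp(r_n^\lambda z_n(w)^\lambda)$ coming from \eqref{curscamanyeq_fgrowth} in the sector $|{\arg z}|\leq\theta$, expanding the exponent to first order in $1/n$, and collecting, one finds
\[
    \frac{f(r_n z_n(w))}{r_n^a (e^{1/\lambda} z_n(w))^n} \sim \xi^a n^{-1/2} e^{w}
\]
uniformly for $w$ in compacta. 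Dividing the displayed identity by this prefactor and solving for $p_{n-1}(r_n z_n(w))/f(r_n z_n(w))$ yields precisely the claimed formula.

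The step most in need of care is the justification that the error in \eqref{curscamanyeq_fnapprox}, namely $o(r_n^{b_k-a} n^{-1/2})$ for each $k$, can genuinely be collapsed to a single $o(n^{-1/2})$ term under the hypothesis $\re b_k = \re a$; this uses only $|r_n^{b_k-a}|=1$ and the fact that there are finitely many terms, so it is routine. Everything else is bookkeeping: the oscillating factors $\zeta_k^{1-n} r_n^{b_k-a} = \zeta_k^{1-n} n^{(b_k-a)/\lambda} \lambda^{(a-b_k)/\lambda}$ are of unit modulus and are simply carried through unchanged, which is why they appear explicitly in the final formula rather than being absorbed.
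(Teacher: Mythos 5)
Your proposal is correct and follows essentially the same route as the paper: the paper also proves this case by retaining all three terms of the sum in \eqref{seccurvemany_fnapprox} (possible precisely because $|r_n^{b_k-a}|=1$ when $\re b_k = \re a$), substituting $z_n(w)$, and reusing the prefactor asymptotic \eqref{curscaoneeq_fnfracasymp} exactly as in the $\re a = \re b$ case of Theorem \ref{curscatwotheo_maintheo}. No gaps.
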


\textbf{Case (iii): $\re b_1 = \re b_2 > \re a$.} In this case we retain both the $b_1$ term and the $b_2$ term in \eqref{seccurvemany_fnapprox}, i.e.
\[
	F_n(z) = \left( \frac{A_1}{\zeta_1-z} + \frac{A_2(\zeta_2/\zeta_1)^{1-n} r_n^{b_2-b_1}}{\zeta_2-z} \right) \frac{\zeta_1^{1-n} r_n^{b_1-a}}{\sqrt{2\pi\lambda n}} + o\!\left(r_n^{b_1-a} n^{-1/2}\right).
\]

Unfortunately we can't easily cancel the oscillations coming from the factor in parentheses by adding extra periodicity to $z_n(w)$ like before. We will again have to settle for an asymptotic rather than a proper limit.

\begin{theorem}
\label{curscamanytheo_beqbgea}
Let $m=2$ and $\re b_1 = \re b_2 > \re a$. Let $\xi$ be a point of $S$ with $|{\arg \xi}| < \theta$, $\xi \neq 1$ and define $\tau$ and $\tau_n$ as in Theorem \ref{curscamanytheo_agebb}. Define the sequence $\sigma_n$ by the conditions
\[
	n\arg\zeta_1 \equiv \sigma_n \pmod{2\pi}, \qquad -\pi < \sigma_n \leq \pi
\]
and let
\[
	z_n(w) = \xi \left[ 1 + \left(a-b_1+\frac{\lambda}{2}\right)\frac{\log n}{\lambda (1-\xi^\lambda) n} - \frac{w - i\sigma_n - i\tau_n}{(1-\xi^\lambda) n} \right].
\]
Then
\[
	\frac{p_{n-1}(r_n z_n(w))}{f(r_n z_n(w))} = 1 - \left( \frac{A_1}{\zeta_1-\xi} + \frac{A_2(\zeta_2/\zeta_1)^{1-n} r_n^{b_2-b_1}}{\zeta_2-\xi} \right)\frac{\zeta_1 \lambda^{(a-b_1)/\lambda} e^{-w}}{\xi^a \sqrt{2\pi\lambda}} + o(1)
\]
as $n \to \infty$ uniformly for $w$ restricted to any compact subset of $\C$.
\end{theorem}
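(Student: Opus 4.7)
The plan is to follow the template established in the proof of Theorem \ref{curscatwotheo_maintheo} for the subcase $\re a < \re b$, generalized to handle two tied dominant directions $\zeta_1$ and $\zeta_2$. Under the hypothesis $\re b_1 = \re b_2 > \re a$, the approximation \eqref{curscamanyeq_fnapprox} collapses to
\[
F_n(z) = \left( \frac{A_1}{\zeta_1-z} + \frac{A_2(\zeta_2/\zeta_1)^{1-n} r_n^{b_2-b_1}}{\zeta_2-z} \right) \frac{\zeta_1^{1-n} r_n^{b_1-a}}{\sqrt{2\pi\lambda n}} + o\!\left(r_n^{b_1-a} n^{-1/2}\right),
\]
since the $k=0$ term in the original sum has order $n^{-1/2}$, which is dwarfed by the $r_n^{b_1-a} n^{-1/2}$ prefactor (strict inequality $\re a < \re b_1$). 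Because $\re \varphi(\xi) = 0$ and $\gamma$ is admissible, I can take $\epsilon > 0$ small enough that $z_n(w)$ eventually lies inside $\gamma$, at distance at least $\epsilon$ from $\gamma$, and in the sector $|{\arg z}| \leq \theta - \epsilon$, uniformly for $w$ in a compact set; the approximation applies at $z = z_n(w)$, and since $z_n(w) \to \xi$ the rational factors may be replaced by their values at $\xi$ up to absorbing errors into the $o(\cdot)$.

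The next step is to use the identity \eqref{curscatwoeq_fnexplicit}, which, for $z_n(w)$ inside $\gamma$, rearranges to
\[
\frac{p_{n-1}(r_n z_n(w))}{f(r_n z_n(w))} - 1 = -\frac{F_n(z_n(w))\, r_n^a (e^{1/\lambda} z_n(w))^n}{f(r_n z_n(w))}.
\]
The shift $z_n(w)$ is tuned precisely so that the reciprocal of the trailing factor satisfies
\[
\frac{f(r_n z_n(w))}{r_n^a (e^{1/\lambda} z_n(w))^n} \sim \xi^a n^{(b_1-a)/\lambda - 1/2} e^{w - i\sigma_n},
\]
by the same calculation that produced \eqref{curscatwoeq_fnfracasymp} with $b$ replaced by $b_1$: the logarithmic correction in $z_n(w)$ balances the $n^{(b_1-a)/\lambda}$ factor arising from a first-order expansion of $\exp\{(n/\lambda)(z_n(w)^\lambda - 1 - \lambda \log z_n(w))\}$ around $\xi$, and the $i\tau_n$ phase cancels the residual $e^{i\tau n/\lambda}$ coming from $\im \varphi(\xi) = \tau$.

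Combining the two ingredients, the $r_n^{b_1-a}$ factors cancel via $r_n^{b_1-a} = \lambda^{(a-b_1)/\lambda} n^{(b_1-a)/\lambda}$, the $n^{-1/2}$ factors cancel, and the oscillation $\zeta_1^{1-n} e^{i\sigma_n}$ simplifies to $\zeta_1$ by the definition of $\sigma_n$. This yields the stated asymptotic directly. The main obstacle is conceptual rather than computational: the factor $(\zeta_2/\zeta_1)^{1-n} r_n^{b_2-b_1}$ has modulus exactly $1$ but no limiting argument along $\N$ in general, so the parenthesized quantity is bounded yet non-convergent, which prevents the result from being written as a genuine scaling limit and forces the $o(1)$ form. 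The only bookkeeping point requiring care is confirming that each $o(r_n^{b_k-a} n^{-1/2})$ error in \eqref{curscamanyeq_fnapprox} is in fact $o(r_n^{b_1-a} n^{-1/2})$, which holds for $k=0$ by strict inequality and trivially for $k=1,2$, so no error term can compete with the bounded-but-non-vanishing leading coefficient.
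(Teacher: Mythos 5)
Your proposal is correct and follows exactly the route the paper intends: retain the two tied dominant terms of \eqref{curscamanyeq_fnapprox}, absorb the $k=0$ term and all error terms into $o\!\left(r_n^{b_1-a}n^{-1/2}\right)$ using $\re b_1 > \re a$, and then run the $\re a < \re b$ subcase of the proof of Theorem \ref{curscatwotheo_maintheo} with $b$ replaced by $b_1$, including the cancellation $\zeta_1^{1-n}e^{i\sigma_n} = \zeta_1$ and the identification $r_n^{b_1-a} = \lambda^{(a-b_1)/\lambda}n^{(b_1-a)/\lambda}$. Your observation that the unimodular, non-convergent factor $(\zeta_2/\zeta_1)^{1-n}r_n^{b_2-b_1}$ is what forces the $o(1)$ form rather than a genuine limit is precisely the point the paper makes.
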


\section{Verification of the Modified Saff-Varga Width Conjecture in the Sector $0 < |{\arg z}| < \theta$}
\label{curscasec_width}

The theorems in this chapter allow us to verify part (a) of the Modified Saff-Varga Width Conjecture (see Section \ref{introsec_width}) for the function $f$ in the sector $0 < |{\arg z}| < \theta$.

When $f$ has a single direction of maximal exponential growth, Theorem \ref{curscaonetheo_maintheo} and Hurwitz's theorem imply that if $\xi$ is any point of the curve
\[
	S = \left\{ z \in \C : \left|z^\lambda \exp\!\left(1-z^\lambda\right)\right| = 1 \text{ and } |z| \leq 1 \right\}
\]
with $0 < |{\arg \xi}| < \theta$ and if $w_0$ is any solution to the equation
\begin{equation}
\label{curscawidtheq_weq}
	\xi^a (1-\xi) \sqrt{2\pi\lambda} = e^{-w}
\end{equation}
then $p_{n-1}(z)$ has a zero $z_0$ satisfying
\begin{equation}
\label{curscawidtheq_zeq}
	z_0 = r_n\xi \left(1 + \frac{\log n}{2(1-\xi^\lambda)n} - \frac{w_0-i\tau_n}{(1-\xi^\lambda) n} + o\!\left(n^{-1}\right)\right)
\end{equation}
as $n \to \infty$.

Fix $\phi \in (-\theta,\theta)$ with $\phi \neq 0$. There is a unique $\xi \in S$ such that $\xi = |\xi| e^{i\phi}$. Set $\rho_n = |\xi| r_n$. Then for the zero $z_0$ above,
\begin{align*}
z_0 - \rho_n e^{i\phi} &= z_0 - r_n\xi \\
	&\sim r_n\xi \frac{\log n}{2(1-\xi^\lambda)n} \\
	&= \rho_n e^{i\phi} \frac{\log n}{2(1-\xi^\lambda)n}
\end{align*}
as $n \to \infty$. It follows that, for any $\epsilon > 0$, $z_0$ will lie inside the disk
\[
	\left| z - \rho_n e^{i\phi} \right| \leq \rho_n n^{-1+\epsilon}
\]
for $n$ large enough. As equation \eqref{curscawidtheq_weq} has infinitely many solutions, the number of zeros of $p_{n-1}(z)$ in any such disk will tend to infinity as $n \to \infty$. Since $\rho_n = \Theta(n^{1/\lambda}) = O(n^{2/\lambda})$ this completes the verification of part (a) of the Modified Saff-Varga Width Conjecture for the sector $0 < |{\arg z}| < \theta$.

When $f$ has two directions of maximal exponential growth and $\re a \neq \re b$ then the situation is very similar to the one above. Theorem \ref{curscatwotheo_maintheo} implies that for any $\xi \in S$ with $0 < |{\arg \xi}| < \theta$ there is an equation of the form
\[
	u = e^w
\]
for some $u = u(\xi) \in \C$ such that if $w_0$ is any solution of the equation then $p_{n-1}(z)$ has a zero $z_0$ of the form
\[
	z_0 = r_n\xi + Cr_n\xi \frac{\log n}{n} + h_n(w_0),
\]
where $C \in \C$ and $h_n$ is a function satisfying $h_n(z) = O(n^{-1})$ as $n \to \infty$ for any fixed $z \in \C$. Part (b) of the Modified Saff-Varga Width Conjecture follows just as above.

When $f$ has two directions of maximal exponential growth and $\re a = \re b$ then Theorem \ref{curscatwotheo_maintheo} implies that for any $\xi \in S$ with $0 < |{\arg \xi}| < \theta$ it is possible to find a constant $D \in \C$ with $D \neq 0$ and a subsequence $M$ such that
\[
	\lim_{m \in M} \frac{p_{m-1}(r_m z_m^1(w))}{f(r_m z_m^1(w))} = 1 - De^{-w}
\]
uniformly on compact subsets of the $w$-plane. So if $w_0$ is any solution of the equation
\[
	1 = De^{-w}
\]
then by Hurwitz's theorem $p_{m-1}(z)$ has a zero $z_0$ of the form
\[
	z_0 = r_m\xi \left(1 + \frac{\log m}{2(1-\xi^\lambda)m} - \frac{w_0-i\tau_m}{(1-\xi^\lambda) m} + o\!\left(m^{-1}\right)\right)
\]
as $m \to \infty$ with $m \in M$. The rest of the verification of the Modified Saff-Varga Width Conjecture proceeds just as above, though with the indices restricted to the subsequence $M$ (as allowed in the Conjecture).

By using Theorems \ref{curscamanytheo_agebb}, \ref{curscamanytheo_aeqbgeb}, \ref{curscamanytheo_aeqbeqb}, and \ref{curscamanytheo_beqbgea} we can verify the conjecture for functions with three directions of maximal exponential growth, and though we haven't obtained any explicit results in the case that $f$ has more than three directions of maximal exponential growth the conjecture can be verified using a similar method.

\graphicspath{{images/chap_posfinite/}}

\chapter{Scaling Limits at the Corner of the Limit Curve}
\label{chap_posfinite}

In this chapter we aim to study the zeros of the scaled partial sums $p_n(r_n z)$ which approach the corner of the limit curve
\[
	S = \left\{ z \in \C : \left|z^\lambda \exp\!\left(1-z^\lambda\right)\right| = 1 \text{ and } |z| \leq 1 \right\}
\]
located at $z=1$. To this end we will calculate a certain limit of the partial sums depending on an argument which follows the zeros as they approach this corner.

The results in Sections \ref{seconeexp}, \ref{secunbalcauchy_sec}, and \ref{corscasec_many} can be seen as generalizations of Theorem \ref{introthm_esvcornerscaling} which was obtained by Edrei, Saff, and Varga in their monograph \cite{esv:sections}.

\section{One Direction of Maximal Exponential Growth}
\label{seconeexp}

Let $a,b \in \C$, $0 < \lambda < \infty$, $0 < \theta < \pi$, and $\mu < 1$. We suppose that $f$ is an entire function with the asymptotic behavior
\begin{equation}
\label{seconeexp_fgrowth}
f(z) = \begin{cases}
	z^a (\log z)^b \exp\!\left(z^\lambda\right)\bigl[1+o(1)\bigr] & \text{for } |{\arg z}| \leq \theta, \\
	O\!\left(\exp(\mu |z|^\lambda)\right) & \text{for } |{\arg z}| > \theta
	\end{cases}			
\end{equation}
as $|z| \to \infty$, with each estimate holding uniformly in its sector. For this $f$, let
\[
	p_n(z) = \sum_{k=0}^{n} \frac{f^{(k)}(0)}{k!} z^k
\]
and define
\begin{equation}
\label{seconeexp_rndef}
	r_n = \left(\frac{n}{\lambda}\right)^{1/\lambda}.
\end{equation}

\begin{theorem}
\label{seconeexp_maintheo}
\[
	\lim_{n \to \infty} \frac{p_{n-1}(r_n(1+w/\sqrt{n}))}{f(r_n(1+w/\sqrt{n}))} = \frac{1}{2} \erfc\!\left(w\sqrt{\lambda/2}\,\right)
\]
uniformly for $w$ restricted to any compact subset of $\re w < 0$.
\end{theorem}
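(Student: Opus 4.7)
The plan is to mimic the analysis of Theorem \ref{seccurve1_maintheo}, but working in the coalescence regime where the evaluation point $z_n := 1 + w/\sqrt{n}$ collides with the saddle of $\varphi$ (defined by \eqref{seccurve1_phidef}) at $s = 1$ at rate $n^{-1/2}$. Setting up the same function $F_n$ as in \eqref{seccurve1_fdef} with an admissible contour $\gamma$, the first observation is that for $\re w < 0$ the point $z_n$ lies inside $\gamma$ for all $n$ large, uniformly for $w$ on any compact subset of $\{\re w < 0\}$, because the steepest-descent piece of $\gamma$ is tangent to the imaginary axis at $s = 1$. The explicit formula \eqref{seccurve1_fnexplicit} then gives
$$\frac{p_{n-1}(r_n z_n)}{f(r_n z_n)} = 1 - \frac{r_n^a(\log r_n)^b (e^{1/\lambda} z_n)^n}{f(r_n z_n)}\, F_n(z_n),$$
reducing the problem to tracking the asymptotics of $F_n(z_n)$ and of the prefactor.

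The prefactor is handled just as at the end of the proof of Theorem \ref{curscaonetheo_maintheo}: from \eqref{seconeexp_fgrowth} and the Taylor expansion $n\varphi(z_n) = \lambda w^2/2 + O(n^{-1/2})$ one finds $f(r_n z_n)/[r_n^a(\log r_n)^b (e^{1/\lambda} z_n)^n] \to e^{\lambda w^2/2}$ uniformly. For $F_n(z_n)$, following Lemmas \ref{seccurve1_gminusgtlem}--\ref{seccurve1_fintapprox2}, I would reduce the defining integral to a neighborhood of $s=1$ modulo an $O(e^{-cn})$ error, substitute $s = \psi(it)$ so that $\varphi(s) = -t^2$, and rescale $t = u/\sqrt{n}$. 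The new feature, absent in Chapter \ref{chap_limitcurves}, is a cancellation in the denominator:
$$\psi(iu/\sqrt{n}) - z_n = n^{-1/2}\bigl(iu\sqrt{2/\lambda} - w\bigr) + O(n^{-1}),$$
so that the $n^{-1/2}$ from the Jacobian $dt = du/\sqrt{n}$ cancels against the $n^{1/2}$ from this denominator, and in the limit
$$F_n(z_n) \longrightarrow \frac{1}{2\pi i} \int_{-\infty}^{\infty} \frac{e^{-u^2}}{u + i w \sqrt{\lambda/2}}\, du.$$
For $\re w < 0$ the pole $u_0 = -iw\sqrt{\lambda/2}$ stays strictly inside the upper half-plane, uniformly on compact sets, which yields the uniform convergence.

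To evaluate the limit integral I would invoke the Faddeeva representation $\int_{\mathbb{R}} e^{-t^2}/(t-z)\,dt = i\pi e^{-z^2}\erfc(-iz)$, valid for $\im z > 0$. Setting $z = u_0$ produces $F_n(z_n) \to \tfrac{1}{2} e^{\lambda w^2/2}\erfc(-w\sqrt{\lambda/2})$; combining this with the prefactor and then applying the reflection identity $\erfc(\zeta) + \erfc(-\zeta) = 2$ delivers the claimed limit $\tfrac{1}{2}\erfc(w\sqrt{\lambda/2})$.

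The main obstacle will be the rigorous uniform control of the Laplace-method rescaling in this coalescence regime. Unlike in Section \ref{seccurve1_maintheoproof}, where $z$ is kept a positive distance away from the contour so that Lemmas \ref{seccurve1_fintapprox1} and \ref{seccurve1_fintapprox2} apply as black boxes, here $z_n$ approaches the saddle at exactly the Gaussian width, so the denominator $s - z_n$ is itself of order $n^{-1/2}$ on the rescaled contour. One must therefore redo those Laplace estimates carefully, controlling the $O((s-1)^3)$ corrections to $\varphi\circ\psi$ and the subexponential error $\tilde\delta(r_n, s)$ from \eqref{seccurve1_deltatdef} to verify that each contributes $o(1)$ uniformly in $w$ after the rescaling --- the situation is precisely the coalescence-of-pole-and-saddle scenario that is responsible for the appearance of $\erfc$.
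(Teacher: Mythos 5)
Your proposal is correct and arrives at the right limit object, but it takes a genuinely different route from the paper. The paper does not pass to the limit in the rescaled integral directly; instead it introduces the model integral $G_n(z) = \frac{1}{2\pi i}\int_{\gamma_\theta} e^{n\varphi(s)}\,\frac{ds}{s-z}$ and the explicit local parametrix $P_n(z) = h\bigl(-i\sqrt{n}\,\psi^{-1}(z)\bigr)$ (which is exactly the ``exact'' version of your limit integral, since $-i\sqrt{n}\,\psi^{-1}(1+w/\sqrt{n}) \to -iw\sqrt{\lambda/2}$), and then controls $G_n - P_n$ by a Riemann--Hilbert argument: the additive jumps of $G_n$ and $P_n$ across $\gamma_\theta$ cancel near $z=1$, so the difference is analytic there and is recovered by Plemelj's formula as a Cauchy integral over contours ($\Gamma_1 = \partial B_{2\epsilon}(1)$ and $\Gamma_2$) that stay a \emph{fixed} distance from $z$; the resulting estimates (Lemmas \ref{seconeexp_pintlemma} and \ref{seconeexp_int2lemma}) are then trivial, which is precisely what the parametrix construction buys --- it converts the pole--saddle coalescence into estimates made far from the singularity. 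Your direct route has to confront the coalescence head-on, and the obstacle you flag at the end is real but surmountable: on the steepest-descent piece of $\gamma_\theta$, which is vertical at $s=1$, one has $|s - z_n| \geq C|{\re w}|\,n^{-1/2}$ uniformly for $w$ in a compact subset of $\re w < 0$, so after the substitution $s=\psi(it)$ the relative errors $\psi(it)^a\psi'(it) - \psi'(0) = O(t)$, $\psi(it) - 1 - it\psi'(0) = O(t^2)$, and $\tilde\delta(r_n,\cdot) = o(1)$ each integrate against $e^{-nt^2}$ (with the extra factor $n^{1/2}$ or $n$ from the small denominator) to $O(n^{-1/2})$ or $o(1)$. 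This is in fact the same style of estimate the paper itself uses in Lemma \ref{seconeexp_fngnapproxlemma} to compare $F_n$ with $G_n$, so your approach amounts to pushing that lemma's technique all the way through to the parametrix instead of switching to the Riemann--Hilbert machinery; it is more elementary but requires more bookkeeping, while the paper's version generalizes more readily (it is the scheme inherited from \cite{mclaughlin:exprh} that also yields full asymptotic expansions). Your endgame --- the Faddeeva representation $h(\zeta) = \tfrac{1}{2}e^{-\zeta^2}\erfc(-i\zeta)$ for $\im\zeta>0$, the reflection identity, and the prefactor limit $e^{\lambda w^2/2}$ --- coincides with the paper's.
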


The function $\erfc$ in the theorem statement above is known as the complementary error function and is defined by
\begin{equation}
\label{seconeexp_erfc}
	\erfc(z) = \frac{2}{\sqrt{\pi}} \int_z^\infty e^{-s^2}\,ds,
\end{equation}
where the contour of integration is the horizontal line starting at $s=z$ and extending to the right to $s = z+\infty$. Information about the zeros of this function can be found in \cite{fettis:erfczeros}.

\begin{remark}
Theorem \ref{seconeexp_maintheo} gives us precise asymptotics for individual zeros of the scaled partial sums $p_{n-1}(r_n z)$ near the corner of the limit curve $S$ located at $z=1$. Details of this are given in Section \ref{corscasec_width}, where we use that information to verify part (b) of the Modified Saff-Varga Width Conjecture (see Section \ref{introsec_width}) for this class of functions.
\end{remark}

\subsection{Definitions and Preliminaries}
\label{seconeexpsub_defsandprelims}

We will repeat here several relevant definitions from Section \ref{seccurve1_defsandprelims}.

Let $\gamma$ be an admissible contour for the function
\begin{equation}
\label{seconeexp_phidef}
	\varphi(z) := \left.\left(z^\lambda - 1 - \lambda \log z\right)\right/\lambda
\end{equation}
and define
\begin{equation}
\label{seconeexp_fdef}
	F_n(z) = \frac{r_n^{-a} (\log r_n)^{-b}}{2\pi i} \int_{\gamma} \left(e^{1/\lambda}s\right)^{-n}\! f(r_n s) \frac{ds}{s-z}
\end{equation}
for $z \notin \gamma$, $z \neq 0$. Just as in Section \ref{seccurve1_defsandprelims},
\begin{equation}
\label{seconeexp_fnexplicit}
	F_n(z) = \frac{1}{r_n^a (\log r_n)^b (e^{1/\lambda} z)^n} \times \begin{cases}
		-p_{n-1}(r_n z) & \text{for } z \text{ outside } \gamma, \\
		f(r_n z) - p_{n-1}(r_n z) & \text{for } z \neq 0 \text{ inside } \gamma.
		\end{cases}
\end{equation}
In particular we have
\[
	F_n^+(z) = F_n^-(z) + \frac{f(r_n z)}{r_n^a (\log r_n)^b (e^{1/\lambda}z)^n}, \qquad z \in \gamma,
\]
where $F_n^+$ (resp.\ $F_n^-$) refers to the continuous extension of $F_n$ from inside (resp.\ outside) $\gamma$ onto $\gamma$.

Note that
\begin{align}
\frac{f(r_n z)}{r_n^a (\log r_n)^b (e^{1/\lambda}z)^n} &\sim z^a \left(z^\lambda e^{1-z^\lambda}\right)^{-n/\lambda} \nonumber \\
&= z^a e^{n \varphi(z)} \label{seconeexp_integrandasymp}
\end{align}
as $n \to \infty$ for $|{\arg z}| \leq \theta$. Additionally, a straightforward calculation shows that $\varphi(1) = \varphi'(1) = 0$ and $\varphi''(1) = \lambda$, so
\[
	\varphi(s) = \frac{\lambda}{2}(s-1)^2 + O\!\left((s-1)^3\right)
\]
in a neighborhood of $s=1$.  The inverse function theorem ensures the existence of a neighborhood $V$ of the origin, a neighborhood $U \subset U_\gamma$ of $s=1$, and a biholomorphic map $\psi \colon V \to U$ which satisfies
\[
	(\varphi \circ \psi)(x) = x^2
\]
for $x \in V$. This function $\psi$ maps a segment of the imaginary axis onto the path of steepest descent of the function $\re \varphi(z)$ going through $z=1$ with $\psi(0) = 1$ and we make the choice that $\psi'(0) = \sqrt{2/\lambda}$.

The following definitions are unique to this chapter.

Let $\gamma_\theta = \gamma \cap \{z \in \C : |{\arg z}| \leq \theta\}$ and define
\begin{equation}
\label{seconeexp_gdef}
	G_n(z) = \frac{1}{2\pi i}\int_{\gamma_\theta} e^{n \varphi(s)} \frac{ds}{s-z},
\end{equation}
where $\varphi$ is as in \eqref{seconeexp_phidef}.  Plemelj's formula implies that
\[
	G_n^+(z) = G_n^-(z) + e^{n\varphi(z)}, \qquad z \in \gamma_\theta,
\]
where $G_n^+$ and $G_n^-$ refer to the continuous extensions of $G_n$ from the left and right of $\gamma_\theta$ onto $\gamma_\theta$, respectively.  Based on the asymptotic \eqref{seconeexp_integrandasymp} and the fact that the saddle point of the function $\varphi(s)$ is located at $s=1$, we expect that $F_n(z) \approx G_n(z)$ for $z \approx 1$ as $n \to \infty$.  Something to this effect is shown in Lemma \ref{seconeexp_fngnapproxlemma}.

Just as in \cite[p.\ 189]{mclaughlin:exprh} we define
\[
	h(\zeta) = \frac{1}{2\pi i} \int_{-\infty}^{\infty} e^{-u^2} \frac{du}{u-\zeta}, \qquad \zeta \in \C \setminus \R
\]
and
\[
	P_n(z) = h\!\left(-i\sqrt{n} \psi^{-1}(z)\right), \qquad z \in U \setminus \gamma_\theta.
\]
Plemelj's formula implies that
\[
	h^+(x) = h^-(x) + e^{-x^2}, \qquad x \in \R,
\]
where $\R$ is given the usual orientation from $-\infty$ to $+\infty$, and setting $z = \psi(ix/\sqrt{n})$ yields
\[
	P_n^+(z) = P_n^-(z) + e^{n\varphi(z)}, \qquad z \in U \cap \gamma_\theta.
\]
Here $+$ and $-$ indicate approaching the contour $\gamma_\theta$ from the left and from the right, respectively.

\subsection{Proof of Theorem \ref{seconeexp_maintheo}}
\label{seconeexpsub_proof}

Choose $\epsilon > 0$ such that $\overline{B_{2\epsilon}(1)} \subset U$ and define
\[
	m(z) = \begin{cases}
		G_n(z) & \text{for } z \in \C \setminus \left( \gamma_\theta \cup \overline{B_{2\epsilon}(1)} \right), \\
		G_n(z) - P_n(z) & \text{for } z \in B_{2\epsilon}(1) \setminus \gamma_\theta.
	\end{cases}
\]
The jumps for $G_n(z)$ and $P_n(z)$ cancel each other out as $z$ moves across $\gamma_\theta$ in $B_{2\epsilon}(1)$, so $m$ is analytic on $B_{2\epsilon}(1)$.  If we define the contours
\begin{equation}
\label{seconeexpeq_biggammadef}
	\Gamma_1 = \partial B_{2\epsilon}(1), \qquad \Gamma_2 = \gamma_\theta \setminus B_{2\epsilon}(1), \qquad \Gamma = \Gamma_1 \cup \Gamma_2,
\end{equation}
where $\Gamma_1$ is oriented in the counterclockwise direction and $\Gamma_2$ inherits its orientation from $\gamma_\theta$ (and thus $\gamma$), then the function $m$ uniquely solves the following Riemann-Hilbert problem.

\begin{figure}[!htb]
	\centering
	\begin{tabular}{cc}
		\includegraphics[width=0.25\textwidth]{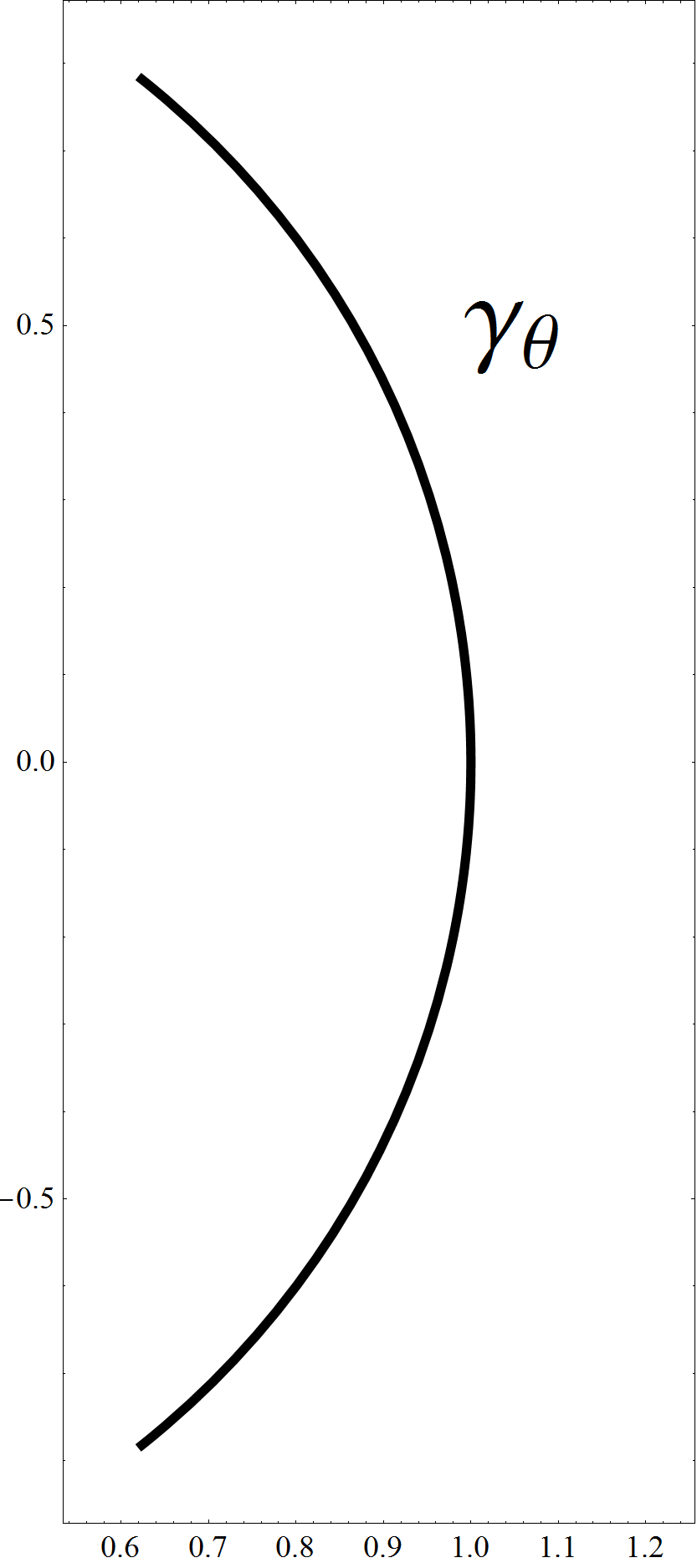}
		& \includegraphics[width=0.25\textwidth]{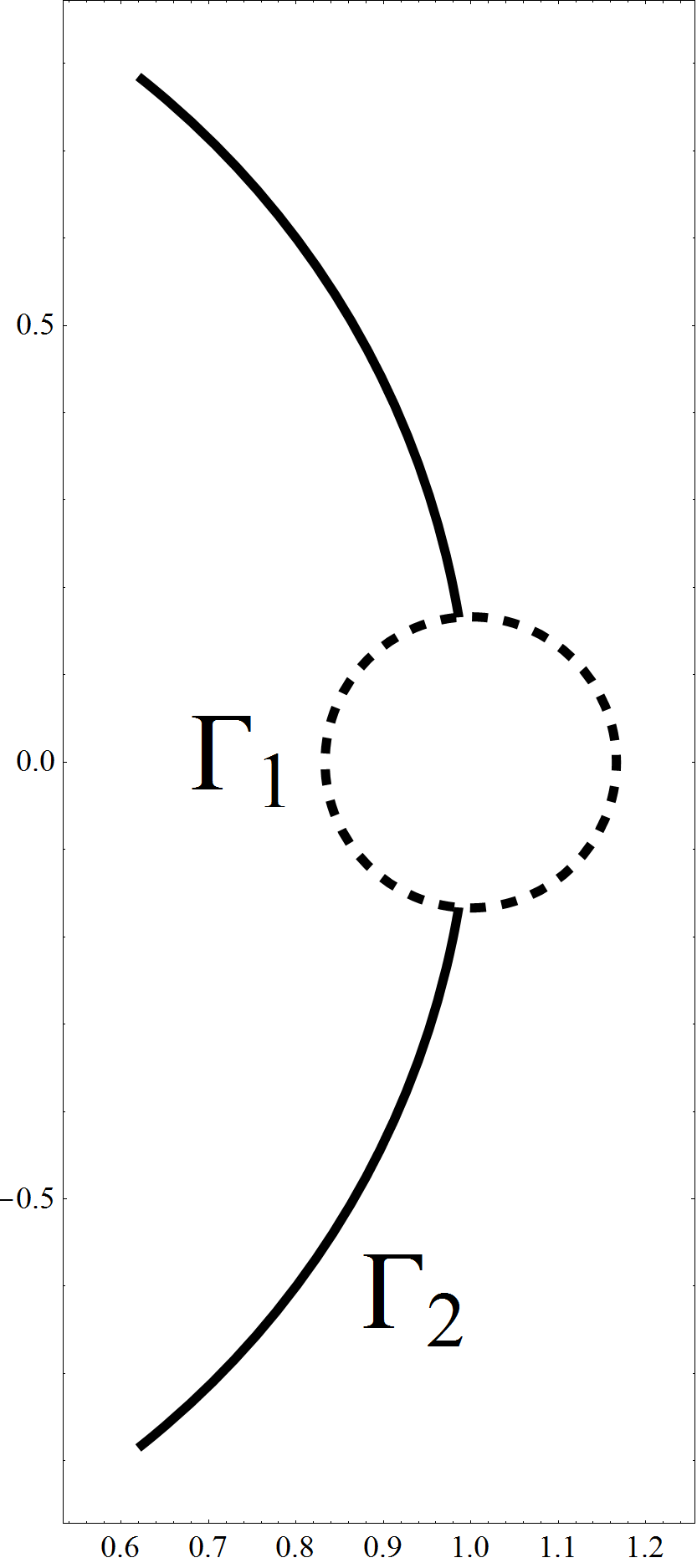}
	\end{tabular}
	\caption[Schematic representations of the curve {$\gamma_\theta$} and the curve {$\Gamma$} in \eqref{seconeexpeq_biggammadef}.]{Schematic representations of the curve $\gamma_\theta$ (left) and the curve $\Gamma$ in \eqref{seconeexpeq_biggammadef} (right). In the plot of $\Gamma$, the curve $\Gamma_1$ is dashed and the curve $\Gamma_2$ is solid black.}
	\label{seconeexpfig_biggamma1}
\end{figure}

\begin{rhp}
\label{seconeexp_mainrhp}
Seek an analytic function $M \colon \C \setminus \Gamma \to \C$ such that
\begin{enumerate}
	\item $M^+(z) = M^-(z) - P_n(z)$ for $z \in \Gamma_1 \setminus \Gamma_2$,
	\item $M^+(z) = M^-(z) + e^{n\varphi(z)}$ for $z \in \Gamma_2$ except at endpoints,
	\item $M(z) \to 0$ as $|z| \to \infty$,
	\item if $c$ is an endpoint of either arc of $\Gamma_2$ then $M(z) = O(|z-c|^q)$ as $z \to c$ with $z \in \C \setminus \Gamma_2$ for some $q > -1$.
\end{enumerate}
\end{rhp}

Plemelj's formula then yields
\begin{align}
m(z) &= \frac{1}{2\pi i} \int_\Gamma \Bigl[m^+(s) - m^-(s)\Bigr] \frac{ds}{s-z} \nonumber \\
	&= -\frac{1}{2\pi i} \int_{\Gamma_1} P_n(s) \frac{ds}{s-z} + \frac{1}{2\pi i} \int_{\Gamma_2} e^{n\varphi(s)} \frac{ds}{s-z} \label{seconeexp_mint}.
\end{align}

\begin{lemma}
\label{seconeexp_pintlemma}
\[
	\int_{\Gamma_1} P_n(s) \frac{ds}{s-z} = O(n^{-1/2})
\]
uniformly for $z \in B_\epsilon(1)$ as $n \to \infty$.
\end{lemma}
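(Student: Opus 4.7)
The plan is to reduce the bound on the integral to a uniform pointwise bound on $P_n$ restricted to $\Gamma_1$. Since $\Gamma_1 = \partial B_{2\epsilon}(1)$ and $z \in B_\epsilon(1)$, every point $s \in \Gamma_1$ satisfies $|s-z| \geq \epsilon$, and $\Gamma_1$ has finite length $4\pi\epsilon$. Therefore
\[
	\left| \int_{\Gamma_1} P_n(s) \frac{ds}{s-z} \right| \leq \frac{\len(\Gamma_1)}{\epsilon} \sup_{s \in \Gamma_1} |P_n(s)|,
\]
and the task reduces to showing $|P_n(s)| = O(n^{-1/2})$ uniformly in $s \in \Gamma_1$.

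The first step would be to establish $h(\zeta) = O(|\zeta|^{-1})$ as $|\zeta| \to \infty$, uniformly in $\zeta \in \C \setminus \R$. I would split the defining integral for $h$ at $|u| = |\zeta|/2$: on the inner piece the reverse triangle inequality gives $|u-\zeta| \geq |\zeta|/2$, and integrating $e^{-u^2}$ against $2/|\zeta|$ produces the $O(|\zeta|^{-1})$ bound; on the outer piece the Gaussian factor is at most $e^{-|\zeta|^2/4}$, which is far smaller than the required bound. Equivalently, one may identify $h(\zeta) = \tfrac{1}{2} e^{-\zeta^2}\erfc(-i\zeta)$ for $\im \zeta > 0$, with a sign analog for $\im \zeta < 0$, and invoke the standard asymptotic of $\erfc$.

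The second step exploits the biholomorphy of $\psi \colon V \to U$, with $\psi(0) = 1$ and $\psi'(0) = \sqrt{2/\lambda} \neq 0$. Because $\epsilon$ has been chosen so that $\overline{B_{2\epsilon}(1)} \subset U$, the inverse $\psi^{-1}$ is holomorphic on a neighborhood of $\Gamma_1$ and vanishes only at $s=1$; since $\Gamma_1$ is compact and lies at positive distance $2\epsilon$ from $1$, there is a constant $c > 0$ with $|\psi^{-1}(s)| \geq c$ for every $s \in \Gamma_1$. Thus $|{-i\sqrt{n}\,\psi^{-1}(s)}| \geq c\sqrt{n}$, and combining with the first step yields $|P_n(s)| = O(n^{-1/2})$ uniformly on $\Gamma_1$ off the (at most two) points where $\Gamma_1$ crosses $\gamma_\theta$. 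These exceptional points form a set of measure zero on $\Gamma_1$ and do not affect the integral.

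The main obstacle is ensuring that the bound on $h(\zeta)$ is truly uniform as $\arg\zeta$ varies, in particular as $\zeta$ approaches the real axis where the integrand defining $h$ is singular; this is precisely why the splitting argument above, whose dominant contribution depends only on $|\zeta|$ via the reverse triangle inequality, is preferable to an asymptotic analysis based on the $\erfc$ representation that would require tracking the behavior across Stokes lines.
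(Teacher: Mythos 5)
Your overall strategy is exactly the paper's: bound $|s-z|^{-1}$ by $\epsilon^{-1}$ on $\Gamma_1$, reduce to the pointwise estimate $|h(\zeta)| \leq C_1|\zeta|^{-1}$ for $\zeta \notin \R$, and then use that $|\psi^{-1}(s)|$ is bounded below on the compact set $\Gamma_1$ (your lower bound $|\psi^{-1}(s)| \geq c$ is the same as the paper's $|\varphi(s)| \geq C_2$, since $\varphi(s) = \psi^{-1}(s)^2$). The paper simply asserts the bound on $h$, so your attempt to prove it is a welcome addition.

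However, your justification of that bound has a genuine gap in the outer piece. On $\{|u| \geq |\zeta|/2\}$ you control only the Gaussian factor $e^{-u^2} \leq e^{-|\zeta|^2/4}$, but the factor $|u-\zeta|^{-1}$ is nearly singular there when $\zeta$ is close to the real axis: if $\im\zeta = y$ is small then $\re\zeta$ lies in the outer region, and
\[
	\int_{|u-\re\zeta|\leq 1} \frac{du}{\sqrt{(u-\re\zeta)^2+y^2}} = \Theta\!\left(\log\tfrac{1}{|y|}\right),
\]
so bounding by absolute values yields at best $O\!\left(e^{-|\zeta|^2/4}\log(1/|\im\zeta|)\right)$, which is \emph{not} uniform in $\arg\zeta$. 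This matters here because $\zeta = -i\sqrt{n}\,\psi^{-1}(s)$ approaches $\R$ as $s \in \Gamma_1$ approaches the two crossing points with $\gamma_\theta$, so your claimed uniform pointwise bound $\sup_{\Gamma_1\setminus\gamma_\theta}|P_n(s)| = O(n^{-1/2})$ is not actually delivered by this argument. (The lemma survives regardless, since the offending term carries the factor $e^{-|\zeta|^2/4} = O(e^{-cn})$ and the logarithmic singularity is integrable over $\Gamma_1$, but the intermediate claim as stated is unproved.) Two clean repairs: for $\im\zeta>0$, shift the contour of integration in the definition of $h$ from $\R$ to $\R - i$ (no pole is crossed and $e^{-u^2}$ decays on horizontal lines), which makes $|u-\zeta| \geq 1$ on the outer piece; or use the representation $h(\zeta) = \tfrac{1}{2}e^{-\zeta^2}\erfc(-i\zeta)$ together with the asymptotic $\erfc(w) \sim e^{-w^2}/(w\sqrt{\pi})$, which is uniform for $|\arg w| \leq 3\pi/4 - \delta$ --- and here $|\arg(-i\zeta)| < \pi/2$, so no Stokes line is approached, contrary to the concern in your last paragraph.
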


\begin{proof}
There exists a constant $C_1$ such that
\[
	|h(\zeta)| \leq C_1 |\zeta|^{-1}
\]
for $\zeta \notin \R$.  Setting $\zeta = -i\sqrt{n}\psi^{-1}(s)$ yields
\[
	|P_n(s)| \leq C_1 n^{-1/2} |\psi^{-1}(s)|^{-1} = C_1 n^{-1/2} |\varphi(s)|^{-1/2}
\]
for $s \in U \setminus \gamma_\theta$.  Thus if $s \in \Gamma_1$ then $|\varphi(s)| \geq C_2$ for some constant $C_2 > 0$, so
\[
	\left| \int_{\Gamma_1} P_n(s) \frac{ds}{s-z} \right| \leq C_1 C_2^{-1/2} \epsilon^{-1} n^{-1/2}
\]
for $z \in B_\epsilon(1)$.
\end{proof}

\begin{lemma}
\label{seconeexp_int2lemma}
There is a constant $c > 0$ such that
\[
	\int_{\Gamma_2} e^{n\varphi(s)} \frac{ds}{s-z} = O(e^{-c n})
\]
uniformly for $z \in B_\epsilon(1)$ as $n \to \infty$.
\end{lemma}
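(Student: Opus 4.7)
The plan is to exploit the admissibility of $\gamma$ to show that $\re\varphi$ is strictly negative on $\Gamma_2$, which will force exponential decay of the integrand, while the exclusion of $B_{2\epsilon}(1)$ keeps the Cauchy kernel uniformly bounded on the relevant $z$.

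First I would observe that by the definition of an admissible contour (Definition \ref{seccurve1_admisscontour}), the curve $\gamma_\theta$ is a positive distance from the zero set $\re\varphi(s) = 0$ except for a segment lying in the neighborhood $U_\gamma$ of $s=1$, where it coincides with the steepest descent path for $\re\varphi$ through $s=1$. Along this steepest descent path $\re\varphi(s)$ attains its maximum value $0$ only at $s=1$ itself and decreases strictly as one moves away from $s=1$ (since $\varphi''(1) = \lambda > 0$ guarantees a nondegenerate saddle there). Since $\overline{B_{2\epsilon}(1)} \subset U$ with $U$ the neighborhood provided by the inverse function theorem, removing $B_{2\epsilon}(1)$ from $\gamma_\theta$ removes precisely the portion touching the critical curve. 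Consequently, a compactness/continuity argument on the compact set $\Gamma_2 = \gamma_\theta \setminus B_{2\epsilon}(1)$ produces a constant $c > 0$ such that
\[
	\re \varphi(s) \leq -c \qquad \text{for all } s \in \Gamma_2.
\]

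Second, since every point $s \in \Gamma_2$ lies outside $B_{2\epsilon}(1)$ and every $z \in B_\epsilon(1)$ lies inside $B_\epsilon(1)$, the triangle inequality gives $|s-z| \geq \epsilon$ uniformly. Combined with the fact that $\Gamma_2 \subset \gamma_\theta$ is a subarc of a smooth Jordan curve and hence has finite arc length $L := \len(\Gamma_2) < \infty$, the trivial estimate yields
\[
	\left|\int_{\Gamma_2} e^{n\varphi(s)}\,\frac{ds}{s-z}\right| \leq \frac{L}{\epsilon}\,\sup_{s \in \Gamma_2} \bigl|e^{n\varphi(s)}\bigr| \leq \frac{L}{\epsilon}\,e^{-cn},
\]
uniformly in $z \in B_\epsilon(1)$, which is the desired bound.

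The only real obstacle is the first step—confirming that $c > 0$ exists uniformly on $\Gamma_2$. This requires nothing beyond a careful reading of Definition \ref{seccurve1_admisscontour}: away from the saddle neighborhood the contour is prescribed to be a positive distance from $\{\re\varphi = 0\}$, while on the saddle neighborhood it descends along the steepest path, so excising a disk of radius $2\epsilon$ around $s=1$ leaves a compact set on which the continuous function $\re\varphi$ attains a strictly negative maximum. Everything else is a routine length-times-sup bound.
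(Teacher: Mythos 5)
Your proof is correct and follows essentially the same route as the paper's: extract a uniform bound $\re\varphi(s) \leq -c$ on $\Gamma_2$ from the admissibility of $\gamma$ (the paper simply asserts this from Definition \ref{seccurve1_admisscontour}, while you spell out the saddle-point and compactness reasoning), note $|s-z| \geq \epsilon$, and finish with the length-times-sup estimate. No gaps.
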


\begin{proof}
Recalling Definition \ref{seccurve1_admisscontour}, since $\gamma_\theta = \gamma \cap \{z \in \C \colon |{\arg z}| \leq \theta\}$ and $\Gamma_2 = \gamma_\theta \setminus B_{2\epsilon}(1)$ there exists a constant $c > 0$ such that $\re \varphi(s) < -c$ for $s \in \Gamma_2$. Further, if $z \in B_{\epsilon}(1)$ and $s \in \Gamma_2$ then $|s-z| > \epsilon$, so that
\begin{align*}
\left| \int_{\Gamma_2} e^{n\varphi(s)} \frac{ds}{s-z} \right| &\leq \int_{\Gamma_2} e^{n\re \varphi(s)} \frac{|ds|}{|s-z|} \\
	&< \epsilon^{-1} \len(\Gamma_2) e^{-c n}.
\end{align*}
\end{proof}

Combining Lemmas \ref{seconeexp_pintlemma} and \ref{seconeexp_int2lemma} yields
\[
	m(z) = o(1),
\]
and hence, by the definition of $m$,
\begin{equation}
\label{seconeexp_gnpnapprox}
	G_n(z) = P_n(z) + o(1)
\end{equation}
uniformly for $z \in B_\epsilon(1)$ as $n \to \infty$.  Now set $z = 1 + w/\sqrt{n}$, where $w$ is restricted to a compact subset of $\re w < 0$.

\begin{lemma}
\label{seconeexp_fngnapproxlemma}
\[
	\lim_{n \to \infty} F_n\!\left(1+w/\sqrt{n}\right) - G_n\!\left(1+w/\sqrt{n}\right) = 0
\]
uniformly for $w$ restricted to compact subsets of $\re w < 0$.
\end{lemma}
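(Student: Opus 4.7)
The plan is to reduce $F_n(z)-G_n(z)$ to an integral over $\gamma_\theta$ with a small integrand, then handle that integral by the substitution that straightens the steepest descent path through $s=1$. First I would exploit the asymptotic hypothesis on $f$ to write, for $s\in\gamma_\theta$,
\[
\frac{f(r_n s)}{r_n^a(\log r_n)^b(e^{1/\lambda}s)^n} = e^{n\varphi(s)} B_n(s),
\]
where $B_n(s):=s^a(1+\log s/\log r_n)^b[1+\delta(r_n s)]$ satisfies $B_n(s)\to 1$ uniformly on $\gamma_\theta$. The same argument used in Lemma~\ref{seccurve1_gminusgtlem} shows that the piece of the $F_n$-integral over $\gamma\setminus\gamma_\theta$ is $O(e^{(\mu-1)n/\lambda})$ uniformly in $w$ on compact subsets of $\{\re w<0\}$, since $z=1+w/\sqrt{n}$ stays at positive distance from $\gamma\setminus\gamma_\theta$ for large $n$. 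Hence
\[
F_n(z)-G_n(z) = \frac{1}{2\pi i}\int_{\gamma_\theta}[B_n(s)-1]\,e^{n\varphi(s)}\frac{ds}{s-z} + O(e^{-cn}).
\]

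Next I would localize the remaining integral. Fix a small neighborhood $U_0\subset U$ of $s=1$. On $\gamma_\theta\setminus U_0$, admissibility gives $\re\varphi(s)\leq -c'<0$ and $|s-z|$ is bounded below uniformly in $w$ on the compact set, so that portion is $O(e^{-c'n})$ uniformly. On $\gamma_\theta\cap U_0$ I would apply the substitution $s=\psi(it/\sqrt{n})$, yielding $n\varphi(s)=-t^2$, $ds=(i/\sqrt{n})\psi'(it/\sqrt{n})\,dt$, and
\[
\sqrt{n}\,(s-z) = \sqrt{n}\,[\psi(it/\sqrt{n})-1]-w = i\sqrt{2/\lambda}\,t\,[1+O(t/\sqrt{n})]-w.
\]
The crucial point is that for $\re w<0$ the quantity $i\sqrt{2/\lambda}\,t-w$ has modulus at least $|\re w|>0$ for every real $t$, so the near-singular factor $1/(s-z)$ is under uniform control even though $s$ and $z$ both sit within $O(1/\sqrt{n})$ of $1$.

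With the contour straightened, I would finish by a Gaussian estimate. Since $\psi(it/\sqrt{n})=1+O(|t|/\sqrt{n})$ and $\delta(r_n s)\to 0$ uniformly in $s\in\gamma_\theta\cap U_0$, Taylor expansion of each factor of $B_n$ gives $|B_n(s)-1|=O(|t|/\sqrt{n})+o(1)$, with both error terms uniform in $t$. Plugging in, and using that $\psi'$ is bounded near $0$,
\[
|F_n(z)-G_n(z)| \leq \frac{C}{|\re w|}\int_{-\infty}^{\infty} e^{-t^2}\left[\frac{|t|}{\sqrt{n}}+o(1)\right]dt + O(e^{-cn}) \longrightarrow 0,
\]
with every constant depending only on $\sup|w|$ and $\inf|\re w|$ over the compact set in $\{\re w<0\}$, which yields the desired uniform convergence. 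The main obstacle is the competition between the near-singular Cauchy kernel and the Gaussian decay of $e^{n\varphi(s)}$; the assumption $\re w<0$ keeps $z$ strictly on the interior side of $\gamma_\theta$ and is exactly what allows the rescaled denominator to stay bounded away from zero so that the Laplace-type bound goes through.
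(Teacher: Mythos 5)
Your proposal is correct and follows essentially the same route as the paper's proof: discard $\gamma\setminus\gamma_\theta$ and the part of $\gamma_\theta$ away from $s=1$ as exponentially small, use $\re w<0$ to get the lower bound $|s-z|\geq C|{\re w}|\,n^{-1/2}$ on the steepest-descent arc, and then beat the resulting factor of $\sqrt{n}$ with the Gaussian decay after the substitution $s=\psi(it)$ (your rescaled version $s=\psi(it/\sqrt{n})$ and single factor $B_n(s)-1$ just repackage the paper's split into the $(s^a-1)$ and $\tilde\delta$ terms). One small slip: $B_n(s)\to s^a$, not $1$, uniformly on $\gamma_\theta$, but this is harmless since away from $s=1$ you only use boundedness of $B_n-1$ and near $s=1$ your Taylor expansion $|B_n(s)-1|=O(|t|/\sqrt{n})+o(1)$ is the correct statement.
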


\begin{proof}
In this proof we will write $z = 1+w/\sqrt{n}$ as a shorthand, keeping in mind the implicit dependence of $z$ on $w$ and $n$.

Split the integral for $F_n$ into the two pieces
\begin{equation}
\label{seconeexp_lem3fnsplit}
	F_n(z) = \frac{r_n^{-a} (\log r_n)^{-b}}{2\pi i} \left( \int_{\gamma_\theta} (e^{1/\lambda}s)^{-n} f(r_ns) \frac{ds}{s-z} + \int_{\gamma \setminus \gamma_\theta} (e^{1/\lambda}s)^{-n} f(r_ns) \frac{ds}{s-z} \right).
\end{equation}
As in the previous lemma the second integral here is uniformly exponentially decreasing.

By the asymptotic assumption on $f$ in \eqref{seconeexp_fgrowth}, for $|{\arg z}| \leq \theta$ we can write
\begin{equation}
\label{seconeexp_deltadef}
	f(z) = z^a (\log z)^b \exp(z^\lambda) \bigl[ 1+\delta(z) \bigr],
\end{equation}
where $\delta(z) \to 0$ uniformly as $|z| \to \infty$ in this sector.  This implies
\[
	\frac{f(r_n s)}{r_n^a (\log r_n)^b (e^{1/\lambda}s)^n} = s^a e^{n \varphi(s)} \left(1 + \frac{\log s}{\log r_n}\right)^b \bigl[ 1 + \delta(r_n s) \bigr]
\]
for $s \in \gamma_\theta$. Then define
\begin{equation}
\label{seconeexp_deltatdef}
	\tilde{\delta}(r_n, s) = \left(1 + \frac{\log s}{\log r_n}\right)^b \bigl[ 1 + \delta(r_n s) \bigr] - 1
\end{equation}
and write the integrand of the first integral in \eqref{seconeexp_lem3fnsplit} as
\[
	e^{n\varphi(s)} + e^{n\varphi(s)} \left(s^a - 1\right) + s^a e^{n\varphi(s)} \tilde{\delta}(r_n, s).
\]
Recalling the definition of $G_n$ in \eqref{seconeexp_gdef}, it follows that \eqref{seconeexp_lem3fnsplit} can be rewritten as
\begin{align}
F_n(z) &= G_n(z) + \frac{1}{2\pi i} \int_{\gamma_\theta} e^{n\varphi(s)} \left(s^a - 1\right) \frac{ds}{s-z} + \frac{1}{2\pi i} \int_{\gamma_\theta} s^a e^{n\varphi(s)}\tilde{\delta}(r_n, s) \frac{ds}{s-z} \nonumber \\
&\qquad + O(e^{-cn})
\label{seconeexp_fngnequiv}
\end{align}
for some constant $c > 0$.  We will show that both of these remaining integrals tend to $0$ uniformly.

The contour $\gamma_\theta$ passes through the point $s=1$ vertically, so by assumption there exists a positive constant $C_2$ such that $|s-z| \geq C_1 n^{-1/2}$.  For $n$ large enough $z \notin \gamma_\theta$, and in that case
\[
	\left|\frac{s-1}{s-z}\right| \leq 1 + \left| \frac{1-z}{s-z} \right| \leq 1 + C_1^{-1} n^{1/2} |1-z| \leq C_2
\]
for some constant $C_2$. Hence
\begin{align*}
\left| \int_{\gamma_\theta} e^{n\varphi(s)} \left(s^a - 1\right) \frac{ds}{s-z} \right| &\leq \int_{\gamma_\theta} e^{n\re \varphi(s)} \left| \frac{s^a - 1}{s-1} \right| \left| \frac{s-1}{s-z} \right| |ds| \\
	&\leq C_2 \int_{\gamma_\theta} e^{n\re \varphi(s)} \left| \frac{s^a - 1}{s-1} \right| |ds|,
\end{align*}
which tends to zero as $n \to \infty$.

Split the second integral in \eqref{seconeexp_fngnequiv} like
\[
	\int_{\gamma_\theta} = \int_{\gamma_\theta \cap B_\epsilon(1)} + \int_{\gamma_\theta \setminus B_\epsilon(1)}.
\]
The integral over $\gamma_\theta \setminus B_\epsilon(1)$ decreases exponentially.  Let $s = \psi(it)$ and let
\[
	-i\psi^{-1}(\gamma_\theta \cap B_\epsilon(1)) = (-\alpha_1,\alpha_2),
\]
where $\alpha_1, \alpha_2 > 0$, so that
\begin{align*}
&\left| \int_{\gamma_\theta \cap B_\epsilon(1)} s^a e^{n\varphi(s)}\tilde{\delta}(r_n, s) \frac{ds}{s-z} \right| \\
&\qquad = \left| \int_{-\alpha_1}^{\alpha_2} e^{-nt^2} \tilde{\delta}(r_n, \psi(it)) \frac{\psi(it)^a \psi'(it)}{\psi(it) - z}\,dt \right| \\
&\qquad \leq C_1^{-1} n^{1/2} \sup_{-\alpha_1 < t < \alpha_1} \left|\tilde{\delta}(r_n, \psi(it)) \psi(it)^a\psi'(it)\right| \int_{-\alpha_1}^{\alpha_2} e^{-nt^2} \,dt \\
&\qquad < C_1^{-1} \sqrt{\pi} \sup_{-\alpha_1 < t < \alpha_1} \left|\tilde{\delta}(r_n, \psi(it)) \psi(it)^a\psi'(it)\right|,
\end{align*}
which tends to $0$ as $n \to \infty$ by the assumption on $\delta$ and, by extension, $\tilde\delta$.

Combining the above estimates with \eqref{seconeexp_fngnequiv} it follows that
\[
	F_n(z) = G_n(z) + o(1)
\]
uniformly as $n \to \infty$.
\end{proof}

As a consequence of this lemma, equation \eqref{seconeexp_gnpnapprox} implies that
\begin{equation}
\label{seconeexp_fnpnasymp}
	F_n\!\left(1+w/\sqrt{n}\right) = P_n\!\left(1+w/\sqrt{n}\right) + o(1)
\end{equation}
uniformly as $n \to \infty$.

Following the argument in \cite[p.\ 194]{mclaughlin:exprh}, it can be shown that
\[
	h(\zeta) = \frac{1}{2}e^{-\zeta^2} \erfc(-i\zeta)
\]
on $\im \zeta > 0$.  Setting
\[
	\zeta = -i\sqrt{n}\psi^{-1}(z) = -i\sqrt{n\varphi(z)}
\]
for an appropriately chosen branch of the square root yields an expression for $P_n$,
\[
	P_n(z) = \frac{1}{2} e^{n\varphi(z)} \erfc\!\left(-\sqrt{n\varphi(z)}\right),
\]
valid for $z \in U$ to the left of $\gamma_\theta$.  Since $2 - \erfc(x) = \erfc(-x)$ this can be rewritten as
\[
	P_n(z) = e^{n\varphi(z)} - \frac{1}{2} e^{n\varphi(z)} \erfc\!\left(\sqrt{n\varphi(z)}\right).
\]
It is straightforward to show that
\[
	\lim_{n \to \infty} n \varphi(1+w/\sqrt{n}) = \frac{\lambda}{2} w^2
\]
uniformly, so
\[
	P_n(1+w/\sqrt{n}) = e^{\lambda w^2/2} - \frac{1}{2} e^{\lambda w^2/2} \erfc\!\left(w \sqrt{\lambda/2}\,\right) + o(1)
\]
uniformly as $n \to \infty$.  By substituting this into \eqref{seconeexp_fnpnasymp} it follows that
\begin{equation}
\label{seconeexp_fnerfasymp1}
F_n(1+w/\sqrt{n}) = e^{\lambda w^2/2} - \frac{1}{2} e^{\lambda w^2/2} \erfc\!\left(w \sqrt{\lambda/2}\,\right) + o(1)
\end{equation}
uniformly as $n \to \infty$.

For $n$ large enough
\[
	F_n(1+w/\sqrt{n}) = \frac{1}{r_n^a (\log r_n)^b} \left(\frac{f(r_n(1+w/\sqrt{n}))}{e^{n/\lambda} (1+w/\sqrt{n})^{n}} - \frac{p_{n-1}(r_n(1+w/\sqrt{n}))}{e^{n/\lambda} (1+w/\sqrt{n})^{n}} \right)
\]
by \eqref{seconeexp_fnexplicit}.  The asymptotic assumption \eqref{seconeexp_fgrowth} grants us the uniform estimate
\[
	\frac{f(r_n(1+w/\sqrt{n}))}{r_n^a (\log r_n)^b e^{n/\lambda} (1+w/\sqrt{n})^{n}} = e^{\lambda w^2/2} + o(1),
\]
and substituting this into the above formula yields
\[
	F_n(1+w/\sqrt{n}) = e^{\lambda w^2/2} - e^{\lambda w^2/2} \frac{p_{n-1}(r_n(1+w/\sqrt{n}))}{f(r_n(1+w/\sqrt{n}))}(1+o(1)) + o(1)
\]
uniformly as $n \to \infty$.  Substituting this into \eqref{seconeexp_fnerfasymp1} then yields the expression
\[
\frac{p_{n-1}(r_n(1+w/\sqrt{n}))}{f(r_n(1+w/\sqrt{n}))} (1+o(1)) = \frac{1}{2} \erfc\!\left(w \sqrt{\lambda/2}\,\right) + o(1),
\]
which holds uniformly as $n \to \infty$.  Theorem \ref{seconeexp_maintheo} follows immediately from this asymptotic.

\subsection{Aside: An Alternate Riemann-Hilbert Problem}

The proof in the last section proceeded by solving a Riemann-Hilbert problem (RHP \ref{seconeexp_mainrhp}) which connected $G_n(z)$ to $P_n(z)$, giving an explicit expression for the error between them. We then proved that $G_n(z) \approx F_n(z)$ which, after some calculations, essentially concluded the proof. It is instructive, however, to note that we could have instead solved a Riemann-Hilbert problem that connected $F_n(z)$ directly to $P_n(z)$, thereby constructing a function which connects the global behavior of $F_n(z)$ to this relationship between $G_n(z)$ and the local parametrix $P_n(z)$.

We will detail this alternate proof in this section. It is substantially more complicated than the proof given in the last section, and we must even assume something stronger about the asymptotic behavior of $f$.

Let $\Delta$ be the circle centered at $z=1$ which subtends an angle of $\theta$ from the origin.  Denote by $\sigma_1, \sigma_2$ the points where $\Delta$ intersects the line of steepest descent of the function $\re \varphi(z)$ which passes through the point $z=1$.  Note that by symmetry $\sigma_1 = \overline{\sigma_2}$ and $\re \varphi(\sigma_1) = \re \varphi(\sigma_2)$.  Further, $\re \varphi(\sigma_1) < 0$.

\begin{condition}
\label{seconeexp_dfgrowth}
There exists a constant $\nu < -\re \varphi(\sigma_1)$ such that if $z$ is restricted to any compact subset of $\{z \in \C : z \neq 0 \text{ and } |{\arg z}| \leq \theta\}$ then
\[
	\frac{f'(r_n z)}{f(r_n z)} = O(e^{\nu n})
\]
uniformly in $z$ as $n \to \infty$.
\end{condition}

Assuming that this condition holds, we now introduce our alternate definition for $m(z)$ as well as the Riemann-Hilbert problem it solves.

Choose $\epsilon > 0$ such that $\overline{B_{2\epsilon}(1)} \subset U$ and define 
\[
	m(z) = \begin{cases}
		F_n(z) & \text{for } z \in \C \setminus \left( \gamma \cup \overline{B_{2\epsilon}(1)} \right), \\
		G_n(z) - P_n(z) & \text{for } z \in B_{2\epsilon}(1) \setminus \gamma.
	\end{cases}
\]
The jumps for $G_n(z)$ and $P_n(z)$ cancel each other out as $z$ moves across $\gamma$ in $B_{2\epsilon}(1)$, so $m$ is analytic on $B_{2\epsilon}(1)$.  If we define the contours
\begin{equation}
\label{seconeexpeq_biggammadef2}
	\Gamma_1 = \partial B_{2\epsilon}(1), \qquad \Gamma_2 = \gamma \setminus B_{2\epsilon}(1), \qquad \Gamma = \Gamma_1 \cup \Gamma_2,
\end{equation}
where $\Gamma_1$ is oriented in the counterclockwise direction and $\Gamma_2$ inherits its orientation from $\gamma$, then the function $m$ uniquely solves the following Riemann-Hilbert problem.

\begin{figure}[!htb]
	\centering
	\includegraphics[width=0.6\textwidth]{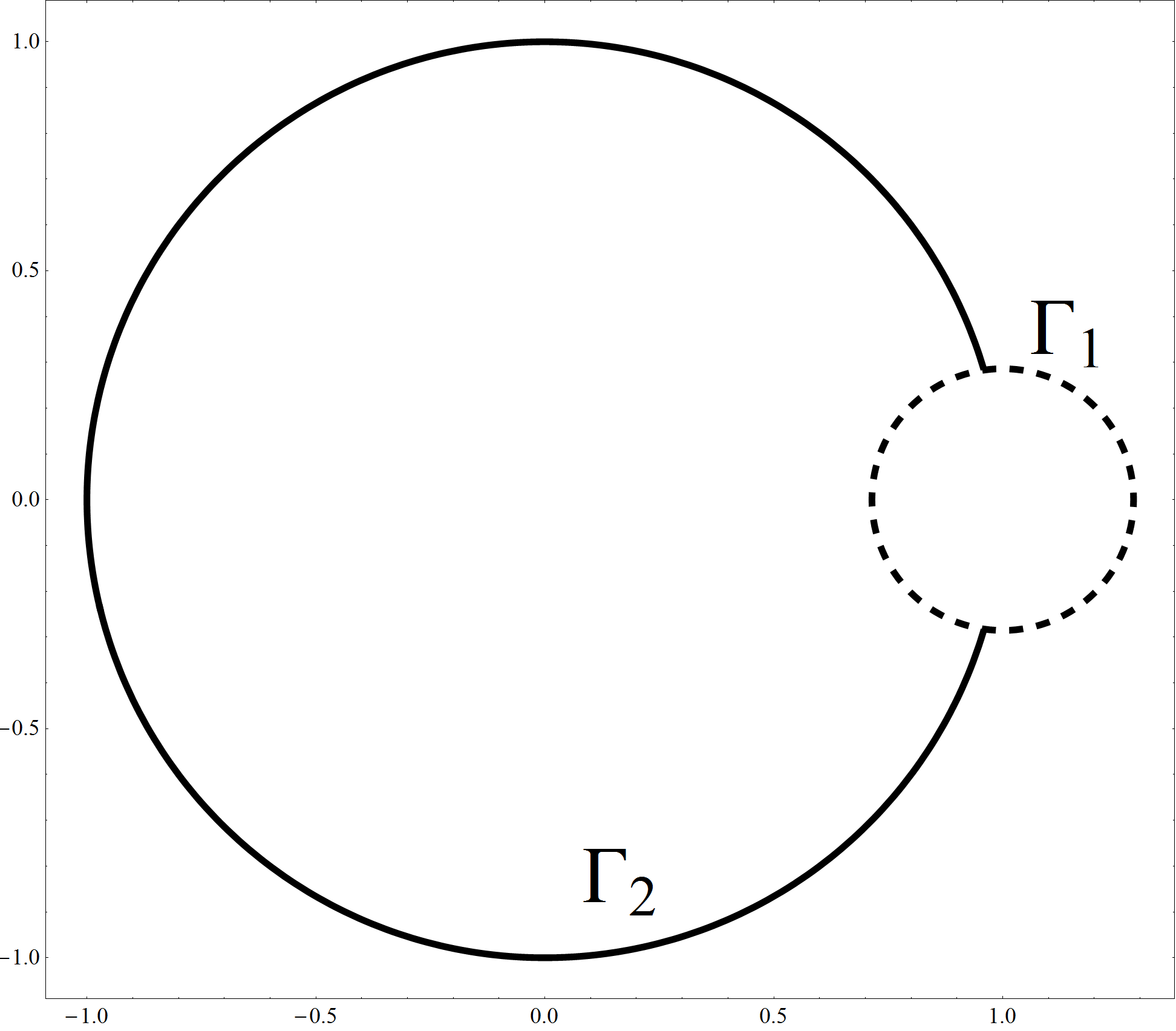}
	\caption[Schematic representation of the curve {$\Gamma$} in \eqref{seconeexpeq_biggammadef2}.]{Schematic representation of the curve $\Gamma$ in \eqref{seconeexpeq_biggammadef2}. The curve $\Gamma_1$ is dashed and the curve $\Gamma_2$ is solid black.}
	\label{seconeexpfig_biggamma2}
\end{figure}

\begin{rhp}
\label{seconeexp_mainrhp2}
Seek an analytic function $M \colon \C \setminus \Gamma \to \C$ such that
\begin{enumerate}
 \item $M^+(z) = M^-(z) - P_n(z) + G_n(z) - F_n(z)$ for $z \in \Gamma_1 \setminus \Gamma_2$,
 \item $M^+(z) = M^-(z) + \frac{f(r_n z)}{r_n^a (\log r_n)^b (e^{1/\lambda}z)^n}$ for $z \in \Gamma_2$ except at endpoints,
\item $M(z) \to 0$ as $|z| \to \infty$,
\item if $c$ is an endpoint of $\Gamma_2$ then $M(z) = O(|z-c|^q)$ as $z \to c$ with $z \in \C \setminus \Gamma_2$ for some $q > -1$.
\end{enumerate}
\end{rhp}

Plemelj's formula grants the integral representation
\begin{align}
m(z) &= \frac{1}{2\pi i} \int_\Gamma \Bigl[m^+(s) - m^-(s)\Bigr] \frac{ds}{s-z} \nonumber \\
	&= -\frac{1}{2\pi i} \int_{\Gamma_1} P_n(s) \frac{ds}{s-z} + \frac{1}{2\pi i} \int_{\Gamma_1} G_n(s)\frac{ds}{s-z} - \frac{1}{2\pi i} \int_{\Gamma_1} F_n(s)\frac{ds}{s-z} \nonumber \\
	&\qquad + \frac{r_n^{-a} (\log r_n)^{-b}}{2\pi i} \int_{\Gamma_2} (e^{1/\lambda}s)^{-n} f(r_n s) \frac{ds}{s-z}.
\label{seconeexp_mint2}
\end{align}
The following lemmas will show that, as $n \to \infty$, each of these integrals tends to zero uniformly as long as $z$ is bounded away from $\Gamma$.

\begin{lemma}
\label{seconeexp_gintlemma}
\[
	\int_{\Gamma_1} G_n(s)\frac{ds}{s-z} = O(n^{-1/2})
\]
uniformly for $z \in B_\epsilon(1)$ as $n \to \infty$.
\end{lemma}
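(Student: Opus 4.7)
My plan is to swap the order of integration and reduce the double integral to a closed form. Since $G_n(s)$ is itself a Cauchy integral along $\gamma_\theta$, I would write
\[
\int_{\Gamma_1} G_n(s) \frac{ds}{s-z} = \frac{1}{2\pi i}\int_{\Gamma_1}\int_{\gamma_\theta} \frac{e^{n\varphi(t)}}{(t-s)(s-z)}\,dt\,ds
\]
and apply Fubini. The swap is legitimate because, for each fixed $s \in \Gamma_1$ off $\gamma_\theta$, the inner integral in $t$ is absolutely convergent, and the resulting function of $s$ has only a mild (logarithmic) singularity at the two intersection points $\Gamma_1 \cap \gamma_\theta$, which is integrable against arc length on $\Gamma_1$.

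After swapping I can evaluate the inner $s$-integral exactly. Using the partial fractions
\[
\frac{1}{(t-s)(s-z)} = \frac{1}{t-z}\left(\frac{1}{t-s} + \frac{1}{s-z}\right),
\]
and the fact that $\Gamma_1 = \partial B_{2\epsilon}(1)$ is oriented counterclockwise and encloses the point $z \in B_\epsilon(1)$, the residue theorem gives
\[
\int_{\Gamma_1} \frac{ds}{(t-s)(s-z)} = \begin{cases} \dfrac{2\pi i}{t-z} & \text{if } t \notin \overline{B_{2\epsilon}(1)}, \\ 0 & \text{if } t \in B_{2\epsilon}(1).\end{cases}
\]
The striking consequence is that the contribution from the saddle region $\gamma_\theta \cap B_{2\epsilon}(1)$ — where $e^{n\varphi(t)}$ is not small — cancels entirely, leaving
\[
\int_{\Gamma_1} G_n(s)\frac{ds}{s-z} = \int_{\gamma_\theta \setminus B_{2\epsilon}(1)} \frac{e^{n\varphi(t)}}{t-z}\,dt.
\]

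The remainder is then immediate from the admissibility of $\gamma$: on $\gamma_\theta \setminus B_{2\epsilon}(1)$ there is a constant $c > 0$ with $\re\varphi(t) \leq -c$, and for $z \in B_\epsilon(1)$ and $t \in \gamma_\theta \setminus B_{2\epsilon}(1)$ one has $|t-z| \geq \epsilon$, so the remaining integral is $O(e^{-cn})$, which is in particular $O(n^{-1/2})$. (In fact this argument gives exponential decay, stronger than the stated bound; the $O(n^{-1/2})$ rate is presumably all that is used subsequently.) The main subtlety to handle carefully will be justifying the Fubini swap at the two crossing points of $\Gamma_1$ with $\gamma_\theta$; if one prefers to avoid the $L^1$ bookkeeping, an alternative is to first perturb $\Gamma_1$ slightly inward on one side of each crossing so that the deformed contour avoids $\gamma_\theta$, apply Fubini on the perturbed contour, and then pass to the limit using that both sides are continuous in the perturbation.
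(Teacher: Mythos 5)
Your argument is correct, and it takes a genuinely different---and in fact sharper---route than the one in the text. Your key identity
\[
	\int_{\Gamma_1} G_n(s)\,\frac{ds}{s-z} \;=\; \int_{\gamma_\theta \setminus B_{2\epsilon}(1)} \frac{e^{n\varphi(t)}}{t-z}\,dt, \qquad z \in B_\epsilon(1),
\]
checks out: the $2\pi i$ from the residue computation cancels the $1/(2\pi i)$ in the definition of $G_n$, and the identity is just the statement that the Cauchy integral over $\Gamma_1$ reproduces the part of $G_n$ that is analytic inside $B_{2\epsilon}(1)$ (the integral over $\gamma_\theta \setminus B_{2\epsilon}(1)$) while annihilating the part that is analytic outside and is $O(1/s)$ at infinity. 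The Fubini swap is legitimate since $|s-z| \geq \epsilon$ on $\Gamma_1$ and $1/|t-s|$ is absolutely integrable over the product of two smooth arcs meeting transversally, so only routine bookkeeping is hidden there. The payoff is that the right-hand side is exactly the integral already estimated in Lemma \ref{seconeexp_int2lemma}, hence $O(e^{-cn})$ rather than merely $O(n^{-1/2})$. By contrast, the proof in the text bounds the integral by $4\pi\,\|G_n\|_{L^\infty(\Gamma_1)}$ and then estimates $\sup_{\Gamma_1}|G_n|$ directly; that supremum genuinely is of order $n^{-1/2}$ (coming from the points of $\Gamma_1$ near the steepest-descent arc), so that route cannot do better than $O(n^{-1/2})$, and it requires splitting $\Gamma_1$ at the two crossing points $s_1,s_2$, a principal-value analysis of $G_n(s_j)$ there, and a contour deformation plus the substitution $t=\psi(iu)$ elsewhere. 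Your residue computation bypasses all of that by exploiting the cancellation that occurs when $G_n$ is integrated over the \emph{closed} contour $\Gamma_1$; since only $O(n^{-1/2})$ is used downstream, either bound suffices, but yours is cleaner and stronger.
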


\begin{proof}
For $z \in B_\epsilon(1)$
\begin{align*}
	\left| \int_{\Gamma_1} G_n(s)\frac{ds}{s-z} \right| &\leq 4\pi \epsilon \left\| \frac{G_n(s)}{s-z} \right\|_{L^\infty(\Gamma_1)} \\
	&\leq 4\pi \| G_n(s) \|_{L^\infty(\Gamma_1)}.
\end{align*}
Let $\Gamma_1^+$ and $\Gamma_1^-$ denote the closures of the parts of $\Gamma_1$ lying to the left and to the right of $\gamma_\theta$, respectively.  Then from the above it follows that
\begin{equation}
\label{seconeexp_gintest}
	\left| \int_{\Gamma_1} G_n(s)\frac{ds}{s-z} \right| \leq 4\pi \left( \| G_n(s) \|_{L^\infty(\Gamma_1^+)} + \| G_n(s) \|_{L^\infty(\Gamma_1^-)} \right).
\end{equation}

Define $s_1,s_2$ to be the points where $\Gamma_1$ intersects $\gamma_\theta$. Depending on whether $s$ approaches $s_j$ from the left or the right,
\[
	G_n(s_j) = \pm \frac{1}{2} e^{n\varphi(s_j)} + \frac{1}{2\pi i} \operatorname{P.V.} \int_{\gamma_\theta} e^{n\varphi(t)} \frac{dt}{t-s_j}.
\]
Note that the first term here decays exponentially.

Deform the contour $\gamma_\theta$ in a small neighborhood $A$ of $s_j$ to be a straight line passing through $s_j$.  Choose this neighborhood small enough so that $\gamma_\theta$ still lies entirely below the saddle point at $s=1$ on the surface $\re \varphi(s)$ except where it passes through $s=1$.  Then
\[
	\operatorname{P.V.} \int_{\gamma_\theta} e^{n\varphi(t)} \frac{dt}{t-s_j} = \int_{\gamma_\theta \cap A} \frac{e^{n\varphi(t)} - e^{n\varphi(s_j)}}{t-s_j}\,dt + \int_{\gamma_\theta \setminus A} e^{n\varphi(t)} \frac{dt}{t-s_j}.
\]
A straightforward application of the Laplace method to the second integral here yields
\[
	\int_{\gamma_\theta \setminus A} e^{n\varphi(t)} \frac{dt}{t-s_j} = O(n^{-1/2}).
\]
Taylor's theorem grants the estimate
\begin{align*}
\left| e^{n\varphi(t)} - e^{n\varphi(s_j)} \right| &\leq |t-s_j| \sup_{\tau \in \gamma_\theta \cap A} \left| n\varphi'(\tau) e^{n\varphi(\tau)} \right| \\
&\leq |t-s_j| n e^{n(\re \varphi(s_j)+c)} \sup_{\tau \in \gamma_\theta} |\varphi'(\tau)|,
\end{align*}
where $0 < c < -\re \varphi(s_j)$, and thus it follows that
\[
	\left| \int_{\gamma_\theta \cap A} \frac{e^{n\varphi(t)} - e^{n\varphi(s_j)}}{t-s_j}\,dt \right| < \const. \cdot ne^{n(\re \varphi(s_j) + c)},
\]
and this tends to $0$.  Combining these facts,
\begin{equation}
\label{seconeexp_gest1}
	G_n(s_j) = O(n^{-1/2})
\end{equation}
as $n \to \infty$.

Now suppose $s \in \Gamma_1^+ \setminus \{s_1,s_2\}$.  Then $e^{n\varphi(t)}/(t-s)$ is analytic in a neighborhood of $\gamma_\theta$.  Deform $\gamma_\theta$ near $s_1$ and $s_2$ so that it stays a small positive distance away from $\Gamma_1^+$, and in such a way that $\gamma_\theta$ is unchanged in the disk $B_\epsilon(1)$.  Split the integral for $G_n(s)$ into the pieces
\[
	G_n(s) = \frac{1}{2\pi i}\int_{\gamma_\theta \setminus B_\epsilon(1)} e^{n\varphi(t)} \frac{ds}{t-s} + \frac{1}{2\pi i}\int_{\gamma_\theta \cap B_\epsilon(1)} e^{n\varphi(t)} \frac{ds}{t-s}.
\]
After this deformation, the first integral is bounded by
\[
	\left| \int_{\gamma_\theta \setminus B_\epsilon(1)} e^{n\varphi(t)} \frac{ds}{t-s} \right| \leq C e^{-c n},
\]
where $C > 0$ and $c > 0$ are constants independent of $s$.  In the second integral let $t = \psi(iu)$ and define $-i\psi^{-1}(\gamma_\theta \cap B_\epsilon(1)) = (-\alpha_1,\alpha_2)$, so that
\begin{align}
\left| \int_{\gamma_\theta \cap B_\epsilon(1)} e^{n\varphi(t)} \frac{ds}{t-s} \right| &= \left| \int_{-\alpha_1}^{\alpha_2} e^{-nu^2} \frac{i\psi'(iu)}{\psi(iu)-s}\,du \right| \nonumber \\
&\leq \sup_{u \in (\alpha_1,\alpha_2)} \left| \frac{\psi'(iu)}{\psi(iu)-s} \right| \int_{-\alpha_1}^{\alpha_2} e^{-nu^2}\,du \nonumber \\
&\leq \epsilon^{-1} \sqrt{\pi/n} \sup_{u \in (\alpha_1,\alpha_2)} |\psi'(iu)|. \label{seconeexp_gest2}
\end{align}
An identical process will yield the same bound for $s \in \Gamma_1^- \setminus \{s_1,s_2\}$.

Combining \eqref{seconeexp_gest1} and \eqref{seconeexp_gest2} in \eqref{seconeexp_gintest} results in the estimate
\[
	\int_{\Gamma_1} G_n(s) \frac{ds}{s-z} = O(n^{-1/2})
\]
uniformly for $z \in B_\epsilon(1)$ as $n \to \infty$, as desired.
\end{proof}

\begin{lemma}
\label{seconeexp_gamma2lemma}
There exists a constant $c>0$ such that
\[
	r_n^{-a} (\log r_n)^{-b} \int_{\Gamma_2} (e^{1/\lambda}s)^{-n} f(r_n s) \frac{ds}{s-z} = O(e^{-cn})
\]
uniformly for $z \in B_\epsilon(1)$ as $n \to \infty$.
\end{lemma}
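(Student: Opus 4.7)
The plan is to split $\Gamma_2 = \gamma \setminus B_{2\epsilon}(1)$ into its two natural pieces, corresponding to the two defining features of an admissible contour: the piece $\Gamma_2' := (\gamma_\theta \setminus B_{2\epsilon}(1))$ inside the sector $|{\arg z}| \leq \theta$, which is a steepest descent path for $\re \varphi$, and the piece $\Gamma_2'' := \gamma \setminus \gamma_\theta$, which coincides with the unit circle in the sector $|{\arg z}| \geq \theta$. Since $z \in B_\epsilon(1)$ and $s \in \Gamma_2$ with $|s-1| \geq 2\epsilon$, I have $|s-z| \geq \epsilon$ throughout, so the factor $1/(s-z)$ contributes only a harmless constant. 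The length of $\Gamma_2$ is finite, so it suffices to show the integrand is uniformly $O(e^{-cn})$ on each piece.

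On $\Gamma_2'$ I would use the asymptotic \eqref{seconeexp_fgrowth} to write
\[
	\frac{f(r_n s)}{r_n^a (\log r_n)^b (e^{1/\lambda}s)^n} = s^a\!\left(1 + \frac{\log s}{\log r_n}\right)^{\!b}\!\bigl[1+\delta(r_n s)\bigr]\, e^{n\varphi(s)},
\]
exactly as in the derivation leading to \eqref{seconeexp_integrandasymp}. The prefactor is bounded uniformly in $s \in \Gamma_2'$ for $n$ large; and because an admissible contour keeps a positive distance from the curve $\re \varphi = 0$ in the sector except in $U_\gamma$, while the steepest descent path through $s=1$ itself has $\re \varphi$ strictly decreasing away from $s=1$, there is a constant $c_1 > 0$ with $\re \varphi(s) \leq -c_1$ for all $s \in \Gamma_2'$. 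Hence the integrand is $O(e^{-c_1 n})$ uniformly on $\Gamma_2'$.

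On $\Gamma_2''$ I would instead use the second case of \eqref{seconeexp_fgrowth}, giving $|f(r_n s)| \leq M \exp(\mu r_n^\lambda |s|^\lambda) = M e^{\mu n/\lambda}$ since $|s|=1$; combined with $|(e^{1/\lambda}s)^{-n}| = e^{-n/\lambda}$, the integrand (after multiplication by $r_n^{-a}(\log r_n)^{-b}$) is $O(r_n^{-\re a}(\log r_n)^{-\re b} e^{(\mu-1)n/\lambda}) = O(e^{-c_2 n})$ for any $c_2 < (1-\mu)/\lambda$, since the algebraic and logarithmic prefactors are absorbed at the cost of shrinking $c_2$ slightly.

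Taking $c = \min(c_1, c_2)/2$ (say), combining the two bounds with the uniform lower bound $|s-z|\geq \epsilon$ and the finite length of $\Gamma_2$ yields the claim. The only point requiring any care is confirming that the error $\delta(r_n s)$ coming from the one-sided asymptotic for $f$ does not spoil the bound on $\Gamma_2'$; but since $\delta(r_n s) \to 0$ uniformly on the compact contour $\Gamma_2'$, this is immediate.
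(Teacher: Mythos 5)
Your proof is correct and follows essentially the same route as the paper's: the same splitting of $\Gamma_2$ into the part inside the sector $|{\arg s}|\leq\theta$ (bounded via $\re\varphi(s)\leq -c_1$ and the uniform boundedness of the prefactor $s^a(1+\log s/\log r_n)^b[1+\delta(r_n s)]$) and the part on the unit circle outside the sector (bounded via $|f(r_n s)|\leq M e^{\mu n/\lambda}$ against $e^{-n/\lambda}$), together with $|s-z|\geq\epsilon$ and the finite length of $\Gamma_2$. The only cosmetic difference is that you carry the factor $r_n^{-a}(\log r_n)^{-b}$ through from the start, whereas the paper bounds the bare integral and then absorbs the subexponential growth of $r_n^{a}(\log r_n)^{b}$ into the exponential decay; both handle it the same way in substance.
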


\begin{proof}
Let $\Gamma_2'$ denote the part of $\Gamma_2$ for which $|{\arg s}| \leq \theta$ and let $\Gamma_2''$ denote the part for which $\theta < |{\arg s}|$.  Split the integral into the two parts
\[
	\int_{\Gamma_2} = \int_{\Gamma_2'} + \int_{\Gamma_2''}.
\]

For $|{\arg z}| \leq \theta$ we can write
\[
	f(z) = z^a (\log z)^b \exp(z^\lambda) \bigl[ 1 + \delta(z) \bigr],
\]
where $\delta(z) \to 0$ uniformly as $|z| \to \infty$, so for $s \in \Gamma_2'$
\[
	\frac{f(r_n s)}{r_n^a (\log r_n)^b (e^{1/\lambda}s)^n} = s^a e^{n \varphi(s)} \left(1 + \frac{\log s}{\log r_n}\right)^b \bigl[ 1 + \delta(r_n s) \bigr].
\]
If $s \in \Gamma_2'$ then there is a constant $d > 0$ such that $\re \varphi(s) < -d$.  The quantities $s^a$, $\log s/\log r_n$, and $\delta(r_n s)$ are uniformly bounded for $s \in \Gamma_2'$, and the quantities $r_n^a$ and $(\log r_n)^b$ grow subexponentially, so if $z \in B_\epsilon(1)$ there are positive constants $C_1$ and $d'$ such that
\[
	\left| \int_{\Gamma_2'} (e^{1/\lambda}s)^{-n} f(r_n s) \frac{ds}{s-z} \right| \leq \len(\Gamma_2') \cdot \epsilon^{-1} \cdot C_1 e^{-d'n}.
\]

For $|{\arg z}| > \theta$
\[
	|f(z)| \leq C_2 \exp(\mu|z|^\lambda)
\]
for some constant $C_2$.  If $s \in \Gamma_2''$ then $|s| = 1$, so
\[
	\left| \frac{f(r_n s)}{(e^{1/\lambda}s)^n} \right| \leq C_2 \exp[(\mu - 1)n/\lambda],
\]
and, since $|s-z| \geq \epsilon$,
\[
	\left| \int_{\Gamma_2''} (e^{1/\lambda}s)^{-n} f(r_n s) \frac{ds}{s-z} \right| \leq \len(\Gamma_2'') \cdot \epsilon^{-1} \cdot C_2 \exp[(\mu - 1)n/\lambda].
\]
Combining this with the above estimate yields the desired result.
\end{proof}

\begin{lemma}
\label{seconeexp_fnintlemma}
\[
	\int_{\Gamma_1} F_n(s)\frac{ds}{s-z} = O(n^{-1/2})
\]
uniformly for $z \in B_\epsilon(1)$ as $n \to \infty$.
\end{lemma}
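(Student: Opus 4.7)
My plan is to reduce the integral bound to a uniform sup-norm estimate on $F_n$ over $\Gamma_1$, then derive that sup-norm bound by re-using the decomposition from the proof of Lemma~\ref{seconeexp_fngnapproxlemma} together with the sup-norm argument already carried out in the proof of Lemma~\ref{seconeexp_gintlemma}. The reduction is immediate: if $z \in B_\epsilon(1)$ and $s \in \Gamma_1 = \partial B_{2\epsilon}(1)$, then $|s-z| \geq \epsilon$, so
\[
\left|\int_{\Gamma_1} F_n(s)\,\frac{ds}{s-z}\right| \leq 4\pi\,\|F_n\|_{L^\infty(\Gamma_1 \setminus \{s_1,s_2\})},
\]
where $s_1,s_2$ are the two points of $\Gamma_1 \cap \gamma_\theta$. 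Everything therefore reduces to showing $F_n(s) = O(n^{-1/2})$ uniformly for $s \in \Gamma_1 \setminus \{s_1,s_2\}$.

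For such $s$, the identity \eqref{seconeexp_fngnequiv}, which holds at any point $s \notin \gamma$, reads
\[
F_n(s) = G_n(s) + \frac{1}{2\pi i}\int_{\gamma_\theta} e^{n\varphi(t)}(t^a - 1)\,\frac{dt}{t-s} + \frac{1}{2\pi i}\int_{\gamma_\theta} t^a e^{n\varphi(t)}\tilde\delta(r_n,t)\,\frac{dt}{t-s} + O(e^{-cn}),
\]
and I will bound each Cauchy integral separately. The term $G_n(s)$ is already known to satisfy $G_n(s) = O(n^{-1/2})$ uniformly on $\Gamma_1 \setminus \{s_1,s_2\}$ from the proof of Lemma~\ref{seconeexp_gintlemma}. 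For the second integral, the factor $t^a - 1$ vanishes at the saddle $t=1$, so Laplace's method, after the substitution $t = \psi(iu)$ in a small neighborhood of $1$, produces $O(n^{-1})$ from the saddle piece and an exponentially decaying contribution from the rest of $\gamma_\theta$. For the third integral, Laplace's method yields a bound of the form $C n^{-1/2}\sup_{t \in \gamma_\theta}|\tilde\delta(r_n,t)|$, which is $o(n^{-1/2})$ by the definition \eqref{seconeexp_deltatdef} and the decay hypothesis on $\delta$.

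The main obstacle is keeping the estimates uniform as $s \to s_j$, where $|t-s|$ can be arbitrarily small for $t$ inside the saddle neighborhood. My plan is to treat this exactly as in the derivation of \eqref{seconeexp_gest1}: deform $\gamma_\theta$ near $s_j$ to a short straight segment through $s_j$, and write the Cauchy integrand as $[h(t) - h(s_j)]/(t - s_j)$ plus $h(s_j)/(t - s_j)$, where $h$ is the relevant smooth numerator; the first difference is handled by Taylor's theorem and the exponential decay $e^{n\re\varphi(s_j)}$ (positive because $s_j$ lies a definite distance from $1$ along the steepest-descent contour), while the second yields a bounded principal-value term. Since $t^a - 1$ and $\tilde\delta(r_n,t)$ are bounded on $\gamma_\theta$ uniformly in $n$, the same principal-value analysis used for $G_n$ in Lemma~\ref{seconeexp_gintlemma} transfers directly. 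Combining the three uniform bounds gives $\|F_n\|_{L^\infty(\Gamma_1 \setminus \{s_1,s_2\})} = O(n^{-1/2})$, which suffices.
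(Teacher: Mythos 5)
Your reduction to a sup-norm bound on $\Gamma_1\setminus\{s_1,s_2\}$ and your treatment of the $G_n$ term and the $(t^a-1)$ term are sound, and they match the paper's strategy. The gap is in your final sentence: the claim that, because $\tilde\delta(r_n,t)$ is \emph{bounded} uniformly in $n$, ``the same principal-value analysis used for $G_n$ in Lemma~\ref{seconeexp_gintlemma} transfers directly.'' It does not. The principal-value argument at $s_j$ hinges on the Taylor-type estimate
\[
\bigl| h(t)-h(s_j) \bigr| \leq |t-s_j|\,\sup_{\tau}\bigl|h'(\tau)\bigr|
\]
applied to the numerator $h(t)=t^a e^{n\varphi(t)}\tilde\delta(r_n,t)$, and this forces you to differentiate $\tilde\delta(r_n,\cdot)$. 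From \eqref{seconeexp_deltatdef2} that derivative contains $r_n\,\delta'(r_n\tau)$, and from \eqref{seconeexp_deltadef2} the quantity $\delta'(r_n\tau)$ contains $f'(r_n\tau)/f(r_n\tau)$. The standing hypothesis \eqref{seconeexp_fgrowth} controls only $f$, not $f'$: the condition $\delta(z)=o(1)$ places no bound whatsoever on $\delta'$, which can grow arbitrarily fast while $\delta$ itself tends to zero. So the difference quotient $\bigl[\tilde\delta(r_n,t)-\tilde\delta(r_n,s_j)\bigr]/(t-s_j)$, even after being multiplied by the exponentially small factor $e^{n\varphi(s_j)}$, is not controlled by boundedness of $\tilde\delta$ alone. (Splitting the difference quotient so that one factor is $\tilde\delta(r_n,t)$ times the controlled quotient of $t^ae^{n\varphi(t)}$ handles only half the problem; the other half still needs a Lipschitz-type bound on $\tilde\delta(r_n,\cdot)$.)

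This is exactly why the paper imposes Condition~\ref{seconeexp_dfgrowth}, i.e.\ $f'(r_nz)/f(r_nz)=O(e^{\nu n})$ with $\nu<-\re\varphi(\sigma_1)$, before stating this alternate Riemann--Hilbert argument, and why it remarks that one ``must even assume something stronger about the asymptotic behavior of $f$.'' With that condition one gets $\sup_\tau|\tau^a e^{n\varphi(\tau)}\tilde\delta_\tau(r_n,\tau)|\leq C r_n e^{n(\re\varphi(s_j)+c'+\nu)}$, and after arranging $\re\varphi(s_j)+c'+\nu<0$ the contribution near $s_j$ is exponentially small. Your proof needs to either invoke this condition explicitly at that step, or find a way to estimate the principal value of the $\tilde\delta$ integral at $s_j$ that avoids differentiating $\tilde\delta$ altogether; as written, the step fails for general $f$ satisfying only \eqref{seconeexp_fgrowth}.
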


\begin{proof}
Split the integral for $F_n$ into the two pieces
\[
	F_n(s) = \frac{r_n^{-a} (\log r_n)^{-b}}{2\pi i} \left( \int_{\gamma \setminus \gamma_\theta} (e^{1/\lambda}t)^{-n} f(r_nt) \frac{dt}{t-s} + \int_{\gamma_\theta} (e^{1/\lambda}t)^{-n} f(r_nt) \frac{dt}{t-s} \right)
\]
and denote by $F_n^1(s)$ and $F_n^2(s)$ the left and right terms, respectively.

If $s \in \Gamma_1$ and $t \in \gamma\setminus\gamma_\theta$ then $|t-s| \geq C_1$ for some constant $C_1 > 0$ since
\[
	s \in \overline{B_{2\epsilon}(1)} \subset U \subset \{z \in \C : |{\arg z}| \leq \theta\}
\]
and $U$ is open.  There exists a constant $C_2$ such that
\[
	|f(z)| \leq C_2 \exp(\mu|z|^\lambda)
\]
for $|{\arg z}| \geq \theta$, and just as in the proof of Lemma \ref{seconeexp_gamma2lemma} it can be shown that
\[
	\left| \int_{\gamma \setminus \gamma_\theta} (e^{1/\lambda}t)^{-n} f(r_nt) \frac{dt}{t-s} \right| \leq \len(\gamma \setminus \gamma_\theta) \cdot C_1^{-1} \cdot C_2 \exp[(\mu-1)n/\lambda].
\]
It follows that there are positive constants $C_3$ and $c$ such that
\[
	\left| \int_{\Gamma_1} F_n^1(s) \frac{ds}{s-z} \right| \leq C_3 e^{-cn}.
\]

Now consider the integral over $\gamma_\theta$.  For $|{\arg z}| \leq \theta$ we can write
\begin{equation}
\label{seconeexp_deltadef2}
	f(z) = z^a (\log z)^b \exp(z^\lambda) \bigl[ 1+\delta(z) \bigr],
\end{equation}
where $\delta(z) \to 0$ uniformly as $|z| \to \infty$ in this sector.  This implies
\[
	\frac{f(r_n t)}{r_n^a (\log r_n)^b (e^{1/\lambda}t)^n} = t^a e^{n \varphi(t)} \left(1 + \frac{\log t}{\log r_n}\right)^b \bigl[ 1 + \delta(r_n t) \bigr]
\]
for $t \in \gamma_\theta$, and so
\begin{align*}
	&\int_{\Gamma_1} F^2_n(s) \frac{ds}{s-z} \\
	&\qquad = \frac{1}{2\pi i}\int_{\Gamma_1} \frac{1}{s-z} \int_{\gamma_\theta} t^a e^{n\varphi(t)} \frac{dt}{t-s} \,ds + \frac{1}{2\pi i}\int_{\Gamma_1} \frac{1}{s-z} \int_{\gamma_\theta} t^a e^{n\varphi(t)} \tilde{\delta}(r_n, t) \frac{dt}{t-s} \,ds,
\end{align*}
where
\begin{equation}
\label{seconeexp_deltatdef2}
	\tilde{\delta}(r_n, t) = \left(1 + \frac{\log t}{\log r_n}\right)^b \bigl[ 1 + \delta(r_n t) \bigr] - 1.
\end{equation}
The first integral in this expression can be estimated using the method in Lemma \ref{seconeexp_gintlemma} while the second requires a little more care.  Actually the proof will go through just as before except for the estimates at the points $s_j$, which we will detail here.

Name the inner integral
\[
	g_n(s) = \frac{1}{2\pi i}\int_{\gamma_\theta} t^a e^{n\varphi(t)} \tilde{\delta}(r_n, t) \frac{dt}{t-s}.
\]
Depending on whether $s$ approaches $s_j$ from the left or the right,
\[
	g_n(s_j) = \pm \frac{1}{2} s_j^a e^{n\varphi(s_j)}\tilde{\delta}(r_n, s_j) + \frac{1}{2\pi i} \operatorname{P.V.} \int_{\gamma_\theta} t^a e^{n\varphi(t)} \tilde{\delta}(r_n, t) \frac{dt}{t-s_j}.
\]
The first term here decays exponentially.  Deform the contour $\gamma_\theta$ in a small neighborhood $A$ of $s_j$ to be a straight line passing through $s_j$.  Choose this neighborhood small enough so that $\gamma_\theta$ still lies entirely below the saddle point at $s=1$ on the surface $\re \varphi(s)$ except where it passes through $s=1$.  Then
\begin{align*}
	&\operatorname{P.V.} \int_{\gamma_\theta} t^a e^{n\varphi(t)} \tilde{\delta}(r_n, t) \frac{dt}{t-s_j} \\
	&\qquad = \int_{\gamma_\theta \cap A} \frac{t^a e^{n\varphi(t)} \tilde{\delta}(r_n, t) - s_j^a e^{n\varphi(s_j)}\tilde{\delta}(r_n, s_j)}{t-s_j}\,dt + \int_{\gamma_\theta \setminus A} t^a e^{n\varphi(t)} \tilde{\delta}(r_n, t) \frac{dt}{t-s_j}.
\end{align*}
For the second integral, the Laplace method yields
\[
	\int_{\gamma_\theta \setminus A} t^a e^{n\varphi(t)} \tilde{\delta}(r_n, t) \frac{dt}{t-s_j} = o(n^{-1/2}).
\]
Taylor's theorem implies that
\begin{align*}
	&\left| t^a e^{n\varphi(t)} \tilde{\delta}(r_n, t) - s_j^a e^{n\varphi(s_j)}\tilde{\delta}(r_n, s_j) \right| \\
	&\qquad \leq |t-s_j| \sup_{\tau \in \gamma_\theta \cap A} \left|a\tau^{a-1}e^{n\varphi(\tau)}\tilde{\delta}(r_n, \tau) + n\varphi'(\tau)\tau^ae^{n\varphi(\tau)}\tilde{\delta}(r_n, \tau) + \tau^ae^{n\varphi(\tau)}\tilde{\delta}_\tau(r_n, \tau) \right| \\
	&\qquad \leq |t-s_j| \left(\sup_{\tau \in \gamma_\theta \cap A} \left|a\tau^{a-1}e^{n\varphi(\tau)}\tilde{\delta}(r_n, \tau) + n\varphi'(\tau)\tau^ae^{n\varphi(\tau)}\tilde{\delta}(r_n, \tau) \right| \right. \\
	&\hspace{3cm} + \left. \sup_{\tau \in \gamma_\theta \cap A} \left| \tau^ae^{n\varphi(\tau)}\tilde{\delta}_\tau(r_n, \tau) \right| \right).
\end{align*}
The first supremum here decays exponentially.  For the second,
\[
	\sup_{\tau \in \gamma_\theta \cap A} \left| \tau^a e^{n\varphi(\tau)}\tilde{\delta}_\tau(r_n, \tau) \right| \leq e^{n(\re \varphi(s_j)+c')} \sup_{\tau \in \gamma_\theta} \left| \tau^a \tilde{\delta}_\tau(r_n, \tau) \right|,
\]
where $0 < c' < -\re \varphi(s_j)$.  By choosing $A$ smaller it can be shown that this estimate holds for any fixed $c' > 0$ small enough.  Differentiating the formula for $\tilde\delta$ yields
\[
	\tilde{\delta}_\tau(r_n, \tau) = \left(\frac{b}{\tau\log(r_n \tau)} + \frac{r_n}{1+\delta(r_n \tau)} \right) \left( \tilde\delta(r_n,\tau) + 1 \right) \delta'(r_n \tau),
\]
and, from \eqref{seconeexp_deltadef2},
\[
	\delta'(r_n \tau) = \left[ \frac{f'(r_n \tau)}{f(r_n \tau)} - \frac{1}{r_n \tau} \left( a + \frac{b}{\log(r_n \tau)} + n\tau^\lambda \right) \right] \bigl[ 1+\delta(r_n \tau) \bigr].
\]
After substituting this into the previous expression, an appeal to Condition \ref{seconeexp_dfgrowth} grants an estimate
\[
	\sup_{\tau \in \gamma_\theta \cap A} \left| \tau^ae^{n\varphi(\tau)}\tilde\delta_\tau(r_n,\tau) \right| \leq C_4 r_n e^{n (\re \varphi(s_j) + c' + \nu)},
\]
where $C_4 > 0$ is a constant independent of $n$.  In addition to taking $c'$ as small as we like, by choosing $U_\gamma$, $U$, and $\epsilon$ slightly larger the quantity $\re \varphi(s_j)$ can be made as close to $\re \varphi(\sigma_j)$ as desired. Arrangements can thus be made so that the quantity $\re \varphi(s_j) + c' + \nu$ is negative.  It follows that
\[
	\left| \int_{\gamma_\theta \cap A} \frac{t^a e^{n\varphi(t)} \tilde{\delta}(r_n, t) - s_j^a e^{n\varphi(s_j)}\tilde{\delta}(r_n, s_j)}{t-s_j}\,dt \right| \leq C_5 e^{-c''n}
\]
for some positive constants $C_5$ and $c''$, and combining this with the above Laplace method estimate yields
\[
	g_n(s_j) = o(n^{-1/2})
\]
as $n \to \infty$.

The remainder of the proof proceeds exactly as in Lemma \ref{seconeexp_gintlemma}.
\end{proof}

Indeed, Lemmas \ref{seconeexp_gintlemma}, \ref{seconeexp_gamma2lemma}, \ref{seconeexp_fnintlemma}, and \ref{seconeexp_pintlemma} imply that
\[
	m(z) = o(1),
\]
and then, by the definition of $m$,
\[
	G_n(z) = P_n(z) + o(1)
\]
uniformly for $z \in B_\epsilon(1)$ as $n \to \infty$.  Now set $z = 1 + w/\sqrt{n}$, where $w$ is restricted to a compact subset of $\re w < 0$.  By Lemma \ref{seconeexp_fngnapproxlemma}
\begin{equation}
\label{fnpnasymp}
	F_n(1+w/\sqrt{n}) = P_n(1+w/\sqrt{n}) + o(1)
\end{equation}
uniformly as $n \to \infty$.

The remainder of the proof of the theorem proceeds exactly as in the previous section.

\section{Unbalanced Growth in Two Directions: Destruction of the Scaling Limit}
\label{secunbalcauchy_sec}

In Section \ref{curscatwosec_sec} the scaling limit at the arcs of the limit curve for the partial sums of a function with two directions of maximal growth had to be modified depending on the balance of the constants $\re a$ and $\re b$. When $\re a > \re b$ one limit holds, when $\re a < \re b$ another, and when $\re a = \re b$ a sort of transitional limit holds.

The behavior of the scaling limit at the corner of the limit curve is starkly different. It does not go through any such change when $\re a = \re b$. In a sense the geometry of the zeros of the partial sums doesn't change that much when $\re a - \re b$ is only slightly negative compared to when it is slightly positive---the zeros still lie outside the curve---and it turns out that the corner scaling limit isn't sensitive to the change that does happen there, which is that the rate at which the zeros approach the arcs of the limit curve begins to depend on both $a$ and $b$ (see Theorem \ref{curscatwotheo_maintheo}, compare the definitions of $z_n^1(w)$ and $z_n^2(w)$).

There is a second bifurcation past this one which we have noticed previously in Chapter \ref{chap_limitcurves} (Theorem \ref{seccurvetwo_maintheo}), namely that the zeros approach the limit curve from the exterior when $\re a - \re b > -\lambda/2$ and from the interior when $\re a - \re b < -\lambda/2$. We will see that this bifurcation is so severe that the scaling limit at the corner of the limit curve is completely destroyed when $\re a - \re b \leq -\lambda/2$.

For examples of this behavior see Sections \ref{applicationssec_expints} and \ref{applicationssec_paraboliccylinder}.

\subsection{Statement of the Result}

Let $a,b,A \in \C$, $0 < \lambda < \infty$, $\mu < 1$, and $\zeta \in \C$ with $|\zeta| = 1$, $\zeta \neq 1$. Let $\theta \in (0,\pi)$ be small enough so that the sectors $|{\arg z}| \leq \theta$ and $|{\arg(z/\zeta)}| \leq \theta$ are disjoint.  We suppose that $f$ is an entire function with the asymptotic behavior
\begin{equation}
\label{secunbalcauchy_fgrowth}
f(z) = \begin{cases}
	z^a \exp\!\left(z^\lambda\right) \bigl[1+o(1)\bigr] & \text{for } |{\arg z}| \leq \theta, \\
	A(z/\zeta)^b \exp\!\left((z/\zeta)^\lambda\right) \bigl[1+o(1)\bigr] & \text{for } |{\arg(z/\zeta)}| \leq \theta, \\
	O\!\left(\exp\!\left(\mu |z|^\lambda\right)\right) & \text{otherwise}
	\end{cases}			
\end{equation}
as $|z| \to \infty$, with each estimate holding uniformly in its sector. For this $f$, let
\[
	p_n(z) = \sum_{k=0}^{n} \frac{f^{(k)}(0)}{k!} z^k
\]
and define
\begin{equation}
\label{secunbalcauchy_rndef}
	r_n = \left(\frac{n}{\lambda}\right)^{1/\lambda}.
\end{equation}

\begin{theorem}
\label{secunbalcauchy_maintheo}
\[
	\lim_{n \to \infty} \frac{p_{n-1}(r_n(1+w/\sqrt{n}))}{f(r_n(1+w/\sqrt{n}))} = \frac{1}{2} \erfc\!\left(w\sqrt{\lambda/2}\,\right)
\]
uniformly for $w$ restricted to any compact subset of $\re w < 0$ if and only if
\[
	\re b - \re a < \frac{\lambda}{2}.
\]
\end{theorem}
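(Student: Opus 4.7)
The plan is to fuse the Riemann--Hilbert / local parametrix construction used to prove Theorem \ref{seconeexp_maintheo} with the two-sector saddle-point bookkeeping developed in Section \ref{seccurvetwo_twodirsec}. Define $F_n$ as in \eqref{curscatwoeq_fndef} using an admissible contour $\gamma$ for $\varphi(z) = (z^\lambda - 1 - \lambda \log z)/\lambda$, and split $\gamma$ into three arcs $\gamma_1 \subset \{|{\arg s}| \leq \theta\}$, $\gamma_2 \subset \{|{\arg(s/\zeta)}| \leq \theta\}$, and $\gamma_3$ for the remainder, giving a decomposition $F_n = F_n^{(1)} + F_n^{(2)} + F_n^{(3)}$.

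First I would dispose of $F_n^{(3)}$, which is uniformly $O(e^{(\mu-1)n/\lambda})$ in any fixed neighbourhood of $z = 1$ by the argument of Lemma \ref{seccurve1_gminusgtlem}. For $F_n^{(1)}$ I would run the proof of Theorem \ref{seconeexp_maintheo} essentially verbatim: strip off the subleading factors of $f$ at the cost of an $o(1)$ error as in Lemma \ref{seconeexp_fngnapproxlemma}, compare with the truncated integral $G_n$, and compare $G_n$ with the local parametrix $P_n(z) = \frac{1}{2} e^{n\varphi(z)} \erfc\!\left(-\sqrt{n\varphi(z)}\right)$ via Riemann--Hilbert Problem \ref{seconeexp_mainrhp}, obtaining $F_n^{(1)}(z) = P_n(z) + o(1)$ uniformly for $z$ in a fixed disk $B_\epsilon(1)$. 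For $F_n^{(2)}$ I would apply Laplace's method at the saddle $s = \zeta$ of the shifted exponent $\tilde\varphi(s) = [(s/\zeta)^\lambda - 1 - \lambda \log(s/\zeta)]/\lambda$ exactly as in Lemma \ref{seccurvetwo_g2lemma}, yielding
\[
F_n^{(2)}(z) = \frac{A \zeta^{1-n} r_n^{b-a}}{(\zeta - z)\sqrt{2\pi \lambda n}} + o\!\left(r_n^{b-a} n^{-1/2}\right)
\]
uniformly for $z$ in a small neighbourhood of $1$; no corner analysis is needed at $s = \zeta$ because the denominator $\zeta - z$ stays bounded away from zero there.

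Adding the three contributions, setting $z = 1 + w/\sqrt{n}$, and using $n\varphi(1 + w/\sqrt{n}) \to \lambda w^2/2$ together with the uniform estimate $f(r_n z)/[r_n^a (e^{1/\lambda} z)^n] \to e^{\lambda w^2/2}$, the explicit identity \eqref{curscatwoeq_fnexplicit} relating $F_n$ to $p_{n-1}$ delivers
\[
\frac{p_{n-1}(r_n z)}{f(r_n z)} = \frac{1}{2}\erfc\!\left(w\sqrt{\lambda/2}\right) - \frac{A \zeta^{1-n} r_n^{b-a} e^{-\lambda w^2/2}}{(\zeta - 1)\sqrt{2\pi \lambda n}} + o(1),
\]
uniformly for $w$ in any compact subset of $\re w < 0$. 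The modulus of the remainder is $\Theta(n^{(\re b - \re a)/\lambda - 1/2})$, which is $o(1)$ precisely when $\re b - \re a < \lambda/2$, giving the ``if'' direction.

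For the converse, if $\re b - \re a > \lambda/2$ the remainder is unbounded at any fixed $w$ with $A e^{-\lambda w^2/2} \neq 0$, while if $\re b - \re a = \lambda/2$ it equals a bounded nonzero constant times $\zeta^{-n}$; since $|\zeta| = 1$ with $\zeta \neq 1$, the sequence $\zeta^{-n}$ fails to converge, so the left-hand side has no limit in $n$. The main obstacle I anticipate is not any single estimate---each is already supplied by the preceding chapters---but verifying that the Riemann--Hilbert argument at the corner $z = 1$ remains uniform once the second saddle contribution is included, and tracking the constants carefully enough to read off the sharp threshold $\lambda/2$ and to confirm that the coefficient of $\zeta^{-n}$ in the critical case is genuinely nonzero.
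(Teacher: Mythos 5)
Your proposal follows essentially the same route as the paper's proof: the same three-arc decomposition of $\gamma$, the same exponential bound on $\gamma_3$, the same appeal to Lemma \ref{seccurvetwo_g2lemma} for the $\gamma_2$ saddle, the reuse of the Riemann--Hilbert/parametrix argument from Theorem \ref{seconeexp_maintheo} for the $\gamma_1$ piece, and the same reading-off of the threshold from $r_n^{b-a}n^{-1/2} = o(1)$. It is correct; your treatment of the critical case $\re b - \re a = \lambda/2$ via the non-convergence of $\zeta^{-n}$ is in fact slightly more explicit than what appears in the text.
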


\begin{remark}
Just as with Theorem \ref{seconeexp_maintheo} in the case of one direction of maximal growth, Theorem \ref{secunbalcauchy_maintheo} gives us precise asymptotics for individual zeros of the scaled partial sums $p_{n-1}(r_n z)$ near the corner of the limit curve $S$ located at $z=1$ in the case where $\re b - \re a < \lambda/2$. Details of this are given in Section \ref{corscasec_width}, where we use that information to verify part (b) of the Modified Saff-Varga Width Conjecture (see Section \ref{introsec_width}) for this class of functions.
\end{remark}

\subsection{Definitions and Preliminaries}

We will repeat here several relevant definitions from Section \ref{seccurvetwo_defsandprelims}.

Let $\gamma$ be an admissible contour for the function
\begin{equation}
\label{secunbalcauchy_phidef}
	\varphi(z) := \left.\left(z^\lambda - 1 - \lambda \log z\right)\right/\lambda
\end{equation}
and define
\begin{equation}
\label{secunbalcauchy_fndef}
	F_n(z) = \frac{r_n^{-a}}{2\pi i} \int_{\gamma} \left(e^{1/\lambda}s\right)^{-n}\! f(r_n s) \frac{ds}{s-z}
\end{equation}
for $z \notin \gamma$, $z \neq 0$. Just as in Section \ref{seccurvetwo_defsandprelims},
\begin{equation}
\label{secunbalcauchy_fnexplicit}
	F_n(z) = \frac{1}{r_n^a (e^{1/\lambda} z)^n} \times \begin{cases}
		-p_{n-1}(r_n z) & \text{for } z \text{ outside } \gamma, \\
		f(r_n z) - p_{n-1}(r_n z) & \text{for } z \neq 0 \text{ inside } \gamma.
		\end{cases}
\end{equation}

\subsection{Proof of Theorem \ref{secunbalcauchy_maintheo}}

As in the proof of Theorem \ref{seccurvetwo_maintheo} in Section \ref{seccurvetwo_maintheoproof} we will split $\gamma$ into three parts. Call $\gamma_1$ the part of $\gamma$ in the sector $|{\arg z}| \leq \theta$, call $\gamma_2$ the part of $\gamma$ in the sector $|{\arg(z/\zeta)}| \leq \theta$, and call $\gamma_3$ the part of $\gamma$ outside of either of those sectors. Then divide the integral in \eqref{secunbalcauchy_fndef} into the three parts
\[
	\int_\gamma = \int_{\gamma_1} + \int_{\gamma_2} + \int_{\gamma_3}.
\]

Using a method identical to the proof of Lemma \ref{seccurve1_gminusgtlem} it can be shown that
\begin{equation}
\label{secunbalcauchy_g3asymp}
	\int_{\gamma_3} \left(e^{1/\lambda} s\right)^{-n}\! f(r_n s) \frac{ds}{s-z} = O\!\left(e^{(\mu-1)n/\lambda}\right)
\end{equation}
as $n \to \infty$ uniformly for $z$ restricted to any sector $|{\arg z}| \leq \theta - \epsilon$ with $\epsilon > 0$. Also, Lemma \ref{seccurvetwo_g2lemma} grants the asymptotic
\begin{equation}
\label{secunbalcauchy_g2asymp}
	\int_{\gamma_2} \left(e^{1/\lambda} s\right)^{-n}\! f(r_n s) \frac{ds}{s-z} = \frac{iA\zeta^{1-n} r_n^b}{\zeta-z} \sqrt\frac{2\pi}{\lambda n} + o(r_n^b n^{-1/2})
\end{equation}
as $n \to \infty$ uniformly for $z \in \C \setminus \tilde N_\epsilon$ with $\epsilon > 0$.

Combining equations \eqref{secunbalcauchy_g3asymp} and \eqref{secunbalcauchy_g2asymp} in the definition of $F_n$ yields the estimate
\begin{equation}
\label{secunbalcauchy_fnerror}
	F_n(z) = \frac{r_n^{-a}}{2\pi i} \int_{\gamma_1} \left(e^{1/\lambda}\right)^{-n}\! f(r_n s) \frac{ds}{s-z} + \frac{A\zeta^{1-n} r_n^{b-a}}{(\zeta-z) \sqrt{2\pi \lambda n}} + o(r_n^{b-a} n^{-1/2})
\end{equation}
as $n \to \infty$ uniformly for $z$ restricted to any sector $|{\arg z}| \leq \theta - \epsilon$ with $\epsilon > 0$.

Using essentially the same technique used in Section \ref{seconeexpsub_proof} to prove Theorem \ref{seconeexp_maintheo} it can be shown that
\[
	\left. \frac{r_n^{-a}}{2\pi i} \int_{\gamma_1} \left(e^{1/\lambda}\right)^{-n}\! f(r_n s) \frac{ds}{s-z} \right|_{z=1+w/\sqrt{n}} = e^{\lambda w^2/2} - \frac{1}{2} e^{\lambda w^2/2} \erfc\!\left(w\sqrt{\lambda/2}\right) + o(1)
\]
and that
\[
	F_n(1+w/\sqrt{n}) = e^{\lambda w^2/2} - e^{\lambda w^2/2} \frac{p_{n-1}(r_n(1+w/\sqrt{n}))}{f(r_n(1+w/\sqrt{n}))} \bigl[ 1 + o(1) \bigr] + o(1)
\]
as $n \to \infty$ uniformly for $w$ restricted to compact subsets of $\re w < 0$. Substituting these into \eqref{secunbalcauchy_fnerror} produces the estimate
\begin{align*}
&\frac{p_{n-1}(r_n(1+w/\sqrt{n}))}{f(r_n(1+w/\sqrt{n}))} \bigl[ 1 + o(1) \bigr] \\
&\qquad = \frac{1}{2} \erfc\!\left(w\sqrt{\lambda/2}\right) - \frac{A\zeta^{1-n} r_n^{b-a}}{(\zeta-1-w/\sqrt{n}) \sqrt{2\pi \lambda n}} + o(r_n^{b-a} n^{-1/2}) + o(1)
\end{align*}
as $n \to \infty$ uniformly for $w$ restricted to compact subsets of $\re w < 0$. It follows that the formula
\[
	\lim_{n \to \infty} \frac{p_{n-1}(r_n(1+w/\sqrt{n}))}{f(r_n(1+w/\sqrt{n}))} = \frac{1}{2} \erfc\!\left(w\sqrt{\lambda/2}\,\right)
\]
holds if and only if $r_n^{b-a} n^{-1/2} = o(1)$, and since $r_n = (n/\lambda)^{1/\lambda}$ this is equivalent to the requirement that
\[
	\re b - \re a < \frac{\lambda}{2}.
\]
This completes the proof of Theorem \ref{secunbalcauchy_maintheo}.

\section{Generalization to More Directions of Maximal Growth}
\label{corscasec_many}

Unlike the scaling limit at the arcs of the limit curve (Section \ref{curscamanysec_sec}), generalizing the scaling limit at the corner of the curve is relatively straightforward.

Let $a,b_1,\ldots,b_m,A_1,\ldots,A_m \in \C$, $0 < \lambda < \infty$, $\mu < 1$, and $\zeta_1,\ldots,\zeta_m \in \C$ with $|\zeta_k| = 1$, $\zeta_k \neq 1$ for all $k = 1,\ldots,m$ and $\zeta_j \neq \zeta_k$ for $j \neq k$. Let $\theta \in (0,\pi)$ be small enough so that all of the sectors $|{\arg z}| \leq \theta$, $|{\arg(z/\zeta_k)}| \leq \theta$, $k=1,\ldots,m$, are disjoint.  We suppose that $f$ is an entire function with the asymptotic behavior
\begin{equation}
\label{secunbalcauchymany_fgrowth}
f(z) = \begin{cases}
	z^a \exp\!\left(z^\lambda\right) \bigl[1+o(1)\bigr] & \text{for } |{\arg z}| \leq \theta, \\
	A_1(z/\zeta_1)^{b_1} \exp\!\left((z/\zeta_1)^\lambda\right) \bigl[1+o(1)\bigr] & \text{for } |{\arg(z/\zeta_1)}| \leq \theta, \\
	\qquad \vdots \\
	A_m(z/\zeta_m)^{b_m} \exp\!\left((z/\zeta_m)^\lambda\right) \bigl[1+o(1)\bigr] & \text{for } |{\arg(z/\zeta_m)}| \leq \theta, \\
	O\!\left(\exp\!\left(\mu |z|^\lambda\right)\right) & \text{otherwise}
	\end{cases}			
\end{equation}
as $|z| \to \infty$, with each estimate holding uniformly in its sector. For this $f$, let $p_n(z)$, $r_n$, and $F_n(z)$ be defined as in Section \ref{secunbalcauchy_sec}. For convenience of notation, define $b_0 = a$, $A_0 = 1$, and $\zeta_0 = 1$.

For each new direction of maximal growth of $f$, equation \eqref{secunbalcauchy_fnerror} gains a corresponding term and error term. Explicitly, for $f$ as defined above, the equation becomes
\[
	F_n(z) = \frac{r_n^{-a}}{2\pi i} \int_{\gamma_1} \left(e^{1/\lambda}\right)^{-n}\! f(r_n s) \frac{ds}{s-z} + \sum_{k=1}^{m} \left[\frac{A_k\zeta_k^{1-n} r_n^{b_k-a}}{(\zeta_k-z) \sqrt{2\pi \lambda n}} + o(r_n^{b_k-a} n^{-1/2}) \right]
\]
as $n \to \infty$ uniformly for $z$ restricted to any sector $|{\arg z}| \leq \theta - \epsilon$ with $\epsilon > 0$. Continuing the argument in the proof of Theorem \ref{secunbalcauchy_maintheo} presents the conclusion that the desired scaling limit exists if and only if $r_n^{b_k-a} n^{-1/2} = o(1)$, $k = 1,\ldots,m$. In other words, we have the following result.

\begin{theorem}
\label{secunbalcauchymany_maintheo}
\[
	\lim_{n \to \infty} \frac{p_{n-1}(r_n(1+w/\sqrt{n}))}{f(r_n(1+w/\sqrt{n}))} = \frac{1}{2} \erfc\!\left(w\sqrt{\lambda/2}\,\right)
\]
uniformly for $w$ restricted to any compact subset of $\re w < 0$ if and only if
\[
	\re b_k - \re a < \frac{\lambda}{2}, \quad k = 1,\ldots,m.
\]
\end{theorem}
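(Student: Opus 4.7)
The plan is to follow the architecture of the proof of Theorem \ref{secunbalcauchy_maintheo}, splitting the contour in the definition of $F_n(z)$ into pieces tailored to the $m+1$ asymptotic sectors of $f$. Concretely, I would partition $\gamma$ as $\gamma_1 \cup \gamma_2^{(1)} \cup \cdots \cup \gamma_2^{(m)} \cup \gamma_3$, where $\gamma_1$ is the portion in $|{\arg z}| \leq \theta$, each $\gamma_2^{(k)}$ is the portion in $|{\arg(z/\zeta_k)}| \leq \theta$, and $\gamma_3$ is what remains. The decomposition of $F_n(z)$ then isolates one distinguished integral (over $\gamma_1$) that carries the $\erfc$ scaling limit, plus $m$ correction integrals, plus a uniformly exponentially small integral over $\gamma_3$.

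First I would dispatch $\gamma_3$: since $f(z) = O(\exp(\mu|z|^\lambda))$ there with $\mu < 1$ and $\gamma_3$ lies on the unit circle, the corresponding integral is $O(\exp((\mu-1)n/\lambda))$ uniformly for $z$ bounded away from $\gamma$. This is a verbatim repetition of Lemma \ref{seccurve1_gminusgtlem}. Next, for each $k=1,\ldots,m$ the integral over $\gamma_2^{(k)}$ is analyzed by Laplace's method at the saddle point $s = \zeta_k$ of the function $\tilde\varphi_k(s) = [(s/\zeta_k)^\lambda - 1 - \lambda\log(s/\zeta_k)]/\lambda$, exactly as in Lemma \ref{seccurvetwo_g2lemma}, producing
\[
\int_{\gamma_2^{(k)}} (e^{1/\lambda} s)^{-n} f(r_n s) \frac{ds}{s-z} = \frac{i A_k \zeta_k^{1-n} r_n^{b_k}}{\zeta_k - z} \sqrt{\frac{2\pi}{\lambda n}} + o\!\left(r_n^{b_k} n^{-1/2}\right).
\]
Because the sectors are disjoint and $z = 1 + w/\sqrt{n}$ stays bounded away from every $\zeta_k$ for $w$ in a compact set, these $m$ estimates proceed independently.

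Finally, the integral over $\gamma_1$ is handled by the full Riemann-Hilbert/parametrix apparatus of Section \ref{seconeexp}: introduce $G_n$ and the local parametrix $P_n$, show $G_n \approx P_n$ via the small-jump Riemann-Hilbert problem, and then show $F_n^{(\gamma_1)} \approx G_n$ as in Lemma \ref{seconeexp_fngnapproxlemma}. Assembling all the pieces gives the direct analogue of equation \eqref{secunbalcauchy_fnerror}, namely
\[
F_n(z) = \frac{r_n^{-a}}{2\pi i} \int_{\gamma_1} (e^{1/\lambda} s)^{-n} f(r_n s) \frac{ds}{s-z} + \sum_{k=1}^{m} \left[\frac{A_k \zeta_k^{1-n} r_n^{b_k-a}}{(\zeta_k - z)\sqrt{2\pi\lambda n}} + o\!\left(r_n^{b_k-a} n^{-1/2}\right)\right]
\]
uniformly for $z$ in any sector $|{\arg z}| \leq \theta-\epsilon$. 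Specializing to $z = 1+w/\sqrt{n}$ and dividing through by $f(r_n z)$, the $\gamma_1$ term yields the $\tfrac{1}{2}\erfc(w\sqrt{\lambda/2})$ limit exactly as in Theorem \ref{seconeexp_maintheo}, while each of the $m$ correction terms contributes an amount of order $r_n^{b_k-a} n^{-1/2} = \Theta(n^{(\re b_k - \re a)/\lambda - 1/2})$. This is $o(1)$ precisely when $\re b_k - \re a < \lambda/2$; otherwise the oscillating factor $\zeta_k^{1-n}$ prevents convergence, giving both directions of the ``if and only if.''

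The main obstacle I expect is not analytic but bookkeeping: ensuring the admissible contour $\gamma$ and the excluded neighborhoods $N_\epsilon$ can be chosen simultaneously so that $z = 1 + w/\sqrt{n}$ lies at uniform positive distance from each $\gamma_2^{(k)}$, and verifying that the Section~\ref{seconeexp} Riemann-Hilbert argument for the $\gamma_1$ contribution is genuinely local, so that it is insensitive to the presence of the other asymptotic sectors of $f$ in \eqref{secunbalcauchymany_fgrowth}. Once that is cleanly set up, the proof reduces to $m$ parallel applications of the two-direction argument and one application of the one-direction $\erfc$ limit.
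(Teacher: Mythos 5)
Your proposal matches the paper's proof essentially step for step: the same contour decomposition into $\gamma_1$, the $m$ sectors around the $\zeta_k$, and an exponentially small remainder; the same per-sector application of Lemma \ref{seccurvetwo_g2lemma}; the same Riemann--Hilbert treatment of the $\gamma_1$ piece from Section \ref{seconeexp}; and the same reduction of the ``if and only if'' to the condition $r_n^{b_k-a}n^{-1/2} = o(1)$ for each $k$. The displayed formula for $F_n(z)$ you derive is exactly the one the paper uses, so no further comparison is needed.
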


\begin{remark}
Theorem \ref{secunbalcauchymany_maintheo} gives us precise asymptotics for individual zeros of the scaled partial sums $p_{n-1}(r_n z)$ near the corner of the limit curve $S$ located at $z=1$ in the case where $f$ has multiple directions of maximal exponential growth as long as $\re b_k - \re a < \lambda/2$, $k=1,\ldots,m$. Details of this are given in Section \ref{corscasec_width}, where we use that information to verify part (b) of the Modified Saff-Varga Width Conjecture (see Section \ref{introsec_width}) for this class of functions.
\end{remark}

\section{Verification of the Modified Saff-Varga Width Conjecture at the Exceptional Argument $\arg z = 0$}
\label{corscasec_width}

The theorems in this chapter allow us to verify part (b) of the Modified Saff-Varga Width Conjecture (see Section \ref{introsec_width}) at the exceptional argument $\arg z = 0$ unconditionally when $f$ has a single direction of maximal exponential growth and on the condition that $\re b_k - \re a < \lambda/2$, $k = 1,\ldots,m$ when $f$ has two or more directions of maximal exponential growth.

Under these conditions Theorems \ref{seconeexp_maintheo}, \ref{secunbalcauchy_maintheo}, and \ref{secunbalcauchymany_maintheo} imply that if $w_0$ is any solution of the equation
\begin{equation}
\label{corscalwidtheq_erfceq}
	\erfc\!\left(w\sqrt{\lambda/2}\,\right) = 0
\end{equation}
then $p_{n-1}(z)$ has a zero $z_0$ of the form
\[
	z_0 = r_n + r_nw_0/\sqrt{n} + o\!\left(r_n n^{-1/2}\right)
\]
as $n \to \infty$. It follows that
\[
	z_0 - r_n \sim r_n w_0/\sqrt{n}
\]
as $n \to \infty$, and hence that, for any fixed $\epsilon > 0$, $z_0$ lies in the disk
\[
	|z - r_n| \leq r_n n^{-1/2 + \epsilon}
\]
for $n$ large enough. Since equation \eqref{corscalwidtheq_erfceq} has infinitely-many solutions, the number of zeros of $p_{n-1}(z)$ in any such disk tends to infinity as $n \to \infty$. Setting $\rho_n = r_n$, this is precisely the condition in part (b) of the Modified Saff-Varga Width Conjecture with $k=2$ at the exceptional argument $\arg z = 0$.

\graphicspath{{images/chap_applications/}}

\chapter{Applications}
\label{chap_applications}

In this chapter we will apply the results from Chapters \ref{chap_limitcurves}, \ref{chap_curvescaling}, and \ref{chap_posfinite} to several common special functions. These functions were chosen to illustrate different behaviors of the zeros of the partial sums.

\section{The Sine and Cosine Functions}
\label{applicationssec_sincos}

Each of the functions
\[
	\sin z = \frac{e^{iz} - e^{-iz}}{2i} \qquad \text{and} \qquad \cos z = \frac{e^{iz} + e^{-iz}}{2}
\]
are entire of order $1$ and have two directions of maximal exponential growth, one as $|z| \to \infty$ with $\epsilon < {\arg z} < \pi - \epsilon$ and one as $|z| \to \infty$ with $-\pi+\epsilon < {\arg z} < -\epsilon$. Indeed, if $\theta \in (0,\pi/2)$ then
\[
	\begin{drcases} \left| \frac{\mp 2i \sin(\pm iz)}{e^z} - 1 \right| \\ \left| \frac{2 \cos(\pm iz)}{e^z} - 1 \right| \end{drcases} \leq e^{-2|z|\cos\theta}, \qquad |{\arg z}| \leq \theta
\]
and
\[
	\begin{drcases} | {\mp} 2i \sin(\pm iz) | \\ | 2 \cos(\pm iz) | \end{drcases} \leq 2 e^{|z|\cos\theta}, \qquad \theta \leq |{\arg z}| \leq \pi-\theta.
\]
So, for $\theta \in(0,\pi/2)$ and $\mu = \cos\theta$ we have
\[
	\mp 2i \sin(\pm iz) = \begin{cases}
	e^z \,[1+o(1)] & \text{for } |{\arg z}| \leq \theta, \\
	-e^{-z} \,[1+o(1)] & \text{for } |{\arg -z}| \leq \theta, \\
	O\!\left(e^{\mu |z|}\right) & \text{otherwise}
	\end{cases}
\]
and
\[
	2 \cos(\pm iz) = \begin{cases}
	e^z \,[1+o(1)] & \text{for } |{\arg z}| \leq \theta, \\
	e^{-z} \,[1+o(1)] & \text{for } |{\arg -z}| \leq \theta, \\
	O\!\left(e^{\mu |z|}\right) & \text{otherwise}
	\end{cases}
\]
as $|z| \to \infty$ uniformly in each of these sectors.

In the notation of the asymptotic assumption on the functions $f$ we have considered in the thesis (see, e.g., \eqref{seccurvetwo_fgrowth}), for each of the rotated and scaled functions $\mp 2i\sin(\pm iz)$ and $2\cos(\pm iz)$ and for $\theta \in (0,\pi/2)$ we have $a = b = 0$, $\lambda = 1$, $\mu = \cos\theta$, and $\zeta = -1$. For the sine functions we have $A=-1$ and for the cosines we have $A=1$.

\begin{remark}
Applying the results in this thesis to the functions $-2i\sin(-iz)$ and $2\cos(-iz)$ gives us information about the zeros of their partial sums in the upper half-plane, and applying them to the function $2i\sin(iz)$ and $2\cos(-iz)$ gives information about the lower half-plane. This is a very useful trick; by applying the results repeatedly to rotated versions of a function with multiple directions of maximal growth we can obtain information about its partial sums in all of its maximal growth sectors. This technique is used throughout this chapter.
\end{remark}

Applying Theorems \ref{seccurvetwo_maintheo}, \ref{curscatwotheo_maintheo}, and \ref{secunbalcauchy_maintheo} to these sine and cosine functions yields the following collections of results.

\begin{theorem}
Let
\[
	p_n[\sin](z) = \sum_{k=0}^{\lfloor (n-1)/2 \rfloor} \frac{(-1)^k z^{2k+1}}{(2k+1)!}
\]
denote the $n^\th$ partial sum of the Maclaurin series for $\sin z$ and let
\[
	S = \{ z \in \C : |z\exp(1-z)| = 1,\ |z| \leq 1, \text{ and } \re z > 0\}.
\]

The limit points of the zeros of the scaled partial sums $p_{n-1}[\sin](n z)$ which do not lie on the real axis are precisely the points of the set $iS \cup -iS$.

Let $\xi \in S$, $\xi \neq 1$ and define
\[
	\tau = \im(\xi - 1 - \log \xi),
\]
\[
	\tau_n \equiv \tau n \pmod{2\pi}, \quad -\pi < \tau_n \leq \pi,
\]
and
\[
	z_n(w) = \xi \left( 1 + \frac{\log n}{2(1-\xi)n} - \frac{w-i\tau_n}{(1-\xi)n} \right).
\]
Then
\begin{equation}
\label{applicationseq_sincurve}
	\frac{p_{n-1}[\sin](\pm in z_n(w))}{\sin(\pm in z_n(w))} = 1 - \left( \frac{1}{1-\xi} - \frac{(-1)^n}{1+\xi} \right) \frac{e^{-w}}{\sqrt{2\pi}} + o(1)
\end{equation}
as $n \to \infty$ uniformly on compact subsets of the $w$-plane.

Additionally,
\begin{equation}
\label{applicationseq_sincorner}
	\lim_{n \to \infty} \frac{p_{n-1}[\sin](\pm i (n+w\sqrt{n}))}{\sin(\pm i(n+w\sqrt{n}))} = \frac{1}{2} \erfc\!\left(\frac{w}{\sqrt{2}}\right)
\end{equation}
uniformly on compact subsets of $\re w < 0$.
\end{theorem}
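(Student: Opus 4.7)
The plan is to reduce every assertion to a statement about the auxiliary function $f(z) = e^z - e^{-z} = 2\sinh z$. Because $\sin(iz) = i\sinh z$, comparing coefficients yields $p_n[\sin](iz) = i\,p_n[\sinh](z) = \tfrac{i}{2}\,p_n[f](z)$, and since $p_n[\sin]$ is odd in $z$ the same identity (up to an overall sign which cancels in the quotient) holds with $iz$ replaced by $-iz$. Consequently
\[
    \frac{p_n[\sin](\pm inz)}{\sin(\pm inz)} = \frac{p_n[f](nz)}{f(nz)}
\]
for either sign, so every part of the theorem reduces to a statement about $p_{n-1}[f](nz)$ in the right half-plane of the $z$-variable.

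Next I would verify that $f$ satisfies the two-direction asymptotic hypothesis of Section \ref{seccurvetwo_twodirsec}. The elementary bounds stated at the beginning of this section give, for any $\theta \in (0,\pi/2)$ and $\mu = \cos\theta < 1$,
\[
    f(z) = \begin{cases}
        e^z\bigl[1+o(1)\bigr] & |{\arg z}| \le \theta,\\
        -e^{-z}\bigl[1+o(1)\bigr] & |{\arg(-z)}| \le \theta,\\
        O\!\left(e^{\mu|z|}\right) & \text{otherwise},
    \end{cases}
\]
identifying the parameters as $a = b = 0$, $\lambda = 1$, $\zeta = -1$, $A = -1$, and $r_n = n$.

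The three claims now follow by direct appeal to the scaling theorems. Theorem \ref{seccurvetwo_maintheo} says that every limit point of the zeros of $p_{n-1}[f](nz)$ in the sector $|{\arg z}| < \theta$ with $z \neq 0$ lies on $|z\exp(1-z)| = 1$, $|z| \le 1$; letting $\theta \uparrow \pi/2$ and using the oddness of $f$ shows that this limit set is $S \cup (-S)$, which translates to $iS \cup (-iS)$ for $\sin$, precisely the non-real limit points. That every such point actually occurs as a limit point follows from Theorem \ref{curscatwotheo_maintheo} (for $\xi \neq 1$) and Theorem \ref{secunbalcauchy_maintheo} (for $\xi = 1$) combined with Hurwitz's theorem, as in Section \ref{curscasec_width}. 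For \eqref{applicationseq_sincurve} one is in the transitional $\re a = \re b$ case of Theorem \ref{curscatwotheo_maintheo}: a short computation gives $A\zeta^{1-n}r_n^{b-a} = (-1)(-1)^{1-n} = (-1)^n$ and $\zeta - \xi = -(1+\xi)$, so the bracketed coefficient becomes $\frac{1}{1-\xi} - \frac{(-1)^n}{1+\xi}$ and the normalizing factor $\xi^a\sqrt{2\pi\lambda}$ collapses to $\sqrt{2\pi}$, reproducing \eqref{applicationseq_sincurve}. Finally, \eqref{applicationseq_sincorner} is Theorem \ref{secunbalcauchy_maintheo} applied to $f$, whose hypothesis $\re b - \re a < \lambda/2$ reduces to the trivial $0 < 1/2$.

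The only real obstacle is careful bookkeeping: tracking the product $A\zeta^{1-n}$ and the sign of $\zeta - \xi$ to recover the oscillating factor $(-1)^n$ in \eqref{applicationseq_sincurve}, and checking that the $\pm$ branches of the original statement produce identical ratios thanks to the oddness of the partial sums of $\sin$. There is no analytic difficulty beyond specializing the theorems of the preceding chapters.
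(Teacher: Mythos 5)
Your proposal is correct and follows essentially the same route as the paper: the paper's normalized function $\mp 2i\sin(\pm iz)$ is identically your $f(z)=2\sinh z$, with the same parameter identification $a=b=0$, $\lambda=1$, $\zeta=-1$, $A=-1$, and the same appeals to Theorems \ref{seccurvetwo_maintheo}, \ref{curscatwotheo_maintheo} (transitional case $\re a=\re b$), and \ref{secunbalcauchy_maintheo}. Your sign bookkeeping ($A\zeta^{1-n}=(-1)^n$, $\zeta-\xi=-(1+\xi)$) matches the stated coefficient exactly.
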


\begin{theorem}
Let
\[
	p_n[\cos](z) = \sum_{k=0}^{\lfloor n/2 \rfloor} \frac{(-1)^k z^{2k}}{(2k)!}
\]
denote the $n^\th$ partial sum of the Maclaurin series for $\cos z$ and let
\[
	S = \{ z \in \C : |z\exp(1-z)| = 1,\ |z| \leq 1, \text{ and } \re z > 0\}.
\]

The limit points of the zeros of the scaled partial sums $p_{n-1}[\cos](n z)$ which do not lie on the real axis are precisely the points of the set $iS \cup -iS$.

Let $\xi \in S$, $\xi \neq 1$ and define
\[
	\tau = \im(\xi - 1 - \log \xi),
\]
\[
	\tau_n \equiv \tau n \pmod{2\pi}, \quad -\pi < \tau_n \leq \pi,
\]
and
\[
	z_n(w) = \xi \left( 1 + \frac{\log n}{2(1-\xi)n} - \frac{w-i\tau_n}{(1-\xi)n} \right).
\]
Then
\[
	\frac{p_{n-1}[\cos](\pm in z_n(w))}{\cos(\pm in z_n(w))} = 1 - \left( \frac{1}{1-\xi} + \frac{(-1)^n}{1+\xi} \right) \frac{e^{-w}}{\sqrt{2\pi}} + o(1)
\]
as $n \to \infty$ uniformly on compact subsets of the $w$-plane.

Additionally,
\[
	\lim_{n \to \infty} \frac{p_{n-1}[\cos](\pm i (n+w\sqrt{n}))}{\cos(\pm i(n+w\sqrt{n}))} = \frac{1}{2} \erfc\!\left(\frac{w}{\sqrt{2}}\right)
\]
uniformly on compact subsets of $\re w < 0$.
\end{theorem}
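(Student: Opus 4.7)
The plan is to reduce the statement to the two-direction framework of Chapters \ref{chap_limitcurves}--\ref{chap_posfinite} applied to the rotated function $g(z) := 2\cos(iz) = e^{z} + e^{-z}$. First I would verify that $g$ satisfies the asymptotic hypothesis \eqref{curscatwoeq_fgrowth} with parameters $a = b = 0$, $\lambda = 1$, $A = 1$, $\zeta = -1$, and $\mu = \cos\theta$ for any $\theta \in (0,\pi/2)$; this is immediate from $2\cos(iz)/e^{z} = 1 + e^{-2z}$, its $z \mapsto -z$ analogue, and the triangle inequality in the middle region. The essential bridge is that, because $\cos$ has only even Maclaurin coefficients, $p_n[\cos]$ is an even polynomial and the term-by-term identity
\[
p_n[g](w) \;=\; 2\,p_n[\cos](iw)
\]
holds, so that $p_n[g](nw)/g(nw) = p_n[\cos](inw)/\cos(inw)$.

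With this dictionary in hand I would apply Theorem \ref{seccurvetwo_maintheo} to $g$: in the sector $|{\arg w}| < \theta$, the limit points of the zeros of $p_{n-1}[g](nw)$ lie on $S$, which transfers through $z = iw$ to limit points on $iS$ for the zeros of $p_{n-1}[\cos](nz)$ in the sector $|{\arg z}-\pi/2| < \theta$. Since $p_{n-1}[\cos]$ is even, the same result with $-iS$ holds in $|{\arg z}+\pi/2| < \theta$, and letting $\theta \uparrow \pi/2$ exhausts the non-real part of the plane.

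Next, for the arc scaling limit I would invoke Theorem \ref{curscatwotheo_maintheo} in the balanced case $\re a = \re b$. Substituting $A = 1$, $\zeta = -1$, $\lambda = 1$, $a = b = 0$, $r_n = n$, together with $\zeta^{1-n} = -(-1)^{n}$ and $r_n^{b-a} = 1$, collapses the bracket to
\[
\frac{1}{1-\xi} + \frac{A\zeta^{1-n} r_n^{b-a}}{\zeta - \xi} \;=\; \frac{1}{1-\xi} + \frac{(-1)^{n}}{1+\xi},
\]
producing the sign difference from the sine case (where $A = -1$ flipped the middle sign). The rotation identity then turns the conclusion of Theorem \ref{curscatwotheo_maintheo} into the claimed formula for $p_{n-1}[\cos](\pm in z_n(w))/\cos(\pm in z_n(w))$, with the $-$ case following from the evenness of $p_{n-1}[\cos]$. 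For the corner, Theorem \ref{secunbalcauchy_maintheo} requires $\re b - \re a < \lambda/2$, which here reads $0 < 1/2$ and therefore holds; the resulting $\tfrac{1}{2}\erfc(w/\sqrt{2})$ limit, combined with $in(1+w/\sqrt{n}) = i(n+w\sqrt{n})$, gives the final assertion.

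The chief subtlety is that the arc statement is not a genuine limit because of the oscillating $(-1)^{n}$. To deduce the \emph{precisely} part of the first assertion---that every point of $iS \cup -iS$ is in fact a limit point of zeros of $p_{n-1}[\cos](nz)$---I would separate the even and odd subsequences of $n$; each produces a bona fide limit in $w$ whose limit function is a nonconstant affine transformation of $e^{-w}$ and thus has infinitely many zeros, so Hurwitz's theorem supplies genuine zeros of $p_{n-1}[\cos](\pm inz_n(w))$ accumulating at every non-corner point of $\pm iS$. The corner points $z = \pm i$ are handled by the $\erfc$ limit together with the infinitude of zeros of $\erfc$.
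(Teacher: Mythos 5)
Your proposal is correct and follows the paper's own route: identify $2\cos(\pm iz)$ as a two-direction function with $a=b=0$, $\lambda=1$, $A=1$, $\zeta=-1$, $\mu=\cos\theta$, and apply Theorems \ref{seccurvetwo_maintheo}, \ref{curscatwotheo_maintheo} (in the balanced case $\re a=\re b$), and \ref{secunbalcauchy_maintheo}, with the sign in the bracket coming from $A\zeta^{1-n}=-(-1)^n$. The additional details you supply---the term-by-term identity $p_n[g](w)=2\,p_n[\cos](iw)$ and the Hurwitz/even-odd-subsequence argument justifying the word ``precisely''---are exactly the steps the paper leaves implicit.
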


\begin{remark}
Due to the appearance of $(-1)^n$ in the scaling limits corresponding to the arcs of the limit curve we actually get two different limits if we restrict $n$ to run through only even or only odd integers. For example, the scaling limit for the sine function yields
\[
	\lim_{n \to \infty} \frac{p_{2n-1}[\sin](\pm i2n z_{2n}(w))}{\sin(\pm i2n z_{2n}(w))} = 1 - \left( \frac{1}{1-\xi} - \frac{1}{1+\xi} \right) \frac{e^{-w}}{\sqrt{2\pi}}
\]
and
\[
	\lim_{n \to \infty} \frac{p_{2n}[\sin](\pm i(2n+1) z_{2n+1}(w))}{\sin(\pm i(2n+1) z_{2n+1}(w))} = 1 - \left( \frac{1}{1-\xi} + \frac{1}{1+\xi} \right) \frac{e^{-w}}{\sqrt{2\pi}},
\]
each converging uniformly on compact subsets of the $w$-plane.
\end{remark}

\begin{figure}[!htb]
	\centering
	\begin{minipage}[c]{0.5\textwidth}
		\includegraphics[width=\textwidth]{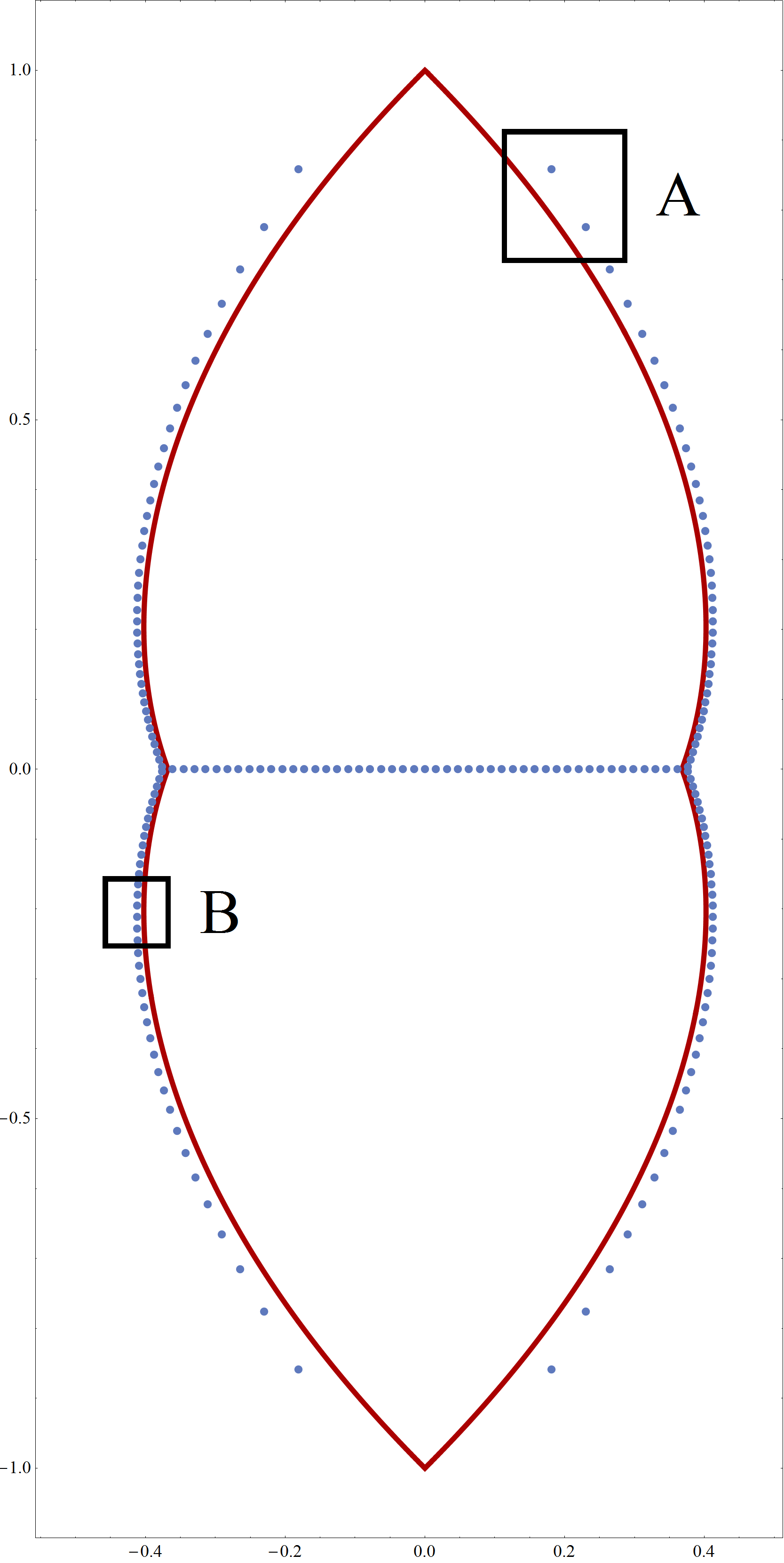}
	\end{minipage}
	\begin{minipage}[c]{0.44\textwidth}
		\begin{tabular}{c}
			\includegraphics[width=\textwidth]{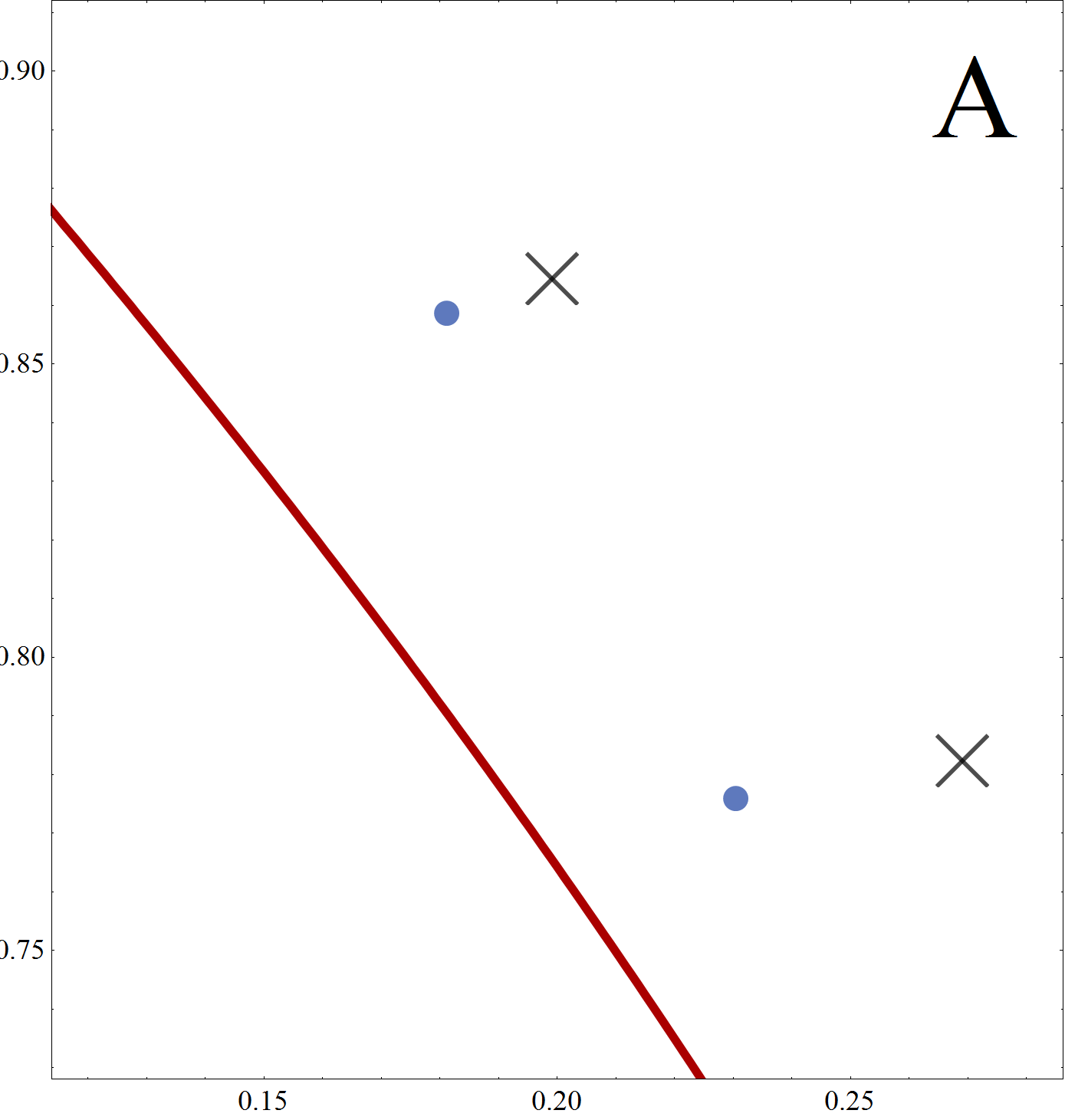} \\
			\includegraphics[width=\textwidth]{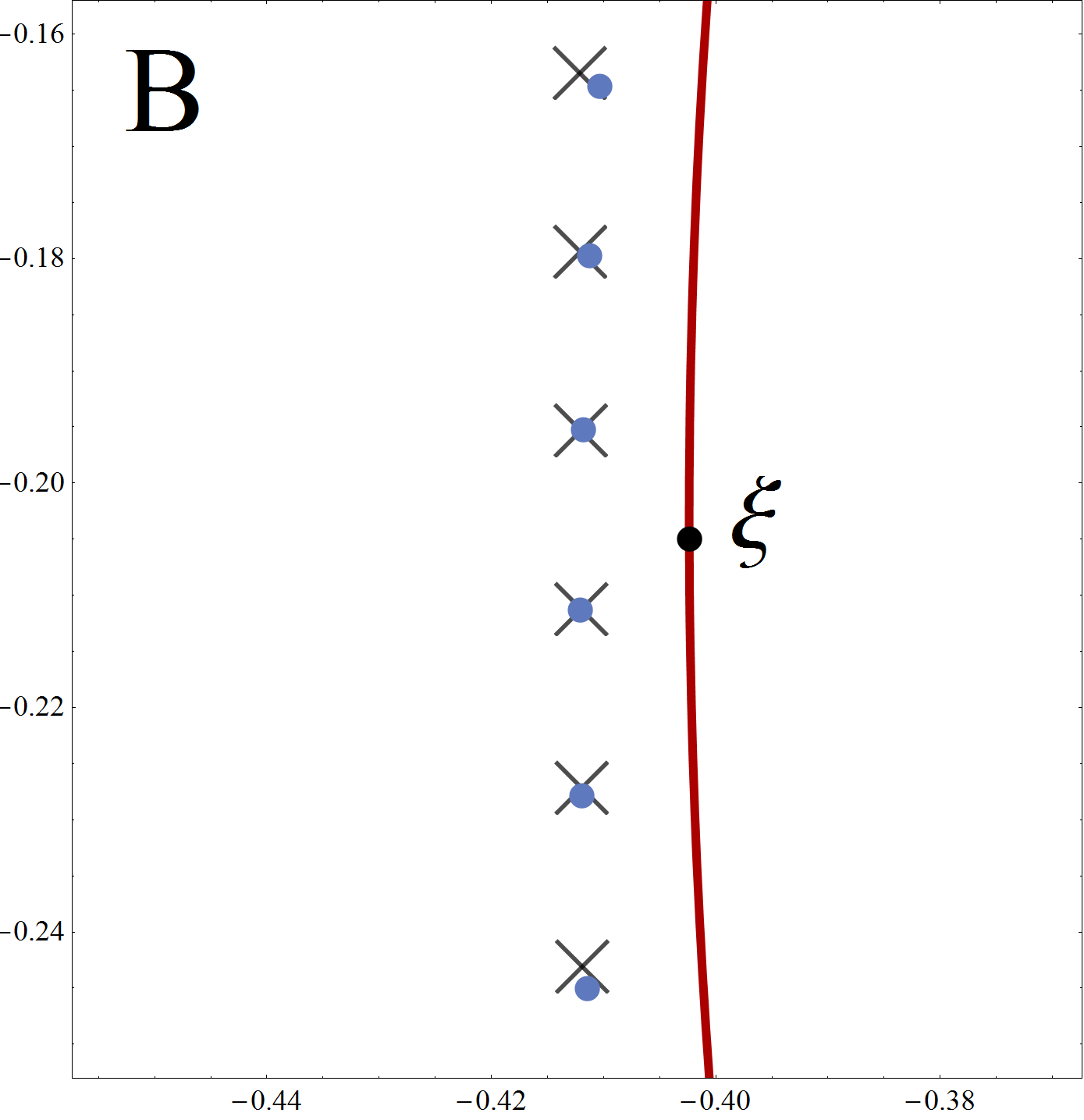}
		\end{tabular}
	\end{minipage}
	\caption[The zeros of {$p_{n-1}[\sin](nz)$} for {$n=200$} and their limit curve.]{The zeros of $p_{n-1}[\sin](nz)$ for $n=200$ in blue and their limit curve in red. Region A, magnified in the top-right, shows the approximations for the zeros in that region, represented as black crosses, which are given by the corner scaling limit in \eqref{applicationseq_sincorner}. Region B, magnified in the bottom-right, shows the approximations for the zeros in that region, represented as black crosses, which are given by the curve scaling limit in \eqref{applicationseq_sincurve}. The point $\xi$ which is used in \eqref{applicationseq_sincurve} to obtain the approximations in Region B is shown in black.}
	\label{applicationsfig_sin}
\end{figure}

\begin{remark}
The approximations in Region B in Figure \ref{applicationsfig_sin} appear much better than those in Region A. Indeed, since the sine function is so close to an exponential in those regions we expect from what is known about the zeros of the partial sums of the exponential function that the absolute error of the approximations is on the order of $(\log n)^2/n^2$ in Region B and on the order of $1/n$ in Region A. In fact, based on the asymptotic expansion in equation \eqref{introeq_rhzapprox} we expect that the two approximations in Region A have absolute errors of approximately $0.02$ and $0.04$, respectively, which agrees with what is shown in the plot.
\end{remark}

\section{Bessel Functions of the First Kind}
\label{applicationssec_bessel}

The Bessel functions of the first kind are defined by
\[
	J_\nu(z) = \left(\frac{z}{2}\right)^\nu \sum_{k=0}^{\infty} \frac{(-z^2/4)^k}{\Gamma(\nu+k+1) k!}
\]
for $\nu \in \C$, where a suitable branch cut is chosen for the factor $(z/2)^\nu$. The series in this definition converges for all $z \in \C$, so the function $(2/z)^\nu J_\nu(z)$ is entire. As such we define the new function
\[
	\mathbf J_\nu(z) = \left(\frac{2}{z}\right)^\nu J_\nu(z).
\]

According to NIST's Digital Library of Mathematical Functions \cite[eq.~10.17.3]{nist:dlmf} the Bessel function obeys the asymptotic
\[
	\mathbf J_\nu(z) = \frac{1}{\sqrt{\pi}} \left(\frac{2}{z}\right)^{\nu+1/2} \left[ \cos \omega \bigl[ 1 + o(1) \bigr] + O\!\left(\frac{\sin\omega}{\omega} \right) \right]
\]
as $|z| \to \infty$ uniformly in any sector $|{\arg z}| \leq \pi - \epsilon$ with $\epsilon > 0$, where
\[
	\omega = z - \frac{\pi\nu}{2} - \frac{\pi}{4}.
\]
As $\mathbf J_\nu(z)$ is even, the same asymptotic is valid for $\mathbf J_\nu(-z)$ in $|{\arg z}| \leq \pi - \epsilon$. It follows that for any $\theta \in (0,\pi/2)$ there is a $\mu < 1$ such that
\[
	\mathbf J_\nu(\pm iz) = 2^\nu \sqrt\frac{2}{\pi} \times \begin{cases}
	z^{-\nu-1/2} e^z \,[1+o(1)] & \text{for } |{\arg z}| \leq \theta, \\
	(-z)^{-\nu-1/2} e^{-z} \,[1+o(1)] & \text{for } |{\arg -z}| \leq \theta, \\
	O\!\left(e^{\mu |z|}\right) & \text{otherwise}
	\end{cases}
\]
as $|z| \to \infty$ uniformly in each of these sectors.

From this information we deduce that the function $\mathbf J_\nu$ has two directions of maximal exponential growth. In the notation of the asymptotic assumption on the functions $f$ we have considered in the thesis (see, e.g., \eqref{seccurvetwo_fgrowth}) we have $a = b = -\nu-1/2$, $A=1$ (after rescaling), $\lambda = 1$, and $\zeta = -1$. Applying Theorems \ref{seccurvetwo_maintheo}, \ref{curscatwotheo_maintheo}, and \ref{secunbalcauchy_maintheo} to these functions yields the following collection of results.

\begin{theorem}
Let
\[
	p_n[\mathbf J_\nu](z) = \sum_{k=0}^{\lfloor n/2 \rfloor} \frac{(-z^2/4)^k}{\Gamma(\nu+k+1) k!}
\]
denote the $n^\th$ partial sum of the Maclaurin series for $\mathbf J_\nu(z)$ and let
\[
	S = \{ z \in \C : |z\exp(1-z)| = 1,\ |z| \leq 1, \text{ and } \re z > 0\}.
\]

The limit points of the zeros of the scaled partial sums $p_{n-1}[\mathbf J_\nu](n z)$ which do not lie on the real axis are precisely the points of the set $iS \cup -iS$.

Let $\xi \in S$, $\xi \neq 1$ and define
\[
	\tau = \im(\xi - 1 - \log \xi),
\]
\[
	\tau_n \equiv \tau n \pmod{2\pi}, \quad -\pi < \tau_n \leq \pi,
\]
and
\[
	z_n(w) = \xi \left( 1 + \frac{\log n}{2(1-\xi)n} - \frac{w-i\tau_n}{(1-\xi)n} \right).
\]
Then
\begin{equation}
\label{applicationseq_besselcurve}
	\frac{p_{n-1}[\mathbf J_\nu](\pm in z_n(w))}{\mathbf J_\nu(\pm in z_n(w))} = 1 - \left( \frac{1}{1-\xi} + \frac{(-1)^n}{1+\xi} \right) \frac{\xi^{\nu+1/2} e^{-w}}{\sqrt{2\pi}} + o(1)
\end{equation}
as $n \to \infty$ uniformly on compact subsets of the $w$-plane.

Additionally,
\begin{equation}
\label{applicationseq_besselcorner}
	\lim_{n \to \infty} \frac{p_{n-1}[\mathbf J_\nu](\pm i (n+w\sqrt{n}))}{\mathbf J_\nu(\pm i(n+w\sqrt{n}))} = \frac{1}{2} \erfc\!\left(\frac{w}{\sqrt{2}}\right)
\end{equation}
uniformly on compact subsets of $\re w < 0$.
\end{theorem}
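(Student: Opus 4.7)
The plan is to view this theorem as a direct application of the general results of Chapters \ref{chap_limitcurves}, \ref{chap_curvescaling}, and \ref{chap_posfinite} to the two rotated functions $\mathbf J_\nu(\pm iz)$. The asymptotic expansion quoted from the DLMF already packages $\mathbf J_\nu(\pm iz)$ into the two-directions-of-maximal-growth template in \eqref{seccurvetwo_fgrowth}: after multiplying by the harmless nonzero constant $C = 2^{-\nu}\sqrt{\pi/2}$ (which leaves the zeros of every partial sum untouched), we read off the parameters $a = b = -\nu - 1/2$, $A = 1$, $\lambda = 1$, $\zeta = -1$, and any $\theta \in (0,\pi/2)$, with a corresponding $\mu < 1$. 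Because zeros of $p_{n-1}[\mathbf J_\nu \circ (\pm iz)](nz)$ correspond under the map $z \mapsto \pm iz$ to zeros of $p_{n-1}[\mathbf J_\nu](nz)$, each conclusion of the general theorems about the sector $|{\arg z}| < \theta$ translates into a conclusion for $\mathbf J_\nu$ in the sector around $\arg z = \pm \pi/2$.

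With these parameters fixed, the limit-curve statement for $iS \cup -iS$ is exactly Theorem \ref{seccurvetwo_maintheo} applied to $\mathbf J_\nu(iz)$ and $\mathbf J_\nu(-iz)$ separately (with $\lambda = 1$, so $r_n = n$), followed by rotation by $\pm i$; the restriction to the right half of $S$ comes from the admissible $\theta < \pi/2$ combined with the fact that the two rotated analyses cover all non-real directions. Since $\theta$ may be taken arbitrarily close to $\pi/2$, the exhaustion argument at the end of Section \ref{seccurvetwo_maintheoproof} gives every non-real limit point.

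For the scaling limit \eqref{applicationseq_besselcurve} at the arcs, the equality $\re a = \re b$ puts us squarely in the third case of Theorem \ref{curscatwotheo_maintheo}, whose conclusion reads
\[
\frac{p_{n-1}(r_n z_n^1(w))}{f(r_n z_n^1(w))} = 1 - \left(\frac{1}{1-\xi} + \frac{A\zeta^{1-n} r_n^{b-a}}{\zeta - \xi}\right)\frac{e^{-w}}{\xi^a\sqrt{2\pi\lambda}} + o(1).
\]
Substituting $A = 1$, $\zeta = -1$, $b - a = 0$, $\lambda = 1$, $a = -\nu - 1/2$ gives $\zeta^{1-n} = -(-1)^n$, $\xi^a = \xi^{-\nu-1/2}$, and a straightforward simplification of $\frac{-(-1)^n}{-1-\xi} = \frac{(-1)^n}{1+\xi}$ produces exactly the right-hand side of \eqref{applicationseq_besselcurve} after rotating $z \mapsto \pm iz$. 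For the corner limit \eqref{applicationseq_besselcorner}, the hypothesis of Theorem \ref{secunbalcauchy_maintheo} is $\re b - \re a < \lambda/2$, which here reads $0 < 1/2$ and is satisfied; the theorem then yields $\tfrac{1}{2}\erfc(w/\sqrt{2})$, again after rotation.

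The only genuine technical point, and the one to verify most carefully, is the DLMF asymptotic: the stated expansion
\[
\mathbf J_\nu(z) \sim \frac{1}{\sqrt{\pi}}\left(\frac{2}{z}\right)^{\nu+1/2}\!\left[\cos\omega\,[1+o(1)] + O\!\left(\tfrac{\sin\omega}{\omega}\right)\right]
\]
must be rearranged into a single leading exponential $e^z$ in the sector $|{\arg z}| \leq \theta < \pi/2$ after the rotation $z \mapsto iz$, with an error that is genuinely $o(1)$ (not merely $O(1/z)$) times the leading term, so that the $[1+o(1)]$ hypothesis of \eqref{seccurvetwo_fgrowth} holds uniformly in the required sector and the subleading exponential $e^{-z}$ is absorbed into the $o(1)$. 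This is a standard rearrangement using $2\cos\omega = e^{i\omega} + e^{-i\omega}$ together with the fact that one of the two summands dominates strictly in each of the two sectors; once it is in hand, the three conclusions of the theorem follow mechanically from the three general results cited above.
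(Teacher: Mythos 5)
Your proposal is correct and follows exactly the route the paper takes: the theorem is stated there as a direct application of Theorems \ref{seccurvetwo_maintheo}, \ref{curscatwotheo_maintheo} (the $\re a = \re b$ case), and \ref{secunbalcauchy_maintheo} to the normalized rotated functions with $a = b = -\nu-1/2$, $A=1$, $\lambda=1$, $\zeta=-1$, and your parameter substitutions and sign bookkeeping (in particular $\zeta^{1-n}/(\zeta-\xi) = (-1)^n/(1+\xi)$) reproduce \eqref{applicationseq_besselcurve} and \eqref{applicationseq_besselcorner} exactly. Your extra care with the DLMF rearrangement and the rotation $z \mapsto \pm iz$ only makes explicit what the paper leaves implicit.
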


\begin{remark}
Just as in the case of the sine and cosine functions, the scaling limit corresponding to the arcs of the limit curve gives two different limits if we restrict $n$ to run through only even or only odd integers.
\end{remark}

\begin{figure}[!htb]
	\centering
	\begin{minipage}[c]{0.5\textwidth}
		\includegraphics[width=\textwidth]{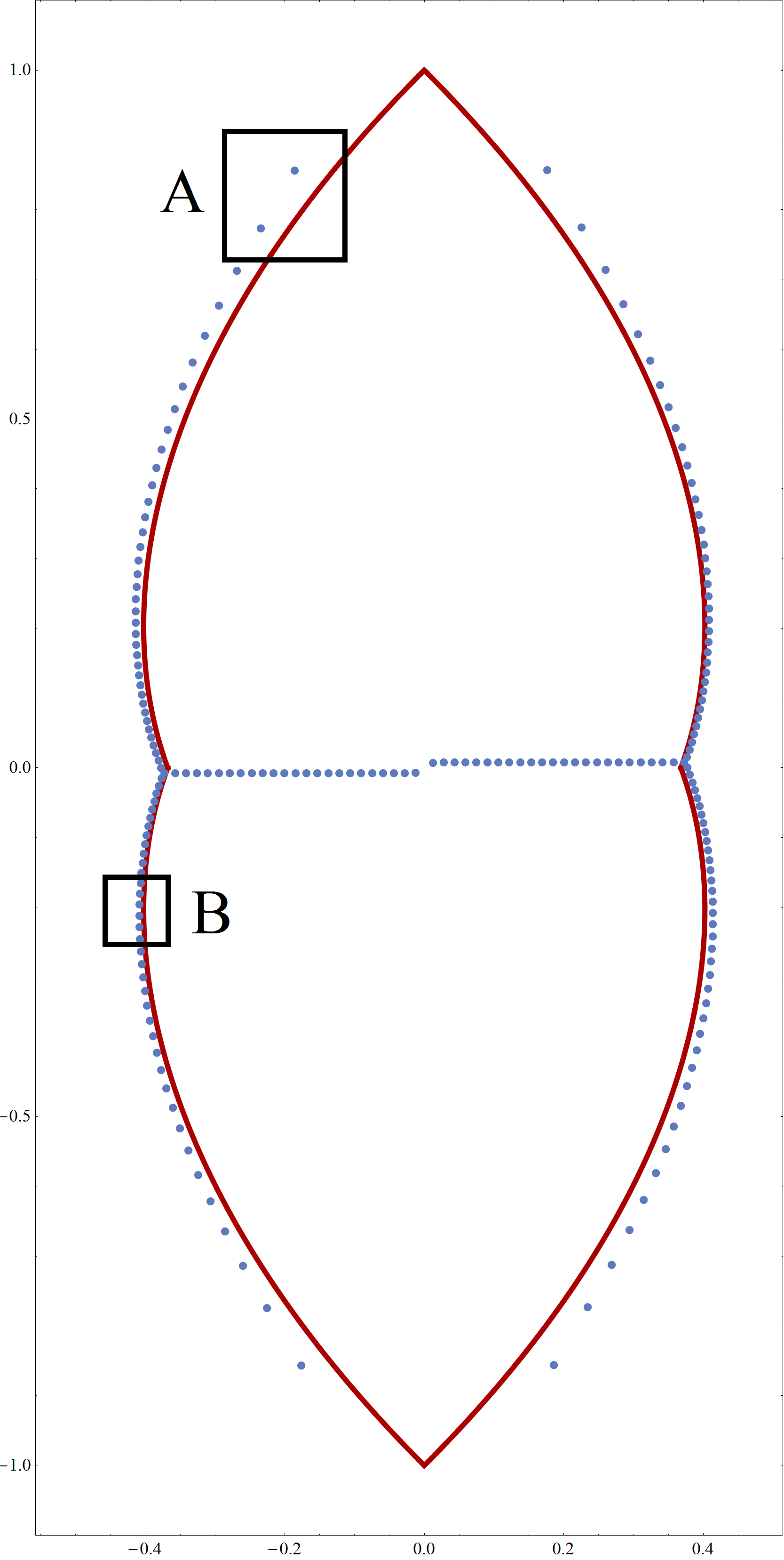}
	\end{minipage}
	\begin{minipage}[c]{0.44\textwidth}
		\begin{tabular}{c}
			\includegraphics[width=\textwidth]{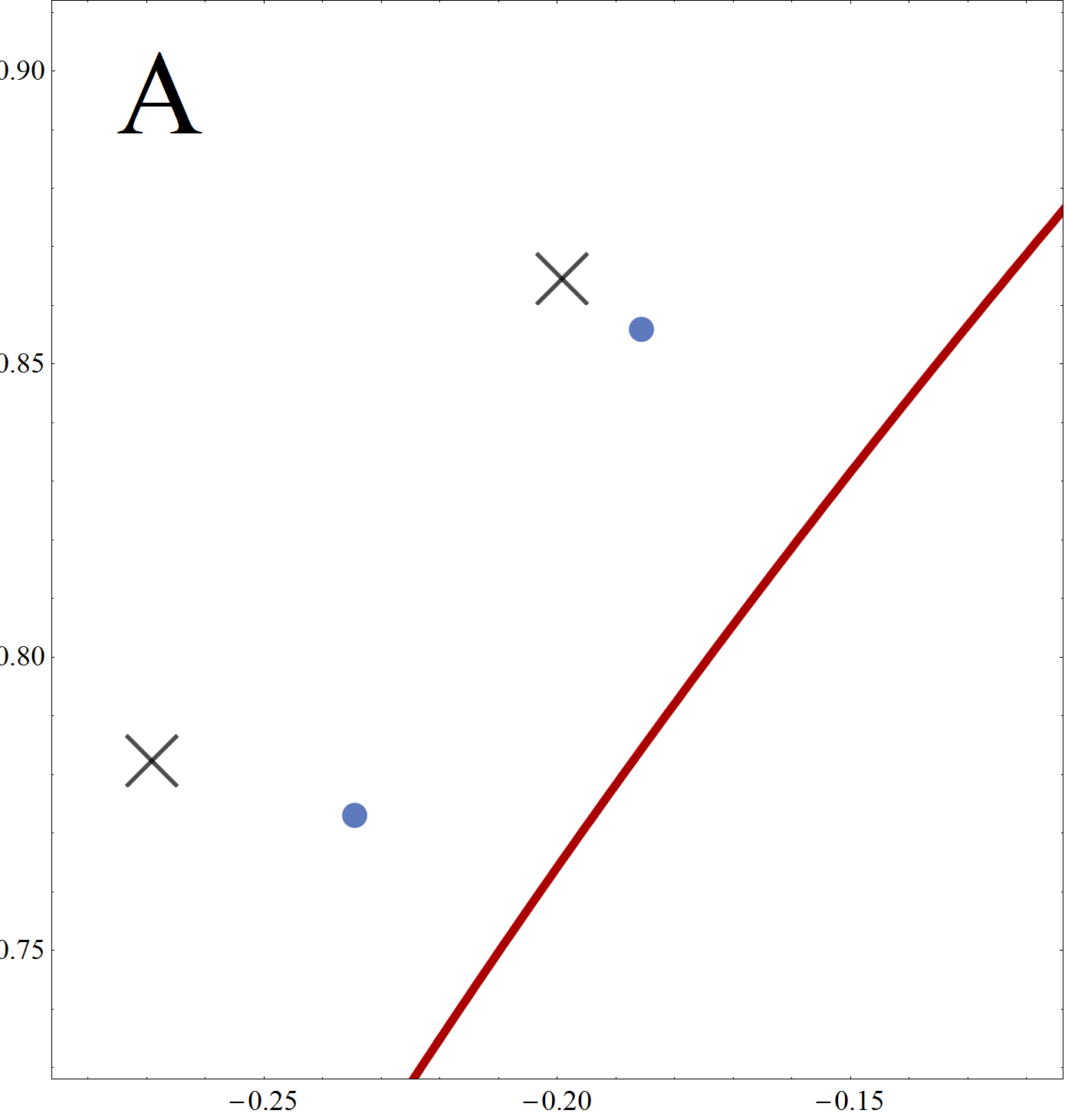} \\
			\includegraphics[width=\textwidth]{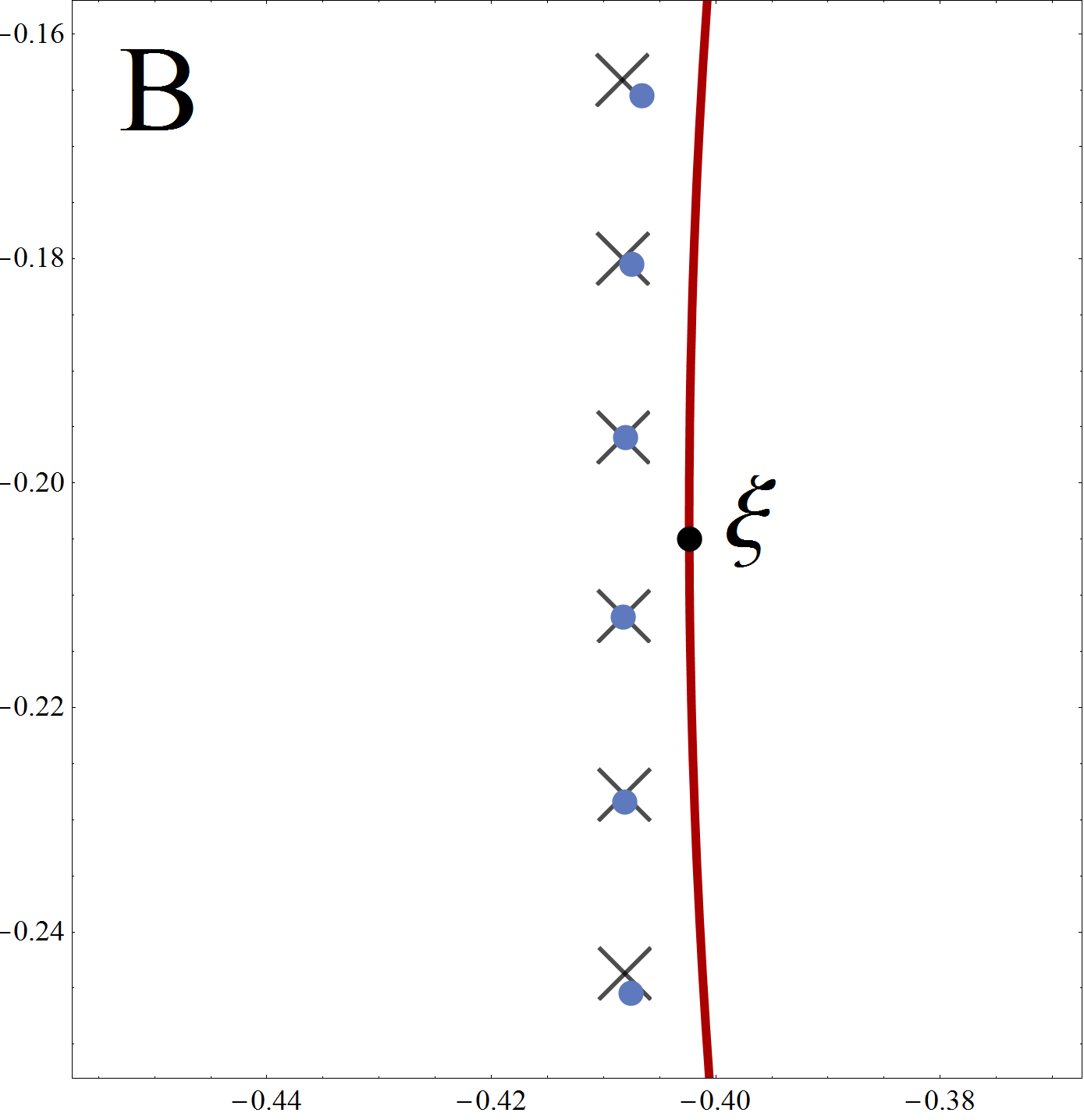}
		\end{tabular}
	\end{minipage}
	\caption[The zeros of {$p_{n-1}[\mathbf J_\nu](nz)$} for {$\nu = i$} and {$n=200$} and their limit curve.]{The zeros of $p_{n-1}[\mathbf J_\nu](nz)$ for $\nu = i$ and $n=200$ in blue and their limit curve in red. Region A, magnified in the top-right, shows the approximations for the zeros in that region, represented as black crosses, which are given by the corner scaling limit in \eqref{applicationseq_besselcorner}. Region B, magnified in the bottom-right, shows the approximations for the zeros in that region, represented as black crosses, which are given by the curve scaling limit in \eqref{applicationseq_besselcurve}. The point $\xi$ which is used in \eqref{applicationseq_besselcurve} to obtain the approximations in Region B is shown in black.}
	\label{applicationsfig_bessel}
\end{figure}

\section{Confluent Hypergeometric Functions}
\label{applicationssec_confluent}

In this section we will consider the functions
\[
	\M(\alpha,\beta,z) = \frac{1}{\Gamma(\alpha)} \sum_{k=0}^{\infty} \frac{\Gamma(k+\alpha)}{\Gamma(k+\beta) k!} z^k,
\]
where $\alpha,\beta \in \C$ and $\alpha \neq 0,-1,-2,\ldots$. The series converges for all $z \in \C$, so these functions are entire. They are related to the usual hypergeometric ${}_1F_1$ functions by
\[
	\frac{{}_1F_1(\alpha,\beta,z)}{\Gamma(\beta)} = \M(\alpha,\beta,z)
\]
for fixed $\beta \neq 0,-1,2,\ldots$, as well as in the limit $\beta \to -m$, $m = 0,1,2,\ldots$.

The Digital Library of Mathematical Functions (or the DLMF) gives the following asymptotic for $\M$:
\[
	\M(\alpha,\beta,z) = \frac{z^{\alpha-\beta} e^z}{\Gamma(\alpha)} \bigl[ 1 + o(1) \bigr] + \frac{e^{\pm i \pi \alpha} z^{-\alpha}}{\Gamma(\beta-\alpha)} \bigl[ 1 + o(1) \bigr]
\]
as $|z| \to \infty$ uniformly in any sector $-\pi/2 + \epsilon \leq \pm {\arg z} \leq 3\pi/2 - \epsilon$ with $\epsilon > 0$ for appropriate choices of branches for $z^{\alpha-\beta}$ and $z^{-\alpha}$ and an appropriate determination of $\arg z$ \cite[eq.~13.7.2]{nist:dlmf}. It follows that for any $\theta \in (0,\pi/2)$ there exists a constant $\mu < 1$ such that
\[
	\Gamma(\alpha) \M(\alpha,\beta,z) = \begin{cases}
		z^{\alpha-\beta} e^z \,[1+o(1)] & \text{for } |{\arg z}| \leq \theta, \\
		O\!\left(e^{\mu |z|}\right) & \text{for } |{\arg z}| > \theta
	\end{cases}
\]
as $|z| \to \infty$ uniformly in each of these sectors.

From the above information we deduce that the function $\M$ has a single direction of maximal exponential growth. In the notation of the asymptotic assumption on the functions $f$ we have considered in the thesis (see, e.g., \eqref{seccurve1_fgrowth}) we have $a = \alpha-\beta$, $b=0$, and $\lambda = 1$. Applying Theorems \ref{seccurve1_maintheo}, \ref{curscaonetheo_maintheo}, and \ref{seconeexp_maintheo} to these functions yields the following collection of results.

\begin{theorem}
Let
\[
	p_n[\M](z) = \frac{1}{\Gamma(\alpha)} \sum_{k=0}^{n} \frac{\Gamma(k+\alpha)}{\Gamma(k+\beta) k!} z^k
\]
denote the $n^\th$ partial sum of the Maclaurin series for $\M(\alpha,\beta,z)$ and let
\[
	S = \{ z \in \C : |z\exp(1-z)| = 1,\ |z| \leq 1, \text{ and } \re z > 0\}.
\]

The limit points of the zeros of the scaled partial sums $p_{n-1}[\M](n z)$ in the right half-plane are precisely the points of $S$.

Let $\xi \in S$, $\xi \neq 1$ and define
\[
	\tau = \im(\xi - 1 - \log \xi),
\]
\[
	\tau_n \equiv \tau n \pmod{2\pi}, \quad -\pi < \tau_n \leq \pi,
\]
and
\[
	z_n(w) = \xi \left( 1 + \frac{\log n}{2(1-\xi)n} - \frac{w-i\tau_n}{(1-\xi)n} \right).
\]
Then
\begin{equation}
\label{applicationseq_confluentcurve}
	\lim_{n \to \infty} \frac{p_{n-1}[\M](n z_n(w))}{\M(\alpha,\beta,n z_n(w))} = 1 - \frac{e^{-w}}{\xi^{\alpha-\beta}(1-\xi)\sqrt{2\pi}}
\end{equation}
uniformly on compact subsets of the $w$-plane.

Additionally,
\begin{equation}
\label{applicationseq_confluentcorner}
	\lim_{n \to \infty} \frac{p_{n-1}[\M](n+w\sqrt{n})}{\M(\alpha,\beta,n+w\sqrt{n})} = \frac{1}{2} \erfc\!\left(\frac{w}{\sqrt{2}}\right)
\end{equation}
uniformly on compact subsets of $\re w < 0$.
\end{theorem}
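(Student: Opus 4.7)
The plan is to reduce each of the three claims to a direct application of the results proved earlier in the thesis. Set $f(z) := \Gamma(\alpha)\,\M(\alpha,\beta,z)$. This is an entire function, and because $p_{n-1}[f](z) = \Gamma(\alpha)\,p_{n-1}[\M](z)$, the ratio $p_{n-1}[\M](z)/\M(\alpha,\beta,z)$ coincides with $p_{n-1}[f](z)/f(z)$, so every assertion of the theorem is an assertion about $f$. The crucial preparatory step is to verify that $f$ fits the asymptotic profile \eqref{seccurve1_fgrowth} with
\[
    a = \alpha - \beta, \qquad b = 0, \qquad \lambda = 1,
\]
for some admissible $\theta$ and $\mu < 1$. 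This has essentially been done in the paragraph preceding the theorem: the DLMF expansion \cite[eq.~13.7.2]{nist:dlmf} shows that for any $\theta \in (0,\pi/2)$ there is a $\mu < 1$ so that $f(z) = z^{\alpha-\beta} e^{z}[1+o(1)]$ uniformly for $|{\arg z}| \leq \theta$ and $f(z) = O(\exp(\mu|z|))$ uniformly for $\theta < |{\arg z}| \leq \pi$. (The algebraically decaying second term $e^{\pm i\pi\alpha} z^{-\alpha}/\Gamma(\beta-\alpha)$ in the DLMF formula is absorbed into the $o(1)$ inside the maximal-growth sector.)

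Once this verification is in hand, I would simply invoke the three master theorems in turn. With $\lambda = 1$ we have $r_n = (n/\lambda)^{1/\lambda} = n$, so the scalings in the statement match those in the earlier chapters. Theorem \ref{seccurve1_maintheo}, applied to $f$, yields the first claim: any limit point of the zeros of $p_{n-1}[\M](nz)$ lying strictly inside the right half-plane must sit on the curve $|z\exp(1-z)| = 1$, $|z| \leq 1$, which is $S$; here I use that an arbitrary point of the open right half-plane can be enclosed by taking $\theta$ sufficiently close to $\pi/2$. The approach-from-outside half of the statement comes from the refined estimate $|z\exp(1-z)| = 1 + (\log n)/(2n) + O(1/n)$ derived in the proof of that theorem. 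The fact that every point of $S$ is attained as a limit point is standard given the equidistribution built into Theorem \ref{curscaonetheo_maintheo} (through the parameter $w$ running over $\C$ and Hurwitz's theorem).

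For the curve scaling limit \eqref{applicationseq_confluentcurve}, Theorem \ref{curscaonetheo_maintheo} applied to $f$ with $a = \alpha-\beta$, $b = 0$, $\lambda = 1$ gives precisely
\[
    \lim_{n\to\infty} \frac{p_{n-1}[f](n z_n(w))}{f(n z_n(w))} = 1 - \frac{e^{-w}}{\xi^{\alpha-\beta}(1-\xi)\sqrt{2\pi}},
\]
which is the stated formula. For the corner scaling limit \eqref{applicationseq_confluentcorner}, Theorem \ref{seconeexp_maintheo} with $\lambda = 1$ gives $\erfc(w\sqrt{\lambda/2}) = \erfc(w/\sqrt{2})$, again exactly the claim. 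The main obstacle, such as it is, is purely bookkeeping: checking that the DLMF expansion is uniform in $|{\arg z}| \leq \theta$ for every $\theta < \pi/2$ (so that one can let $\theta \to \pi/2^-$ to cover the full open right half-plane), and confirming that multiplying by the constant $\Gamma(\alpha)$ does not interfere with the asymptotic constants that appear in the earlier theorems. No serious analytical work is required beyond what has already been done in Chapters \ref{chap_limitcurves}, \ref{chap_curvescaling}, and \ref{chap_posfinite}.
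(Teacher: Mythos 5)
Your proposal is correct and follows essentially the same route as the thesis: the paper's own treatment consists of verifying the asymptotic profile \eqref{seccurve1_fgrowth} for $\Gamma(\alpha)\M(\alpha,\beta,z)$ via the DLMF expansion (with $a=\alpha-\beta$, $b=0$, $\lambda=1$, hence $r_n=n$) and then citing Theorems \ref{seccurve1_maintheo}, \ref{curscaonetheo_maintheo}, and \ref{seconeexp_maintheo}. Your additional remarks---absorbing the recessive $z^{-\alpha}$ term into the $o(1)$, letting $\theta\to\pi/2^-$ to cover the whole open right half-plane, and using the curve scaling limit together with Hurwitz's theorem to get the ``precisely'' direction---are exactly the bookkeeping the paper leaves implicit.
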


\begin{figure}[!htb]
	\centering
	\includegraphics[width=0.9\textwidth]{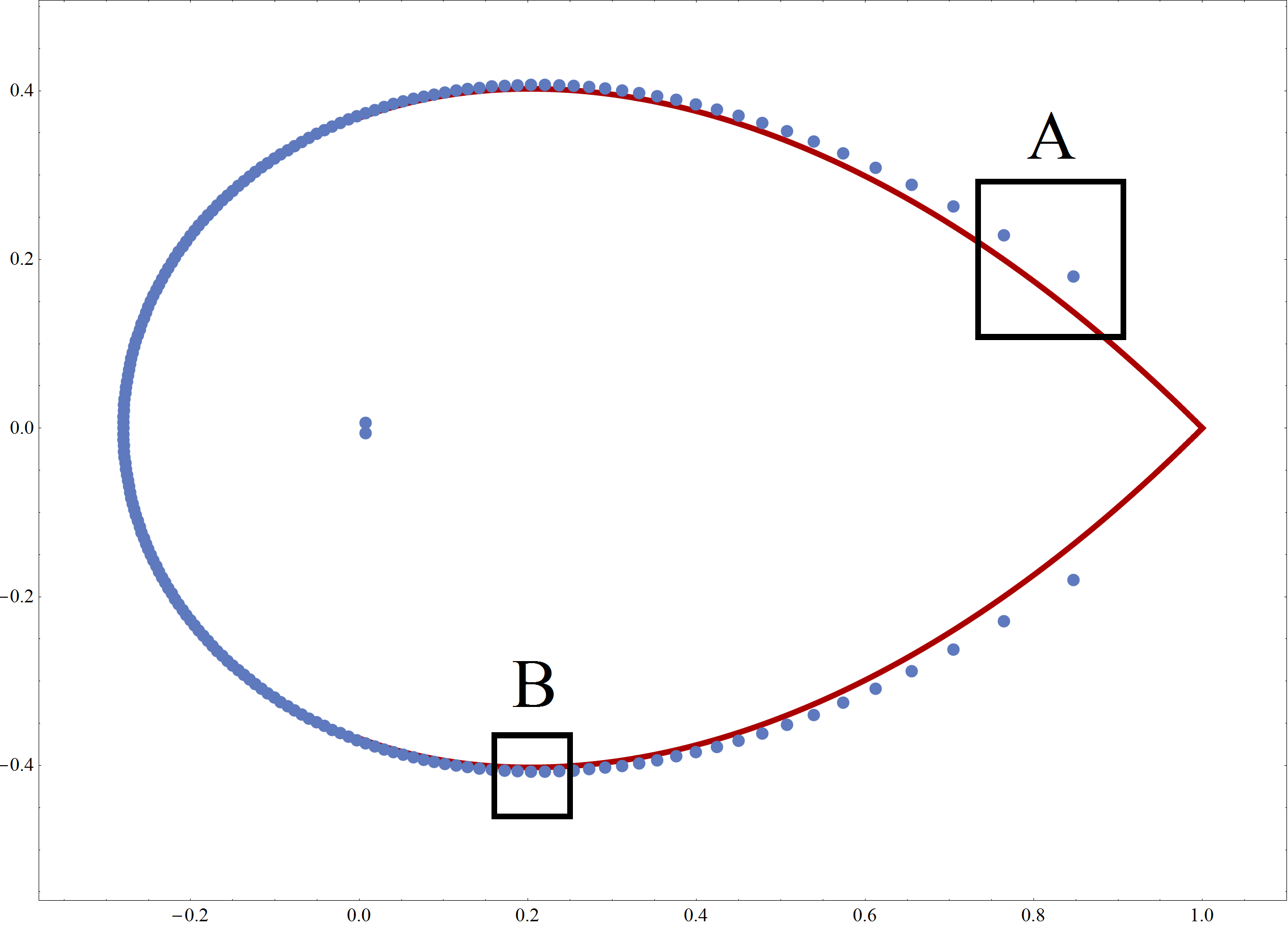} \\
	\vspace{0.1cm}
	\begin{tabular}{cc}
		\includegraphics[width=0.45\textwidth]{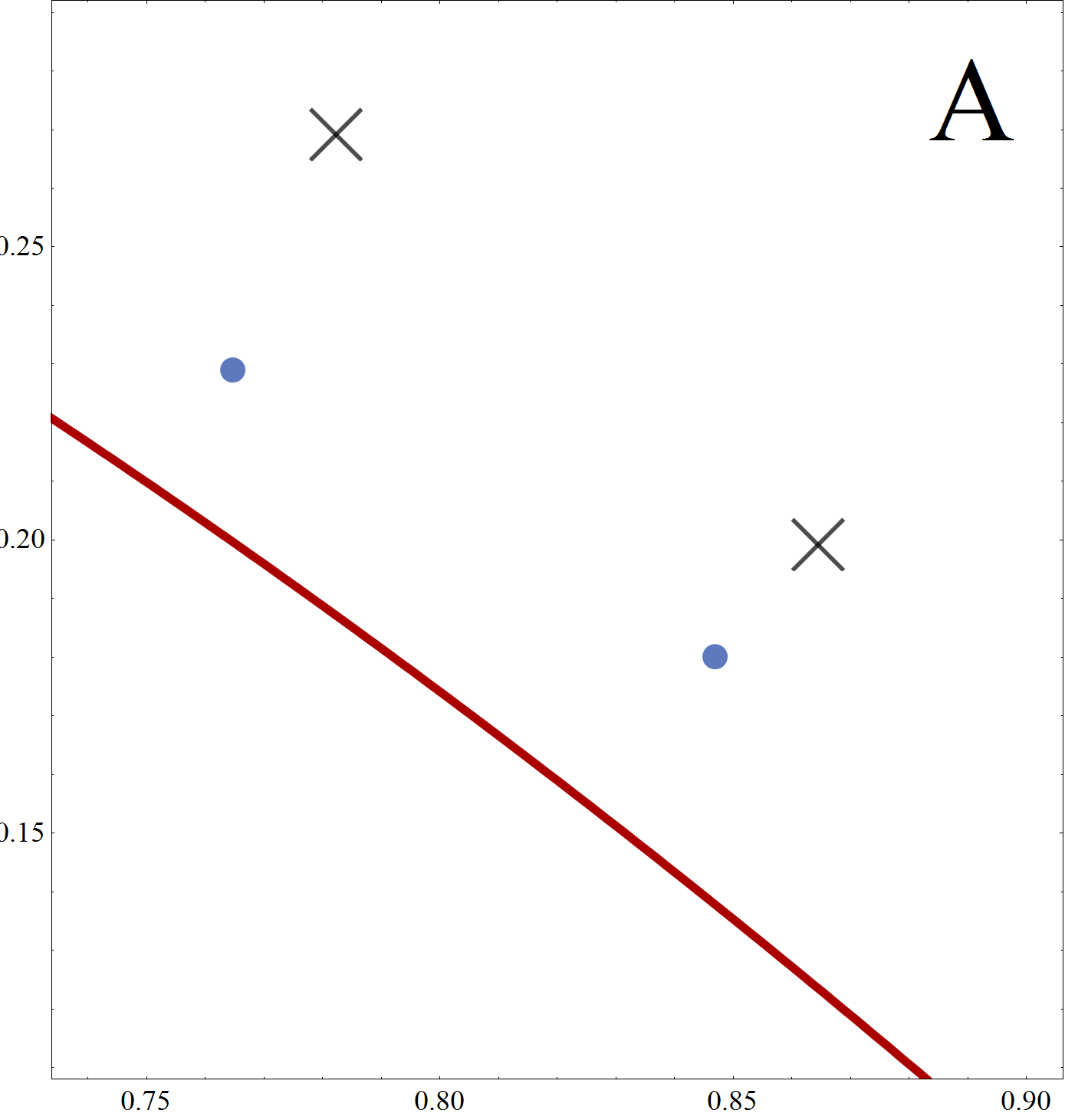}
		& \includegraphics[width=0.45\textwidth]{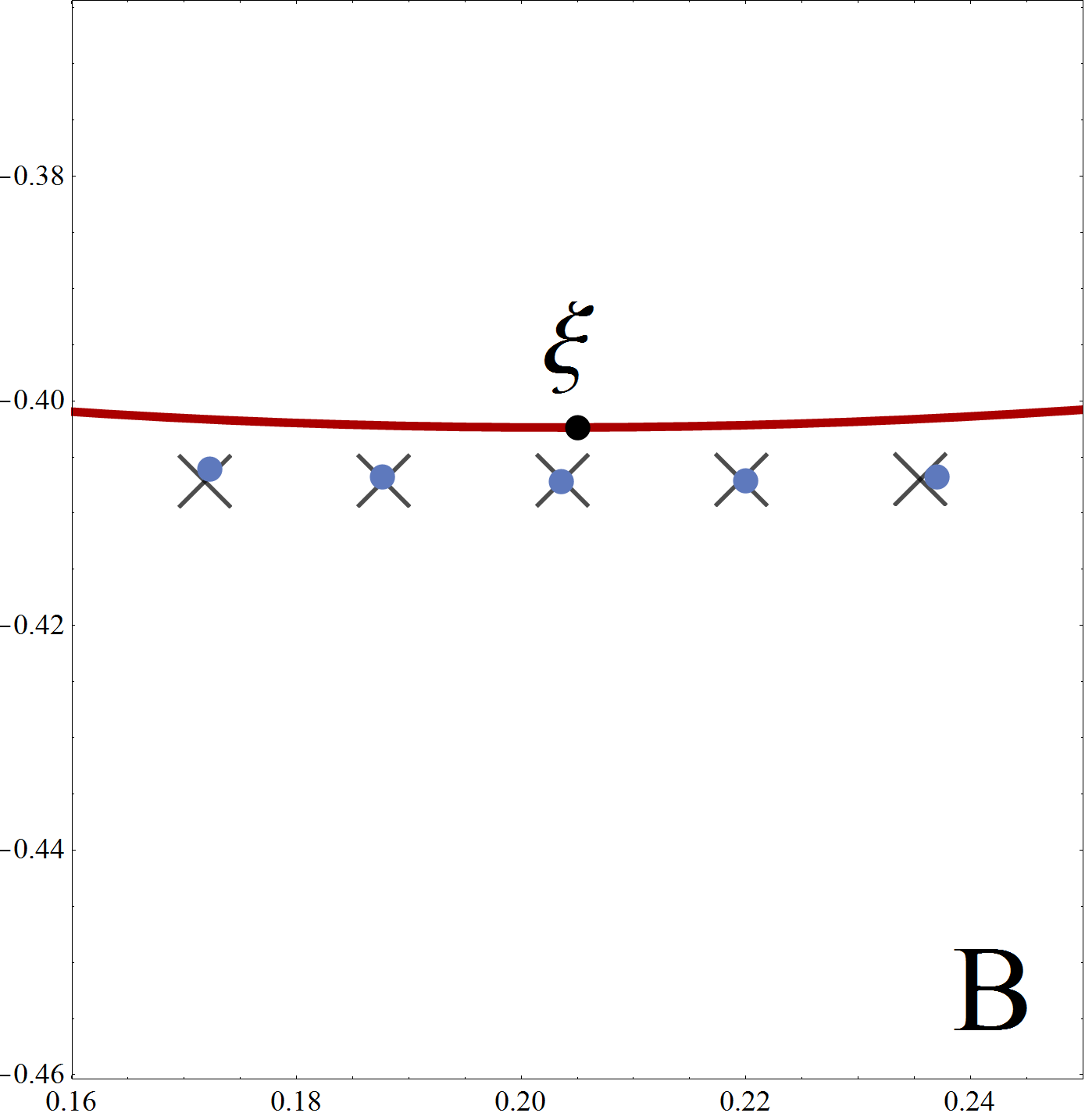}
	\end{tabular}
	\caption[The zeros of {$p_{n-1}[\M](nz)$} for {$\alpha = -1/2$}, {$\beta = -5/2$}, and {$n=200$} and their limit curve in the right half-plane.]{The zeros of $p_{n-1}[\M](nz)$ for $\alpha = -1/2$, $\beta = -5/2$, and $n=200$ in blue and their limit curve in the right half-plane in red. Region A, magnified in the bottom-left, shows the approximations for the zeros in that region, represented as black crosses, which are given by the corner scaling limit in \eqref{applicationseq_confluentcorner}. Region B, magnified in the bottom-right, shows the approximations for the zeros in that region, represented as black crosses, which are given by the curve scaling limit in \eqref{applicationseq_confluentcurve}. The point $\xi$ which is used in \eqref{applicationseq_confluentcurve} to obtain the approximations in Region B is shown in black.}
	\label{applicationsfig_confluent}
\end{figure}

\section{Exponential Integrals}
\label{applicationssec_expints}

Let $-1 \leq r < 1$ and let $g\colon [r,1] \to \C\cup\{\infty\}$ be a measurable, integrable function satisfying
\[
	g(t) = (t-r)^p g_1(t-r) = (1-t)^q g_2(1-t),
\]
where
\begin{enumerate}[label=(\arabic*)]
\item $\re p > -1$ and $\re q > -1$,
\item $g_1(0)$ and $g_2(0)$ are both finite and nonzero, and
\item in a neighborhood of $t=0$, both $g_1'(t)$ and $g_2'(t)$ exist and are bounded.
\end{enumerate}

Define the function
\begin{equation}
\label{applicationseq_expintdef}
	f(z) = \int_r^1 e^{zt} g(t)\,dt.
\end{equation}
Under the above conditions this $f$ is an entire function of order $1$.

By Watson's lemma (see Theorem \ref{watsonslemma} or \cite[Secs.\ 2.2 and 2.3]{miller:aaa}) the function $f$ obeys the asymptotics
\[
	f(z) \sim g_2(0) \Gamma(q+1) z^{-q-1} e^z
\]
and
\[
	f(-z) \sim g_1(0) \Gamma(p+1) z^{-p-1} e^{-rz}
\]
as $|z| \to \infty$ uniformly in any sector $|{\arg z}| \leq \theta$ with $0 \leq \theta < \pi/2$. Additionally, if $|{\arg z}| \leq \pi/2 - \theta$ then
\begin{align*}
|f(\pm i z)| &\leq \int_r^1 e^{\mp t \im z} |g(t)|\,dt \\
	&= \int_r^1 e^{\mp |z| t \sin \arg z} |g(t)|\,dt \\
	&\leq \int_r^1 e^{|zt| \cos\theta} |g(t)|\,dt \\
	&\leq e^{|z|\cos\theta} \int_r^1 |g(t)|\,dt.
\end{align*}
It follows that for any fixed $\theta \in (0,\pi/2)$, if $-1 < r < 1$ then
\begin{equation}
\label{applicationseq_expintasymprleq1}
	[g_2(0) \Gamma(q+1)]^{-1} f(z) = \begin{cases}
		z^{-q-1} e^z \,[1+o(1)] & \text{for } |{\arg z}| \leq \theta, \\
		O\!\left(e^{\mu |z|}\right) & \text{for } |{\arg z}| > \theta
	\end{cases}
\end{equation}
and if $r = -1$ then
\begin{equation}
\label{applicationseq_expintasympreq1}
	[g_2(0) \Gamma(q+1)]^{-1} f(z) = \begin{cases}
	z^{-q-1} e^z \,[1+o(1)] & \text{for } |{\arg z}| \leq \theta, \\
	A (-z)^{-p-1} e^{-z} \,[1+o(1)] & \text{for } |{\arg -z}| \leq \theta, \\
	O\!\left(e^{\mu |z|}\right) & \text{otherwise}
	\end{cases}
\end{equation}
as $|z| \to \infty$ uniformly in each sector, where $\mu < 1$ and
\[
	A = \frac{g_1(0)\Gamma(p+1)}{g_2(0)\Gamma(q+1)}.
\]
Applying Theorems \ref{seccurve1_maintheo}, \ref{curscaonetheo_maintheo}, \ref{seconeexp_maintheo}, \ref{seccurvetwo_maintheo}, \ref{curscatwotheo_maintheo}, and \ref{secunbalcauchy_maintheo} to this function yields the following collection of results.

\begin{theorem}
\label{applicationsthm_expint}
Let
\[
	p_n[f](z) = \sum_{k=0}^{n} \frac{z^k}{k!} \int_r^1 t^k g(t)\,dt
\]
denote the $n^\th$ partial sum of the Maclaurin series for $f(z)$ and let
\[
	S = \{ z \in \C : |z\exp(1-z)| = 1,\ |z| \leq 1, \text{ and } \re z > 0\}.
\]

If $-1 < r < 1$ then the limit points of the zeros of the scaled partial sums $p_{n-1}[f](n z)$ in the right half-plane are precisely the points of $S$. If $r = -1$, the limit points which do not lie on the imaginary axis are precisely the points of the set $S \cup -S$.

Let $\xi \in S$, $\xi \neq 1$ and define
\[
	\tau = \im(\xi - 1 - \log \xi).
\]
Define the sequence $\tau_n$ by the conditions
\[
	\tau n \equiv \tau_n \pmod{2\pi}, \qquad -\pi < \tau_n \leq \pi
\]
and let
\[
	z_n^1(w) = \xi \left(1 + \frac{\log n}{2(1-\xi)n} - \frac{w-i\tau_n}{(1-\xi) n}\right).
\]
Define the sequence $\sigma_n$ by the conditions
\[
	\pi n \equiv \sigma_n \pmod{2\pi}, \qquad -\pi < \sigma_n \leq \pi
\]
and let
\[
	z_n^2(w) = \xi \left[ 1 + \left(p-q+\frac{1}{2}\right)\frac{\log n}{(1-\xi) n} - \frac{w - i\sigma_n - i\tau_n}{(1-\xi) n} \right].
\]

If $-1 < r < 1$ or if $r = -1$ and $\re q < \re p$ then
\begin{equation}
\label{applicationseq_expintcurve1}
	\lim_{n \to \infty} \frac{p_{n-1}[f](n z_n^1(w))}{f(n z_n^1(w))} = 1 - \frac{\xi^{q+1} e^{-w}}{(1-\xi)\sqrt{2\pi}}
\end{equation}
as $n \to \infty$ uniformly on compact subsets of the $w$-plane. When $r = -1$, if $\re q > \re p$ then
\begin{equation}
\label{applicationseq_expintcurve2}
	\lim_{n \to \infty} \frac{p_{n-1}[f](n z_n^2(w))}{f(n z_n^2(w))} = 1 - \frac{A \xi^{q+1} e^{-w}}{(1+\xi)\sqrt{2\pi}}
\end{equation}
and if $\re q = \re p$ then
\[
	\frac{p_{n-1}[f](n z_n^1(w))}{f(n z_n^1(w))} = 1 - \left(\frac{1}{1-\xi} + \frac{A(-1)^{n} n^{q-p}}{1+\xi} \right) \frac{\xi^{q+1} e^{-w}}{\sqrt{2\pi}} + o(1)
\]
as $n \to \infty$. Both of these limits are uniform with respect to $w$ on compact subsets of the $w$-plane.

Additionally, if $-1 < r < 1$ then
\begin{equation}
\label{applicationseq_expintcorner}
	\lim_{n \to \infty} \frac{p_{n-1}[f](n+w\sqrt{n})}{f(n+w\sqrt{n})} = \frac{1}{2} \erfc\!\left(\frac{w}{\sqrt{2}}\right)
\end{equation}
uniformly on compact subsets of $\re w < 0$. This is also true when $r = -1$ if and only if
\[
	\re p - \re q + \frac{1}{2} > 0.
\]
\end{theorem}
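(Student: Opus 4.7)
The plan is to verify that the function $f$ defined in \eqref{applicationseq_expintdef} falls under the general framework developed in Chapters \ref{chap_limitcurves}, \ref{chap_curvescaling}, and \ref{chap_posfinite}, and then read off each conclusion by instantiating the appropriate theorem with the correct parameters. The asymptotic behavior of $f$ in the formulas \eqref{applicationseq_expintasymprleq1} and \eqref{applicationseq_expintasympreq1}—already established via Watson's lemma applied at each of the endpoints $t=1$ and (when $r=-1$) $t=-1$—is exactly the shape required by the hypotheses of those theorems, so the bulk of the work reduces to matching parameters.

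First I would note that the ratio $p_{n-1}[f](z)/f(z)$ and the zero set of $p_{n-1}[f]$ are both invariant under multiplying $f$ by the nonzero constant $g_2(0)\Gamma(q+1)$, so throughout the proof I will work with the normalized function, whose asymptotic matches the one in the hypotheses of Chapters \ref{chap_limitcurves}--\ref{chap_posfinite} with $\lambda=1$. In the case $-1 < r < 1$, the function has a single direction of maximal exponential growth and the asymptotic is as in \eqref{seccurve1_fgrowth} with $a = -q-1$ and $b = 0$; Theorem \ref{seccurve1_maintheo} then yields the description of the limit points of the zeros in the right half-plane, Theorem \ref{curscaonetheo_maintheo} applied with these values yields the curve scaling limit (substituting $a=-q-1$ converts the factor $\xi^{-a}$ into $\xi^{q+1}$, producing exactly \eqref{applicationseq_expintcurve1}), and Theorem \ref{seconeexp_maintheo} yields the corner scaling limit \eqref{applicationseq_expintcorner}.

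For $r = -1$ the function has two directions of maximal exponential growth with $a = -q-1$, $b = -p-1$, $\zeta = -1$, and $A = g_1(0)\Gamma(p+1)/[g_2(0)\Gamma(q+1)]$. Theorem \ref{seccurvetwo_maintheo} locates the limit points on $S \cup (-S)$ (the $-S$ part comes from the symmetry $f(-z)$ shifting growth by $\zeta = -1$). For the arc scaling limit I would split into the three cases of Theorem \ref{curscatwotheo_maintheo}: when $\re q < \re p$ (equivalently $\re a > \re b$) I get the first formula, which again simplifies to \eqref{applicationseq_expintcurve1}; when $\re q > \re p$ I use the second formula, and computing $\zeta \lambda^{(a-b)/\lambda}/(\zeta - \xi) = (-1)/(-1-\xi) = 1/(1+\xi)$ together with $\xi^{-a} = \xi^{q+1}$ gives \eqref{applicationseq_expintcurve2}; when $\re q = \re p$ the transitional formula evaluates directly with $r_n^{b-a} = n^{q-p}$ and $\zeta^{1-n} = (-1)^{1-n}$, after which the unimodular constant $(-1)$ combines with $(-1)^{-n} = (-1)^n$ to give the stated result.

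The corner scaling limit for $r=-1$ follows from Theorem \ref{secunbalcauchy_maintheo}, whose necessary and sufficient condition $\re b - \re a < \lambda/2$ becomes $(-\re p - 1) - (-\re q - 1) < 1/2$, i.e.\ $\re p - \re q + 1/2 > 0$, as stated. The only part I would expect to require a bit of care is the verification at the boundary condition when $\re q = \re p$ in the arc scaling limit, since one must correctly track the oscillating factor $A(-1)^n n^{q-p}/(1+\xi)$ and confirm that the general formula from Theorem \ref{curscatwotheo_maintheo} indeed contracts to the stated expression after the parameter substitution; the rest is entirely mechanical once the asymptotics \eqref{applicationseq_expintasymprleq1} and \eqref{applicationseq_expintasympreq1} are in hand.
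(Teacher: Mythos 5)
Your proposal is correct and is exactly the route the paper takes: the paper offers no separate argument for this theorem beyond the sentence ``Applying Theorems \ref{seccurve1_maintheo}, \ref{curscaonetheo_maintheo}, \ref{seconeexp_maintheo}, \ref{seccurvetwo_maintheo}, \ref{curscatwotheo_maintheo}, and \ref{secunbalcauchy_maintheo} to this function yields the following collection of results,'' and your parameter identifications ($\lambda=1$, $a=-q-1$, $b=-p-1$, $\zeta=-1$, $A=g_1(0)\Gamma(p+1)/[g_2(0)\Gamma(q+1)]$, after normalizing by $g_2(0)\Gamma(q+1)$) together with the resulting simplifications of each formula are all accurate. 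In fact you supply more of the bookkeeping (e.g.\ $\zeta^{1-n}/(\zeta-\xi)=(-1)^n/(1+\xi)$, $r_n^{b-a}=n^{q-p}$, and the translation of $\re b-\re a<\lambda/2$ into $\re p-\re q+1/2>0$) than the paper does.
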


\begin{figure}[!htb]
	\centering
	\includegraphics[width=0.95\textwidth]{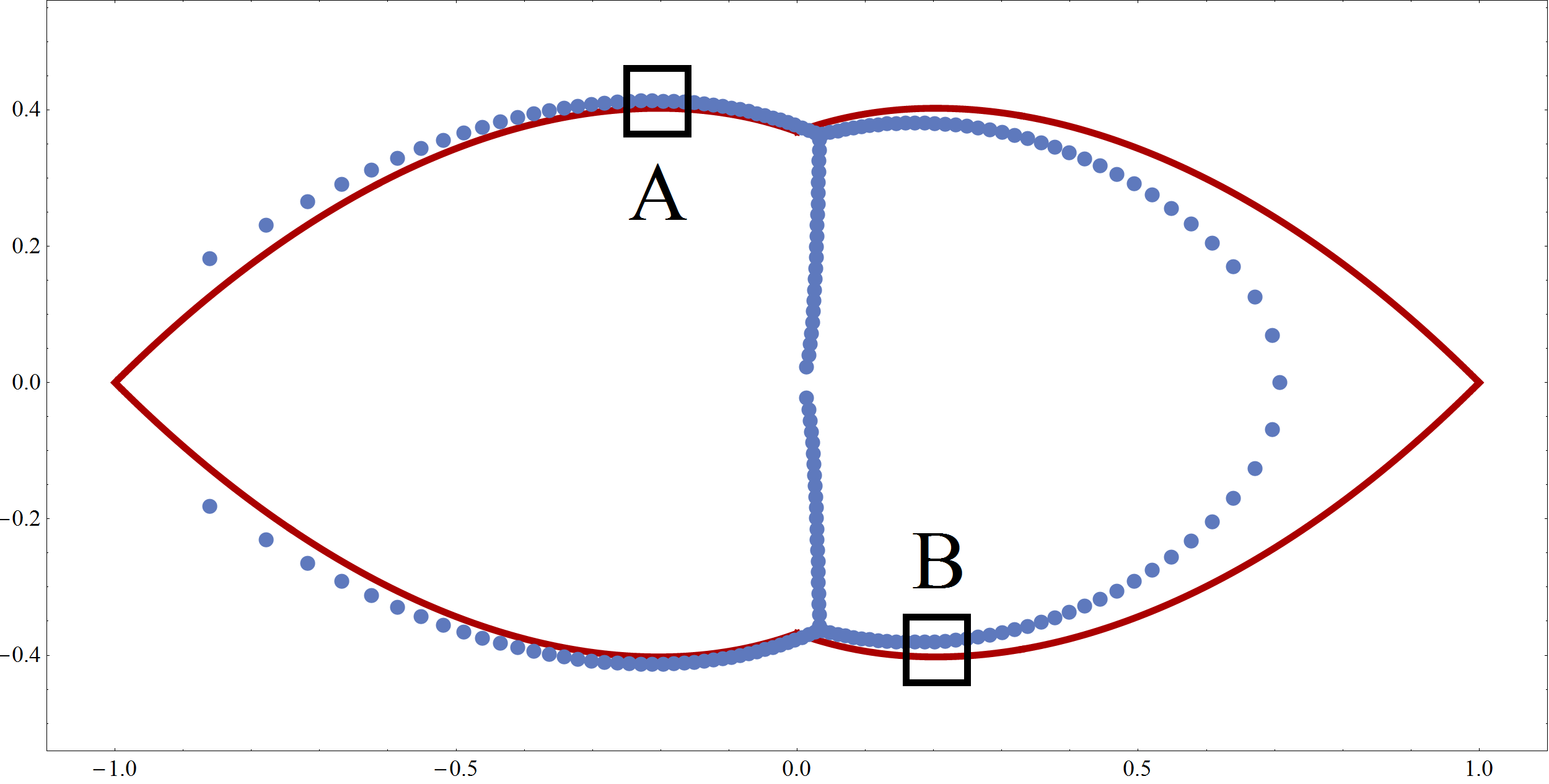} \\
	\vspace{0.1cm}
	\begin{tabular}{cc}
		\includegraphics[width=0.45\textwidth]{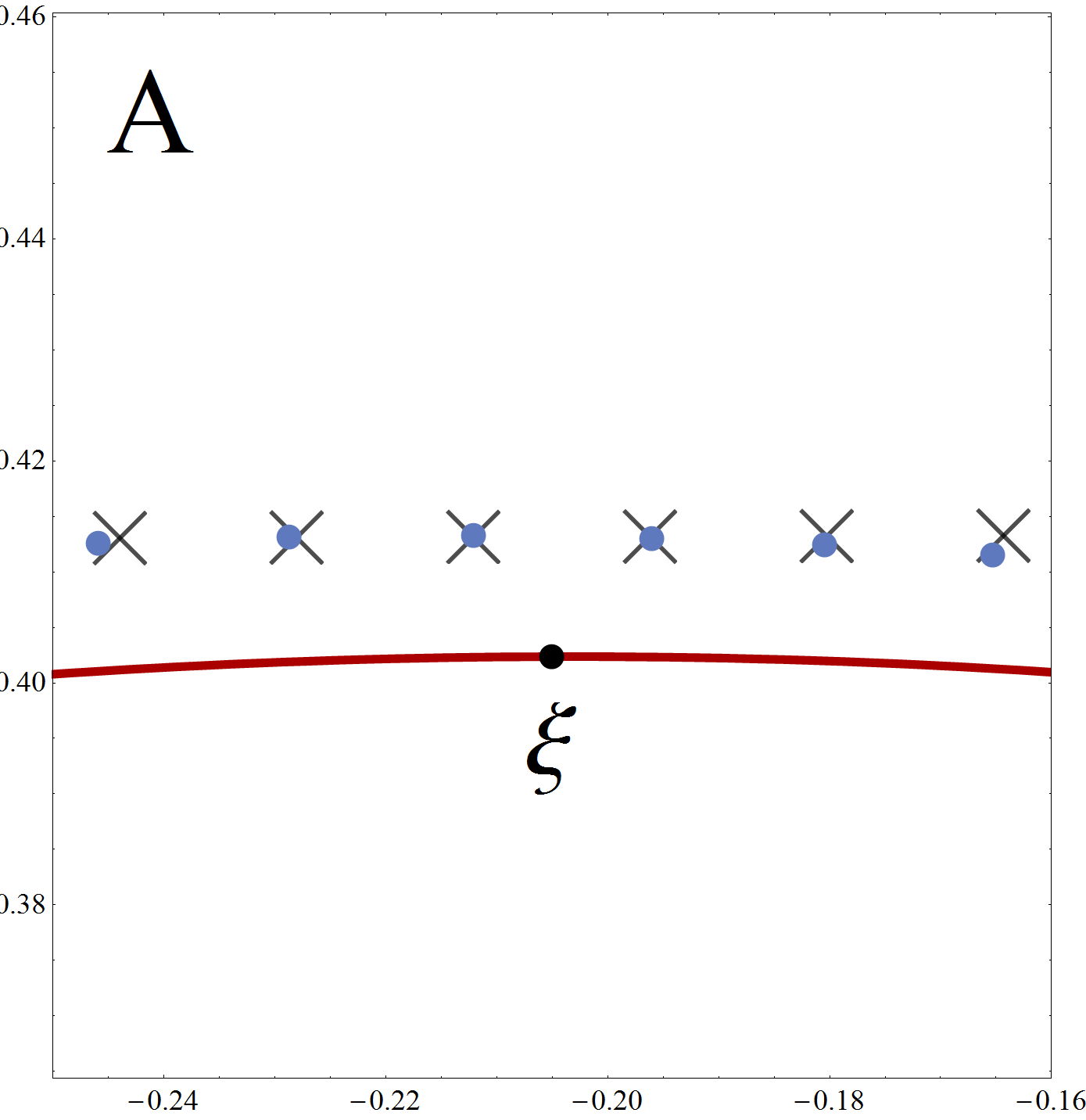}
		& \includegraphics[width=0.45\textwidth]{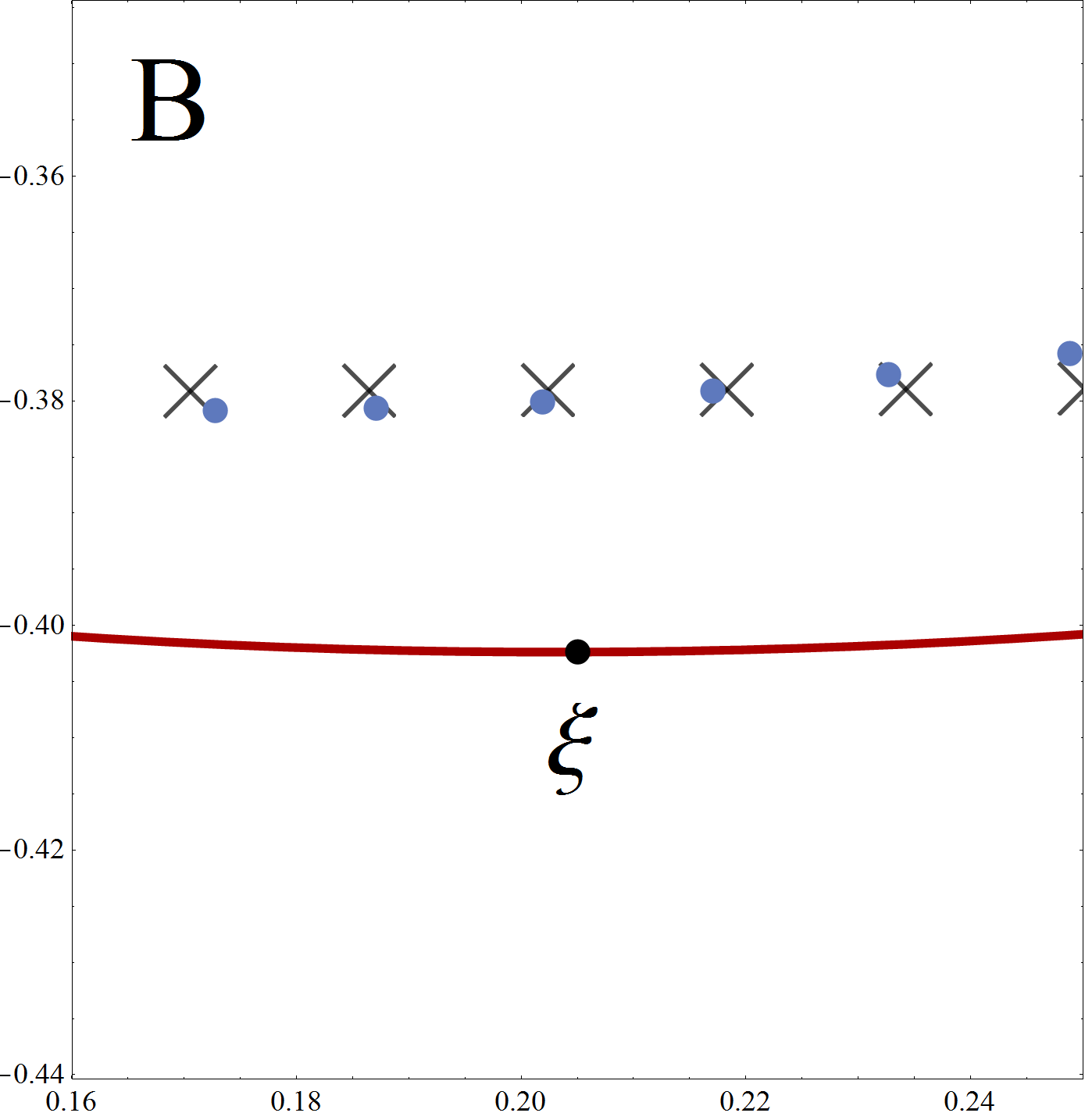}
	\end{tabular}
	\caption[The zeros of {$p_{n-1}[f](nz)$} for {$g(t) = (1-t)^3$}, {$r=-1$}, and {$n=200$} and their limit curve.]{The zeros of $p_{n-1}[f](nz)$ for $g(t) = (1-t)^3$, $r=-1$, and $n=200$ in blue and their limit curve in red. The magnified regions A and B show the approximations for the zeros in those regions, represented as black crosses, which are given by the curve scaling limits in \eqref{applicationseq_expintcurve1} and \eqref{applicationseq_expintcurve2}, respectively. Note that to apply limit \eqref{applicationseq_expintcurve1} we applied Theorem \ref{applicationsthm_expint} to the reflected function $f(-z)$. The points $\xi$ which are used in the respective applications of \eqref{applicationseq_expintcurve1} and \eqref{applicationseq_expintcurve2} are shown in black. The corner scaling limit \eqref{applicationseq_expintcorner} does not exist for this function $f$ so we do not get approximations for the zeros which approach the point $z=1$.}
	\label{applicationsfig_expint}
\end{figure}

\section{Airy Functions}
\label{applicationssec_airy}

The Airy functions of the first and second kind are entire functions defined for $z \in \C$ by the integrals
\[
	\Ai(z) = \frac{1}{2\pi i} \int_{e^{-i\pi/3} \infty}^{e^{i\pi/3} \infty} \exp\!\left(\frac{1}{3}t^3 - zt\right)dt
\]
and
\[
	\Bi(z) = \frac{1}{2\pi} \int_{-\infty}^{e^{i\pi/3} \infty} \exp\!\left(\frac{1}{3}t^3 - zt\right)dt + \frac{1}{2\pi} \int_{-\infty}^{e^{-i\pi/3} \infty} \exp\!\left(\frac{1}{3}t^3 - zt\right)dt,
\]
respectively.

The DLMF gives the following uniform asymptotics for $\Ai$ \cite[eqns.\ 9.7.5 and 9.7.9]{nist:dlmf}: for any fixed $\epsilon > 0$,
\[
	\Ai(z) \sim \frac{z^{-1/4}}{2\sqrt{\pi}} \exp\!\left(-\frac{2}{3} z^{3/2}\right)
\]
as $|z| \to \infty$ with $|{\arg z}| \leq \pi - \epsilon$ and
\[
	\Ai(-z) = \frac{z^{-1/4}}{\sqrt{\pi}} \left[ \cos \omega \bigl[1+o(1)\bigr] + O\!\left(\frac{\sin \omega}{\omega}\right) \right]
\]
as $|z| \to \infty$ with $|{\arg z}| \leq 2\pi/3 - \epsilon$, where
\[
	\omega = \frac{2}{3} z^{3/2} - \frac{\pi}{4}.
\]
It follows that for any $\epsilon > 0$ there is some $\mu < 2/3$ such that
\[
	\Ai(z) = \frac{1}{2\sqrt{\pi}} \times \begin{cases}
		z^{-1/4} \exp\!\left(-\frac{2}{3} z^{3/2}\right) [1+o(1)] & \text{for } |{\arg z}| \leq \pi - \epsilon, \\
		O\!\left(\exp\!\left(\mu|z|^{3/2}\right)\right) & \text{for } |{\arg z}| \geq \pi-\epsilon
	\end{cases}
\]
as $|z| \to \infty$ uniformly in each sector. The quantity $\re(-z^{3/2})$ is maximal on the rays $\arg z = \pm 2\pi/3$, so we conclude that $\Ai$ has two directions of maximal exponential growth.

We could now apply Theorems \ref{seccurvetwo_maintheo}, \ref{curscatwotheo_maintheo}, and \ref{secunbalcauchy_maintheo} directly. However, in each of these results we restricted ourselves to sectors which are symmetric about the rays of maximal growth of the function. This was done to simplify the notation, and not because some aspect of the analysis requires it. So, I hope the reader will permit me to extend the generality of the results just slightly in this direction without additional comment toward rigorous justification. In particular, we will consider the function $\Ai(z)$ to have two half-open sectors of maximal growth, one $0 \leq \arg z < \pi-\epsilon$ and the other $-\pi + \epsilon < \arg z \leq 0$, even though the rays of maximal growth ($\arg z = 2\pi/3$ and $-2\pi/3$, respectively) do not bisect these sectors. With this in mind, the appropriately extended versions of Theorems \ref{seccurvetwo_maintheo}, \ref{curscatwotheo_maintheo}, and \ref{secunbalcauchy_maintheo} give the following collection of results.

\begin{theorem}
Let $p_n[\Ai](z)$ denote the $n^\th$ partial sum of the Maclaurin series for $\Ai(z)$, let
\[
	S = \left\{ z \in \C : \left|z^{3/2}\exp\!\left(1-z^{3/2}\right)\right| = 1,\ |z| \leq 1, \text{ and } {-}2\pi/3 \leq \arg z < \pi/3 \right\},
\]
and define
\[
	S_+ = S, \qquad S_- = \overline S,
\]
and
\[
	r_n = \left(\frac{2n}{3}\right)^{2/3}.
\]

The limit points of the zeros of the scaled partial sums $p_{n-1}[\Ai]((3/2)^{2/3} r_n z)$ which do not lie on the ray $\arg z = \pi$ are precisely the points of the set
\[
	\left[ (3/2)^{2/3} e^{i2\pi/3} S_+ \right] \cup \left[ (3/2)^{2/3} e^{-i2\pi/3} S_- \right].
\]

Let $\xi \in S_\pm$, $\xi \neq 1$ and define
\[
	\tau = \im\!\left(\xi^{3/2} - 1 - \frac{3}{2}\log \xi\right),
\]
\[
	\tau_n \equiv \frac{\tau n}{3/2} \pmod{2\pi}, \quad -\pi < \tau_n \leq \pi,
\]
and
\[
	z_n(w) = \xi \left( 1 + \frac{\log n}{2(1-\xi^{3/2})n} - \frac{w-i\tau_n}{(1-\xi^{3/2})n} \right).
\]
Then
\begin{equation}
\label{applicationseq_aicurve}
	\frac{p_{n-1}[\Ai]\!\left((3/2)^{2/3} e^{\pm i2\pi/3} r_n z_n(w)\right)}{\Ai\bigl((3/2)^{2/3} e^{\pm i2\pi/3} r_n z_n(w)\bigr)} = 1 - \left( \frac{1}{1-\xi} - \frac{e^{\mp i2\pi n/3}}{e^{\pm i2\pi/3} - \xi} \right) \frac{\xi^{1/4} e^{-w}}{\sqrt{3\pi}} + o(1)
\end{equation}
as $n \to \infty$ uniformly on compact subsets of the $w$-plane.

Additionally,
\begin{equation}
\label{applicationseq_aicorner}
	\lim_{n \to \infty} \frac{p_{n-1}[\Ai]\bigl((3/2)^{2/3} e^{\pm i2\pi/3} r_n (1+w/\sqrt{n})\bigr)}{\Ai\bigl((3/2)^{2/3} e^{\pm i2\pi/3} r_n (1+w/\sqrt{n})\bigr)} = \frac{1}{2} \erfc\!\left(w \sqrt{3/4}\right)
\end{equation}
uniformly on compact subsets of $\re w < 0$.
\end{theorem}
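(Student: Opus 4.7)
The plan is to reduce the statement to direct applications of Theorems \ref{seccurvetwo_maintheo}, \ref{curscatwotheo_maintheo}, and \ref{secunbalcauchy_maintheo}. The DLMF asymptotic for $\Ai$ has maximal exponential growth precisely where $\re(-w^{3/2})$ is maximal, namely on the rays $\arg w = \pm 2\pi/3$, so the Airy function has two directions of maximal growth and falls into the setting of Section \ref{seccurvetwo_twodirsec}. To handle each direction I will work with the rotated and rescaled function $g_\pm(z) := \Ai(e^{\pm i2\pi/3}(3/2)^{2/3}z)$, which places one direction of maximal growth along the positive real axis with exponential factor $\exp(z^{3/2})$. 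A short computation in principal branches gives
\[
g_\pm(z) \sim \tfrac{1}{2\sqrt\pi}\,e^{\mp i\pi/6}(3/2)^{-1/6}\, z^{-1/4}\exp(z^{3/2}) \quad \text{for } |{\arg z}| \le \theta,
\]
and, writing $\zeta_\pm = e^{\pm i2\pi/3}$,
\[
g_\pm(\zeta_\pm \tilde z) \sim \tfrac{1}{2\sqrt\pi}\,e^{\pm i\pi/6}(3/2)^{-1/6}\, \tilde z^{-1/4}\exp(\tilde z^{3/2}) \quad \text{for } |{\arg \tilde z}| \le \theta,
\]
so after normalizing by the common prefactor $g_\pm$ fits the asymptotic shape \eqref{seccurvetwo_fgrowth} with $\lambda = 3/2$, $a = b = -1/4$, $\zeta = \zeta_\pm$, and $A = e^{\pm i\pi/3}$; the global scaling is then $r_n = (n/\lambda)^{1/\lambda} = (2n/3)^{2/3}$, matching the theorem.

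With these identifications the limit set description is immediate from Theorem \ref{seccurvetwo_maintheo}: the zeros of $p_{n-1}[g_\pm](r_n z)$ in the sector $|{\arg z}| < \theta$ accumulate on the arc $|z^{3/2}\exp(1-z^{3/2})| = 1$, $|z| \le 1$, and rotating back by $\zeta_\pm$ gives $(3/2)^{2/3}\zeta_\pm S_\pm$. For the corner scaling I will apply Theorem \ref{secunbalcauchy_maintheo} to $g_\pm$: since $\re b - \re a = 0 < 3/4 = \lambda/2$ the hypothesis holds, and the resulting limit $\tfrac12\erfc(w\sqrt{\lambda/2}) = \tfrac12\erfc(w\sqrt{3/4})$ translates directly to \eqref{applicationseq_aicorner} once the ratio is rewritten in terms of $\Ai$. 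For the curve scaling I will invoke Theorem \ref{curscatwotheo_maintheo} in its transitional regime $\re a = \re b$; substituting $A = e^{\pm i\pi/3}$, $\zeta = e^{\pm i2\pi/3}$, $a = b = -1/4$, and simplifying $A\zeta^{1-n} = e^{\pm i\pi/3} e^{\pm i2\pi/3} e^{\mp i2\pi n/3} = -e^{\mp i2\pi n/3}$ reproduces \eqref{applicationseq_aicurve} exactly.

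The one technical subtlety, flagged by the author in the paragraph preceding the statement, is that the Airy asymptotic is valid on the half-open sector $|{\arg w}| \le \pi - \epsilon$ about each ray of maximal growth rather than on the symmetric sectors used in the referenced theorems. I expect this to be the main (though essentially notational) obstacle: it requires checking that the proofs of those theorems use the symmetric sector condition only as a simplification and that the uniform exponential bound in the \emph{otherwise} branch of \eqref{seccurvetwo_fgrowth} -- the only ingredient that actually depends on the sector shape -- is supplied throughout the complementary region by the same DLMF asymptotic. Beyond this the only bookkeeping to be careful with is the propagation of the $\pm$ signs through the two rotations, which is what produces the $\mp$ in $e^{\mp i2\pi n/3}$ in the final formula.
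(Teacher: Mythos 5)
Your proposal is correct and follows the same route the paper takes: identify $\Ai$ as a function with two directions of maximal exponential growth, rotate and normalize so that the parameters of \eqref{seccurvetwo_fgrowth} read $\lambda = 3/2$, $a = b = -1/4$, $\zeta = e^{\pm i2\pi/3}$, $A = e^{\pm i\pi/3}$, and then quote Theorems \ref{seccurvetwo_maintheo}, \ref{curscatwotheo_maintheo} (in the transitional regime $\re a = \re b$), and \ref{secunbalcauchy_maintheo}, with the same caveat about extending those results to asymmetric (half-open) growth sectors that the paper itself makes. Your explicit bookkeeping, including $A\zeta^{1-n} = -e^{\mp i2\pi n/3}$ and the prefactor $\xi^{1/4}/\sqrt{3\pi}$, reproduces the stated formulas exactly.
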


\begin{figure}[!htb]
	\centering
	\includegraphics[width=0.8\textwidth]{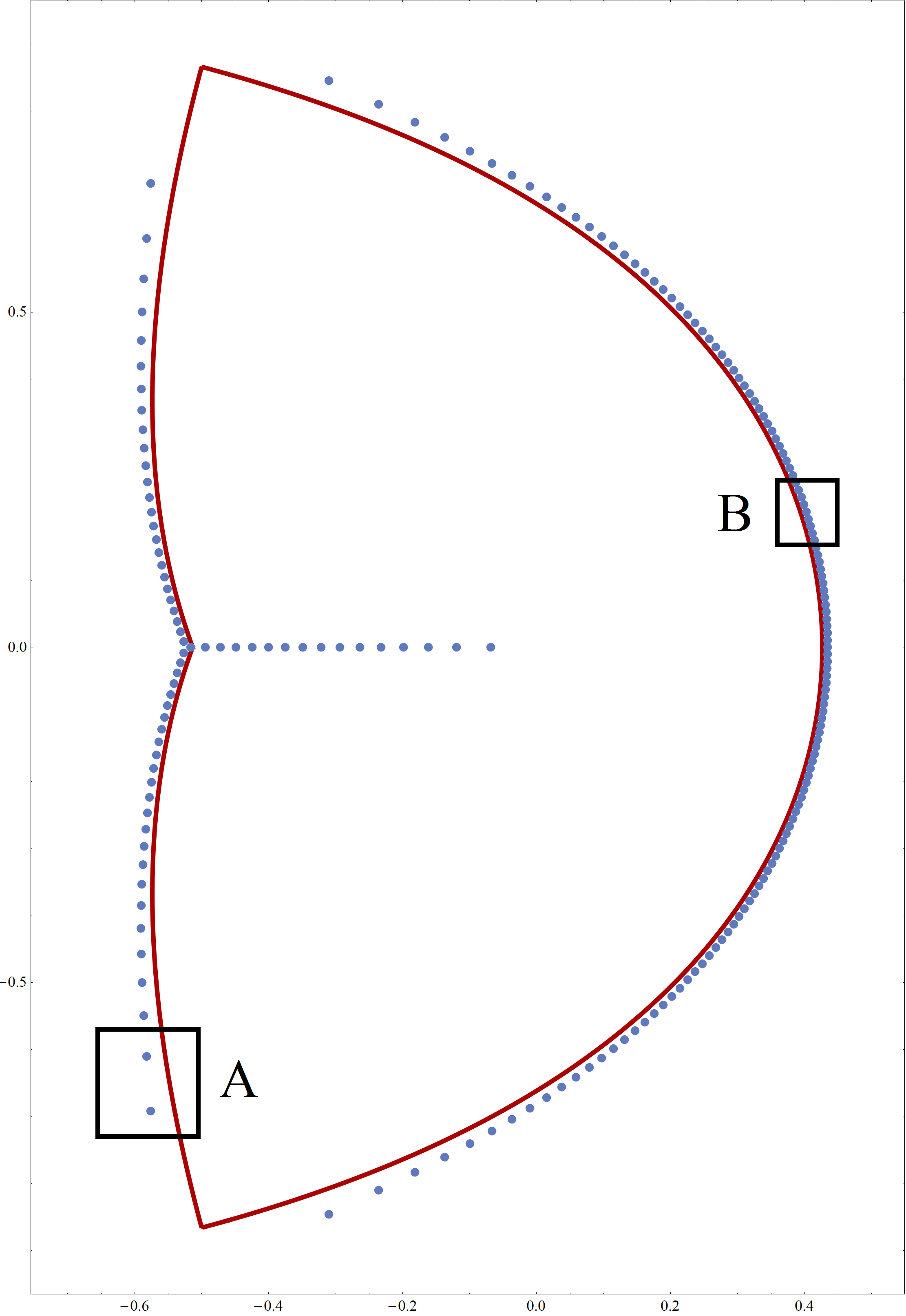}
	\caption[The zeros of {$p_{n-1}[\Ai]((3/2)^{2/3} r_n z)$} for {$n=200$} and their limit curve.]{The zeros of $p_{n-1}[\Ai]((3/2)^{2/3} r_n z)$ for $n=200$ in blue and their limit curve in red.}
	\label{applicationsfig_ai1}
\end{figure}

\begin{figure}[!htb]
	\centering
	\begin{tabular}{cc}
		\includegraphics[width=0.47\textwidth]{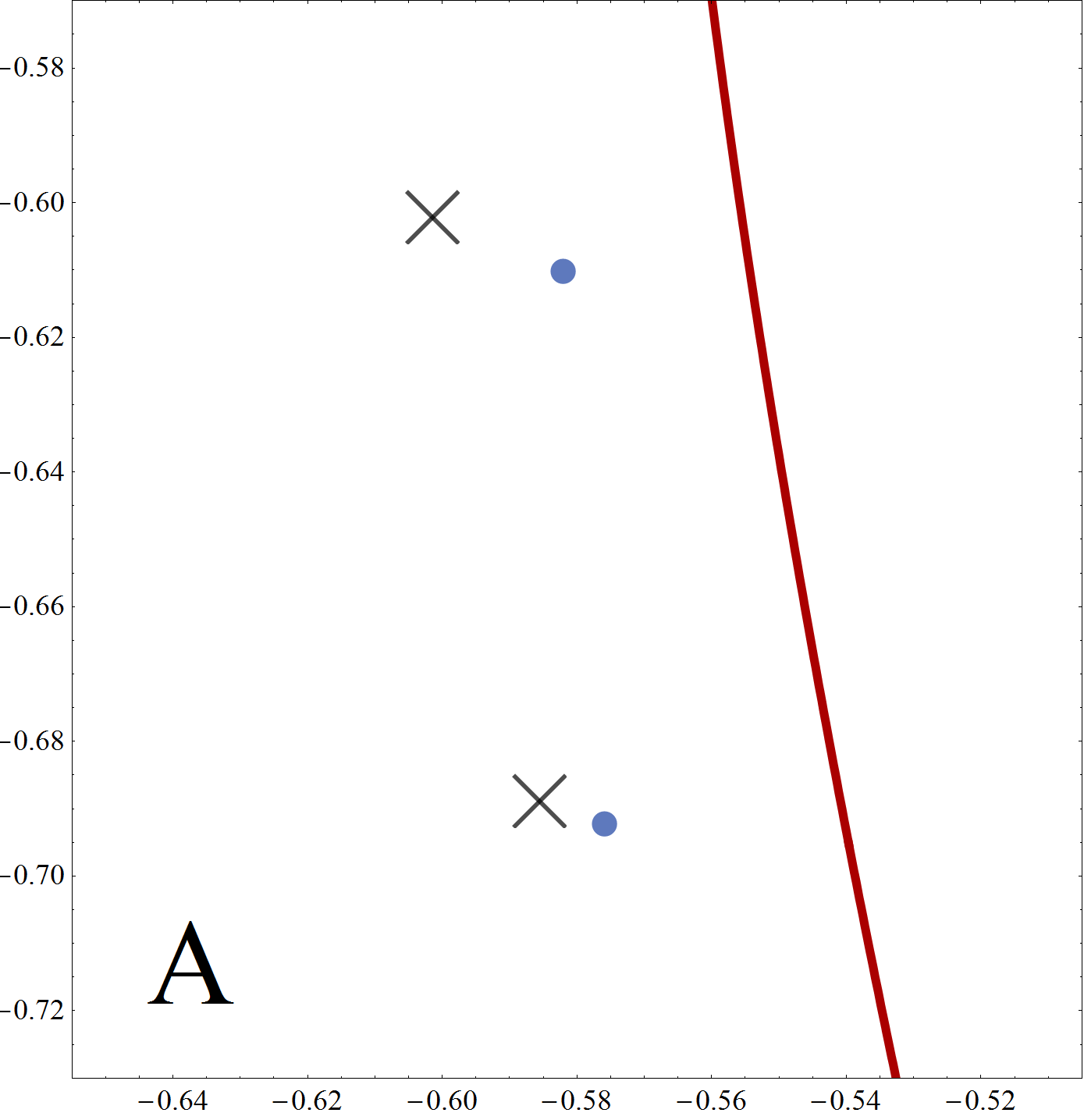}
		& \includegraphics[width=0.47\textwidth]{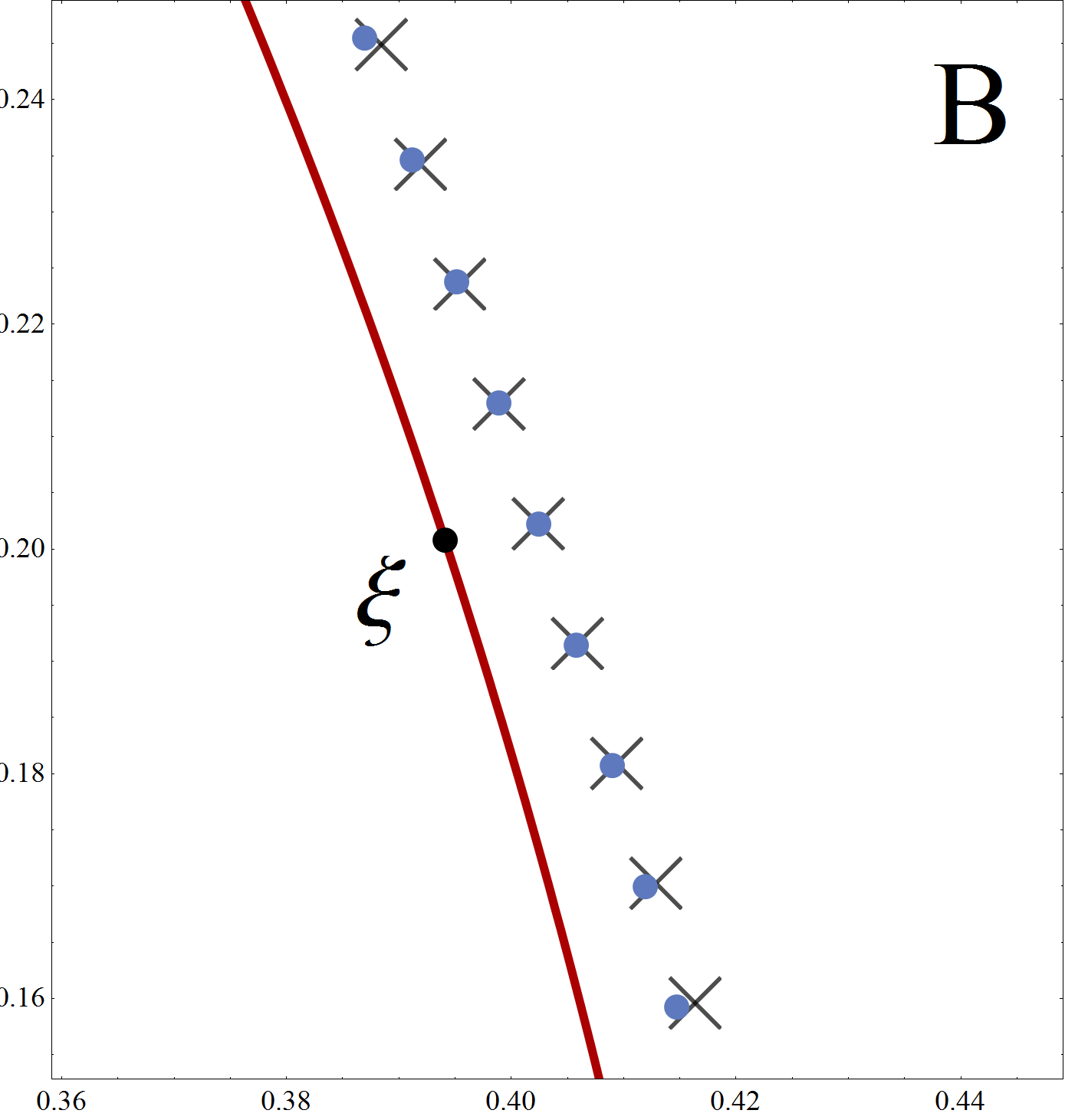}
	\end{tabular}
	\caption[Magnifications of the regions labeled in Figure \ref{applicationsfig_ai1}.]{Magnifications of the regions labeled in Figure \ref{applicationsfig_ai1}. Region A shows the approximations for the zeros in that region, represented as black crosses, which are given by the corner scaling limit in \eqref{applicationseq_aicorner}. Region B shows the approximations for the zeros in that region, represented as black crosses, which are given by the curve scaling limit in \eqref{applicationseq_aicurve}. The point $\xi$ which is used in \eqref{applicationseq_aicurve} to obtain the approximations in Region B is shown in black.}
	\label{applicationsfig_ai2}
\end{figure}

\begin{remark}
Due to the appearance of $e^{\pm i2\pi n/3}$ in the scaling limit corresponding to the arcs of the limit curve, the ratio $p_{n-1}[\Ai](\cdots)/\Ai(\cdots)$ may tend to one of three different limits if we restrict $n$ to run through any one of the three residue classes modulo $3$.
\end{remark}

The DLMF gives the following uniform asymptotics for $\Bi$ \cite[eqns.\ 9.7.7, 9.7.11, and 9.7.13]{nist:dlmf}: for any fixed $\epsilon > 0$,
\[
	\Bi(z) \sim \frac{z^{-1/4}}{\sqrt{\pi}} \exp\!\left(\frac{2}{3} z^{3/2}\right)
\]
as $|z| \to \infty$ with $|{\arg z}| \leq \pi/3 - \epsilon$,
\[
	\Bi(-z) = \frac{z^{-1/4}}{\sqrt{\pi}} \left[ -\sin \omega \bigl[1+o(1)\bigr] + O\!\left(\frac{\cos \omega}{\omega}\right) \right]
\]
as $|z| \to \infty$ with $|{\arg z}| \leq 2\pi/3 - \epsilon$, and
\[
	\Bi\!\left(e^{\pm i \pi/3} z\right) = e^{\pm i \pi/6} z^{-1/4} \sqrt\frac{2}{\pi} \left[ \cos\!\left(\omega \mp \frac{i}{2}\log 2 \right) \!\bigl[1+o(1)\bigr] + O\!\left(\frac{\sin\!\left(\omega \mp \frac{i}{2}\log 2 \right)}{\omega}\right)\right]
\]
as $|z| \to \infty$ with $|{\arg z}| \leq 2\pi/3 - \epsilon$, where
\[
	\omega = \frac{2}{3} z^{3/2} - \frac{\pi}{4}.
\]
It follows that for any $\epsilon > 0$ there is some $\mu < 2/3$ such that
\[
	\sqrt{\pi} \Bi(z) \sim \begin{cases}
		z^{-1/4} \exp\!\left(\frac{2}{3} z^{3/2}\right) & \text{for } |{\arg z}| \leq \pi/3 - \epsilon, \\
		\frac{1}{2} e^{\mp i\pi/3} \left(e^{\pm i2\pi/3} z\right)^{-1/4} \exp\!\left[\frac{2}{3} \!\left(e^{\pm i2\pi/3} z\right)^{3/2}\right]\! & \text{for } \!\left|{\arg e^{\pm i2\pi/3} z}\right| \leq \pi/3 - \epsilon, \\
		O\!\left(\exp\!\left(\mu|z|^{3/2}\right)\right) & \text{otherwise}
	\end{cases}
\]
as $|z| \to \infty$ uniformly in each sector. Evidently the $\Bi$ function has three directions of maximal exponential growth. The discussion in Section \ref{seccurvemany_sec} and Theorems \ref{curscamanytheo_aeqbeqb} and \ref{secunbalcauchymany_maintheo} yield the following collection of results.

\begin{theorem}
Let $p_n[\Bi](z)$ denote the $n^\th$ partial sum of the Maclaurin series for $\Bi(z)$, let
\[
	S = \left\{ z \in \C : \left|z^{3/2}\exp\!\left(1-z^{3/2}\right)\right| = 1,\ |z| \leq 1, \text{ and } |{\arg z}| < \pi/3 \right\},
\]
and define
\[
	r_n = \left(\frac{2n}{3}\right)^{2/3}.
\]

The limit points of the zeros of the scaled partial sums $p_{n-1}[\Bi]((3/2)^{2/3} r_n z)$ which do not lie on the rays $\arg z = \pm \pi/3, \pi$ are precisely the points of the set
\[
	S \cup e^{i2\pi/3} S \cup e^{-i2\pi/3} S.
\]

Let $\xi \in S$, $\xi \neq 1$ and define
\[
	\tau = \im\!\left(\xi^{3/2} - 1 - \frac{3}{2}\log \xi\right),
\]
\[
	\tau_n \equiv \frac{\tau n}{3/2} \pmod{2\pi}, \quad -\pi < \tau_n \leq \pi,
\]
and
\[
	z_n(w) = \xi \left( 1 + \frac{\log n}{2(1-\xi^{3/2})n} - \frac{w-i\tau_n}{(1-\xi^{3/2})n} \right).
\]
Then
\begin{align*}
&\frac{p_{n-1}[\Bi]\!\left((3/2)^{2/3} r_n z_n(w) \right)}{\Bi\bigl((3/2)^{2/3} r_n z_n(w)\bigr)} \\
&\qquad = 1 - \left( \frac{1}{1-\xi} - \frac{e^{-i2\pi n/3}}{2(e^{i2\pi/3} - \xi)} - \frac{e^{i2\pi n/3}}{2(e^{-i2\pi/3} - \xi)} \right) \frac{\xi^{1/4} e^{-w}}{\sqrt{3\pi}} + o(1)
\end{align*}
and
\begin{align}
&\frac{p_{n-1}[\Bi]\!\left((3/2)^{2/3} e^{\pm i2\pi/3} r_n z_n(w) \right)}{\Bi\bigl((3/2)^{2/3} e^{\pm i2\pi/3} r_n z_n(w)\bigr)} \nonumber \\
&\qquad = 1 - \left( \frac{1}{1-\xi} - \frac{2e^{\pm i2\pi n/3}}{e^{\mp i2\pi/3} - \xi} + \frac{e^{\mp i2\pi n/3}}{e^{\pm i2\pi/3} - \xi} \right) \frac{\xi^{1/4} e^{-w}}{\sqrt{3\pi}} + o(1)
\label{applicationseq_bicurve}
\end{align}
as $n \to \infty$ uniformly on compact subsets of the $w$-plane.

Additionally, for fixed $k \in \{-1,0,1\}$
\begin{equation}
\label{applicationseq_bicorner}
	\frac{p_{n-1}[\Bi]\!\left((3/2)^{2/3} e^{i2\pi k/3} r_n z_n(w) \right)}{\Bi\bigl((3/2)^{2/3} e^{i2\pi k/3} r_n z_n(w)\bigr)} = \frac{1}{2} \erfc\!\left(w \sqrt{3/4}\right)
\end{equation}
uniformly on compact subsets of $\re w < 0$.
\end{theorem}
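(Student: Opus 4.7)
The plan is to reduce the theorem to the multi-direction generalizations developed in Section \ref{seccurvemany_sec} and Theorems \ref{curscamanytheo_aeqbeqb} and \ref{secunbalcauchymany_maintheo}, and then to exploit the three-fold rotational symmetry of the Airy equation to pass from statements about the main direction $\arg z = 0$ to the corresponding statements in the rotated directions $\arg z = \pm 2\pi/3$. First I would normalize: the factor $(3/2)^{2/3}$ is chosen so that the substitution $z \mapsto (3/2)^{2/3} z$ converts the leading exponent $\tfrac{2}{3} z^{3/2}$ of $\Bi$ into the canonical form $z^{3/2}$, which fixes $\lambda = 3/2$. After dividing $\sqrt{\pi}\,\Bi((3/2)^{2/3} z)$ by its leading coefficient in the main sector, the asymptotic given in the excerpt places the rescaled function into the framework of \eqref{secunbalcauchymany_fgrowth} with $m=2$, the parameters $a = b_1 = b_2 = -1/4$, the unit complex numbers $\zeta_1 = e^{-i2\pi/3}$ and $\zeta_2 = e^{i2\pi/3}$, and $A_1 = \tfrac{1}{2} e^{-i\pi/3}$, $A_2 = \tfrac{1}{2} e^{i\pi/3}$.

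With the set-up in place, the location of the limit curve in the main sector $|{\arg z}| < \pi/3$ follows directly from the generalization of Theorem \ref{seccurvetwo_maintheo} to $m$ directions discussed in Section \ref{seccurvemany_sec}: the limit points of the scaled zeros in this sector are exactly the points of $S$. For the curve scaling limit in the main sector I would apply Theorem \ref{curscamanytheo_aeqbeqb}, which is precisely the case $\re a = \re b_1 = \re b_2$. Since $b_k - a = 0$, each factor $r_n^{b_k-a}$ equals $1$; substituting the values of $A_k$ and $\zeta_k$ above into the formula of that theorem and using $\lambda = 3/2$, $\xi^a = \xi^{-1/4}$, $\xi^a \sqrt{2\pi\lambda} = \xi^{-1/4}\sqrt{3\pi}$, the three terms
\[
	\frac{1}{1-\xi}, \qquad \frac{A_1 \zeta_1^{1-n}}{\zeta_1 - \xi}, \qquad \frac{A_2 \zeta_2^{1-n}}{\zeta_2 - \xi}
\]
should collapse, after absorbing the constant phases $\zeta_k \cdot A_k = \tfrac{1}{2} e^{\mp i\pi}$, to the bracket displayed in the theorem statement for $\arg z = 0$.

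To obtain the curve scaling limit in the rotated sectors $\arg z = \pm 2\pi/3$ I would apply the same machinery to the rotated function $h_\pm(z) = \Bi(e^{\pm i2\pi/3} z)$, whose partial sums satisfy $p_n[h_\pm](z) = p_n[\Bi](e^{\pm i2\pi/3} z)$, so that zeros of $p_{n-1}[h_\pm]$ near $\arg z = 0$ correspond to zeros of $p_{n-1}[\Bi]$ near $\arg z = \pm 2\pi/3$. The crucial book-keeping here is that, after rotating, the direction which was the \emph{main} one for $\Bi$ becomes one of the two \emph{secondary} directions for $h_\pm$; since we now normalize by $\tfrac{1}{2} e^{\pm i\pi/3}/\sqrt{\pi}$ rather than $1/\sqrt{\pi}$, the new coefficient $A$ in that secondary direction acquires magnitude $2$ rather than $1$, which is exactly the source of the factor $-2$ in the displayed bracket \eqref{applicationseq_bicurve}. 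The $n$-dependent phases $e^{\pm i2\pi n/3}$ arise from the $\zeta_k^{1-n}$ factors in Theorem \ref{curscamanytheo_aeqbeqb}, as in the sine and cosine examples of Section \ref{applicationssec_sincos}.

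Finally, the corner scaling limits \eqref{applicationseq_bicorner} follow from Theorem \ref{secunbalcauchymany_maintheo}: its hypothesis $\re b_k - \re a < \lambda/2$ reads $0 < 3/4$, which is trivially satisfied, and the complementary error function argument $w\sqrt{\lambda/2} = w\sqrt{3/4}$ falls out of the substitution. Applying this at the main direction $\arg z = 0$ gives the case $k = 0$, and applying it to the rotated functions $h_\pm$ gives $k = \pm 1$. The principal technical hurdles I anticipate are bookkeeping ones: carefully fixing branches of $(z/\zeta_j)^{3/2}$ so that the DLMF asymptotics line up with the canonical form \eqref{secunbalcauchymany_fgrowth} on disjoint sectors, and verifying that the sector $|{\arg z}| < \pi/3$ is small enough both for the sectors around $\arg z = 0, \pm 2\pi/3$ to be disjoint and for the uniform DLMF asymptotics to hold after the rotation. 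No new analytic ideas beyond those in Chapters \ref{chap_limitcurves}--\ref{chap_posfinite} should be required.
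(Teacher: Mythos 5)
Your proposal is correct and is essentially the paper's own argument: the paper likewise reads off $\lambda = 3/2$, $a = b_1 = b_2 = -1/4$, $\zeta_{1,2} = e^{\mp i2\pi/3}$, $A_{1,2} = \tfrac{1}{2}e^{\mp i\pi/3}$ from the DLMF asymptotics for $\sqrt{\pi}\,\Bi$ and then invokes the $m$-direction discussion of Section \ref{seccurvemany_sec} together with Theorems \ref{curscamanytheo_aeqbeqb} and \ref{secunbalcauchymany_maintheo}, handling the rotated sectors exactly as you do by renormalizing the rotated function so that the former main direction becomes a secondary one with $|A|=2$. Your parameter bookkeeping (including $\xi^{1/4}/\sqrt{3\pi}$, the phases $A_k\zeta_k^{1-n}$, and the condition $0 < 3/4$ for the corner limit) checks out against the stated brackets.
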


\begin{figure}[!htb]
	\centering
	\includegraphics[width=0.93\textwidth]{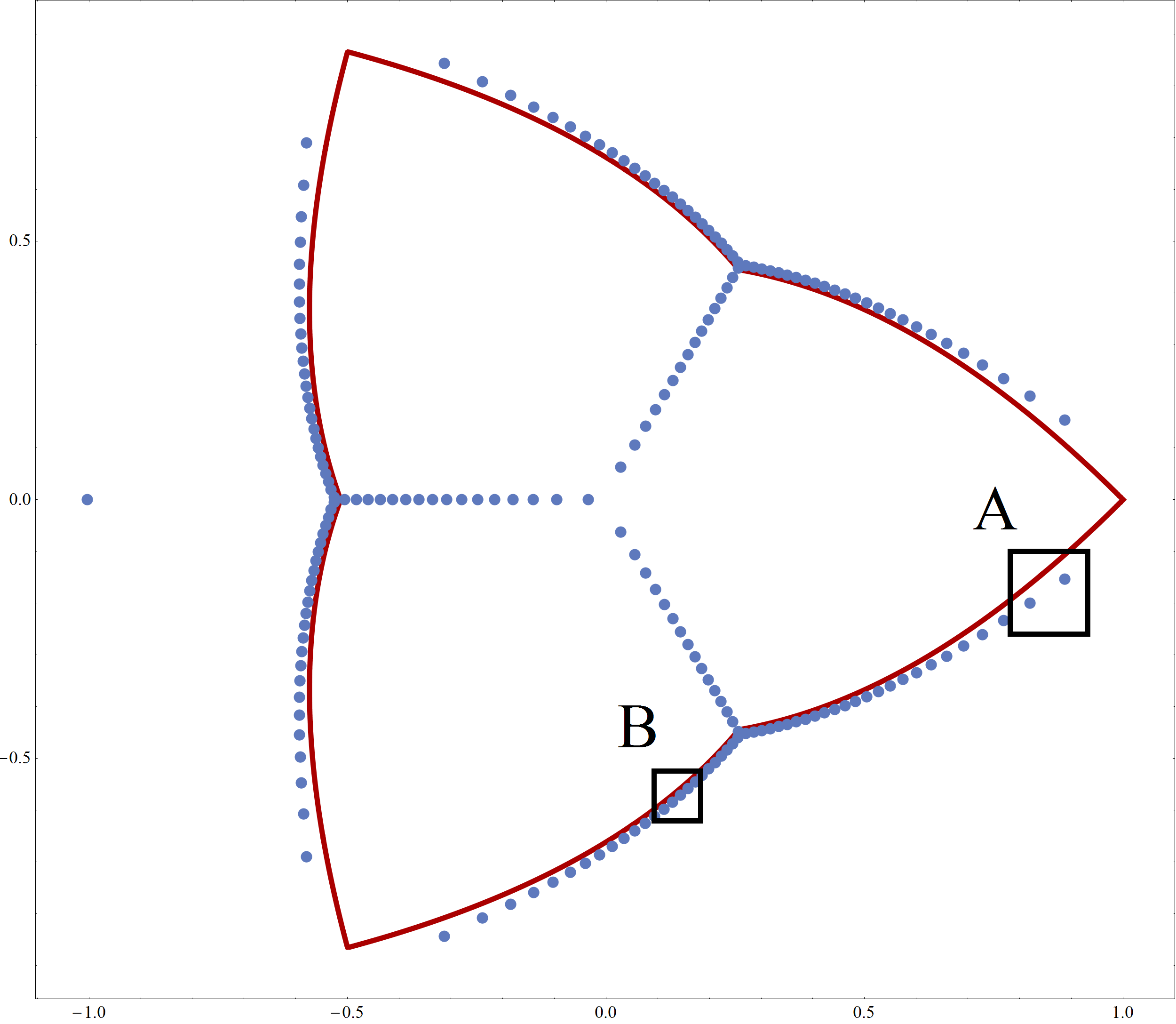}
	\caption[The zeros of {$p_{n-1}[\Bi]((3/2)^{2/3} r_n z)$} for {$n=200$} and their limit curve.]{The zeros of $p_{n-1}[\Bi]((3/2)^{2/3} r_n z)$ for $n=200$ in blue and their limit curve in red.}
	\label{applicationsfig_bi1}
\end{figure}

\begin{figure}[!htb]
	\centering
	\begin{tabular}{cc}
		\includegraphics[width=0.47\textwidth]{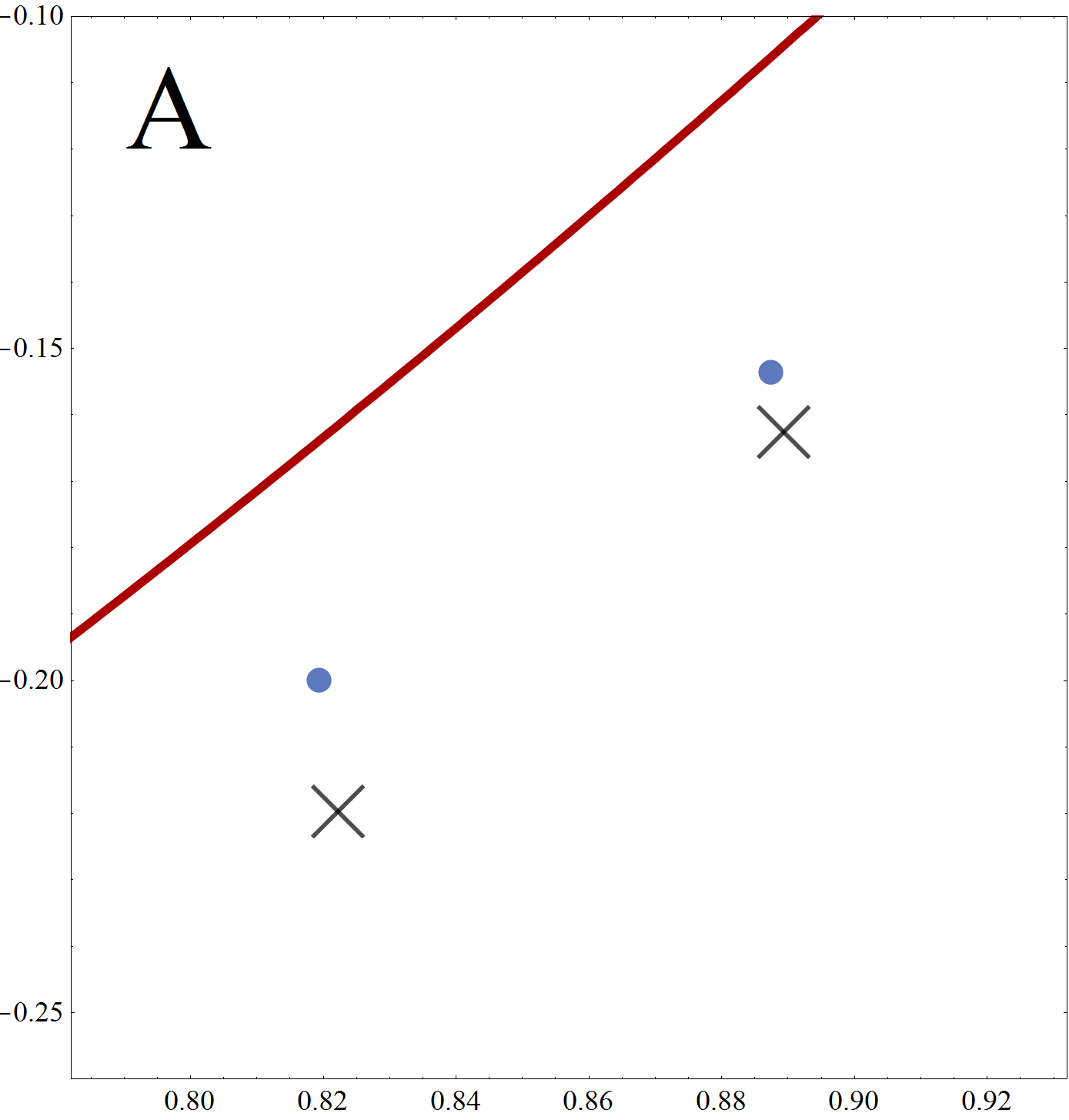}
		& \includegraphics[width=0.47\textwidth]{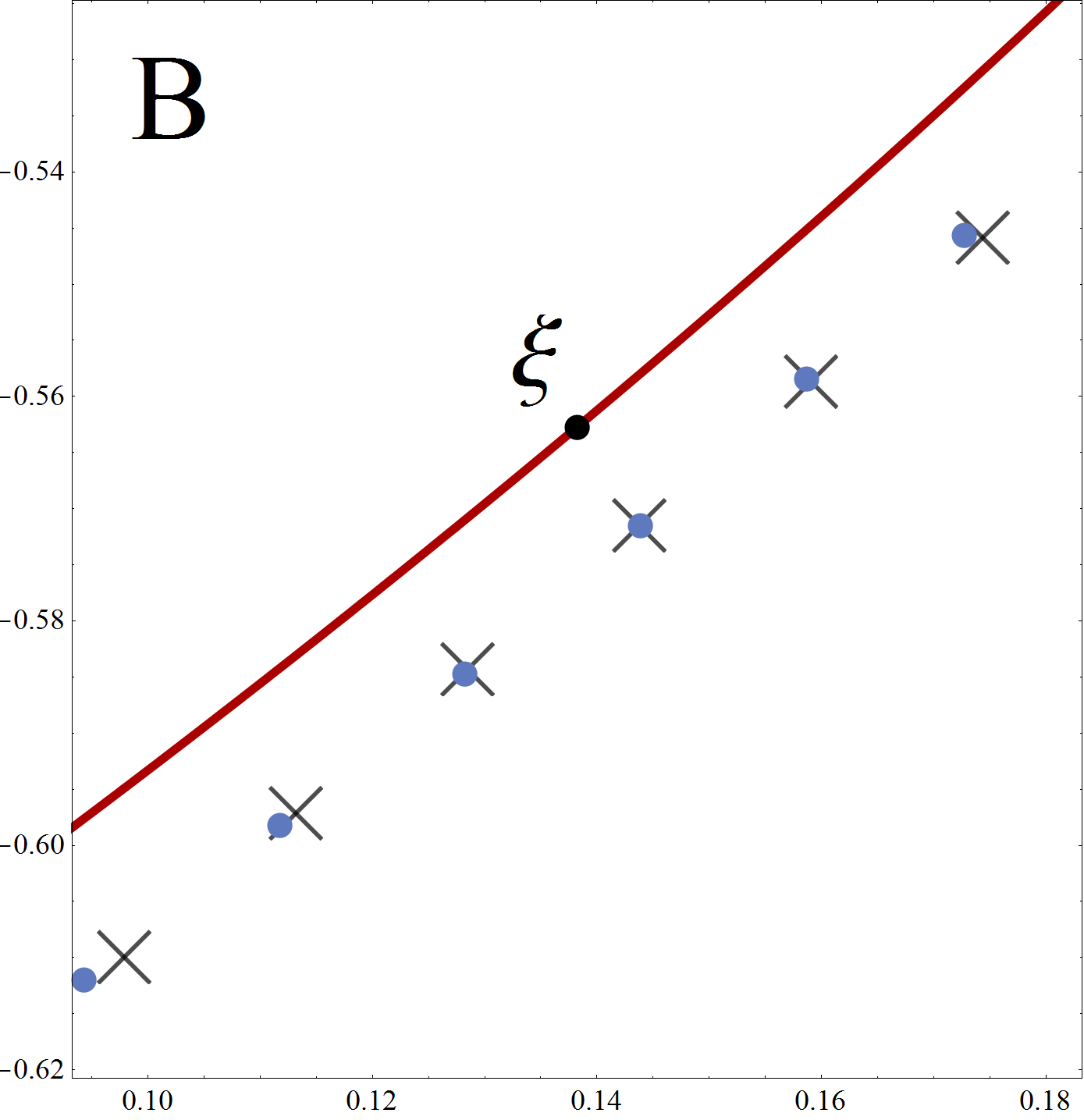}
	\end{tabular}
	\caption[Magnifications of the regions labeled in Figure \ref{applicationsfig_bi1}.]{Magnifications of the regions labeled in Figure \ref{applicationsfig_bi1}. Region A shows the approximations for the zeros in that region, represented as black crosses, which are given by the corner scaling limit in \eqref{applicationseq_bicorner}. Region B shows the approximations for the zeros in that region, represented as black crosses, which are given by the curve scaling limit in \eqref{applicationseq_bicurve}. The point $\xi$ which is used in \eqref{applicationseq_bicurve} to obtain the approximations in Region B is shown in black.}
	\label{applicationsfig_bi2}
\end{figure}

\section{Parabolic Cylinder Functions}
\label{applicationssec_paraboliccylinder}

The parabolic cylinder function $U(a,z)$ is an entire function which is defined for $a,z \in \C$ as the solution of the differential equation
\[
	\frac{d^2 u}{dz^2} - \left(\frac{1}{4}z^2 + a\right) u = 0
\]
identified by the asymptotic behavior
\[
	U(a,z) \sim z^{-a-1/2} e^{-z^2/4}
\]
as $|z| \to \infty$ with $|\arg z| \leq 3\pi/4 - \epsilon$ and
\[
	U(a,z) = z^{-a-1/2} e^{-z^2/4} \,\bigl[1+o(1)\bigr] \pm \frac{i\sqrt{2\pi}}{\Gamma(a+1/2)} e^{\mp i\pi a} z^{a-1/2} e^{z^2/4} \,\bigl[1+o(1)\bigr]
\]
as $|z| \to \infty$ with $\pi/4+\epsilon \leq \pm {\arg z} \leq 5\pi/4 - \epsilon$, where in the latter appropriate choices for the branches of $z^{-a-1/2}$ and $z^{a-1/2}$ and an appropriate determination of $\arg$ is made \cite[eqns.\ 12.9.1 and 12.9.3]{nist:dlmf}. These functions are sometimes written using the alternate notation
\[
	D_\nu(z) = U(-\nu-1/2,z),
\]
as in \cite[sec.\ 19.3]{aands:handbook}.

From the above asymptotic it follows that $U$ has three directions of maximal exponential growth. Indeed, for any $\epsilon > 0$ there is a $\mu < 1/4$ such that
\[
	U(a,z) \!\sim\! \begin{cases}
		\frac{\sqrt{2\pi}}{\Gamma(a+1/2)} (-z)^{a-1/2} \exp\!\left[\frac{1}{4}(-z)^2\right] & \text{for } |{\arg -z}| \leq \pi/4 - \epsilon, \\
		e^{i\pi (a/2+1/4)} (iz)^{-a-1/2} \exp\!\left[\frac{1}{4}(iz)^2\right] & \text{for } -\pi/4 + \epsilon < {\arg iz} < \pi/2 - \epsilon, \\
		e^{-i\pi (a/2+1/4)} (z/i)^{-a-1/2} \exp\!\left[\frac{1}{4}(z/i)^2\right]\!\! & \text{for } -\pi/2 + \epsilon < {\arg(z/i)} < \pi/4 - \epsilon, \\
		O\!\left(\exp\!\left(\mu|z|^2\right)\right) & \text{otherwise}
	\end{cases}
\]
as $|z| \to \infty$ uniformly in each sector. As with the $\Ai$ function we will consider the half-open sectors $0 \leq \pm \arg z < 3\pi/4$ to be maximal growth sectors even though the maximal growth rays $\arg z = \pm \pi/2$ do not bisect them. With this in mind, the discussion in Section \ref{seccurvemany_sec} and Theorems \ref{curscamanytheo_agebb}, \ref{curscamanytheo_aeqbgeb}, \ref{curscamanytheo_aeqbeqb}, \ref{curscamanytheo_beqbgea} and \ref{secunbalcauchymany_maintheo} yield the following collection of results.

\begin{theorem}
Let $p_n[U](z)$ denote the $n^\th$ partial sum of the Maclaurin series for $U(a,z)$, let
\[
	S = \left\{ z \in \C : \left|z^2\exp\!\left(1-z^2\right)\right| = 1,\ |z| \leq 1, \text{ and } -\pi/2 \leq {\arg z} < \pi/4 \right\},
\]
and define
\[
	S_{\pi/4} = S \cap \{z \in \C : |{\arg z}| < \pi/4\}, \qquad S_+ = S, \qquad S_- = \overline S
\]
and
\[
	r_n = \left(\frac{n}{2}\right)^{1/2}.
\]

The limit points of the zeros of the scaled partial sums $p_{n-1}[U](2 r_n z)$ which do not lie on the rays $\arg z = \pm 3\pi/4$ are precisely the points of the set
\[
	iS_+ \cup -iS_- \cup -S_{\pi/4}.
\]

Define
\[
	\tau = \im\!\left(\xi^2 - 1 - 2\log \xi\right),
\]
\[
	\tau_n \equiv \frac{\tau n}{2} \pmod{2\pi}, \quad -\pi < \tau_n \leq \pi,
\]
\[
	\sigma_n \equiv \frac{\pi n}{2} \pmod{2\pi}, \quad -\pi < \sigma_n \leq \pi,
\]
\[
	z_n^1(w) = \xi \left( 1 + \frac{\log n}{2(1-\xi^2)n} - \frac{w-i\tau_n}{(1-\xi^2)n} \right),
\]
\[
	z_n^2(w) = \xi \left( 1 + \frac{(2a+1)\log n}{2(1-\xi^2)n} - \frac{w-i\sigma_n-i\tau_n}{(1-\xi^2)n} \right),
\]
and
\[
	z_n^\pm(w) = \xi\left( 1 + \frac{(1-2a)\log n}{2(1-\xi^2)n} - \frac{w \mp i\sigma_n-i\tau_n}{(1-\xi^2)n} \right).
\]

Let $\xi \in S_{\pi/4}$, $\xi \neq 1$. If $\re a > 0$ then
\[
	\lim_{n \to \infty} \frac{p_{n-1}[U](-2 r_n z_n^1(w))}{U(a,-2 r_n z_n^1(w))} = 1 - \frac{\xi^{1/2-a} e^{-w}}{2\sqrt{\pi}(1-\xi)},
\]
if $\re a < 0$ then
\begin{align}
&\frac{p_{n-1}[U](-2 r_n z_n^2(w))}{U(a,-2 r_n z_n^2(w))} \nonumber \\
&\qquad = 1 - \left( \frac{\exp\{i\pi(a/2+3/4)\}}{i-\xi} + \frac{(-1)^n \exp\{-i\pi(a/2-1/4)\}}{i+\xi} \right) \frac{\Gamma(a+1/2) e^{-w}}{2^{a+3/2} \pi \xi^{a-1/2}} \nonumber \\
&\qquad\qquad {}+ o(1)
\label{applicationseq_paraboliccurve}
\end{align}
as $n \to \infty$, and if $\re a = 0$ then
\begin{align*}
&\frac{p_{n-1}[U](-2 r_n z_n^1(w))}{U(a,-2 r_n z_n^1(w))} \\
	&\qquad = 1 - \left[ \frac{1}{1-\xi} + \frac{\Gamma(a+1/2) r_n^{-2a}}{4^a \sqrt{2\pi}} \left( \frac{i^{1-n} \exp\{i\pi(a/2+1/4)\}}{i-\xi} \right.\right. \\
	&\qquad\qquad\qquad\qquad\qquad \left.\left. {}- \frac{(-i)^{1-n} \exp\{-i\pi(a/2+1/4)\}}{i+\xi} \right) \right] \frac{e^{-w}}{2\sqrt{\pi} \xi^{a-1/2}} + o(1)
\end{align*}
as $n \to \infty$, with each limit holding uniformly on compact subsets of the $w$-plane.

Let $\xi \in S_\pm$, $\xi \neq 1$. If $\re a > 0$ then
\[
	\lim_{n \to \infty} \frac{p_{n-1}[U](\pm 2ir_nz_n^\pm(w))}{U(a,\pm 2ir_nz_n^\pm(w))} = 1 - i\exp\!\left\{\pm i\pi\!\left(\tfrac{a}{2} + \tfrac{1}{4}\right)\!\right\} \!\frac{2^{a-1/2} \xi^{a+1/2} e^{-w}}{\Gamma(a+1/2) (i\mp\xi)},
\]
if $\re a < 0$ then
\[
	\frac{p_{n-1}[U](\pm 2ir_nz_n^1(w))}{U(a,\pm 2ir_nz_n^1(w))} = 1 - \left( \frac{1}{1-\xi} + \frac{(-1)^n \exp\{\pm i\pi(a+1/2)\}}{1+\xi} \right) \frac{\xi^{a+1/2} e^{-w}}{2\sqrt{\pi}} + o(1)
\]
as $n \to \infty$, and if $\re a = 0$ then
\begin{align*}
&\frac{p_{n-1}[U](\pm 2ir_nz_n^1(w))}{U(a,\pm 2ir_nz_n^1(w))} \\
&\qquad = 1 - \left( \frac{1}{1-\xi} + \frac{(-1)^n \exp\{\pm i\pi(a+1/2)\}}{1+\xi} \right. \\
&\qquad\qquad\qquad\qquad \left. {}\pm \exp\!\left\{\pm i\pi \left(\tfrac{a}{2}+\tfrac{1}{4}\right)\right\} \!\frac{4^a \sqrt{2\pi} (\pm i)^{1-n} r_n^{2a}}{\Gamma(a+1/2)(i\mp\xi)} \right) \frac{\xi^{a+1/2} e^{-w}}{2\sqrt{\pi}} + o(1)
\end{align*}
as $n \to \infty$, with each limit holding uniformly on compact subsets of the $w$-plane.

Additionally,
\[
	\lim_{n \to \infty} \frac{p_{n-1}[U](-2r_n(1+w/\sqrt{n}))}{U(a,-2r_n(1+w/\sqrt{n}))} = \frac{1}{2} \erfc(w)
\]
if and only if $\re a > -1/2$ and
\begin{equation}
\label{applicationseq_paraboliccorner}
	\lim_{n \to \infty} \frac{p_{n-1}[U](\pm 2ir_n(1+w/\sqrt{n}))}{U(a,\pm 2ir_n(1+w/\sqrt{n}))} = \frac{1}{2} \erfc(w)
\end{equation}
if and only if $\re a < 1/2$, with each limit holding uniformly on compact subsets of $\re w < 0$.
\end{theorem}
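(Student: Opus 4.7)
The plan is to reduce both corner scaling limits to direct applications of Theorem \ref{secunbalcauchymany_maintheo}, with the asymptotic classification of $U(a,z)$ already worked out in the text preceding the theorem statement. The key is to bookkeep, for each of the three directions of maximal growth of $U(a,z)$, which exponent in the algebraic prefactor plays the role of the focal ``$a$'' of Theorem \ref{secunbalcauchymany_maintheo} and which play the roles of the $b_k$'s, and then verify that the asymmetric condition $\re b_k - \re a < \lambda/2$ translates exactly into the given inequalities on $\re a$.

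First I would normalize. The DLMF asymptotic has the exponential $\exp(z^2/4)$, so I set $w = z/2$ and consider $f(w) := U(a,2w)$; then $f$ has exponential $\exp(w^2)$ with $\lambda = 2$ in the notation of Theorem \ref{secunbalcauchymany_maintheo}, and the corresponding scaling factor is $r_n = (n/2)^{1/2}$, which is exactly the $r_n$ that appears in the theorem statement. In the direction $\arg w = \pi$ the leading algebraic prefactor is $(-w)^{a-1/2}$ (exponent $a-1/2$), while in the directions $\arg w = \pm \pi/2$ the prefactor is $w^{-a-1/2}$ up to constants (exponent $-a-1/2$). These three sectors of maximal growth were already identified in the preceding discussion and the ``otherwise'' estimate $O(\exp(\mu|z|^2))$ with $\mu < 1/4$ is recorded there, so the hypotheses of Theorem \ref{secunbalcauchymany_maintheo} are verified once we rotate so that the focal direction becomes the positive real axis.

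For the first scaling limit, I rotate so that $\arg w = \pi$ becomes the positive real axis: the focal exponent is $\alpha = a - 1/2$ and the other two directions contribute exponents $b_1 = b_2 = -a - 1/2$. The condition $\re b_k - \re \alpha < \lambda/2$ becomes
\[
\re(-a-1/2) - \re(a-1/2) < 1 \iff -2\re a < 1 \iff \re a > -1/2.
\]
For the second scaling limit, I rotate so that $\arg w = \pm \pi/2$ becomes the positive real axis; now the focal exponent is $\alpha = -a-1/2$ and the other directions contribute $b_1 = a - 1/2$ and $b_2 = -a-1/2$. The worst of the two constraints $\re b_k - \re \alpha < 1$ is
\[
\re(a-1/2) - \re(-a-1/2) < 1 \iff 2\re a < 1 \iff \re a < 1/2,
\]
while the other is $0 < 1$, always satisfied. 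Theorem \ref{secunbalcauchymany_maintheo} gives an \emph{if and only if}, so these inequalities are not only sufficient but also necessary for the stated scaling limits.

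There is no real obstacle here beyond careful bookkeeping; the main potential pitfall is the sign/rotation conventions, since the asymptotic formula for $U(a,z)$ in the sectors $\pi/4 + \epsilon \leq \pm \arg z \leq 5\pi/4 - \epsilon$ has \emph{two} terms, only one of which is exponentially dominant in any given direction, and one must confirm that the subdominant piece is indeed absorbed into the $[1+o(1)]$ uniformly on the relevant sector. With that uniform absorption in hand (which follows directly from the DLMF estimates, as already used in the curve-scaling parts of the theorem), Theorem \ref{secunbalcauchymany_maintheo} applies verbatim and delivers the two biconditionals, completing the proof.
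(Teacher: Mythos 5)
Your treatment of the two corner scaling limits is correct and is exactly the paper's route: normalize via $z = 2w$ so that $\lambda = 2$ and $r_n = (n/2)^{1/2}$, read off the three exponents $a-1/2$ (direction $\arg z = \pi$) and $-a-1/2$ (directions $\arg z = \pm\pi/2$) from the DLMF asymptotics, and feed them into the biconditional $\re b_k - \re a_{\mathrm{focal}} < \lambda/2$ of Theorem \ref{secunbalcauchymany_maintheo}. Your arithmetic $-2\re a < 1 \iff \re a > -1/2$ and $2\re a < 1 \iff \re a < 1/2$ matches the stated conditions, and since $\lambda = 2$ the limit function is $\tfrac12\erfc(w)$ as claimed.

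The gap is one of coverage: the corner limits are only the last clause of the theorem. The statement also asserts (i) the identification of the limit points of the zeros of $p_{n-1}[U](2r_n z)$ as the set $iS_+ \cup -iS_- \cup -S_{\pi/4}$, which in the paper comes from the limit-curve analysis of Section \ref{seccurvemany_sec}, and (ii) six distinct curve scaling limits --- the cases $\re a > 0$, $\re a < 0$, $\re a = 0$ for $\xi \in S_{\pi/4}$ and again for $\xi \in S_\pm$ --- which require choosing among Theorems \ref{curscamanytheo_agebb}, \ref{curscamanytheo_aeqbgeb}, \ref{curscamanytheo_aeqbeqb}, and \ref{curscamanytheo_beqbgea} according to how the real parts of the exponents $a-1/2$ and $-a-1/2$ (and the repeated $-a-1/2$) balance, and then tracking the oscillatory factors $\zeta_k^{1-n}$ and the constants $A_k$ to produce the explicit limits with $(-1)^n$, $i^{1-n}$, and $\Gamma(a+1/2)$ appearing. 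None of this bookkeeping is in your proposal, and it is where most of the case analysis (and most of the opportunities for sign errors) lives. You should also note, as the paper does, that the maximal-growth rays $\arg z = \pm\pi/2$ do not bisect the half-open sectors $0 \le \pm\arg z < 3\pi/4$, so a mild extension of the general theorems beyond their stated symmetric-sector hypotheses is being invoked; your remark about absorbing the subdominant term into $[1+o(1)]$ touches a related but distinct issue. As written, your argument establishes only the final biconditionals, not the full theorem.
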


\begin{figure}[!htb]
	\centering
	\includegraphics[width=0.9165\textwidth]{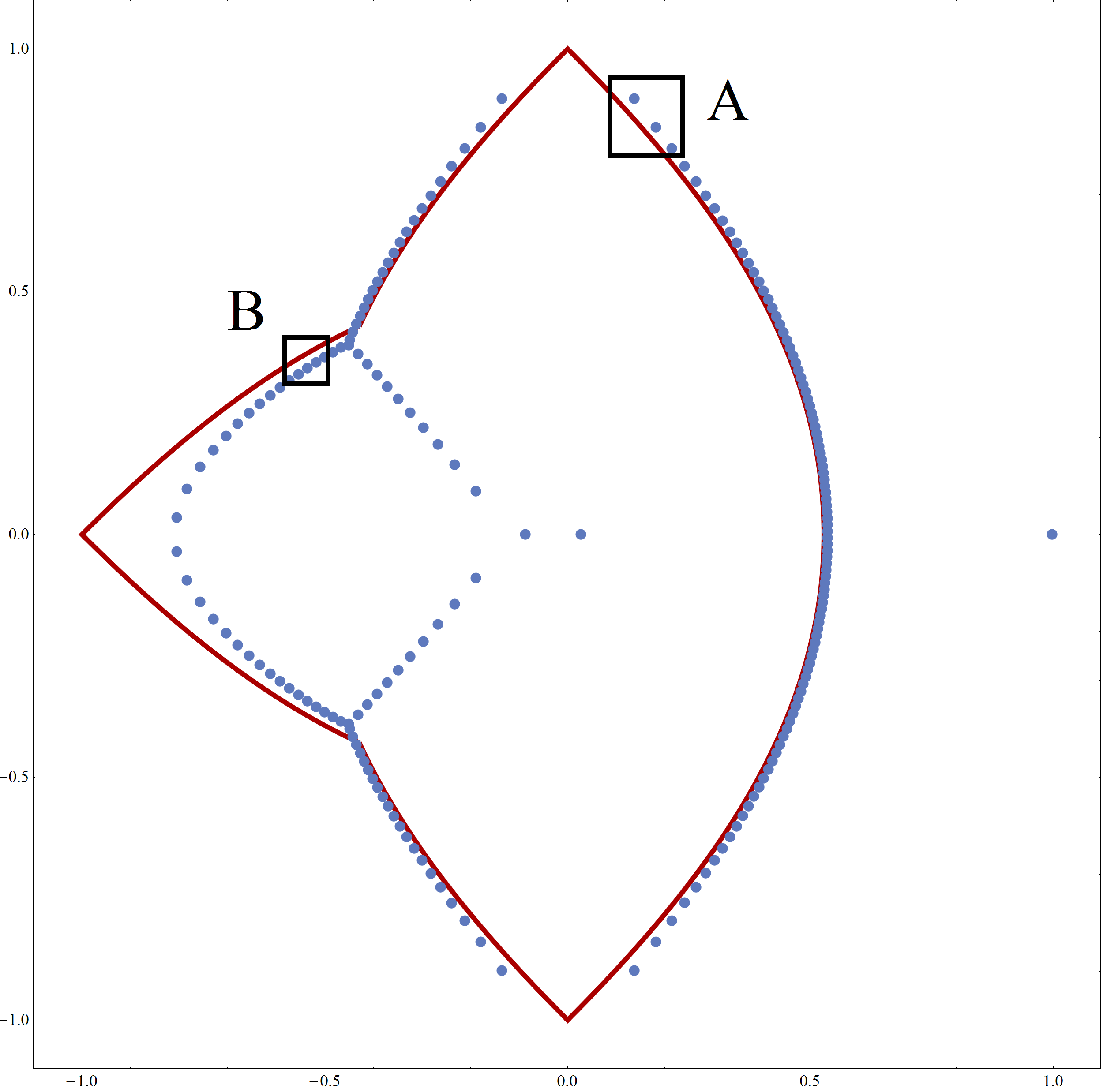}
	\caption[The zeros of {$p_{n-1}[U](2 r_n z)$} for {$a = -2$} and {$n=200$} and their limit curve.]{The zeros of $p_{n-1}[U](2 r_n z)$ for $a = -2$ and $n=200$ in blue and their limit curve in red.}
	\label{applicationsfig_parabolic1}
\end{figure}

\begin{figure}[!htb]
	\centering
	\begin{tabular}{cc}
		\includegraphics[width=0.47\textwidth]{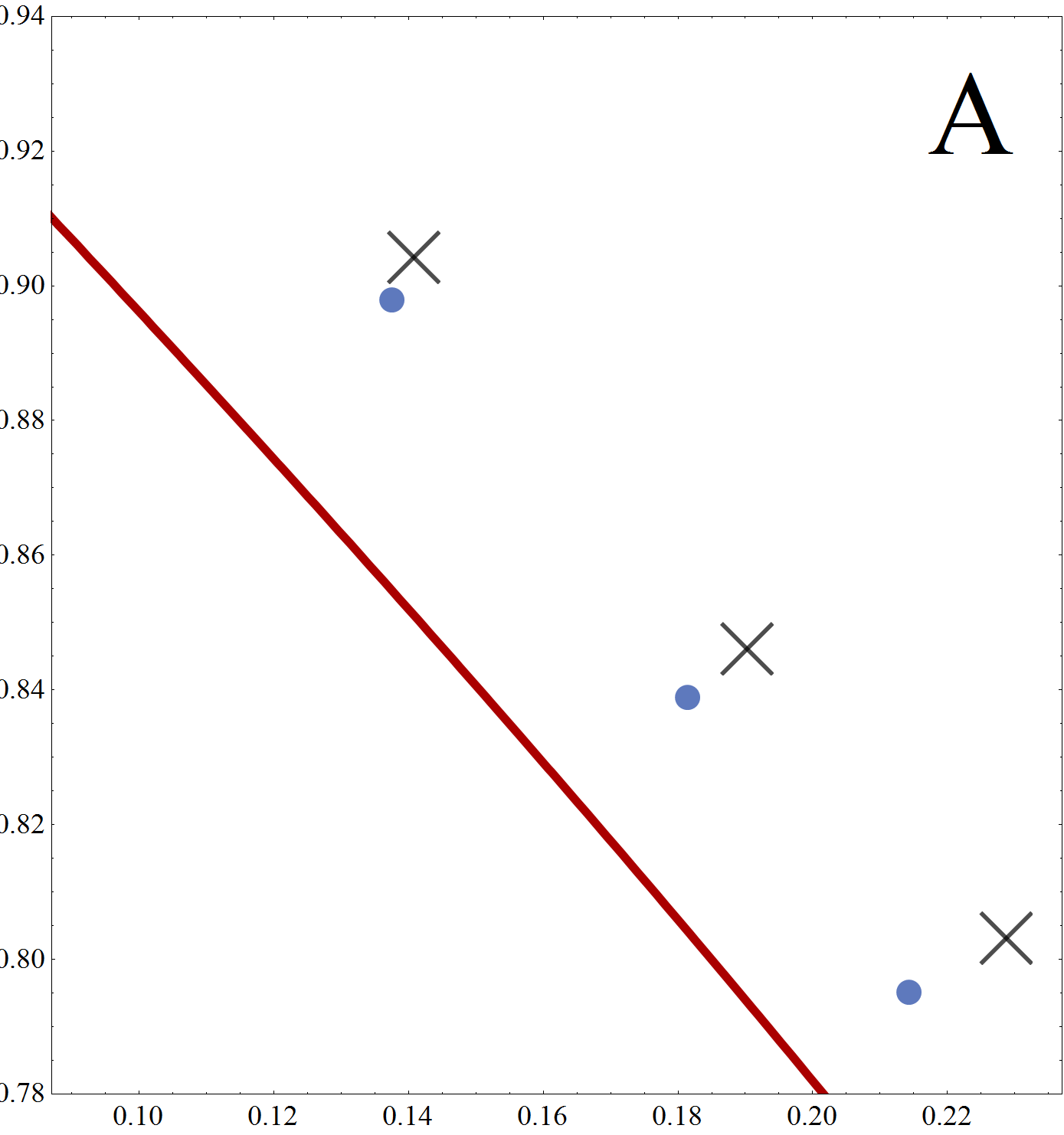}
		& \includegraphics[width=0.47\textwidth]{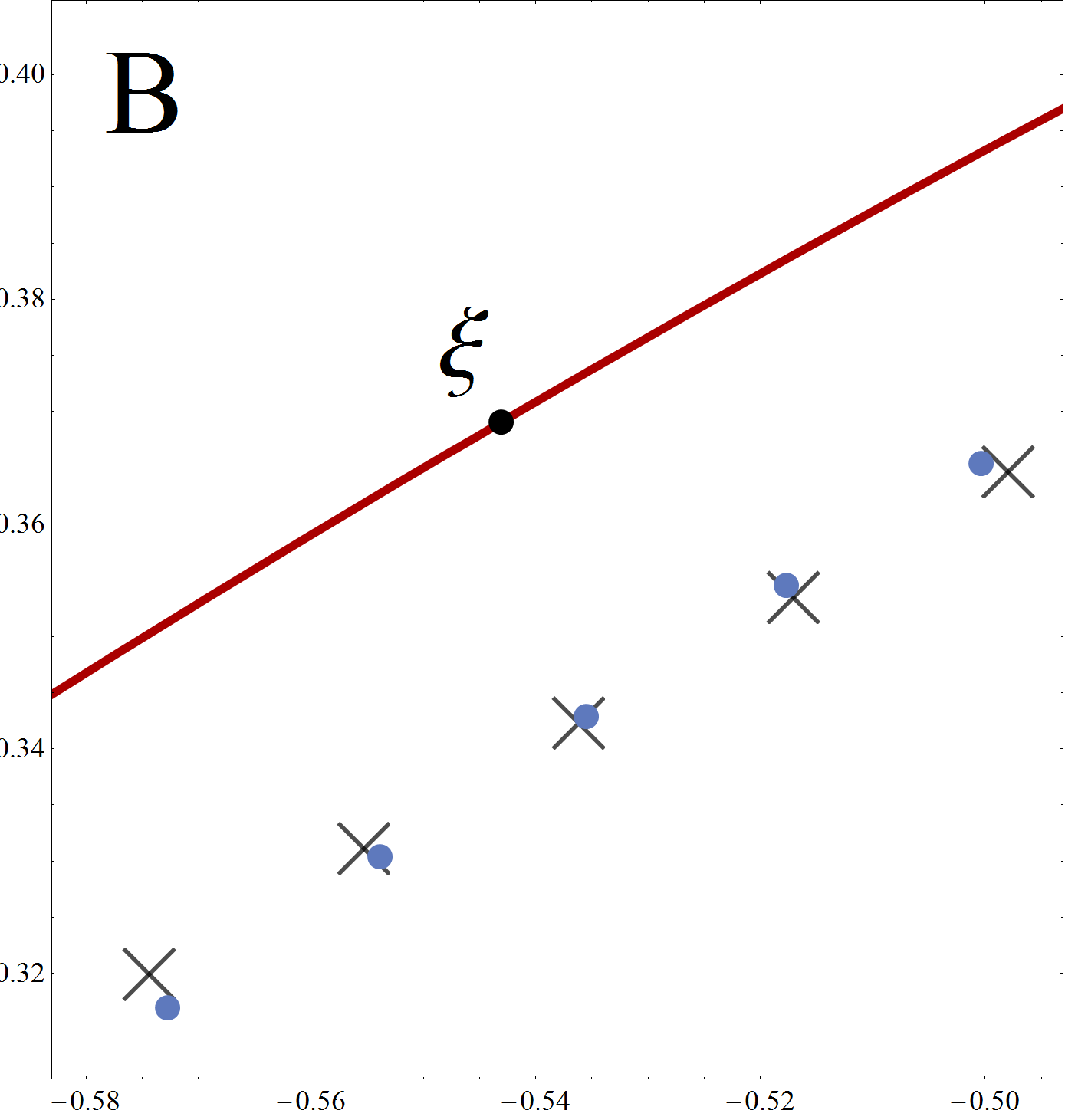}
	\end{tabular}
	\caption[Magnifications of the regions labeled in Figure \ref{applicationsfig_parabolic1}.]{Magnifications of the regions labeled in Figure \ref{applicationsfig_parabolic1}. Region A shows the approximations for the zeros in that region, represented as black crosses, which are given by the corner scaling limit in \eqref{applicationseq_paraboliccorner}. Region B shows the approximations for the zeros in that region, represented as black crosses, which are given by the curve scaling limit in \eqref{applicationseq_paraboliccurve}. The point $\xi$ which is used in \eqref{applicationseq_paraboliccurve} to obtain the approximations in Region B is shown in black.}
	\label{applicationsfig_parabolic2}
\end{figure}





\graphicspath{{images/chap_conclusion/}}

\chapter{Conclusion}
\label{chap_conclusion}

In this thesis it has been shown that if an entire function has simple exponential growth in a finite number of directions in the complex plane---growth like $z^a \exp(z^\lambda)$ for some $a \in \C$ and some $\lambda > 0$---and is exponentially smaller elsewhere, then the partial sums of its power series have scaling limits in these exponential growth directions which yield information about the asymptotic behavior of their zeros. In particular these scaling limits describe enough about the trajectories of the zeros to verify that the Saff-Varga Width Conjecture (see Section \ref{introsec_width}) holds for these entire functions.

In this chapter we will give some context for these results and discuss an open problem relating to them.

\section{Big Picture and the Width Conjecture}

In his thesis \cite{rosen:thesis} Rosenbloom considered entire functions
\[
	f(z) := \sum_{k=0}^{\infty} a_k z^k
\]
for which some subsequence of $f(|a_n|^{-1/n}z)^{1/n}$ converges to an analytic function in some domain (see Theorem \ref{introthm_rosen2}). For example, Rosenbloom tells us that if
\begin{equation}
\label{concleq_rosenasymp}
	\lim_{n \to \infty} f(|a_n|^{-1/n}ez)^{1/n} = e^{z}
\end{equation}
in some domain then the zeros of the scaled partial sums $p_n[f](|a_n|^{-1/n}ez)$ converge precisely to the classical Szeg\H{o} curve $|ze^{1-z}| = 1$ in that domain.

These results can be thought of as the ``first-order'' theory for the zeros of partial sums of power series for entire functions. Given information about the limiting behavior of $f(|a_n|^{-1/n}z)^{1/n}$, Rosenbloom deduces the limiting behavior of the zeros of the scaled partial sums $p_n[f](|a_n|^{-1/n}ez)$.

The assumptions made in this thesis are stronger than Rosenbloom's. Whereas Rosenbloom would only assume something like \eqref{concleq_rosenasymp}, I would instead assume that
\begin{equation}
\label{concleq_myasymp}
	f(nz) \sim (nz)^a (\log n)^b e^{nz}
\end{equation}
as $n \to \infty$ for $z$ in some domain. Rosenbloom-type results can indeed be deduced from this---i.e.\ the proper scaling of the zeros (they will scale like $n$) as well as the fact that the zeros of the scaled partial sums $p_n[f](nz)$ will accumulate on the classical Szeg\H{o} curve $|ze^{1-z}| = 1$. However, under this stronger assumption we can go one step further than Rosenbloom and deduce not only how quickly the zeros approach the limit curve but also information about their geometry as they do so. In this way the results in this thesis can be considered part of the ``second-order'' theory of the zeros.

The scaling limits in Chapter \ref{chap_curvescaling} tell us that the zeros of the scaled partial sums approach their limit curve at a rate of approximately $\log n/n$, that they are (locally) separated from each other by a distance of approximately $1/n$, and that they approximately lie on straight lines parallel to their limit curve. Similarly, the scaling limits in Chapter \ref{chap_posfinite} tell us that the zeros approach the convex corner of their limit curve at a rate of approximately $1/\sqrt{n}$, that they are (locally) separated from each other by a distance of approximately $1/\sqrt{n}$, and that they approximately lie on rays with arguments $\pm 3\pi/4$ originating at the corner of the curve.

This second-order information is detailed enough to imply the validity of the Saff-Varga Width Conjecture for the class of functions considered in this thesis. However, while substantial, this class of functions is smaller than the one considered by Rosenbloom. A verification of the Width Conjecture for Rosenbloom's class would mark a significant step forward in the theory. It would be interesting to see whether the techniques used here can be generalized to that case.

A proof of the Width Conjecture in the general setting still eludes us.

\section{Open Problem: Asymptotics in the Transitional Regions}

In this thesis we did not study regions of the limit curve which lie on the boundary between two sectors of maximal exponential growth.

For example, consider the example of $f(x) = \sin x$ as in Section \ref{applicationssec_sincos}. This function behaves exponentially as $\im z \to \pm \infty$, and the dividing line between these two exponential growth directions is the real axis. The zeros of $f$ lie on this line, and so the partial sums of $f$ have corresponding zeros there as well. This can been seen clearly in Figure \ref{conclusionfig_sin}. This line of zeros on the real axis extends outward until it intersects the limit curves from the upper and lower half-planes at the points $z = \pm 1/e$. At the intersection at $z=1/e$ magnified in the figure we can see that the zeros of the partial sums come together in a three-spoke junction, one ray emanating from the point $z=1/e$ to the left along the real axis and the other two into the upper and lower half-planes with arguments of $\pm(\arctan e^{-1} - \pi/2)$.

\begin{figure}[!htb]
	\centering
	\begin{minipage}[c]{0.5\textwidth}
		\includegraphics[width=\textwidth]{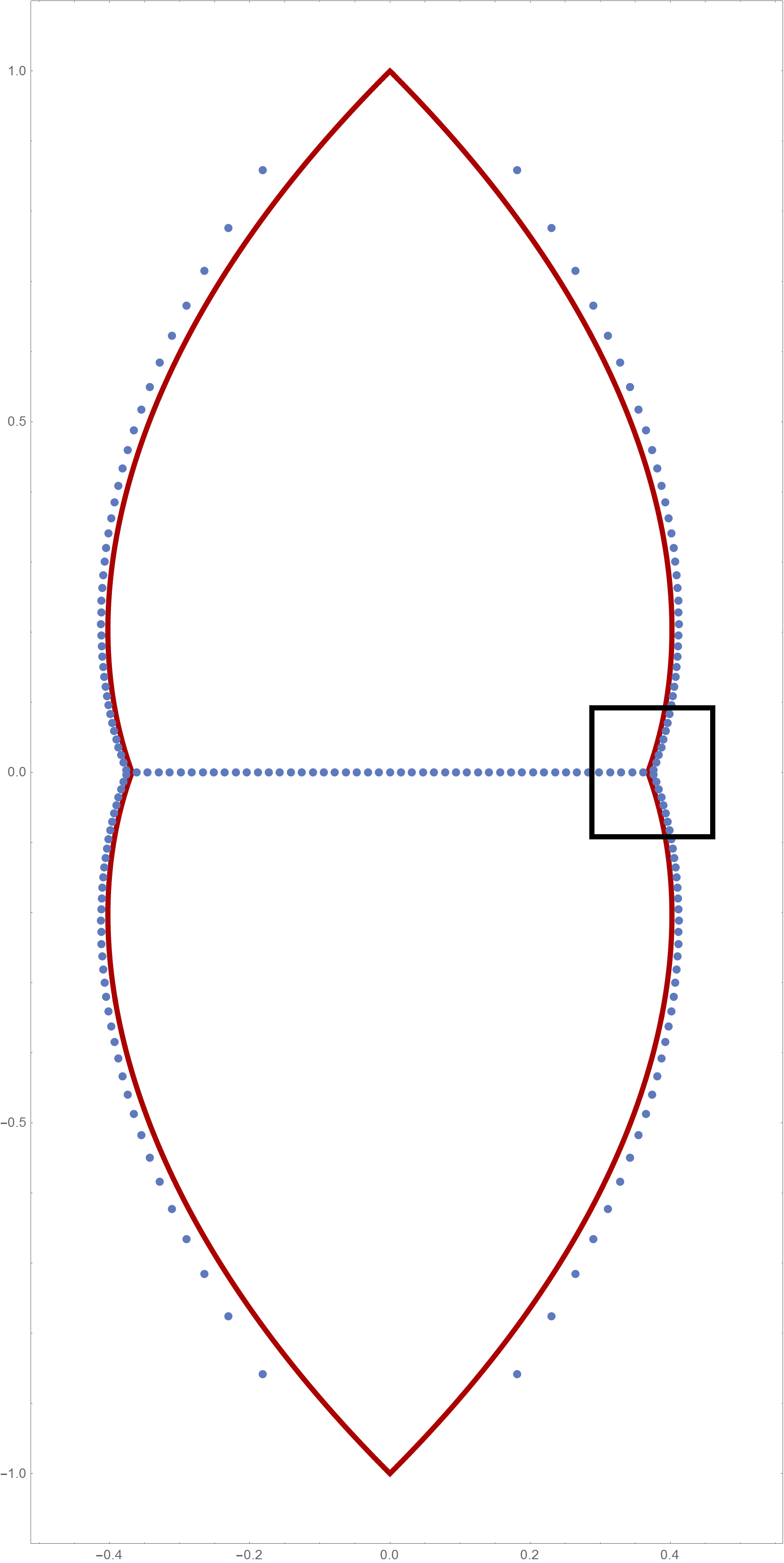}
	\end{minipage}
	\begin{minipage}[c]{0.44\textwidth}
		\includegraphics[width=\textwidth]{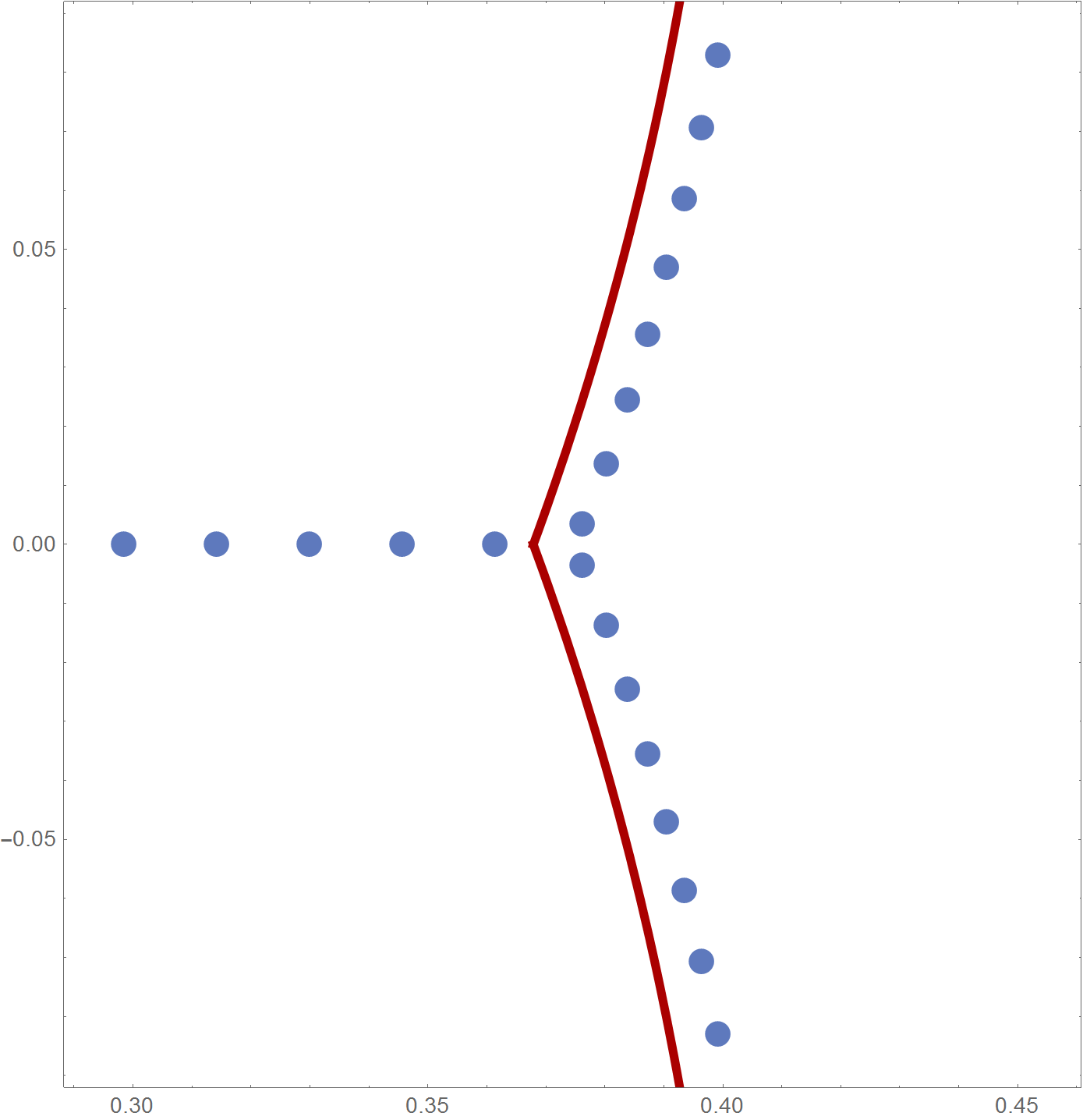}
	\end{minipage}
	\caption[The zeros of {$p_{n-1}[\sin](nz)$} for {$n=200$} and their limit curve.]{The zeros of $p_{n-1}[\sin](nz)$ for $n=200$ in blue and their limit curve in red. The boxed region, magnified on the right, shows the zeros which are near this corner of the limit curve.}
	\label{conclusionfig_sin}
\end{figure}

This three-spoke geometry seems to appear wherever two exponential growth sectors meet. Further examples can be seen in Figures \ref{applicationsfig_bessel}, \ref{applicationsfig_expint}, \ref{applicationsfig_ai1}, \ref{applicationsfig_bi1}, and \ref{applicationsfig_parabolic1}. We know so far that the partial sums have universal $\erfc$ scaling limits which capture the asymptotics near the convex corners of the limit curve and universal $1-De^{-w}$ scaling limits which capture asymptotics near the smooth arcs, so it seems reasonable to expect that there is some analogous universal scaling limit which captures the asymptotics at these three-spoke intersections.

This phenomenon has been studied in the case where $f$ is an exponential sum of the form
\[
	f(z) = \sum_{k=0}^{n} \exp(\omega_k z),
\]
where $\omega_k \in \C$, by Bleher and Mallison. We refer the reader to their paper \cite{mallison:expsums}, and in particular their Theorem 7.1. Their result indicates that the universal behavior may involve a three-term exponential sum of the form
\[
	c_1 e^{a_1 z} + c_2 e^{a_2 z} + c_3 e^{a_3 z}.
\]

\appendix

\chapter{The Laplace Method}
\label{appendix_laplace}

The ``Laplace method'' is a statement concerning the analytic behavior, usually in regards to the growth or decay, of integrals of a certain type.  The most common version of it looks something like this:

\begin{center}
\fbox{\begin{minipage}{0.8\textwidth}
If $g\colon\R \to \R$ has a global maximum at $t=t_0$ and $g''(t_0) < 0$ then
\[
	\int_{-\infty}^{\infty} e^{\lambda g(t)}\,dt \sim e^{\lambda g(t_0)} \sqrt{\frac{2\pi}{-\lambda g''(t_0)}}
\]
as $\lambda \to \infty$ with $\lambda > 0$.
\end{minipage}}
\end{center}

\noindent Throughout this thesis I use several variants of this result. For example, more general versions of the above would allow $\lambda$ to be complex or would replace the real integral with a complex contour integral. A full description of these tools can be found in a book on asymptotic analysis (such as Miller's \cite{miller:aaa}). Instead, my goal in this appendix is to investigate a few simple archetypal situations in order to help a starting student gain a basic understanding of the topic, and hopefully to give them enough of an idea of it that they can follow the more advanced maneuvers in the body of the thesis.

The no-assembly-required statement above is very useful for applications and can even give correct results in situations that don't quite satisfy the hypotheses.  While we will prove this statement, the main idea behind the Laplace method is much simpler, and in my opinion much more powerful:

\begin{center}
\fbox{\begin{minipage}{0.8\textwidth}
\textbf{The Laplace method.}\par

An integral can be approximated by approximating the integrand near its largest point.
\end{minipage}}
\end{center}

\noindent This perspective can get you pretty far, I think.  I'll focus in the succeeding sections on investigating how it can inform our decisions when attempting to approximate various types of integrals which depend in some way on a parameter.

\section{Something Like a Laplace Transform}

Consider the Laplace transform of a function $f$,
\[
	F(\lambda) = \int_0^\infty f(t)e^{-\lambda t}\,dt,
\]
where $\lambda>0$.  Suppose we want to estimate this integral for large $\lambda$.  If all we know about $f$ is that it's bounded, say $|f(t)| \leq M$, then we can at least say that
\begin{align*}
|F(\lambda)| &\leq \int_0^\infty |f(t)| e^{-\lambda t}\,dt \\
	&\leq M \int_0^\infty e^{-\lambda t}\,dt \\
	&= \frac{M}{\lambda},
\end{align*}
so $F(\lambda) = O(\lambda^{-1})$.  It stands to reason that if we know more about $f$ then we can probably get a more accurate picture of the behavior of $F$.

Let's hand wave a little.  If $f(t)$ is a ``nice'' function that doesn't grow very quickly as $t \to \infty$ then the quantity $f(t)e^{-\lambda t}$ will be very small for large values of $t$.  As $\lambda$ grows, the quantity will even be very small for not-so-large values of $t$.  In a sense, the values of $t$ near $t=0$ are the only ones for which the $e^{-\lambda t}$ factor doesn't make the integrand (and its contributions to the integral) negligible.

More precisely (but still informally), if $\lambda t \to \infty$ then $f(t)e^{-\lambda t} \to 0$, so we only care about the values of $t$ for which $\lambda t$ is bounded.  We could chose any bound, so let's say we only care about all $t$ for which $\lambda t < 1$.

Letting $\lambda t = u$ the integral becomes
\[
	F(\lambda) = \frac{1}{\lambda} \int_0^\infty f\!\left(\frac{u}{\lambda}\right) e^{-u}\,du,
\]
and, as mentioned, we only really care about the values $u < 1$.  When $\lambda$ is much larger than $u$ we have $u/\lambda \approx 0$, so if $f$ is sufficiently nice we might expect that
\[
	F(\lambda) \approx \frac{1}{\lambda} \int_0^\infty f(0) e^{-u}\,du = \frac{f(0)}{\lambda}
\]
for large $\lambda$.

So what do we mean when we say that $f$ needs to be ``sufficiently nice''?  In order to ensure that $f(u/\lambda) \approx f(0)$ when $u/\lambda \approx 0$ we'll at least need to assume that $f$ is continuous from the right at $0$.  Of course $u/\lambda$ is not small over the entire range of integration, so we'll also need to assume that the ``tail'' of $f$, i.e.\ the set of values of $f(t)$ when $t$ is large, does not contribute too much to the total size of the integral.  We can formalize this as follows.

\begin{theorem}
Fix $0 < T \leq \infty$ and let $f \colon [0,T] \to \C \cup \{\infty\}$ be a measurable function satisfying
\[
	\int_0^T |f(t)| e^{-\ell t}\,dt < \infty
\]
for some constant $\ell \geq 0$.  Suppose $f$ is right-continuous at $0$ with $f(0) \neq 0$.  Then
\[
	F(\lambda) := \int_0^T f(t) e^{-\lambda t}\,dt \sim \int_0^\infty f(0) e^{-\lambda t} = \frac{f(0)}{\lambda}
\]
as $\lambda \to \infty$ with $\lambda > 0$.
\label{babywatson}
\end{theorem}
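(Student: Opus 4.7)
The plan is to realize the informal Laplace-method intuition from the preceding discussion: since $f(t)e^{-\lambda t}$ is concentrated near $t=0$ when $\lambda$ is large, I will split the integral into a small head $[0,\delta]$ where I replace $f(t)$ by $f(0)$ up to a controlled error, and a tail $[\delta,T]$ which should be exponentially smaller than $1/\lambda$. Fix $\epsilon > 0$. By right-continuity of $f$ at $0$, choose $\delta \in (0,T)$ so that $|f(t)-f(0)| < \epsilon$ for all $t \in [0,\delta]$. Write
\[
F(\lambda) = \int_0^\delta f(0)\,e^{-\lambda t}\,dt + \int_0^\delta \bigl[f(t)-f(0)\bigr] e^{-\lambda t}\,dt + \int_\delta^T f(t)\,e^{-\lambda t}\,dt.
\]

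First I would handle the leading term exactly: $\int_0^\delta f(0)\,e^{-\lambda t}\,dt = f(0)\lambda^{-1}(1-e^{-\lambda\delta}) = f(0)/\lambda + O(e^{-\lambda\delta}/\lambda)$, which already produces the main asymptotic contribution. For the ``head error'' I would use the uniform bound $|f(t)-f(0)| < \epsilon$ to obtain
\[
\left|\int_0^\delta \bigl[f(t)-f(0)\bigr] e^{-\lambda t}\,dt\right| \leq \epsilon\int_0^\delta e^{-\lambda t}\,dt \leq \frac{\epsilon}{\lambda}.
\]

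For the tail I would exploit the integrability hypothesis: for $t \geq \delta$ and $\lambda > \ell$, factor $e^{-\lambda t} = e^{-(\lambda-\ell)t}e^{-\ell t}$ and bound the first factor by $e^{-(\lambda-\ell)\delta}$, giving
\[
\left|\int_\delta^T f(t)\,e^{-\lambda t}\,dt\right| \leq e^{-(\lambda-\ell)\delta}\int_\delta^T |f(t)|\,e^{-\ell t}\,dt \leq C\,e^{-(\lambda-\ell)\delta},
\]
with $C$ the finite constant supplied by the hypothesis. This is $o(1/\lambda)$ as $\lambda\to\infty$, and in fact exponentially small.

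Combining these three estimates yields $\lambda F(\lambda) = f(0) + O(\epsilon) + o(1)$ as $\lambda\to\infty$, and since $\epsilon>0$ was arbitrary, $\lambda F(\lambda)\to f(0)$, which is the claim (using $f(0)\ne 0$ to convert this limit into an asymptotic equivalence in the sense of Definition \ref{defasympequiv}). The only subtlety, and the only thing that really needs care, is the order in which the limits and the choice of $\delta$ are taken: $\delta$ must be fixed in terms of $\epsilon$ before letting $\lambda\to\infty$, so that the exponential tail bound $e^{-(\lambda-\ell)\delta}$ genuinely decays. Beyond that, the argument is routine, and the integrability assumption is exactly what is needed to absorb the behavior of $f$ away from the origin without any further regularity requirement.
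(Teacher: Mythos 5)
Your proof is correct and follows essentially the same route as the paper's: approximate $f(t)$ by $f(0)$ on a small interval $[0,\delta]$ chosen via right-continuity, and kill the tail $[\delta,T]$ by factoring $e^{-\lambda t}=e^{-(\lambda-\ell)t}e^{-\ell t}$ and invoking the integrability hypothesis. The only cosmetic difference is that you split the integral three ways at the outset while the paper first extracts $\int_0^T f(0)e^{-\lambda t}\,dt$ over the whole interval and then estimates the remainder, but the estimates used are identical.
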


\begin{proof}
The fundamental idea behind the proof of this result (and many of the following ones as well) is that we will extract the ``dominant'' contribution from the integral and leave behind something which is relatively much smaller.  To wit, write
\[
	f(t) = f(0) + (f(t) - f(0)),
\]
so that
\begin{align*}
F(\lambda) &= \int_0^T f(0) e^{-\lambda t}\,dt + \int_0^T (f(t) - f(0)) e^{-\lambda t}\,dt \\
	&= \frac{f(0)}{\lambda} - \frac{f(0)}{\lambda e^{T\lambda}} + \int_0^T (f(t) - f(0)) e^{-\lambda t}\,dt.
\end{align*}
Since the second term is
\[
	\frac{f(0)}{\lambda e^{T\lambda}} = o\!\left(\frac{1}{\lambda}\right)
\]
it only remains to show that
\[
	\int_0^T (f(t) - f(0)) e^{-\lambda t}\,dt = o\!\left(\frac{1}{\lambda}\right)
\]
as well, for this will imply that
\[
	F(\lambda) = \frac{f(0)}{\lambda} + o\!\left(\frac{1}{\lambda}\right),
\]
allowing us to conclude the result.

Let's take advantage of the continuity of $f$.  For any $\epsilon > 0$ we can find a $0 < \delta < T$ such that
\[
	|f(t) - f(0)| < \epsilon
\]
for all $0 \leq t \leq \delta$.  We then have
\begin{align}
\left|\int_0^\delta (f(t) - f(0)) e^{-\lambda t}\,dt\right| &\leq \int_0^\delta |f(t) - f(0)| e^{-\lambda t}\,dt \nonumber \\
	&< \epsilon \int_0^\delta e^{-\lambda t}\,dt \nonumber \\
	&< \epsilon \int_0^\infty e^{-\lambda t}\,dt \nonumber \\
	&= \frac{\epsilon}{\lambda}. \label{1_1_phihead}
\end{align}
To estimate the contribution to the integral from the remaining interval $(\delta,T)$ we will separate $f(t)$ and $f(0)$,
\[
	\left|\int_\delta^T (f(t) - f(0)) e^{-\lambda t}\,dt\right| \leq \int_\delta^T |f(t)|  e^{-\lambda t}\,dt + \int_\delta^T |f(0)|  e^{-\lambda t}\,dt,
\]
and treat each piece individually.  For the first we apply the assumption that $f e^{-\ell t}$ is integrable to obtain the estimate
\begin{align}
\int_\delta^T |f(t)|  e^{-\lambda t}\,dt &= \int_\delta^T |f(t)| e^{(\ell-\lambda)t} e^{-\ell t}\,dt \nonumber \\
	&\leq e^{(\ell-\lambda)\delta} \int_\delta^T |f(t)| e^{-\ell t}\,dt
\label{1_1_phitail2}
\end{align}
for $\lambda \geq \ell$.  Since $\lambda e^{(\ell-\lambda)\delta} \to 0$ as $\lambda \to \infty$ we can therefore find an $M > 0$ such that
\begin{equation}
	\int_\delta^T |f(t)|  e^{-\lambda t}\,dt < \frac{\epsilon}{\lambda}
\label{1_1_phitail1}
\end{equation}
for all $\lambda > M$.  Handling the second integral is similar; we only need to note that
\begin{align*}
\int_\delta^T |f(0)| e^{-\lambda t}\,dt &\leq \int_\delta^\infty |f(0)|  e^{-\lambda t}\,dt \\
	&= \frac{|f(0)|}{\lambda e^{\delta \lambda}} \\
	&< \frac{\epsilon}{\lambda}
\end{align*}
for $\lambda$ large enough.  Combining this and \eqref{1_1_phitail1} with the estimate in \eqref{1_1_phihead} we may conclude that
\[
	\left|\int_0^T (f(t)-f(0)) e^{-\lambda t}\,dt\right| < \frac{3\epsilon}{\lambda}
\]
for all $\lambda$ large enough.  Since $\epsilon$ was arbitrary, this is equivalent to the statement
\[
	\int_0^T (f(t)-f(0)) e^{-\lambda t}\,dt = o\!\left(\frac{1}{\lambda}\right),
\]
which is what was to be shown.
\end{proof}

The assumption that $f(0) \neq 0$ is definitely necessary in Theorem \ref{babywatson}---after all, a conclusion like ``$F(\lambda) \sim 0$'' wouldn't make sense with our definition of ``$\sim$''.  However, in the course of the proof we actually showed that if $f$ satisfies all of the other assumptions and $f(0) = 0$ then
\[
	\int_0^T f(t)e^{-\lambda t}\,dt = o\!\left(\frac{1}{\lambda}\right)
\]
as $\lambda \to \infty$.  If we go back and examine that part of the proof in detail (when we integrate $f(t) - f(0)$ over the the interval $(0,\delta)$) we get a better idea of just what kind of information we need about $f$: we need to know more about how $f(t) \to f(0)$ as $t \to 0$.  There are myriad conditions we could consider, but instead of going into full generality we'll just consider two regimes which tend to be very useful.

\subsection{Integrands Asymptotic to Powers of $t$}

In this first regime, $f(t)$ behaves like a series of powers of $t$ as $t \to 0$.  The result treating this case is known as Watson's lemma.

\begin{theorem}[Watson's lemma]
Fix $0 < T \leq \infty$ and let $f \colon [0,T] \to \C \cup \{\infty\}$ be a measurable function satisfying
\[
	\int_0^T |f(t)| e^{-\ell t}\,dt < \infty
\]
for some constant $\ell \geq 0$.  Suppose that
\[
	f(t) \dsim \sum_{k=0}^{\infty} a_k t^{b_k}
\]
as $t \to 0^+$, where $-1 < \re b_k < \re b_{k+1}$ for all $k$.  Then
\[
	F(\lambda) := \int_0^T f(t)e^{-\lambda t}\,dt \dsim \sum_{k=0}^{\infty} \frac{a_k \Gamma(b_k+1)}{\lambda^{b_k+1}}
\]
as $\lambda \to \infty$ with $\lambda > 0$.
\label{watsonslemma}
\end{theorem}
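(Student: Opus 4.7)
The plan is to mirror the strategy of Theorem \ref{babywatson} but keep track of $N$ terms of the expansion at a time. Fix $N \geq 0$. The hypothesis $f(t) \dsim \sum_k a_k t^{b_k}$ means that the remainder
\[
	r_N(t) := f(t) - \sum_{k=0}^{N} a_k t^{b_k}
\]
satisfies $r_N(t) = o(t^{b_N})$ as $t \to 0^+$, which in practical terms gives, for every $\epsilon > 0$, a $\delta = \delta(\epsilon,N) \in (0,T)$ with $|r_N(t)| \leq \epsilon\, t^{\re b_N}$ on $[0,\delta]$. I would split
\[
	F(\lambda) = \sum_{k=0}^{N} a_k \int_0^T t^{b_k} e^{-\lambda t}\,dt + \int_0^T r_N(t)\, e^{-\lambda t}\,dt,
\]
and handle the two pieces separately.

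For the main sum, the substitution $u = \lambda t$ (with a small rotation of contour if $b_k$ is complex, justified because $t \mapsto t^{b_k} e^{-\lambda t}$ is entire in the right half $\lambda$-plane and decays rapidly) gives
\[
	\int_0^T t^{b_k} e^{-\lambda t}\,dt = \frac{\Gamma(b_k+1)}{\lambda^{b_k+1}} - \frac{1}{\lambda^{b_k+1}} \int_{\lambda T}^{\infty} u^{b_k} e^{-u}\,du,
\]
and the tail integral decays faster than any power of $\lambda^{-1}$ because of the $e^{-\lambda T}$ factor that appears after a crude bound. This produces the desired main contribution $\sum_{k=0}^{N} a_k \Gamma(b_k+1)/\lambda^{b_k+1}$ up to an exponentially small error.

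The heart of the proof is showing that the remainder integral is $o(\lambda^{-b_N - 1})$ as $\lambda \to \infty$, since that is exactly what is needed to promote a truncated expansion to an asymptotic one in the sense of the definition given in Chapter \ref{chap_prelims}. I would split $\int_0^T = \int_0^\delta + \int_\delta^T$. On $[0,\delta]$, the pointwise bound $|r_N(t)| \leq \epsilon\, t^{\re b_N}$ yields
\[
	\left| \int_0^\delta r_N(t)\, e^{-\lambda t}\,dt \right| \leq \epsilon \int_0^\infty t^{\re b_N} e^{-\lambda t}\,dt = \epsilon\, \frac{\Gamma(\re b_N + 1)}{\lambda^{\re b_N + 1}},
\]
which is $\epsilon \cdot O(\lambda^{-\re b_N - 1})$, and note that $|\lambda^{-b_N-1}| = \lambda^{-\re b_N -1}$ for real positive $\lambda$, so this is the correct scale. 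On $[\delta,T]$, I mimic the tail argument of Theorem \ref{babywatson}: write
\[
	\int_\delta^T |r_N(t)|\, e^{-\lambda t}\,dt \leq e^{-(\lambda - \ell)\delta} \int_\delta^T |r_N(t)|\, e^{-\ell t}\,dt,
\]
and the last integral is finite because $f$ and each polynomial term $t^{b_k}$ are integrable against $e^{-\ell t}$ on $[0,T]$ (the polynomial terms trivially, since $\re b_k > -1$). This tail is therefore exponentially small in $\lambda$ and crushes any polynomial order. Combining the two pieces, for every $\epsilon > 0$ and for $\lambda$ large enough,
\[
	\left| \int_0^T r_N(t)\, e^{-\lambda t}\,dt \right| \leq C \epsilon\, \lambda^{-\re b_N - 1},
\]
so the remainder is indeed $o(\lambda^{-b_N - 1})$. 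Since $N$ was arbitrary, this is precisely the asymptotic expansion claimed.

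The main technical obstacle I anticipate is bookkeeping with complex exponents $b_k$: one must be careful that the "smaller than $t^{b_N}$" statement in the hypothesis refers to the modulus $t^{\re b_N}$, and that the $\Gamma(b_k+1)/\lambda^{b_k+1}$ terms form a legitimate asymptotic scale $\varphi_k(\lambda)$ in the sense of Chapter \ref{chap_prelims}, which requires $\re b_{k+1} > \re b_k$ (explicitly assumed). Once those conventions are fixed, the proof is really the one-term version of Theorem \ref{babywatson} applied to each remainder $r_N$ in turn.
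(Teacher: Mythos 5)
Your proof follows essentially the same route as the one in the text: peel off the first $N+1$ terms, bound the remainder near $t=0$ using the asymptotic hypothesis, and show that every contribution away from the origin is exponentially small (the ``remove the tail / approximate / reattach the tail'' pattern). The only real difference is cosmetic: you use the remainder bound in its little-$o$ form $|r_N(t)| \leq \epsilon\, t^{\re b_N}$ on $[0,\delta]$, so the error after $N+1$ terms comes out directly as $o(\lambda^{-b_N-1})$, whereas the text uses the big-$O$ form $|f(t) - \sum_{k=0}^{n-1} a_k t^{b_k}| \leq M t^{b_n}$ and obtains an $O(\lambda^{-b_n-1})$ error, which is $o(\lambda^{-b_{n-1}-1})$ because $\re b_n > \re b_{n-1}$. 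Both versions satisfy the definition of $\dsim$, and both rely on the same $\Gamma$-integral identity for the main terms.

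One small slip to repair: on $[\delta,T]$ you bound $\int_\delta^T |r_N(t)| e^{-\lambda t}\,dt \leq e^{-(\lambda-\ell)\delta}\int_\delta^T |r_N(t)|e^{-\ell t}\,dt$ and assert that the last integral is finite because each power $t^{b_k}$ is ``trivially'' integrable against $e^{-\ell t}$. That assertion fails in the permitted case $\ell = 0$, $T = \infty$, since $\int_\delta^\infty t^{\re b_k}\,dt = \infty$ for $\re b_k > -1$. The fix is immediate and is exactly what the text does: handle the $f$-part of $r_N$ with the hypothesis as you did, and bound each polynomial tail directly, e.g.\ $\int_\delta^T t^{\re b_k} e^{-\lambda t}\,dt \leq e^{-(\lambda-1)\delta}\int_0^\infty (s+\delta)^{\re b_k} e^{-s}\,ds$ for $\lambda \geq 1$, which is again exponentially small. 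With that one-line adjustment your argument is complete.
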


The proof will have three main steps.  First, since the asymptotic series only approximates $f(t)$ well near $t=0$, we will remove the tail of the integral in $F$.  Then, having only the head of the integral, we'll approximate $f$.  Once we've done that we'll reattach appropriate tails to any remaining integrals.  This kind of argument will reappear in various forms all throughout this appendix.

\begin{proof}
Fix any integer $n \geq 1$.  By assumption we can find an $M > 0$ and a $\delta$ with $0 < \delta < T$ such that
\begin{equation}
	\left|f(t) - \sum_{k=0}^{n-1} a_k t^{b_k} \right| \leq M t^{b_n}
\label{watson_phiapprox}
\end{equation}
for all $0 \leq t \leq \delta$.  Using this $\delta$ we split the integral in $F$ like
\begin{equation}
	F(\lambda) = \int_0^\delta f(t)e^{-\lambda t}\,dt + \int_\delta^T f(t)e^{-\lambda t}\,dt.
\label{watson_split}
\end{equation}

The second integral in \eqref{watson_split} will be negligible---this is identical to what happened in the proof of Theorem \ref{babywatson}.  Its interval of integration, $(\delta,T)$, is bounded away from the main contribution coming from $t=0$, so the exponential factor in the integrand, $e^{-\lambda t}$, will dominate and cause the integral to decay exponentially as $\lambda \to \infty$.  Indeed, repeating the calculations in \eqref{1_1_phitail2} we find that
\begin{equation}
	\int_\delta^T f(t)e^{-\lambda t}\,dt = O\!\left(e^{-\delta\lambda}\right)
\label{watson_tail1}
\end{equation}
as $\lambda \to \infty$.  We will appeal to this kind of heuristic to inform our rigorous calculations whenever we need to remove a tail from or attach a tail to an integral.

Next we'll attempt to use our asymptotic approximation for $f$ in the first integral in \eqref{watson_split}.  To start we write
\[
	\int_0^\delta f(t)e^{-\lambda t}\,dt = \sum_{k=0}^{n-1} a_k \int_0^\delta t^{b_k} e^{-\lambda t}\,dt + \int_0^\delta \left(f(t) - \sum_{k=0}^{n-1} a_k t^{b_k}\right)e^{-\lambda t}\,dt.
\]
Using \eqref{watson_phiapprox} we estimate the remainder integral like
\begin{align*}
\left|\int_0^\delta \left(f(t) - \sum_{k=0}^{n-1} a_k t^{b_k}\right)e^{-\lambda t}\,dt\right| &\leq \int_0^\delta \left|f(t) - \sum_{k=0}^{n-1} a_k t^{b_k}\right|e^{-\lambda t}\,dt \\
	&\leq M \int_0^\delta t^{b_n} e^{-\lambda t}\,dt \\
	&< M \int_0^\infty t^{b_n} e^{-\lambda t}\,dt \\
	&= \frac{M \Gamma(b_n+1)}{\lambda^{b_n+1}},
\end{align*}
so that
\begin{equation}
	\int_0^\delta f(t)e^{-\lambda t}\,dt = \sum_{k=0}^{n-1} a_k \int_0^\delta t^{b_k} e^{-\lambda t}\,dt + O\!\left(\frac{1}{\lambda^{b_n+1}}\right).
\label{watson_headest}
\end{equation}

Now we'll attach the tails onto the $n$ integrals appearing in \eqref{watson_headest}.  We write
\begin{align*}
\int_0^\delta t^{b_k} e^{-\lambda t}\,dt &= \int_0^\infty t^{b_k} e^{-\lambda t}\,dt - \int_\delta^\infty t^{b_k} e^{-\lambda t}\,dt \\
	&= \frac{\Gamma(b_k+1)}{\lambda^{b_k+1}} - \int_\delta^\infty t^{b_k} e^{-\lambda t}\,dt,
\end{align*}
and note that the remainder integral decays exponentially:
\begin{align*}
0 < \int_\delta^\infty t^{b_k} e^{-\lambda t}\,dt &= e^{-\delta\lambda}\int_0^\infty (s+\delta)^{b_k} e^{-\lambda s}\,ds \\
	&< e^{-\delta\lambda} \int_0^\infty (s+\delta)^{b_k} e^{-s}\,ds,
\end{align*}
where the inequality holds for $\lambda > 1$.  Thus
\[
	\int_0^\delta t^{b_k} e^{-\lambda t}\,dt = \frac{\Gamma(b_k+1)}{\lambda^{b_k+1}} + O\!\left(e^{-\delta\lambda}\right)
\]
as $\lambda \to \infty$.  Substituting this in \eqref{watson_headest} yields
\begin{align}
\int_0^\delta f(t)e^{-\lambda t}\,dt &= \sum_{k=0}^{n-1} a_k \left[\frac{\Gamma(b_k+1)}{\lambda^{b_k+1}} + O\!\left(e^{-\delta\lambda}\right)\right] + O\!\left(\frac{1}{\lambda^{b_n+1}}\right) \nonumber \\
	&= \sum_{k=0}^{n-1} \frac{a_k \Gamma(b_k+1)}{\lambda^{b_k+1}} + O\!\left(e^{-\delta\lambda}\right) + O\!\left(\frac{1}{\lambda^{b_n+1}}\right).
\label{watson_headest2}
\end{align}
Note that we only added together finitely many terms of the form $O(e^{-\delta \lambda})$.  Even though the implied constants in each $O(\cdots)$ are different, the result is still $O(e^{-\delta \lambda})$.

Combining \eqref{watson_tail1} and \eqref{watson_headest2} in \eqref{watson_split} we find that
\begin{align*}
\int_0^T f(t)e^{-\lambda t}\,dt &= \sum_{k=0}^{n-1} \frac{a_k \Gamma(b_k+1)}{\lambda^{b_k+1}} + O\!\left(e^{-\delta\lambda}\right) + O\!\left(\frac{1}{\lambda^{b_n+1}}\right) \\
&= \sum_{k=0}^{n-1} \frac{a_k \Gamma(b_k+1)}{\lambda^{b_k+1}} + O\!\left(\frac{1}{\lambda^{b_n+1}}\right)
\end{align*}
as $\lambda \to \infty$.  Since $n$ was arbitrary, this completes the proof.
\end{proof}

\begin{remark}
Since
\[
	\int_0^\infty t^b e^{-\lambda t}\,dt = \frac{\Gamma(b+1)}{\lambda^{b+1}}
\]
provided $\re b > -1$ and $\lambda > 0$, another form of the conclusion of Watson's lemma is that
\[
	F(\lambda) := \int_0^T f(t)e^{-\lambda t}\,dt \dsim \sum_{k=0}^{\infty} a_k \int_0^\infty t^{b_k} e^{-\lambda t}\,dt
\]
as $\lambda \to \infty$ with $\lambda > 0$.  One informal interpretation of this is that $\int \sum \dsim \sum \int$ under the hypotheses of Watson's lemma.  That is to say, the series and the integral can be exchanged in this case \textit{as long as both series are interpreted as asymptotic series}.  This a sort of asymptotic dominated convergence result.
\end{remark}

\subsection{Integands Asymptotic to Powers of Logarithms and Powers of $t$}

Now we will consider the case when the asymptotic series for $f(t)$ as $t \to 0$ involves powers of \textit{logarithms} of $t$ in addition to powers of $t$, say
\[
	f(t) \dsim \sum_{k=0}^{\infty} a_k (-\log t)^{b_k} t^{c_k}
\]
as $t \to 0^+$.  An astute practitioner would try to substitute this series into the integral for $F(\lambda)$ but would quickly run into integrals like
\[
	\int_0^\delta (-\log t)^b t^c e^{-\lambda t}\,dt,
\]
which have no closed form.  Apparently we need to investigate their asymptotic behavior.

The result below is due to Erd\'elyi, and an alternate proof, along with other interesting related results, can be found in the paper \cite{erdelyi:generalasymp}.

\begin{theorem}
Suppose that $0 < \delta < 1$, $\re a > 0$, and $b \in \R$.  Then
\[
	\int_0^\delta (-\log t)^b t^{a-1} e^{-\lambda t}\,dt \dsim \lambda^{-a} \sum_{k=0}^{\infty} (-1)^k \binom{b}{k} \Gamma^{(k)}(a) (\log \lambda)^{b-k}
\]
as $\lambda \to \infty$ with $\lambda > 0$.\newnot{symbol:binom}
\label{logthm}
\end{theorem}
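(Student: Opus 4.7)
The plan is to make the substitution $u = \lambda t$ to rewrite the integral as
\[
	\int_0^\delta (-\log t)^b t^{a-1} e^{-\lambda t}\,dt = \lambda^{-a} \int_0^{\delta\lambda} (\log\lambda - \log u)^b u^{a-1} e^{-u}\,du,
\]
which reshapes the problem: the logarithmic dependence on $\lambda$ is now explicit, and I need to extract it from inside the integral. Since $\delta < 1$, we have $\log\lambda - \log u > \log(1/\delta) > 0$ for all $u$ in the range of integration (once $\lambda$ is large enough), so $(\log\lambda - \log u)^b$ is unambiguously defined.

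Next I would split the integral at $u = \sqrt{\lambda}$. On the ``tail'' piece $u \in [\sqrt{\lambda}, \delta\lambda]$, the factor $e^{-u}$ forces the contribution to be smaller than any power of $1/\lambda$: bounding $|\log\lambda - \log u|^b$ by a polynomial in $\log\lambda$ and using $\int_{\sqrt{\lambda}}^\infty u^{\re a - 1} e^{-u}\,du = O(e^{-\sqrt{\lambda}/2})$ handles this portion trivially. On the ``head'' piece $u \in (0,\sqrt{\lambda})$, we have $|\log u/\log\lambda| \leq 1/2$, which is the key inequality that makes the binomial expansion
\[
	(\log\lambda - \log u)^b = (\log\lambda)^b \left(1 - \frac{\log u}{\log\lambda}\right)^b = \sum_{k=0}^{N-1} (-1)^k \binom{b}{k} (\log u)^k (\log\lambda)^{b-k} + R_N(u,\lambda)
\]
valid with a Lagrange-type remainder that I can bound as $|R_N(u,\lambda)| \leq C_{N,b}\, |\log u|^N (\log\lambda)^{b-N}$ uniformly on $(0, \sqrt{\lambda})$ --- here the uniform separation $|1 - \theta \log u/\log\lambda| \geq 1/2$ is what saves me.

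I would then integrate term-by-term: the remainder contributes $O((\log\lambda)^{b-N})$ after using $\int_0^\infty |\log u|^N u^{\re a - 1} e^{-u}\,du < \infty$, and each main term becomes
\[
	(-1)^k \binom{b}{k} (\log\lambda)^{b-k} \int_0^{\sqrt{\lambda}} (\log u)^k u^{a-1} e^{-u}\,du.
\]
Attaching the tail back on (which costs only an exponentially small error) identifies each of these integrals with $\Gamma^{(k)}(a) = \int_0^\infty (\log u)^k u^{a-1} e^{-u}\,du$. Summing and multiplying by $\lambda^{-a}$ yields the asymptotic expansion through order $N$, and since $N$ was arbitrary the full asymptotic series follows.

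The main obstacle will be the book-keeping in the remainder estimate for real $b$: for integer $b \geq 0$ the binomial series terminates and one could instead just differentiate the Laplace transform of $t^{a-1}$ with respect to $a$, but for general $b \in \R$ the series is infinite and I must control the Taylor remainder of $(1-x)^b$ on $|x| \leq 1/2$. That is where the choice of splitting point $\sqrt{\lambda}$ earns its keep --- it is large enough that the tail is negligible yet small enough that $\log u/\log\lambda$ stays a definite distance from $1$, which is precisely what makes the remainder manageable.
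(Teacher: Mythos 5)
Your proposal is essentially the paper's own proof: the substitution $u=\lambda t$, the finite Taylor expansion of $(1-\log u/\log\lambda)^b$ with an explicit remainder, term-by-term integration, and reattachment of the tails to produce $\Gamma^{(k)}(a)=\int_0^\infty(\log u)^k u^{a-1}e^{-u}\,du$. The only structural difference is your preliminary split at $u=\sqrt{\lambda}$ (the paper expands over the whole range $(0,\delta\lambda)$ and bounds its integral-form remainder separately on $(0,1)$ and $(1,\delta\lambda)$); your split is a reasonable variant and in fact buys you a base $1-\theta\log u/\log\lambda$ bounded away from the branch point by the constant $1/2$, rather than by a quantity that shrinks as $s\to\delta\lambda$.

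One claim you make is false as stated and should be repaired, though the repair is easy. On $(0,\sqrt{\lambda})$ you assert $|\log u/\log\lambda|\leq 1/2$; this is only a one-sided bound, since $\log u/\log\lambda\to-\infty$ as $u\to 0^+$. What you actually have is $\log u/\log\lambda\leq 1/2<1$, which is all that is needed for the Taylor expansion of $(1-x)^b$ with Lagrange remainder to be valid, and the intermediate point then satisfies $1-\theta\log u/\log\lambda\geq 1/2$ in all cases (for $u<1$ it is $\geq 1$, moving \emph{away} from the singularity). To keep $(1-\theta\log u/\log\lambda)^{b-N}$ uniformly bounded on the region where the base is large (i.e.\ $u$ near $0$) you must take $N\geq b$ so that the exponent $b-N$ is nonpositive; having established the expansion for all large $N$, the standard truncation remark (spelled out at the end of the paper's proof) recovers it for every $N$. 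With these two small points made explicit, your argument goes through; the remaining steps (the $O(e^{-\sqrt{\lambda}/2})$ tail estimates, the convergence of $\int_0^\infty|\log u|^N u^{\re a-1}e^{-u}\,du$ using $\re a>0$, and the identification with $\Gamma^{(k)}(a)$) are all correct.
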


\begin{proof}
Fix an integer $n \geq 0$.  Taylor's theorem with remainder gives us the identity
\begin{align}
(1+x)^b &= \sum_{k=0}^{n} \binom{b}{k} x^k + (b-n) \binom{b}{n} x^{n+1} \int_0^1 (1+xt)^{b-n-1} (1-t)^n\,dt \nonumber \\
	&= \sum_{k=0}^{n} \binom{b}{k} x^k + C_{b,n} x^{n+1} g(b,n,x),
\label{binomialthm}
\end{align}
valid for $x > -1$.

Turning to the integral in question, the substitution $s = \lambda t$ gives us
\begin{align*}
\int_0^\delta (-\log t)^b t^{a-1} e^{-\lambda t}\,dt &= \lambda^{-a} \int_{0}^{\delta \lambda} (\log \lambda - \log s)^b s^{a-1} e^{-s}\,ds \\
	&= \lambda^{-a} (\log \lambda)^b \int_0^{\delta\lambda} \left(1-\frac{\log s}{\log \lambda}\right)^b s^{a-1} e^{-s}\,ds.
\end{align*}
Setting $x = -\log s/\log \lambda$ in \eqref{binomialthm} and inserting it into the above expression yields
\begin{align}
&\lambda^a (\log\lambda)^{-b} \int_0^\delta (-\log t)^b t^{a-1} e^{-\lambda t}\,dt \nonumber \\
	&\qquad {}= \sum_{k=0}^{n} (-1)^k \binom{b}{k} (\log\lambda)^{-k} \int_0^{\delta\lambda} (\log s)^k s^{a-1} e^{-s}\,ds \nonumber \\
	&\qquad\qquad {}+ C_{b,n} (\log\lambda)^{-n-1} \int_0^{\delta\lambda} (-\log s)^{n+1} g\!\left(b,n,-\frac{\log s}{\log\lambda}\right)s^{a-1}e^{-s}\,ds.
\label{logthmerror1}
\end{align}

To aid us in estimating the last integral in the above expression we will assume for the moment that $n$ and $\lambda$ are large enough to have $n > b-1$ and $\delta\lambda > 1$.  For $0 < s < 1$ we have
\[
	g\!\left(b,n,-\frac{\log s}{\log\lambda}\right) = \int_0^1 \left(1-\frac{t\log s}{\log \lambda}\right)^{b-n-1}(1-t)^n\,dt < \int_0^1 (1-t)^n\,dt = \frac{1}{n+1},
\]
and for $1 < s < \delta\lambda$
\begin{align*}
g\!\left(b,n,-\frac{\log s}{\log\lambda}\right) &< \left(1-\frac{\log s}{\log \lambda}\right)^{b-n-1} \int_0^1 (1-t)^n\,dt \\
	&< \frac{(-\log\delta)^{b-n-1}}{n+1}.
\end{align*}
On the whole interval $0 < s < \delta\lambda$ we have $g(b,n,-\log s/\log\lambda) \geq 0$, so we can conclude that
\begin{align*}
&\left|\int_0^{\delta\lambda} (-\log s)^{n+1} g\!\left(b,n,-\frac{\log s}{\log\lambda}\right)s^{a-1}e^{-s}\,ds\right| \\
&\qquad\quad {}< \frac{1+(-\log\delta)^{b-n-1}}{n+1} \int_0^{\delta\lambda} |\log s|^{n+1} s^{a-1} e^{-s}\,ds \\
&\qquad\quad {}< \frac{1+(-\log\delta)^{b-n-1}}{n+1} \int_0^\infty |\log s|^{n+1} s^{a-1} e^{-s}\,ds
\end{align*}
and therefore, combining this with \eqref{logthmerror1}, that
\begin{align}
&\lambda^a (\log\lambda)^{-b} \int_0^\delta (-\log t)^b t^{a-1} e^{-\lambda t}\,dt \nonumber \\
	&\qquad {}= \sum_{k=0}^{n} (-1)^k \binom{b}{k} (\log\lambda)^{-k} \int_0^{\delta\lambda} (\log s)^k s^{a-1} e^{-s}\,ds + O\!\left((\log\lambda)^{-n-1}\right).
\label{logthmnotails}
\end{align}

The last thing we need to do is attach the tails onto the integrals in the sum.  For each $k=0,...,n$ they can be expressed as
\begin{align*}
\int_0^{\delta\lambda} (\log s)^k s^{a-1} e^{-s}\,ds &= \int_0^\infty (\log s)^k s^{a-1} e^{-s}\,ds - \int_{\delta\lambda}^\infty (\log s)^k s^{a-1} e^{-s}\,ds \\
	&= \Gamma^{(k)}(a) - \int_{\delta\lambda}^\infty (\log s)^k s^{a-1} e^{-s}\,ds,
\end{align*}
and the error in each case is exponentially small:
\begin{align*}
\left|\int_{\delta\lambda}^\infty (\log s)^k s^{a-1} e^{-s}\,ds\right| &\leq \int_{\delta\lambda}^\infty (\log s)^k s^{\re a-1} e^{-s}\,ds \\
	&< \int_{\delta\lambda}^\infty s^{\re a+k-1} e^{-s}\,ds \\
	&= \lambda^{\re a+k} e^{-\delta\lambda} \int_0^\infty (\delta+r)^{\re a+k-1} e^{-\lambda r}\,dr \\
	&< \lambda^{\re a+k} e^{-\delta\lambda} \int_0^\infty (\delta+r)^{\re a+k-1} e^{-r}\,dr,
\end{align*}
where we made the substitution $s=\lambda(\delta+r)$ in the third line.  Thus
\[
	\int_0^{\delta\lambda} (\log s)^k s^{a-1} e^{-s}\,ds = \Gamma^{(k)}(a) + O\!\left(\lambda^{\re a+k} e^{-\delta\lambda}\right)
\]
for each $k$, and so from \eqref{logthmnotails} we get
\begin{align}
&\int_0^\delta (-\log t)^b t^{a-1} e^{-\lambda t}\,dt \nonumber \\
	&\qquad {}= \lambda^{-a} \sum_{k=0}^{n} (-1)^k \binom{b}{k} \Gamma^{(k)}(a) (\log\lambda)^{b-k} + O\!\left(\lambda^{-a}(\log\lambda)^{b-n-1}\right).
\label{logthmfinal}
\end{align}

We assumed that $n > b-1$, while we really want \eqref{logthmfinal} to hold for all $n = 0,1,2,\ldots$.  Actually, this follows easily.  If $b-1 \geq 0$ then set $n=\lfloor b-1 \rfloor + 1$ and fix $m \in \{0,1,\ldots,n-1\}$.  By \eqref{logthmfinal} we indeed have
\begin{align*}
&\int_0^\delta (-\log t)^b t^{a-1} e^{-\lambda t}\,dt \\
	&\qquad {}= \lambda^{-a} \left(\sum_{k=0}^{m} + \sum_{k=m+1}^{n} \right) (-1)^k \binom{b}{k} \Gamma^{(k)}(a) (\log\lambda)^{b-k} + O\!\left(\lambda^{-a}(\log\lambda)^{b-n-1}\right) \\
	&\qquad {}= \lambda^{-a} \sum_{k=0}^{m} (-1)^k \binom{b}{k} \Gamma^{(k)}(a) (\log\lambda)^{b-k} + O\!\left(\lambda^{-a}(\log\lambda)^{b-m-1}\right),
\end{align*}
which concludes the proof.
\end{proof}

\section{More Complicated Exponents}

Now we'll investigate how to deal with integrals of the form
\[
	F(\lambda) = \int_a^b f(t)e^{\lambda g(t)}\,dt.
\]
Just as in the previous section, if $f(t)$ is nice enough then the $\lambda \to \infty$ behavior of the integrand will essentially be dominated by the exponential factor $e^{\lambda g(t)}$. In that special case we had $ g(t) = -t$, and the maximum of this quantity occurred at the left endpoint of the interval of integration.  For general $ g$ the situation can be much more complicated:
\begin{itemize}
\item $ g(t)$ can have a global maximum at $t=a$, $t=b$, or both;
\item $ g(t)$ can have one or more global maxima inside the interval $(a,b)$;
\item any such global maximum may or may not be a critical point of $ g(t)$ with high-order degeneracy.
\end{itemize}
In any case, though, the method of attack remains the same: approximate the integrand near its largest points.  Here that means we'll be approximating $f$ and $ g$ near any global maxima of $ g$.

One principle that simplifies our work is that \textit{the contribution of each maximum of $ g$ can be estimated independently}.  Let's prove a small lemma toward this.

\begin{lemma}
Fix $-\infty \leq a < b \leq \infty$ and let $f \colon [a,b] \to \C \cup \{\infty\}$ and $ g \colon [a,b] \to \R$ be measurable functions satisfying
\[
	\int_a^b |f(t)| e^{\ell g(t)}\,dt < \infty
\]
for some constant $\ell \geq 0$.  If $ g(t) < M$ for $I \subset [a,b]$ then
\[
	\int_I f(t)e^{\lambda  g(t)}\,dt = O\!\left(e^{M\lambda}\right)
\]
for $\lambda \geq \ell$.
\label{laplacelemma}
\end{lemma}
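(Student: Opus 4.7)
The plan is to exploit the integrability hypothesis directly by peeling off a factor $e^{\ell g(t)}$ from the integrand so that what remains can be bounded pointwise by $e^{M\lambda}$. The key algebraic manipulation is the factorization
\[
    e^{\lambda g(t)} = e^{(\lambda - \ell)g(t)} \cdot e^{\ell g(t)},
\]
which separates the $\lambda$-dependent part of the exponential from the part that already makes $f(t) e^{\ell g(t)}$ integrable on $[a,b]$.

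First I would observe that, because $\lambda \geq \ell$ implies $\lambda - \ell \geq 0$ and because $g(t) < M$ on $I$, we have the uniform pointwise estimate
\[
    e^{(\lambda - \ell) g(t)} \leq e^{(\lambda - \ell) M} \quad \text{for all } t \in I.
\]
(Note that this inequality uses only that $\lambda - \ell$ is nonnegative; the sign of $M$ is immaterial, since multiplying $g(t) < M$ by the nonnegative quantity $\lambda - \ell$ preserves the inequality.) Then I would estimate in the standard way:
\[
    \left| \int_I f(t) e^{\lambda g(t)}\,dt \right| \leq \int_I |f(t)|\, e^{(\lambda - \ell) g(t)}\, e^{\ell g(t)}\,|dt| \leq e^{(\lambda - \ell) M} \int_I |f(t)| e^{\ell g(t)}\,|dt|.
\]

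Finally, since $I \subset [a,b]$, the last integral is bounded by $\int_a^b |f(t)| e^{\ell g(t)}\,dt$, which is finite by hypothesis. Calling this finite value $K$, we obtain
\[
    \left| \int_I f(t) e^{\lambda g(t)}\,dt \right| \leq K e^{-\ell M} \cdot e^{M \lambda},
\]
and the constant $K e^{-\ell M}$ depends only on $f$, $g$, $\ell$, and $M$, not on $\lambda$. This is exactly the conclusion $O(e^{M\lambda})$ for $\lambda \geq \ell$. There is no real obstacle here: the whole proof is essentially the single factorization above, and its purpose within the chapter is to provide the pointwise bookkeeping tool that lets us discard, with an exponentially small error, any piece of a Laplace-type integral whose integrand is bounded away from the global maximum of $g$.
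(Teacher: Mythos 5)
Your proof is correct and is essentially the paper's own argument: the same factorization $e^{\lambda g(t)} = e^{(\lambda-\ell)g(t)}e^{\ell g(t)}$, the same pointwise bound $e^{(\lambda-\ell)g(t)} \leq e^{(\lambda-\ell)M}$ for $\lambda \geq \ell$, and the same final appeal to the integrability hypothesis over $[a,b]$. Nothing further is needed.
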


\begin{proof}
The calculation is pretty similar to the ones we have done before:
\begin{align*}
\left|\int_I f(t) e^{\lambda g(t)}\,dt\right| &\leq \int_I |f(t)| e^{\lambda g(t)}\,dt \\
&= \int_I |f(t)| e^{(\lambda-\ell) g(t)} e^{\ell g(t)}\,dt \\
&\leq e^{(\lambda-\ell)M} \int_I |f(t)| e^{\ell g(t)}\,dt \\
&\leq e^{(\lambda-\ell)M} \int_a^b |f(t)| e^{\ell g(t)}\,dt
\end{align*}
for $\lambda \geq \ell$.
\end{proof}

So, suppose that $ g\colon [a,b] \to \R$ is a continuous function that has global maxima at the points $t_1,\ldots,t_n \in [a,b]$, i.e.\ that 
\[
	 g(t_1) = \cdots =  g(t_n) >  g(t), \quad t \in [a,b] \setminus \{ t_1,\ldots,t_n\}.
\]
We can then find disjoint intervals $I_1,\ldots,I_n \subset [a,b]$ with $t_j \in I_j$, $j=1,\ldots,n$ and a constant $\delta > 0$ such that $ g(t) <  g(t_1) - \delta$ for all $t \in [a,b] \setminus (I_1 \cup \cdots \cup I_n)$.  So, if $f$ is a function such that the hypotheses of Lemma \ref{laplacelemma} are satisfied, taking $M =  g(t_1) - \delta$ and $I = [a,b] \setminus (I_1 \cup \cdots \cup I_n)$ in the lemma tells us that
\begin{align}
\int_a^b f(t)e^{\lambda  g(t)}\,dt &= \sum_{k=1}^{n} \int_{I_k} f(t)e^{\lambda  g(t)}\,dt + \int_{[a,b] \setminus (I_1 \cup \cdots \cup I_n)} f(t)e^{\lambda  g(t)}\,dt \nonumber \\
	&= \sum_{k=1}^{n} \int_{I_k} f(t)e^{\lambda  g(t)}\,dt + O\!\left(e^{( g(t_1)-\delta) \lambda}\right)
\label{laplaceindependence}
\end{align}
as $\lambda \to \infty$, $\lambda > 0$.

We are now free to estimate each integral in the sum on its own.  Of course, we're banking on the fact that the leading-order behavior of these integrals is asymptotically larger than the $O(\cdots)$ term, but this will indeed be the case: it will turn out that their behavior will involve a factor of $e^{\lambda g(t_1)}$ multiplied by some other subexponential factors.  As such, the integrals in the sum will, in fact, be \textit{exponentially} larger than the $O(\cdots)$ term.

In light of these observations we can simplify the discussion in this section and only consider exponent functions $g$ with a single global maximum.  We'll focus on just two common situations, illustrating different methods in each case.  It might be instructive, after seeing both methods, to try applying each of them in the other case.

\subsection{A Simple Maximum on the Boundary}

In this section we'll suppose that the exponent function $ g\colon[a,b] \to \R$ has a maximum at the left endpoint $t=a$ of the interval we're integrating over.  We'll also suppose that $ g'(a) < 0$, which means that this maximum isn't a critical point of $g$.  Techniques to handle a maximum at a critical point of $g$ can be found in the next section.

One thing that I should point out about the theorems in this section and the next is that we won't explicitly assume that $g$ has a unique global maximum at $t=a$, but rather that $g(t)$ is forever bounded below $g(a)$ by some fixed amount as soon as $t > a$.  This does imply that $g(t)$ has a unique global maximum at $t=a$, but the two concepts aren't equivalent.  For example, the function defined by
\[
	g(t) = \begin{cases}
				-t & 0 \leq t < 1, \\
				-t^{-1} & t \geq 1
			\end{cases}
\]
has a unique global maximum at $t=0$, but, after dropping below $g(0)$, $g(t)$ eventually rises again and gets arbitrarily close to $g(0)$ as $t \to \infty$.  This $g$ is therefore not one of the ones considered in the following theorems.  We'll revisit this example after the proof.

\begin{theorem}
Fix $-\infty < a < b \leq \infty$ and let $f \colon [a,b] \to \R \cup \{\infty\}$ and $ g \colon [a,b] \to \R$ be measurable functions satisfying
\[
	\int_a^b |f(t)| e^{\ell g(t)}\,dt < \infty
\]
for some constant $\ell \geq 0$.  Suppose that $g'(a)$ exists with $g'(a) < 0$, and that for any $\delta > 0$ we can find an $\eta(\delta) > 0$ such that $g(t) < g(a) - \eta(\delta)$ whenever $a+\delta < t < b$.  Suppose also that
\[
	f(t) \sim (t-a)^p
\]
as $t \to a^+$ with $p > -1$.  Then
\[
	F(\lambda) := \int_a^b f(t) e^{\lambda g(t)}\,dt \sim \frac{\Gamma(p+1)}{(- g'(a)\lambda)^{p+1}} \,e^{g(a)\lambda}
\]
as $\lambda \to \infty$ with $\lambda > 0$.
\label{laplaceboundary}
\end{theorem}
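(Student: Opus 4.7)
The plan is to reduce Theorem \ref{laplaceboundary} to a Watson-type gamma calculation by localizing the integral to a small right-neighborhood of $t=a$, extracting the dominant factor $e^{\lambda g(a)}$, and using $g'(a)<0$ to linearize $g$ at the endpoint. The structure mirrors the proof of Watson's lemma (Theorem \ref{watsonslemma}): cut off the tail, approximate on the head, and then reattach tails on the resulting gamma integrals.

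First, for any $\delta>0$ I would split $F(\lambda)=\int_a^{a+\delta}+\int_{a+\delta}^b$. The hypothesis $g(t)<g(a)-\eta(\delta)$ on $(a+\delta,b)$, together with the integrability of $|f|\,e^{\ell g}$, permits an argument identical to Lemma \ref{laplacelemma} to conclude
\[
\int_{a+\delta}^{b} f(t)\,e^{\lambda g(t)}\,dt = O\!\left(e^{(g(a)-\eta(\delta))\lambda}\right),
\]
which is exponentially smaller than the target leading term $e^{\lambda g(a)}\lambda^{-p-1}$. So it suffices to asymptotically evaluate the head integral for a conveniently chosen small $\delta$.

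Next, for any $0<\eta<-g'(a)$, differentiability of $g$ at $a$ yields a $\delta_1>0$ with $|g(t)-g(a)-g'(a)(t-a)|\leq \eta(t-a)$ on $[a,a+\delta_1]$, and the asymptotic $f(t)\sim(t-a)^p$ yields a $\delta_2>0$ with $(1-\eta)(t-a)^p\leq f(t)\leq (1+\eta)(t-a)^p$ on $[a,a+\delta_2]$ (in particular $f\geq 0$ there). Taking $\delta=\min(\delta_1,\delta_2)$ I would sandwich the integrand to obtain
\begin{align*}
&(1-\eta)\,e^{\lambda g(a)}\int_a^{a+\delta}(t-a)^p\,e^{(g'(a)+\eta)\lambda(t-a)}\,dt \\
&\qquad \leq \int_a^{a+\delta} f(t)\,e^{\lambda g(t)}\,dt \\
&\qquad\qquad \leq (1+\eta)\,e^{\lambda g(a)}\int_a^{a+\delta}(t-a)^p\,e^{(g'(a)-\eta)\lambda(t-a)}\,dt.
\end{align*}
Since $g'(a)\pm\eta<0$, substituting $u=(-g'(a)\mp\eta)\lambda(t-a)$ turns each bracketing integral into a truncated gamma integral whose tail beyond $(-g'(a)\mp\eta)\lambda\delta$ decays exponentially in $\lambda$; reattaching those tails as in the proof of Watson's lemma shows both bracketing integrals are asymptotic to $\Gamma(p+1)/[(-g'(a)\mp\eta)\lambda]^{p+1}$ as $\lambda\to\infty$. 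Dividing through by $e^{\lambda g(a)}\lambda^{-p-1}\Gamma(p+1)$, taking $\liminf$ and $\limsup$ as $\lambda\to\infty$, and finally letting $\eta\to 0^+$ squeezes the limit to $(-g'(a))^{-p-1}$, which is the desired conclusion.

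The principal obstacle is bookkeeping, namely choosing $\eta$ and $\delta$ in the right order and ensuring that every error introduced by the sandwich truly is of smaller order than the leading term. The conceptual point that makes such mild hypotheses on $g$ sufficient is that the only region contributing non-negligibly to the integral shrinks to the single point $\{a\}$ as $\lambda\to\infty$; inside any such shrinking region the tangent-line approximation $g(t)\approx g(a)+g'(a)(t-a)$ is valid, so differentiability of $g$ at the single point $t=a$ is all that is needed.
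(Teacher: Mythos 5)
Your proposal is essentially the paper's own proof: localize near $t=a$ using the $\eta(\delta)$ hypothesis and Lemma \ref{laplacelemma}, sandwich $f$ and the linearization of $g$ with an $\eta$-perturbation, reduce to truncated gamma integrals, reattach tails, and squeeze as $\eta\to 0^+$. One small slip to fix: in your displayed sandwich the perturbed exponents are swapped --- since $g(a)+(g'(a)-\eta)(t-a)\leq g(t)\leq g(a)+(g'(a)+\eta)(t-a)$, the lower bound should carry $e^{(g'(a)-\eta)\lambda(t-a)}$ and the upper bound $e^{(g'(a)+\eta)\lambda(t-a)}$; the final limit is unaffected.
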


The idea behind the statement of this theorem and its proof is to try to get the leading order asymptotic for $F(\lambda)$ using the least amount of information about $g$ as possible.  By only assuming that $g'(a)$ exists we limit ourselves to very weak bounds for $g(t)$ as $t \to a^+$, and are therefore limited to upper and lower bounds for $F(\lambda)$ which are within multiplicative factors of $1\pm\epsilon$ of the desired asymptotic.  We then perform a final limiting step taking $\epsilon \to 0$ to conclude the result.

\begin{proof}
To start let's make the change of variables $s = t-a$ in the integral and rename $ h(s) =  g(s+a)$, so that
\[
	F(\lambda) = \int_0^{b-a} f(s+a) e^{\lambda h(s)}\,ds.
\]

By assumption $ h'(0)$ exists and is negative.  If we define
\[
	H(s) =  h(s) -  h(0) -  h'(0)s
\]
then $H(0) = H'(0) = 0$, so that
\[
	\frac{H(s) - H(0)}{s-0} = \frac{H(s)}{s} \to 0
\]
as $s \to 0$.  It follows that for all $\epsilon$ with $0 < \epsilon < - h'(0)$ we can find a $\delta$ with $0 < \delta < b-a$ such that
\[
	\left| h(s) -  h(0) -  h'(0)s\right| \leq \epsilon s
\]
and
\[
	(1-\epsilon)s^p < f(s+a) < (1+\epsilon)s^p
\]
for all $0 \leq s \leq \delta$.  We then have
\begin{align}
&(1-\epsilon)e^{\lambda h(0)} \int_0^\delta s^p e^{\lambda( h'(0)-\epsilon)s}\,ds \nonumber \\
	&\hspace{2cm} < \int_0^\delta f(s+a) e^{\lambda h(s)}\,ds \nonumber \\
		&\hspace{4cm} < (1+\epsilon)e^{\lambda h(0)} \int_0^\delta s^p e^{\lambda( h'(0)+\epsilon)s}\,ds.
\label{lapendnotails}
\end{align}
By assumption we know that $h(s) = g(t) < g(a)-\eta(\delta) = h(0) - \eta(\delta)$ for $s = t-a > \delta$, so by Lemma \ref{laplacelemma} we have
\begin{align*}
&\int_0^\delta f(s+a) e^{\lambda h(s)}\,ds = F(\lambda) + O\!\left(e^{(h(0)-\eta(\delta))\lambda}\right), \\
&\int_0^\delta s^p e^{\lambda( h'(0)-\epsilon)s}\,ds = \frac{\Gamma(p+1)}{[-(h'(0)-\epsilon)\lambda]^{p+1}} + O\!\left(e^{\delta (h'(0)-\epsilon) \lambda}\right), \\
&\int_0^\delta s^p e^{\lambda( h'(0)+\epsilon)s}\,ds = \frac{\Gamma(p+1)}{[-(h'(0)+\epsilon)\lambda]^{p+1}} + O\!\left(e^{\delta (h'(0)+\epsilon) \lambda}\right).
\end{align*}
We can then write
\[
	\frac{(1-\epsilon)\Gamma(p+1)}{[-(h'(0)-\epsilon)\lambda]^{p+1}} + O\!\left(e^{-\mu \lambda}\right) < e^{-h(0)\lambda} F(\lambda) < \frac{(1+\epsilon)\Gamma(p+1)}{[-(h'(0)+\epsilon)\lambda]^{p+1}} + O\!\left(e^{-\mu \lambda}\right)
\]
as $\lambda \to \infty$ for some $\mu > 0$ which does not depend on $\lambda$, and hence
\[
	\frac{(1-2\epsilon)\Gamma(p+1)}{[-(h'(0)-\epsilon)\lambda]^{p+1}} < e^{-h(0)\lambda} F(\lambda) < \frac{(1+2\epsilon)\Gamma(p+1)}{[-( h'(0)+\epsilon)\lambda]^{p+1}}
\]
for $\lambda > 0$ large enough.  Since $\epsilon$ was arbitrary, it follows that
\[
	F(\lambda) \sim \frac{\Gamma(p+1)}{(- h'(0)\lambda)^{p+1}} \,e^{ h(0)\lambda} = \frac{\Gamma(p+1)}{(- g'(a)\lambda)^{p+1}} \,e^{ g(a)\lambda}
\]
as $\lambda \to \infty$, which is what was to be shown.
\end{proof}

Another example of this method can be found in \cite[sec.\ 4.2]{debruijn:ama} for the case when $g$ has a critical point in the interior of the interval (like in the next section).

Let's revisit the exponent function example we mentioned before the theorem, namely
\[
	g(t) = \begin{cases}
				-t & 0 \leq t < 1, \\
				-t^{-1} & t \geq 1.
			\end{cases}
\]
If we take
\[
	f(t) = \begin{cases}
				1 & 0 \leq t < 1, \\
				t^{-2} & t \geq 1
			\end{cases}
\]
then all hypotheses of Theorem \ref{laplaceboundary} are satisfied except for one: $g$ does have a unique global maximum at $t=0$, but the necessary quantity $\eta(\delta)$ doesn't exist for any $\delta > 0$.  If we naively tried to apply the result we might reason that since $f(t) \sim 1$ and $g(t) \sim -t$ as $t \to 0^+$ we should have
\[
	\int_0^\infty f(t) e^{\lambda g(t)}\,dt \stackrel{?}{\sim} \int_0^\infty e^{-\lambda t}\,dt = \frac{1}{\lambda}
\]
as $\lambda \to \infty$.  It turns out that this isn't quite correct, and that in this case we actually do get non-negligible contributions coming from the tail of the integral $\int_\delta^\infty \cdots \,dt$.

If we split the integral at $t=1$ and write
\[
	\int_0^\infty f(t) e^{\lambda g(t)}\,dt = \int_0^1 e^{-\lambda t}\,dt + \int_1^\infty t^{-2} e^{-\lambda/t}\,dt
\]
then make the change of variables $t = 1/u$ in the second integral we get
\[
	\int_1^\infty t^{-2} e^{-\lambda/t}\,dt = \int_0^1 e^{-\lambda u}\,du.
\]
Apparently the tail actually makes a contribution of $1/\lambda$ to the integral as well.  Consequently, the correct asymptotic is
\[
	\int_0^\infty f(t) e^{\lambda g(t)}\,dt \sim \frac{2}{\lambda},
\]
which is twice our naive estimate.

It's worth noting that it is possible for the naive guess to be off by much more than just a constant factor.  Take, for example,
\[
	g(t) = \begin{cases}
				-t & 0 \leq t < 1, \\
				-e^{-t} & t \geq 1,
			\end{cases}
\]
with the same $f$ as before.  Here, splitting the integral and making the change of variables $e^{-t} = u$ yields
\begin{align*}
\int_0^\infty f(t) e^{\lambda g(t)}\,dt &= \int_0^1 e^{-\lambda t}\,dt + \int_1^\infty t^{-2} \exp\!\left(-\lambda e^{-t}\right) dt \\
&= \int_0^1 e^{-\lambda t}\,dt + \int_0^{1/e} (-\log u)^{-2} u^{-1} e^{-\lambda u} du.
\end{align*}
By using Theorem \ref{logthm} we can show that the second integral is asymptotic to $1/\log\lambda$, so in fact
\[
	\int_0^\infty f(t) e^{\lambda g(t)}\,dt \sim \frac{1}{\log\lambda}
\]
as $\lambda \to \infty$.  This is very far off from the naive guess of $1/\lambda$.

Point being, if an $\eta$ function doesn't exist, one must take special care to also estimate the asymptotic contribution coming from the tail of the integral.

\begin{remark}
Theorem \ref{laplaceboundary} is also true when $p \in \mathbb C$ with $\re p > -1$, though the proof above doesn't really generalize to complex $p$.
\end{remark}

\subsection{A Critical Point in the Interior}

In this section we'll suppose that $g$ has a maximum at some $t_0 \in (a,b)$.  The method we'll use requires that we know a little more about $f$ and $g$ than in the previous section, but in return we can quantify relative error of the resulting asymptotic for $F(\lambda)$.

\begin{theorem}
Fix $-\infty \leq a < b \leq \infty$ and let $f \colon [a,b] \to \C \cup \{\infty\}$ and $g \colon [a,b] \to \R$ be measurable functions satisfying
\[
	\int_a^b |f(t)| e^{\ell g(t)}\,dt < \infty
\]
for some constant $\ell \geq 0$.  Suppose that $g'(t_0) = 0$ and $g''(t_0) < 0$, that $ g'''(t_0)$ exists, and that for any $\delta > 0$ we can find an $\eta(\delta) > 0$ such that $g(t) < g(a) - \eta(\delta)$ for all $t \in (a,b)$ with $|t-t_0| > \delta$.  Suppose also that
\[
	f(t) = (t-t_0)^{2n} + O\!\left((t-t_0)^{2n+1}\right)
\]
as $t \to t_0$ for some integer $n \geq 0$.  Then
\[
	F(\lambda) := \int_a^b f(t) e^{\lambda g(t)}\,dt = \Gamma\!\left(n+\frac{1}{2}\right) \left(\frac{2}{-g''(t_0)\lambda}\right)^{n+1/2} e^{g(t_0)\lambda} \left[1 + O\!\left(\lambda^{-1/2}\right)\right]
\]
as $\lambda \to \infty$ with $\lambda > 0$.
\label{laplacemethod}
\end{theorem}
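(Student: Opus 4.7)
My plan is to use the $\eta(\delta)$ hypothesis to localize the integral to a shrinking neighborhood of the critical point $t_0$, Taylor-expand both $f$ and $g$ there, and reduce the whole thing to a Gaussian moment integral plus controlled errors.

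First I will localize: applying Lemma \ref{laplacelemma} on $(a,b) \setminus (t_0-\delta, t_0+\delta)$ with $M = g(t_0) - \eta(\delta)$ gives, for any $\delta > 0$ and $\lambda \geq \ell$,
\[
F(\lambda) = \int_{t_0-\delta}^{t_0+\delta} f(t)\,e^{\lambda g(t)}\,dt + O\!\left(e^{(g(t_0)-\eta(\delta))\lambda}\right).
\]
I will take $\delta = \delta(\lambda) \to 0$ slowly; concretely $\delta = \lambda^{-2/5}$ is convenient, since then $\lambda\delta^2 = \lambda^{1/5} \to \infty$ (so Gaussian tails outside $[-\delta,\delta]$ are exponentially negligible) while $\lambda\delta^3 = \lambda^{-1/5} \to 0$ (so the cubic Taylor remainder of $g$ stays small in the integrand).

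Next, since $g'''(t_0)$ exists, Taylor's theorem gives $g(t_0+s) = g(t_0) - A s^2 + R(s)$ with $A := -g''(t_0)/2 > 0$ and $R(s) = (g'''(t_0)/6)\,s^3 + o(s^3)$ as $s \to 0$. Pulling out the factor $e^{\lambda g(t_0)}$, the localized integral becomes
\[
e^{\lambda g(t_0)} \int_{-\delta}^{\delta} f(t_0+s)\,e^{-\lambda A s^2}\,e^{\lambda R(s)}\,ds.
\]
Since $|\lambda R(s)| = O(\lambda\delta^3) = O(\lambda^{-1/5})$ uniformly for $|s|\leq\delta$, I may expand $e^{\lambda R(s)} = 1 + \lambda R(s) + O((\lambda R(s))^2)$ and distribute. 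Combined with the expansion $f(t_0+s) = s^{2n} + O(s^{2n+1})$, the dominant contribution is that of $s^{2n} e^{-\lambda A s^2}$, which upon extending integration to all of $\R$ at exponentially small cost evaluates to $\Gamma(n+1/2)/(A\lambda)^{n+1/2}$. The $O(s^{2n+1})$ correction from $f$ contributes $O(\lambda^{-n-1})$, i.e.\ a relative size of exactly $O(\lambda^{-1/2})$, delivering the claimed error rate.

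The remaining contributions, coming from $e^{\lambda R(s)} - 1$, must be shown to be $o(\lambda^{-1/2})$ relative to the main term. The leading piece $\lambda(g'''(t_0)/6)\,s^{2n+3}$ is odd in $s$, so it integrates to zero on the symmetric interval $[-\delta,\delta]$ by parity; the $o(\lambda s^{2n+3})$ remainder yields $o(\lambda^{-n-1})$ via standard Gaussian moment bounds, and the quadratic term $O(\lambda^2 s^{2n+6})$ yields $O(\lambda^{-n-3/2})$. Both are safely below $\lambda^{-n-1/2}\cdot \lambda^{-1/2}$. The main obstacle is precisely this sharp accounting: a naive bound $|e^{\lambda R} - 1| = O(\lambda\delta^3)$ gives only $O(\lambda^{-1/5})$ relative error at the chosen scale of $\delta$, and no admissible $\delta$ (one with $\lambda\delta^2 \to \infty$) can beat $O(\lambda^{-1/2})$ without exploiting this cancellation, so the parity observation is the essential ingredient for the sharp rate.
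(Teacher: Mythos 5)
Your overall strategy---localize near $t_0$, Taylor-expand $f$ and $g$, reduce to Gaussian moment integrals---is the same as the paper's, and most of your estimates check out. But your very first step has a gap. You localize directly to $[t_0-\delta(\lambda),\,t_0+\delta(\lambda)]$ with $\delta(\lambda)=\lambda^{-2/5}\to 0$ by invoking Lemma \ref{laplacelemma} with $M=g(t_0)-\eta(\delta(\lambda))$, producing an error $O\!\left(e^{(g(t_0)-\eta(\lambda^{-2/5}))\lambda}\right)$. The hypothesis only guarantees $\eta(\delta)>0$ for each fixed $\delta$; it gives no lower bound on $\eta(\delta)$ as $\delta\to 0$, so nothing forces $\eta(\lambda^{-2/5})\lambda-(n+1)\log\lambda\to\infty$, and this error term need not be $o\!\left(\lambda^{-n-1}e^{g(t_0)\lambda}\right)$. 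The fix is the two-stage localization the paper uses: first discard $|t-t_0|>\delta_0$ for a \emph{fixed} $\delta_0$ via the $\eta$ hypothesis, then use the quadratic upper bound $g(t_0+s)\le g(t_0)+\tfrac{1}{4}g''(t_0)s^2$ for $|s|\le\delta_0$ (which follows from the same Taylor expansion you already invoke) to discard $\lambda^{-2/5}<|s|<\delta_0$ at a cost of $O\!\left(e^{g(t_0)\lambda+g''(t_0)\lambda^{1/5}/4}\right)$. With that inserted, your argument goes through.

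Second, your closing claim---that the parity cancellation of the $s^{2n+3}$ term is ``the essential ingredient for the sharp rate'' and that no admissible $\delta$ reaches $O(\lambda^{-1/2})$ without it---is not correct, and the paper's proof shows why. You do not need the uniform bound $|e^{\lambda R}-1|=O(\lambda\delta^3)$; the pointwise bound $|e^{\lambda R(s)}-1|=O(\lambda|s|^3)$ (valid since $\lambda|s|^3$ stays bounded on your window) fed into the Gaussian moment integral gives
\[
	\lambda\int_{\R}|s|^{2n+3}e^{-\lambda A s^2}\,ds=O\!\left(\lambda^{-n-1}\right),
\]
already a relative error of $O(\lambda^{-1/2})$---exactly the rate claimed, and in any case unavoidable because the $O(s^{2n+1})$ term in the expansion of $f$ contributes an error of the same order. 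The paper works on the window $|s|<\lambda^{-1/3}$ and uses only $e^{O(\lambda s^3)}=1+O(\lambda s^3)$, with no parity argument anywhere. Your use of parity is valid and makes that one sub-term smaller than necessary, but presenting it as essential misidentifies the source of the $O(\lambda^{-1/2})$: it is forced by the expansion of $f$, not by the cubic term of $g$.
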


\begin{proof}
Our assumptions imply that $g$ has a maximum at $t=t_0$, and so we infer that $g'(t_0) = 0$.  To simplify things a little we'll make the translation $t=s+t_0$ and rename $h(s) = g(t)$, so that
\[
	F(\lambda) = \int_{a-t_0}^{b-t_0} f(s+t_0) e^{\lambda h(s)}\,ds.
\]
The new exponent function $h(s)$ has a maximum at $s=0$.  The main idea for the proof is that we want to replace $h(s)$ and $f(s+t_0)$ with simple approximations near this maximum, keeping track throughout of the errors we introduce in doing so.

By making an argument similar to the one in the beginning of the proof of Theorem \ref{laplaceboundary} (or simply by appealing to Taylor's theorem) it can be deduced from the assumptions that
\begin{align}
h(t) &= h(0) + \frac{h''(0)}{2} s^2 + \frac{h'''(0)}{6} s^3 + o\!\left(s^3\right) \nonumber \\
	&= h(0) + \frac{h''(0)}{2} s^2 + O\!\left(s^3\right)
\label{laphestimate}
\end{align}
as $s \to 0$.

We wish to use $h(0) + h''(0) s^2/2$ as an approximation for $h(s)$ near $s=0$, and near $s=0$ the error in doing so is $O(s^3)$.  The key step in the proof is that we will use this $O(s^3)$ error to find the size of the interval about $s=0$ which contains all of the asymptotic information about the integral.  For now we just note that because of the the asymptotic in \eqref{laphestimate} we can find a $\delta > 0$ such that $(-\delta,\delta) \subset (a-t_0,b-t_0)$ and
\begin{equation}
	h(t) < h(0) + \frac{h''(0)}{4}s^2
\label{lapdeltadef}
\end{equation}
for all $|s| < \delta$.  We then have
\begin{equation}
	F(\lambda) = \int_{-\delta}^{\delta} f(s+t_0) e^{\lambda h(s)}\,ds + O\!\left(e^{(h(0)-\eta(\delta))\lambda}\right).
\label{lapdeltaest}
\end{equation}

Now, suppose that $\varphi$ is some function with $-A|x| \leq \varphi(x) \leq A|x|$ for all $x \in [-B,B]$.  Then $|\varphi(x)| \leq AB$ and
\begin{align*}
\left| \frac{e^{\varphi(x)} - 1}{x} \right| & = \left| \frac{\varphi(x)}{x} \sum_{k=0}^{\infty} \frac{|\varphi(x)|^k}{(k+1)!} \right| \\
	&\leq A \sum_{k=0}^{\infty} \frac{|\varphi(x)|^k}{(k+1)!} \\
	&\leq A e^{|\varphi(x)|} \\
	&\leq A e^{AB}
\end{align*}
for all $x \in [-B,B]$.  This implies that $e^{\varphi(x)} = 1 + O(x)$.  Stated differently, we have shown that
\begin{equation}
	e^{O(x)} = 1 + O(x) \quad \text{if } x = O(1).
\label{explemma}
\end{equation}

How can we use this?  As stated above we want to write
\begin{align*}
\exp(\lambda h(s)) &= \exp\!\left[\lambda\left(h(0) + \frac{h''(0)}{2} s^2 + O\!\left(s^3\right)\right)\right] \\
	&= \exp\!\left[\lambda\left(h(0) + \frac{h''(0)}{2} s^2\right)\right] \exp\!\left[ O\!\left(\lambda s^3\right) \right],
\end{align*}
and ideally we would then like to simplify the exponential with the $O(\cdots)$ term, say like
\[
	\exp\!\left[O\!\left(\lambda s^3\right)\right] = 1 + O\!\left(\lambda s^3\right).
\]
Well, \eqref{explemma} tells us that this is true as long as $\lambda s^3 = O(1)$ or, equivalently, $s = O(\lambda^{-1/3})$.  This is precisely the range of $s$ which contains all of the asymptotic information about the integral.  Specifically we will show that the contributions coming from $|s| > \lambda^{-1/3}$ are exponentially small relative to those from $|s| < \lambda^{-1/3}$.

Returning to \eqref{lapdeltaest}, we split the integral like
\[
	F(\lambda) = \left(\int_{|s| < \lambda^{-1/3}} + \int_{\lambda^{-1/3} < |s| < \delta} \right) f(s+t_0) e^{\lambda h(s)}\,ds + O\!\left(e^{(h(0)-\eta(\delta))\lambda}\right).
\]
By taking $\delta$ smaller if necessary we can ensure that $f(s+t_0)$ is bounded on $(-\delta,\delta)$, say by $|f(s+t_0)| \leq M_0$, and then using \eqref{laphestimate} we get the estimate
\begin{align*}
\left| \int_{\lambda^{-1/3} < |s| < \delta} f(s+t_0) e^{\lambda h(s)}\,ds \right| &\leq M_0 \int_{\lambda^{-1/3} < |s| < \delta} e^{\lambda h(s)}\,ds \\
	&< M_0\int_{\lambda^{-1/3} < |s| < \delta} \exp\!\left[\lambda \left(h(0) + \frac{h''(0)}{4} s^2\right)\right]\,ds \\
	&< M_0 \exp\!\left[\lambda \left(h(0) + \frac{h''(0)}{4} \lambda^{-2/3}\right)\right] \int_{\lambda^{-1/3} < |s| < \delta} ds \\
	&< 2\delta M_0 \exp\!\left[h(0)\lambda + \frac{h''(0)}{4} \lambda^{1/3}\right].
\end{align*}
This error dominates the previous error term in \eqref{lapdeltaest}, so we end up with
\begin{equation}
	F(\lambda) = \int_{|s| < \lambda^{-1/3}} f(s+t_0) e^{\lambda h(s)}\,ds + O\!\left(e^{h(0)\lambda + h''(0)\lambda^{1/3}/4}\right).
\label{lapcubicerror}
\end{equation}

With $|s| < \lambda^{-1/3}$ it is now true that
\begin{align*}
	&f(s+t_0) = s^{2n} + O\!\left(s^{2n}\right), \\
	&h(s) = h(0) + \frac{h''(0)}{2} s^2 + O\!\left(s^3\right),
\end{align*}
and
\[
	\exp\!\left[O\!\left(\lambda s^3\right)\right] = 1 + O\!\left(\lambda s^3\right),
\]
so we can insert these approximations into the integral.  We note that
\begin{align*}
\left[s^{2n}+O\!\left(s^{2n+1}\right)\right] \left[1+O\!\left(\lambda s^3\right)\right] &= s^{2n} + O\!\left(s^{2n+1}\right) + O\!\left(\lambda s^{2n+3}\right) + O\!\left(\lambda s^{2n+4}\right) \\
&= s^{2n} + O\!\left(s^{2n+1}\right) + O\!\left(\lambda s^{2n+3}\right)
\end{align*}
in this situation, so the integral splits into the three pieces
\begin{align*}
&e^{-\lambda h(0)} \int_{|s| < \lambda^{-1/3}} f(s+t_0) e^{\lambda h(s)}\,ds \\
	&\qquad {}= \int_{|s| < \lambda^{-1/3}} \left[s^{2n}+O\!\left(s^{2n+1}\right)\right] e^{\lambda h''(0) s^2/2 + O(\lambda s^3)}\,ds \\
	&\qquad {}= \int_{|s| < \lambda^{-1/3}} \left[s^{2n}+O\!\left(s^{2n+1}\right)\right] e^{\lambda h''(0) s^2/2} \left[1+O\!\left(\lambda s^3\right)\right]\,ds \\
	&\qquad {}= \int_{|s| < \lambda^{-1/3}} s^{2n} e^{\lambda h''(0) s^2/2}\,ds + \int_{|s| < \lambda^{-1/3}} O\!\left(s^{2n+1}\right) e^{\lambda h''(0) s^2/2}\,ds \\
	&\qquad\qquad\quad {}+ \lambda \int_{|s| < \lambda^{-1/3}} O\!\left(s^{2n+3}\right) e^{\lambda h''(0) s^2/2}\,ds.
\end{align*}
The two integrals involving $O(\cdots)$ factors can be handled simply; if we suppose the quantity represented by $O(s^k)$ is bounded by $M_1 |s|^k$ then
\begin{align*}
\left| \int_{|s| < \lambda^{-1/3}} O\bigl(s^k\bigr) e^{\lambda h''(0) s^2/2}\,ds \right| &\leq M_1 \int_{|s| < \lambda^{-1/3}} |s|^k e^{\lambda h''(0) s^2/2}\,ds \\
&= 2M_1 \int_0^{\lambda^{-1/3}} s^k e^{\lambda h''(0) s^2/2}\,ds \\
&< 2M_1 \int_0^\infty s^k e^{\lambda h''(0) s^2/2}\,ds \\
&= M_1 \Gamma\!\left(\frac{k+1}{2}\right) \left(\frac{2}{-h''(0)\lambda}\right)^{(k+1)/2},
\end{align*}
so taking $k=2n+1$ and $k=2n+3$ gives us the respective estimates of $O(\lambda^{-n-1})$ and $O(\lambda^{-n-2})$ for the integrals.  Finally after combining like $O(\cdots)$ terms we are left with
\[
	e^{-\lambda h(0)} \int_{|s| < \lambda^{-1/3}} f(s+t_0) e^{\lambda h(s)}\,ds = \int_{|s| < \lambda^{-1/3}} s^{2n} e^{\lambda h''(0) s^2/2}\,ds + O\!\left(\lambda^{-n-1}\right).
\]
Finally Lemma \ref{laplacelemma} tells us that
\[
	\int_{|s| > \lambda^{-1/3}} s^{2n} e^{\lambda h''(0) s^2/2}\,ds = O\!\left(e^{h''(0) \lambda^{1/3}/2}\right),
\]
so
\begin{equation}
	e^{-\lambda h(0)} \int_{|s| < \lambda^{-1/3}} f(s+t_0) e^{\lambda h(s)}\,ds = \Gamma\!\left(n+\frac{1}{2}\right) \left(\frac{2}{-h''(0)\lambda}\right)^{n+1/2} + O\!\left(\lambda^{-n-1}\right).
\label{lapwithtail}
\end{equation}

Upon substituting \eqref{lapwithtail} into \eqref{lapcubicerror} we observe that the new error term \linebreak $O(\lambda^{-n-1}e^{h(0)\lambda})$ dominates the previous one, leaving us with
\begin{align*}
	F(\lambda) &= \Gamma\!\left(n+\frac{1}{2}\right) \left(\frac{2}{-h''(0)\lambda}\right)^{n+1/2} e^{h(0)\lambda} + O\!\left(\lambda^{-n-1} e^{h(0)\lambda}\right) \\
	&= \Gamma\!\left(n+\frac{1}{2}\right) \left(\frac{2}{-h''(0)\lambda}\right)^{n+1/2} e^{h(0)\lambda} \left[1 + O\!\left(\lambda^{-1/2}\right)\right] \\
	&= \Gamma\!\left(n+\frac{1}{2}\right) \left(\frac{2}{-g''(t_0)\lambda}\right)^{n+1/2} e^{g(t_0)\lambda} \left[1 + O\!\left(\lambda^{-1/2}\right)\right]
\end{align*}
as desired.
\end{proof}

\bibliographystyle{amsplain}
\bibliography{biblio}

\end{document}